\documentclass[reqno, 11pt]{amsart} 
\usepackage{amsfonts, amsmath, amssymb, amsthm}
\usepackage[margin=2.71cm, heightrounded]{geometry}
\usepackage{hhline, float}
\usepackage{nccmath}
\usepackage[colorlinks=true]{hyperref}

\newtheorem{theorem}{Theorem}[section]
\newtheorem{definition}[theorem]{Definition}
\newtheorem{prop}[theorem]{Proposition}
\newtheorem{lemma}[theorem]{Lemma}
\newtheorem{cor}[theorem]{Corollary}
\newtheorem{conv}[theorem]{Convention}
\newtheorem{conj}[theorem]{Conjecture}

\theoremstyle{definition}
\newtheorem{remark}[theorem]{Remark}

\newcommand{\ga}{\gamma}
\newcommand{\ep}{\epsilon}
\newcommand{\ka}{\kappa}
\newcommand{\vph}{\varphi}
\newcommand{\vth}{\vartheta}
\newcommand{\pa}{\partial}

\newcommand{\bg}{\bar{g}}
\newcommand{\bh}{\bar{h}}
\newcommand{\bw}{\bar{w}}
\newcommand{\bx}{\bar{x}}
\newcommand{\bz}{\bar{z}}
\newcommand{\bth}{\bar{\theta}}

\newcommand{\hg}{\hat{g}}
\newcommand{\hv}{\hat{v}}

\newcommand{\tf}{\tilde{f}}
\newcommand{\tg}{\tilde{g}}
\renewcommand{\th}{\tilde{h}}
\newcommand{\tu}{\tilde{u}}

\newcommand{\ty}{\tilde{y}}

\newcommand{\trh}{\tilde{\rho}}

\newcommand{\B}{\mathbb{B}}

\newcommand{\N}{\mathbb{N}}
\newcommand{\R}{\mathbb{R}}
\renewcommand{\S}{\mathbb{S}}

\newcommand{\mca}{\mathcal{A}}
\newcommand{\mcb}{\mathcal{B}}
\newcommand{\mcc}{\mathcal{C}}
\newcommand{\mcd}{\mathcal{D}}
\newcommand{\mce}{\mathcal{E}}
\newcommand{\mch}{\mathcal{H}}
\newcommand{\mci}{\mathcal{I}}
\newcommand{\mcj}{\mathcal{J}}
\newcommand{\mck}{\mathcal{K}}

\newcommand{\mcm}{\mathcal{M}}

\newcommand{\mcq}{\mathcal{Q}}
\newcommand{\mcr}{\mathcal{R}}
\newcommand{\mcs}{\mathcal{S}}
\newcommand{\mct}{\mathcal{T}}
\newcommand{\mcx}{\mathcal{X}}

\newcommand{\mz}{\mathfrak{m}_0}

\newcommand{\whv}{\widehat{V}}
\newcommand{\wtf}{\widetilde{F}}
\newcommand{\wtk}{\widetilde{K}}
\newcommand{\wtu}{\widetilde{U}}
\newcommand{\wtv}{\widetilde{V}}
\newcommand{\wmcd}{\widetilde{\mathcal{D}}}
\newcommand{\wmck}{\widetilde{\mathcal{K}}}

\newcommand{\ox}{\overline{X}}

\newcommand{\m}{{\textup{M}}}
\newcommand{\pb}{{\textup{PB}}}
\newcommand{\ph}{{\textup{PH}}}
\newcommand{\geo}{{\textup{g}}}
\newcommand{\bgg}{\bg_{\geo}}
\newcommand{\rhog}{\rho_{\geo}}
\newcommand{\pp}{{\textup{pp}}}

\newcommand{\la}{\left\langle}
\newcommand{\ra}{\right\rangle}
\renewcommand{\(}{\left(}
\renewcommand{\)}{\right)}

\allowdisplaybreaks
\numberwithin{equation}{section}

\begin{document}
\title[Weighted isoperimetric ratios]{Weighted isoperimetric ratios and extension problems \\ for fractional conformal Laplacians}

\author[S. Jin]{Sangdon Jin}
\address[Sangdon Jin]{Department of Mathematics Education, Chungbuk National University, 1 Chungdaero Seowon-Gu, Cheongju 28644, Republic of Korea}
\email{sangdonjin@cbnu.ac.kr}

\author[S. Kim]{Seunghyeok Kim}
\address[Seunghyeok Kim]{Department of Mathematics and Research Institute for Natural Sciences, College of Natural Sciences, Hanyang University, 222 Wangsimni-ro Seongdong-gu, Seoul 04763, Republic of Korea,
School of Mathematics, Korea Institute for Advanced Study, 85 Hoegiro Dongdaemun-gu, Seoul 02455, Republic of Korea.}
\email{shkim0401@hanyang.ac.kr shkim0401@gmail.com}

\begin{abstract}
We investigate a novel connection between the weighted isoperimetric problems and the weighted Poisson integrals of the extension problems for nonlocal elliptic operators.

We first derive sharp inequalities for the weighted Poisson integrals associated with degenerate elliptic equations on the half-space and the unit ball, and classify their extremizers.
The equations arise from the Caffarelli-Silvestre extension for the fractional Laplacian on the Euclidean space and its conformal transformation via the M\"obius transformation.

We next interpret the above sharp inequalities in a conformal geometric viewpoint.
For this aim, we formulate a variational problem involving a weighted isoperimetric ratio on a smooth metric measure space induced by a conformally compact Einstein (CCE) manifold.
Then, we prove that the variational problem is closely linked to the Chang-Gonz\'alez extension for a fractional conformal Laplacian on the conformal infinity of the CCE manifold,
and is reduced to the sharp inequality if the CCE manifold is either the Poincar\'e half-space or ball model.
We also find a criterion that ensures the existence of a smooth extremizer of the variational problem, and present a relevant conjecture.
\end{abstract}

\date{\today}
\subjclass[2020]{Primary: 35R11, 53C18, Secondary: 35J60, 53C21, 58J60}
\keywords{Weighted isoperimetric ratio, extension problem, fractional conformal Laplacian, weighted Poisson integral, smooth metric measure space, conformally compact Einstein manifold}
\maketitle


\section{Introduction}
The isoperimetric ratio is a fundamental concept in mathematics that has been a subject of investigation for centuries.
A number of researchers have developed various ideas, viewpoints, and techniques to comprehend it, and it remains a vibrant area of research.

This paper is motivated by the works of Hang, Wang, and Yan \cite{HWY, HWY2} who explored the isoperimetric ratio from a conformal perspective:
In \cite{HWY}, they derived a sharp inequality involving a function $f$ on the $n$-dimensional Euclidean space $\R^n$ ($n \ge 2$) and its harmonic extension $\mck_nf$ on the upper half-space $\R^{n+1}_+ := \R^n \times (0,\infty)$.
In \cite{HWY2}, they proved that the sharp inequality could be interpreted as a variational problem involving an isoperimetric ratio,
which can be defined in any $(n+1)$-dimensional smooth compact Riemannian manifold $\ox$ with boundary $M$.

It is well-known that the normal derivative of $\mck_nf$ equals the half-Laplacian $(-\Delta)^{1/2}f$ of $f$ on $\R^n$, up to a constant factor.
According to Caffarelli and Silvestre \cite{CS}, such a principle continues to hold in a broader context: For all $\ga \in (0,1)$, the fractional Laplacian $(-\Delta)^{\ga}f$ of $f$ on $\R^n$ equals a weighted normal derivative of the $\ga$-harmonic extension $\mck_{n,\ga}f$ on $\R^{n+1}_+$.
Chang and Gonz\'alez \cite{CG} established its further generalization: Let $P^{\ga}_{g^+,\bh}$ be the fractional conformal Laplacian on the conformal infinity $(M^n,[\bh])$ of a conformally compact Einstein (CCE for short) manifold $(X^{n+1},g^+)$ studied by Graham and Zworski \cite{GZ}.
Then $P^{\ga}_{g^+,\bh}f$ equals a weighted normal derivative of the solution $\mck^{1-2\ga}_{\bg,\rho}f$ to a certain degenerate elliptic equation in $X$ with Dirichlet data $f$ on $M$; here, $\rho$ denotes a defining function of $M$ and $\bg := \rho^2g^+$.

In light of the theorems in \cite{CS, CG} mentioned above, it is natural to ask whether there exists a sharp inequality involving both $f$ and $\mck^{1-2\ga}_{\bg,\rho}f$
and, if so, whether one can interpret it as a variational problem involving a version of the isoperimetric ratio. In this paper, we affirmatively answer these questions.
By appealing a geometric interpretation of the Chang-Gonz\'alez extension theorem due to Case and Chang \cite{CC}, we will introduce a weighted isoperimetric ratio on smooth metric measure space induced by the CCE manifold $(X,g^+)$.
Then we will formulate a variational problem involving the ratio and examine its connection to the sharp inequality.
We will also investigate the existence of a smooth extremizer of the variational problem.
Certainly, our main results have analogies to those of Yamabe-type problems such as the Yamabe problem \cite{Au}, the CR Yamabe problem \cite{JL}, the boundary Yamabe problem \cite{Es2, Es}, and the fractional Yamabe problem \cite{GQ}, among others.

For several decades, researchers have investigated the relationship between CCE manifolds and their conformal infinity
because of its fundamental importance in geometry and theoretical physics, as manifested by the AdS/CFT correspondence.
Our study on the weighted isoperimetric ratios may provide a new way to understand such a relationship.

A few studies relevant to our work include those on the resolvent operator \cite{MM}, the relationship between scattering theory and the fractional GJMS operators \cite{GZ}, and the regularity of CCE metrics \cite{CDLS}.
Additionally, studies on the extension theorem of the fractional conformal Laplacian and fractional GJMS operators \cite{CC, CG}, the weighted renormalized volume \cite{Go2},
the fractional Yamabe problem and flow \cite{DSV, GQ, GW, KMW, KMW2, MN, MN2}, and energy inequalities involving the fractional GJMS operator \cite{Ca2} are closely linked to our results; see also the survey \cite{Go}.

\subsection{Isoperimetric ratio over scalar-flat conformal classes}\label{subsec:iso}
In this subsection, we provide a more detailed explanation of the works by Hang et al. \cite{HWY, HWY2} and their follow-ups.

\medskip
Let $n \in \N$, $N := n+1 \ge 3$, and $|\B^N|$ be for the volume of the open unit ball $\B^N$ in $\R^N$. We define the harmonic extension of $f \in L^{\frac{2(N-1)}{N-2}}(\R^n)$ on the upper half-space $\R^N_+$ by
\[(\mck_nf)(x) = k_n \int_{\R^n} \frac{x_N}{(|\bx-\bw|^2+x_N^2)^{\frac{N}{2}}} f(\bw) d\bw \quad \textup{for } x=(\bx,x_N) \in \R^N_+\]
where $k_n := \pi^{-\frac{N}{2}}\Gamma(\frac{N}{2}) = \frac{2}{N|\B^N|}$ is the normalizing constant such that $\mck_n 1 = 1$. In \cite{HWY}, Hang et al. derived a sharp inequality
\begin{equation}\label{eq:hwy}
\|\mck_n f\|_{L^{\frac{2N}{N-2}}(\R^N_+)} \le \left[N^{-\frac{N}{N-1}} \left|\B^N\right|^{-\frac{1}{N-1}}\right]^{\frac{N-2}{2N}} \|f\|_{L^{\frac{2(N-1)}{N-2}}(\R^n)} \quad \textup{for } f \in L^{\frac{2(N-1)}{N-2}}(\R^n),
\end{equation}
and classified its extremizers. Inequality \eqref{eq:hwy} is a higher-dimensional analog of Carleman's inequality in \cite{Car}, which appears in his proof of the isoperimetric inequality on a minimal surface.

In \cite{HWY2}, Hang et al. examined a variational problem involving the isoperimetric ratio over scalar-flat conformal classes on a smooth compact Riemannian manifold $(\ox^N,\bg)$ with boundary $M^n$:
\begin{equation}\label{eq:thetan}
\Theta_n\(\ox,[\bg]\) = \sup\left\{I_n\(\ox,\tg\): \tg \in \mca_{[\bg]}\right\}
\end{equation}
with
\[I_n\(\ox,\tg\) := \frac{\textup{Volume}_{\tg}\(\ox\)}{\textup{Area}_{\th}(M)^{\frac{N}{N-1}}}
\quad \textup{and} \quad \mca_{[\bg]} := \left\{\tg \in [\bg]: R_{\tg} = 0 \textup{ on } \ox\right\}.\]
Here, $[\bg]$ is the conformal class of $\bg$, $\th := \tg|_{TM}$, $TM$ is the tangent bundle of $M$, and $R_{\tg}$ is the scalar curvature on $(\ox,\tg)$. If we set the M\"obius transformation $\phi_\m: \R^N_+ \to \B^N$ as
\begin{equation}\label{eq:Mobius}
\phi_\m(x) = \frac{2(x+e_N)}{|x+e_N|^2}-e_N \quad \textup{where } e_N := (0,\ldots,0,1) \in \R^N,
\end{equation}
then $\phi_\m^{-1}(y) = \phi_\m(y)$ for $y \in \B^N$ and given any positive function $u \in C^{\infty}(\B^N)$,
\begin{equation}\label{eq:Mobius2}
\phi_\m^*\(u^{\frac{4}{N-2}} g_{\R^N}\) = v^{\frac{4}{N-2}} g_{\R^N} \quad \textup{where } v(x) = \frac{2^{\frac{N-2}{2}}u(\phi_\m(x))}{|x+e_N|^{N-2}},\ x \in \R^N_+.
\end{equation}
In light of \eqref{eq:Mobius2}, inequality \eqref{eq:hwy} is recast as
\[\Theta_n\(\overline{\R^N_+} \cup \{\infty\},\left[4|x+e_N|^{-4}g_{\R^N}\right]\) = \Theta_n\big(\overline{\B^N},[g_{\R^N}]\big) = N^{-\frac{N}{N-1}} \left|\B^N\right|^{-\frac{1}{N-1}}\]
where $\infty$ is a point at infinity and $g_{\R^N}$ is the Euclidean metric. Hence \eqref{eq:thetan} extends \eqref{eq:hwy} to any smooth compact Riemannian manifolds $(\ox,\bg)$, providing a geometric interpretation of \eqref{eq:hwy} as well.
As observed in \cite{HWY2}, $\mca_{[\bg]} \ne \emptyset$ if and only if $\lambda_1(L_{\bg}) > 0$,
where $\lambda_1(L_{\bg})$ is the first eigenvalue of the conformal Laplacian $L_{\bg} := -\Delta_{\bg} + \frac{N-2}{4(N-1)} R_{\bg}$ in $X$ with zero Dirichlet boundary condition on $M$.
Besides, the sign of $\lambda_1(L_{\tg})$ is independent of the choice of a metric $\tg$ in $[\bg]$. Assuming $\lambda_1(L_{\bg}) > 0$, Hang et al. proved that
\begin{equation}\label{eq:hwy20}
\Theta_n\big(\overline{\B^N},[g_{\R^N}]\big) \le \Theta_n\(\ox,[\bg]\) < \infty
\end{equation}
and $\Theta_n(\ox,[\bg])$ is achieved by a metric in $\mca_{[\bg]}$ provided
\begin{equation}\label{eq:hwy2}
\Theta_n\big(\overline{\B^N},[g_{\R^N}]\big) < \Theta_n\(\ox,[\bg]\).
\end{equation}
They also conjectured that if $(\ox,\bg)$ is not conformally diffeomorphic to $(\B^N,g_{\R^N})$, then \eqref{eq:hwy2} holds. So far, \eqref{eq:hwy2} has been confirmed for few cases:
\begin{enumerate}
\item[(i)] $N \ge 9$ and $M$ has a nonumbilic point (\cite{JX, CJR});
\item[(ii)] $N \ge 7$, $M$ is umbilic, and the Weyl tensor $W_{\bg}$ is nonzero at some point on $M$ (\cite{JX, CJR});
\item[(iii)] $N \ge 3$ and $(\ox,\bg) = (\{x \in \R^N: \ep<|x|<1\}, g_{\R^N})$ for sufficiently small $\ep > 0$ (\cite{GlZ}).
\end{enumerate}
Verifying \eqref{eq:hwy2} for the remaining cases is an interesting open question.

\subsection{Main results}
Let $n \in \N$, $n > 2\ga$, $\ga \in (0,1)$, and $\mz := 1-2\ga \in (-1,1)$. We define the $\ga$-harmonic extension of $f \in L^{\frac{2n}{n-2\ga}}(\R^n)$ on the upper half-space $\R^N_+$ by
\begin{equation}\label{eq:gaharext}
\(\mck_{n,\ga}f\)(x) = \int_{\R^n} \mck_{n,\ga}(x,\bw)f(\bw) d\bw \quad \textup{for } x=(\bx,x_N) \in \R^N_+
\end{equation}
where
\begin{equation}\label{eq:gaharext2}
\mck_{n,\ga}(x,\bw) := \ka_{n,\ga}\frac{x_N^{2\ga}}{\(|\bx-\bw|^2+x_N^2\)^{\frac{n+2\ga}{2}}}
\end{equation}
and $\ka_{n,\ga} := \pi^{-\frac{n}{2}} \Gamma(\frac{n+2\ga}{2})/\Gamma(\ga)$ is the normalizing constant such that $\mck_{n,\ga}1 = 1$.
According to \cite[Theorem 1.2]{CM} (see also \cite[Section 2.4]{CS}), if $f \in \dot{H}^{\ga}(\R^n) \subset L^{\frac{2n}{n-2\ga}}(\R^n)$, then $U = \mck_{n,\ga}f$ is the unique solution to the degenerate local elliptic equation
\begin{equation}\label{eq:degeq}\begin{cases}
-\textup{div}\(x_N^{\mz} \nabla U\) = 0 &\textup{in } \R^N_+,\\
U = f &\textup{on } \R^n,\\
U \in \dot{H}^{1,2}\(\R^N_+;x_N^{\mz}\),
\end{cases}
\end{equation}
and it holds that
\begin{equation}\label{eq:degeq2}
\frac{d_{\ga}}{2\ga} \lim_{x_N \to 0+} x_N^{\mz}{\frac{\pa U}{\pa x_N}} = (-\Delta)^{\ga} f \quad \textup{on } \R^N
\quad \textup{where } d_{\ga} := 2^{2\ga} \frac{\Gamma(\ga)}{\Gamma(-\ga)} < 0
\end{equation}
in the weak sense; see Definition \ref{def:fs1} for the definition of the function spaces.
The following theorem provides a sharp inequality \eqref{eq:main11} involving both $f$ and $\mck_{n,\ga}f$, and a classification on its extremizers. If $\ga = \frac{1}{2}$, then \eqref{eq:main11} is reduced to \eqref{eq:hwy}.
\begin{theorem}\label{thm:main11}
Assume that $n \in \N$, $n > 2\ga$, and $\ga \in (0,1)$. Then there exists the optimal constant $C_{n,\ga} > 0$ depending only on $n$ and $\ga$ such that
\begin{equation}\label{eq:main11}
\|\mck_{n,\ga}f\|_{L^{\frac{2(n-2\ga+2)}{n-2\ga}}(\R^N_+;x_N^{\mz})} \le C_{n,\ga}\|f\|_{L^{\frac{2n}{n-2\ga}}(\R^n)} \quad \text{for } f \in L^{\frac{2n}{n-2\ga}}(\R^n).\ \footnotemark
\end{equation}
\footnotetext{For all $\ga \in (0,1)$, it holds that
\[C_{n,\ga} = \|W_{1,0}\|_{L^{\frac{2(n-2\ga+2)}{n-2\ga}}(\R^N_+;x_N^{\mz})} \|w_{1,0}\|_{L^{\frac{2n}{n-2\ga}}(\R^n)}^{-1}
\quad \text{where }\begin{cases}
w_{1,0}(\bw) := (1+|\bw|^2)^{-\frac{n-2\ga}{2}} &\text{for } \bw \in \R^n,\\
W_{1,0} := \mck_{n,\ga}w_{1,0} &\text{in } \R^N_+.
\end{cases}\]
If $\ga = \frac{1}{2}$, then $W_{1,0}(\bx,x_N) = ((|\bx|^2+(x_N+1)^2)^{-\frac{N-2}{2}}$ for $(\bx,x_N) \in \R^N_+$,
so one can easily compute $C_{n,1/2} = N^{-\frac{N-2}{2(N-1)}} |\B^N|^{{-\frac{N-2}{2N(N-1)}}}$ as given in \eqref{eq:hwy}.
If $\ga \in (0,1) \setminus \{\frac{1}{2}\}$, it is difficult to find the exact value of $C_{n,\ga}$. \label{fnlabel}}Furthermore,
the equality holds if and only if there are numbers $c \in \R$, $\lambda > 0$, and $\bx_0 \in \R^n$ such that
\begin{equation}\label{eq:bubble}
f(\bx) = c\(\frac{\lambda}{\lambda^2 + |\bx-\bx_0|^2}\)^{\frac{n-2\ga}{2}} \quad \text{for all } \bx \in \R^n.
\end{equation}
\end{theorem}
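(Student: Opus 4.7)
The plan is to establish \eqref{eq:main11} as a sharp weighted Hardy-Littlewood-Sobolev (HLS) type inequality on the half-space via three steps: (i) recast it as a bilinear bound whose two sides both enjoy the full conformal symmetry of $\R^N_+$; (ii) produce an extremizer by concentration-compactness modulo this symmetry; and (iii) classify all extremizers by the method of moving spheres applied to the coupled Euler-Lagrange system, which pins down the bubble form \eqref{eq:bubble}.

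My starting point is duality. Using the crucial identity $x_N^{2\ga}\cdot x_N^{\mz}=x_N$, which absorbs the $\ga$-weight in the extension kernel together with the $\mz$-weight in the target norm, \eqref{eq:main11} is equivalent to the bilinear bound
\[
\mci[f,F] := \iint_{\R^n \times \R^N_+} \frac{\ka_{n,\ga}\,x_N\, f(\bw) F(x)}{(|\bx - \bw|^2 + x_N^2)^{(n+2\ga)/2}}\, d\bw\, dx
\le C_{n,\ga}\, \|f\|_{L^{\frac{2n}{n-2\ga}}(\R^n)}\, \|F\|_{L^{\frac{2(n-2\ga+2)}{n-2\ga+4}}(\R^N_+;\, x_N^{\mz})}.
\]
A direct calculation would show that $\mci$ and both norms on the right are invariant under the subgroup of M\"obius transformations of $\R^N$ preserving $\R^N_+$, generated by horizontal translations in $\bx\in\R^n$, positive dilations, and the Kelvin inversion associated with $\phi_\m$. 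This noncompact symmetry group governs both of the remaining steps.

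For existence, I would take a maximizing sequence $(f_k)$ with $\|f_k\|_{L^{2n/(n-2\ga)}(\R^n)}=1$ and apply a Lions-type concentration-compactness argument. The M\"obius invariance lets one act by a translation and a dilation on each $f_k$ so that the concentration function of $|f_k|^{\frac{2n}{n-2\ga}}$ is of unit scale at the origin. The strict subadditivity of the supremum, a consequence of the scale- and translation-invariance combined with nontriviality, rules out vanishing and dichotomy, yielding a nonzero weak limit $f_k \rightharpoonup f_*$ in $L^{2n/(n-2\ga)}(\R^n)$. A Brezis-Lieb splitting of the weighted $L^q(x_N^{\mz})$-norm together with pointwise convergence of $\mck_{n,\ga}f_k$ (which follows from the explicit kernel \eqref{eq:gaharext2}) then implies that $f_*$ achieves the supremum, proving attainability of $C_{n,\ga}$.

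The main obstacle will be classifying the extremizers. Every extremizer $f$ satisfies the Euler-Lagrange integral equation
\[
f(\bw)^{\frac{n+2\ga}{n-2\ga}} = \mu \int_{\R^N_+} \frac{\ka_{n,\ga}\,x_N\, U(x)^{\frac{n-2\ga+4}{n-2\ga}}}{(|\bx - \bw|^2 + x_N^2)^{(n+2\ga)/2}}\, dx, \qquad U := \mck_{n,\ga}f,
\]
coupled with the representation $U = \mck_{n,\ga}f$ on $\R^N_+$. I plan to classify the solutions of this coupled system by the method of moving spheres in the style of Li-Zhu and Chen-Li-Ou, using Kelvin transforms centered at boundary points $\bx_0 \in \R^n$ extended naturally into $\R^N_+$. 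The kernel $x_N/(|\bx-\bw|^2+x_N^2)^{(n+2\ga)/2}$ transforms under this inversion with a Jacobian that matches the critical exponents $\frac{2n}{n-2\ga}$ on $\R^n$ and $\frac{2(n-2\ga+2)}{n-2\ga}$ on $\R^N_+$ exactly; here the power $x_N^1$ rather than $x_N^{2\ga}$, produced by the weight identity above, is essential. The moving-spheres comparison then forces $f$ to be invariant under the Kelvin transform based at every point of $\R^n$, which in turn forces $f$ to be of the form \eqref{eq:bubble}. Evaluating the inequality on $w_{1,0}$ finally yields the expression for $C_{n,\ga}$ reported in the footnote.
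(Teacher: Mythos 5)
Your bilinear reformulation is correct (the identity $x_N^{2\ga}\cdot x_N^{\mz}=x_N$ and the dual exponent $\frac{2(n-2\ga+2)}{n-2\ga+4}$ both check out), and your overall architecture is genuinely different from the paper's. However, there is a concrete gap at the very start: you never prove that $C_{n,\ga}<\infty$. Rewriting \eqref{eq:main11} as a bilinear bound and observing its conformal invariance does not establish boundedness, and without $s_{n,\ga,p}<\infty$ the maximization problem your concentration-compactness argument relies on is not even well posed. This is not a triviality here: the naive route through the weighted Sobolev inequality \eqref{eq:Sob} fails for $\ga\in(\frac12,1)$, as the paper points out. The paper closes this step (Lemma \ref{lemma:Poiest1}) by proving a weak-type $(1,\frac{n-2\ga+2}{n})$ bound for $\mck_{n,\ga}$, combining it with the trivial $L^\infty$ bound $\mck_{n,\ga}1=1$, and applying off-diagonal Marcinkiewicz interpolation; some such argument must be supplied before anything else in your plan can run.

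Two further points. First, in your existence step the Br\'ezis--Lieb splitting on the $L^{2n/(n-2\ga)}(\R^n)$ side requires almost-everywhere convergence of $f_k$, which weak convergence alone does not give; the paper obtains it by first symmetrizing (Riesz rearrangement) so that the maximizing sequence is radial nonincreasing and Helly's selection principle applies, together with the Lorentz-space non-vanishing bound of Lemma \ref{lemma:nonvanish}. Your unsymmetrized version needs a substitute for this. Second, your classification via moving spheres on the coupled Euler--Lagrange system is a legitimately different route from the paper's (which uses the equality case of Riesz's rearrangement inequality to force radial symmetry and then invokes the Kelvin-transform invariance together with Proposition 1.3 of Hang--Wang--Yan); if carried out it would give the stronger statement that all positive solutions of the system are bubbles, but it requires first establishing positivity and enough regularity and decay of an $L^p$ maximizer to start the spheres moving, a bootstrap you do not address (the paper's Subsection \ref{subsec:reg} performs the analogous regularity theory). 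As written, the proposal is a plausible program with the foundational inequality missing, rather than a proof.
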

\begin{remark}
We have three remarks for Theorem \ref{thm:main11}.

\medskip \noindent
(1) One can further extend Theorem \ref{thm:main11} by treating $f \in L^p(\R^n)$ with any $p \in (1,\infty)$
and the case when $\ga \in (-\frac{n}{2p},1)$. See Lemma \ref{lemma:Poiest1} and Theorem \ref{thm:main11g} below.

\medskip \noindent
(2) Similar results to Theorem \ref{thm:main11} were established by Chen \cite[Remark 2]{Ch},
who considered the map $f \in L^{\frac{2n}{n-2\ga}}(\R^n) \to \mck_{n,\ga}f \in L^{\frac{2(n+1)}{n-2\ga}}(\R^N_+)$ for $n \in \N$ and $\ga \in (0,1)$.

In fact, inequalities \eqref{eq:main11} and \eqref{eq:Poiest11} below are already known and their abstract generalization is available in \cite[Theorem 3.1]{LSHZ}. In Lemma \ref{lemma:Poiest1}, we will provide a direct and quick proof. The classification result is new.

\medskip \noindent
(3) The choice of the norms in \eqref{eq:main11} are natural in view of the weighted Sobolev inequality
\begin{equation}\label{eq:Sob}
\|U\|_{L^{\frac{2(n-2\ga+2)}{n-2\ga}}(\R^N_+;x_N^{\mz})} \le C\|U\|_{\dot{H}^{1,2}(\R^N_+;x_N^{\mz})} \quad \textup{for all } U \in \dot{H}^{1,2}\(\R^N_+;x_N^{\mz}\)
\end{equation}
and the weighted Sobolev trace inequality
\begin{equation}\label{eq:Sob2}
\|u\|_{L^{\frac{2n}{n-2\ga}}\(\R^n\)} \le C\|U\|_{\dot{H}^{1,2}(\R^N_+;x_N^{\mz})} \quad \textup{for all } U \in \dot{H}^{1,2}\(\R^N_+;x_N^{\mz}\),\, u = U|_{\R^n}
\end{equation}
for some $C > 0$ depending only on $n$ and $\ga$. As verified in Lemma \ref{lemma:embedSob}, inequality \eqref{eq:Sob} is valid for $\ga \in (0,\frac{1}{2}]$.
However, there is no universal constant $C$ for \eqref{eq:Sob}, and only a weaker inequality holds for $\ga \in (\frac{1}{2},1)$.
In contrast, inequality \eqref{eq:Sob2} is valid for all $\ga \in (0,1)$, as shown by Xiao \cite[Theorem 1.1]{Xi}.

A notable fact is that the best constant for \eqref{eq:Sob2} with $\ga \in (0,1)$ is achieved by $u = f$ in \eqref{eq:bubble} and $U = \mck_{n,\ga}f$ (see \cite[Theorem 1.1]{Xi}),
whilst the best constant for \eqref{eq:Sob} with $\ga \in (0,\frac{1}{2}]$ is attained by
\[U(x) = c \(\frac{\lambda}{\lambda^2 + |x-(\bx_0,0)|^2}\)^{\frac{n-2\ga}{2}} \quad \textup{for all } x \in \R^N_+\]
where $c \in \R$, $\lambda > 0$, and $\bx_0 \in \R^n$ (see \cite[Theorem 1.3]{CR} and \cite[Theorem 1.6]{DSWZ}). This function is certainly not $\ga$-harmonic on $\R^N_+$.
\end{remark}

We next transform Theorem \ref{thm:main11} to a result for functions on the unit sphere $\S^n = \pa\B^N$, by employing the M\"obius transformation $\phi_\m: \R^N_+ \to \B^N$ defined in \eqref{eq:Mobius}, along with additional conformal geometric techniques and the use of spherical harmonics.
We also find an equation in $\B^N$ corresponding to \eqref{eq:degeq}--\eqref{eq:degeq2}.

For future use, we introduce the canonical hyperbolic metric on $\B^N$,
\begin{equation}\label{eq:gpb}
g_\pb^+(y) = \frac{4\, |dy|^2}{(1-|y|^2)^2} \quad \textup{for } y \in \B^N.
\end{equation}
In addition, let
\begin{equation}\label{eq:rhobgpb}
\rho_\pb(y) = \frac{1-|y|}{1+|y|} \quad \textup{and} \quad \bg_\pb(y) = \frac{4\, |dy|^2}{(1+|y|)^4} \quad \textup{for } y \in \overline{\B^N}.
\end{equation}
Denoting the standard metric on $\S^n$ by $g_{\S^n}$, we define the metric $\bh_\pb = \bg_\pb|_{T\S^n} = \frac{1}{4} g_{\S^n}$ on $\S^n$ and the Riemannian volume form $dv_{\bh_\pb} = \frac{1}{2^n} dv_{g_{\S^n}}$ on $(\S^n,\bh_\pb)$.
We also express $dv_{\bg_\pb}(y) = \frac{2^N dy}{(1+|y|)^{2N}}$ for $y \in \overline{\B^N}$.
In Definition \ref{def:fs2}, we set function spaces appearing in the following corollary.
\begin{cor}\label{cor:main12}
Assume that $n \in \N$, $n > 2\ga$, $\ga \in (0,1)$, and $\tf \in C^{\infty}(\S^n)$. Let also
\begin{equation}\label{eq:wmcp}
\(\wmck_{n,\ga}\tf\)(y) = \ka_{n,\ga} (1+|y|)^{n-2\ga} \int_{\S^n} \frac{(1-|y|^2)^{2\ga}}{|y-\zeta|^{n+2\ga}} \tf(\zeta) (dv_{\bh_\pb})_{\zeta} \quad \text{for } y \in \B^N
\end{equation}
where $\ka_{n,\ga} > 0$ is the constant in \eqref{eq:gaharext2} and the subscript $\zeta$ signifies the variable of integration.

\medskip \noindent \textup{(a)} If $C_{n,\ga} > 0$ is the optimal constant of \eqref{eq:main11}, then
\begin{equation}\label{eq:main12}
\left\|\wmck_{n,\ga} \tf\right\|_{L^{\frac{2(n-2\ga+2)}{n-2\ga}}(\B^N;\rho_\pb^{\mz},\bg_\pb)} \le C_{n,\ga} \big\|\tf\big\|_{L^{\frac{2n}{n-2\ga}}(\S^n,\bh_\pb)} \quad \text{for } f \in L^{\frac{2n}{n-2\ga}}(\S^n,\bh_\pb).
\end{equation}
Furthermore, the equality holds if and only if there are numbers $c \in \R$, $\lambda > 0$, and $\zeta_0 = (\bar{\zeta_0},\zeta_{0N}) \in \S^n$ such that
\[\tf(y) = c\left[\frac{2\lambda(1+\zeta_{0N})}{\lambda^2(1+\zeta_{0N})(1+y_N)+2(1-\zeta_0 \cdot y)}\right]^{\frac{n-2\ga}{2}} \quad \text{for all } y = (\bar{y},y_N) \in \S^n.\]
When $\zeta_0 = (0,1) = \phi_\m(0) \in \S^n$ and $\lambda = 1$, the function $\tf$ is reduced to $\tf(y) = 1$ for all $y \in \S^n$.

\medskip \noindent \textup{(b)} The function $\wtv = \wmck_{n,\ga}\tf$ is the unique solution to
\begin{equation}\label{eq:main12eq}
\begin{cases}
-\textup{div}_{\bg_\pb}\(\rho_\pb^{\mz}(y) \nabla_{\bg_\pb} \wtv(y)\) + \dfrac{n(n-2\ga)}{4} \dfrac{(1+|y|)^2}{|y|} \rho_\pb^{\mz}(y) \wtv(y) = 0 &\text{for } y \in \B^N, \\
\wtv = \tf &\text{on } \S^n, \\
\wtv \in H^{1,2}\(\B^N;\rho_\pb^{\mz},\bg_\pb\).
\end{cases}
\end{equation}
In particular, $\wtv$ and $\rho_\pb^{\mz}\frac{\pa \wtv}{\pa \rho_\pb}$ are H\"older continuous on $\overline{\B^N}$.
\end{cor}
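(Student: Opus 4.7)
The plan is to derive Corollary~\ref{cor:main12} from Theorem~\ref{thm:main11} by transporting everything through the Möbius transformation $\phi_\m: \R^N_+ \to \B^N$ in \eqref{eq:Mobius}, which is a hyperbolic isometry between the Poincaré models $(\R^N_+, x_N^{-2}|dx|^2)$ and $(\B^N, g_\pb^+)$. Conformal compatibility of the Caffarelli-Silvestre extension with $\phi_\m$ will automatically translate the sharp inequality and its extremizers from $\R^N_+$ to $\B^N$, and the same will hold for the degenerate elliptic equation in part~(b).

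\textbf{Proof of part (a).} Given $\tf \in C^\infty(\S^n)$, define
\[f(\bw) := (|\bw|^2+1)^{-(n-2\ga)/2}\, \tf(\phi_\m(\bw,0)), \qquad \bw \in \R^n,\]
which is normalized so that $\|f\|_{L^{2n/(n-2\ga)}(\R^n)} = \|\tf\|_{L^{2n/(n-2\ga)}(\S^n,\bh_\pb)}$. Using the elementary Möbius identities $|\phi_\m(x)| = |x-e_N|/|x+e_N|$, $1-|\phi_\m(x)|^2 = 4x_N/|x+e_N|^2$, $|\phi_\m(x)-\phi_\m(\bw,0)|^2 = 4|x-(\bw,0)|^2/(|x+e_N|^2(|\bw|^2+1))$, together with the stereographic Jacobian $\phi_\m|_{\R^n}^*\,dv_{\bh_\pb} = (|\bw|^2+1)^{-n}\,d\bw$, a direct substitution into \eqref{eq:wmcp} yields the intertwining identity
\[(\wmck_{n,\ga}\tf)(\phi_\m(x)) = \Phi(x)\,(\mck_{n,\ga}f)(x), \qquad \Phi(x) := \left[\frac{|x+e_N|+|x-e_N|}{2}\right]^{n-2\ga}.\]
The isometry $\phi_\m^*g_\pb^+ = x_N^{-2}|dx|^2$ forces $\phi_\m^*\bg_\pb = (\rho_\pb\circ\phi_\m)^2\, x_N^{-2}|dx|^2$, and a direct calculation shows that $\rho_\pb^{\mz}\,dv_{\bg_\pb}$ combined with the $\frac{2(n-2\ga+2)}{n-2\ga}$-th power of $\Phi$ reduces exactly to $x_N^{\mz}\,dx$, so the bulk norms also match. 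Applying Theorem~\ref{thm:main11} to $f$ then yields \eqref{eq:main12} with the same optimal constant $C_{n,\ga}$, and pulling back the bubble family \eqref{eq:bubble} via $\tf(\phi_\m(\bw,0)) = (|\bw|^2+1)^{(n-2\ga)/2}f(\bw)$ produces the announced extremizers; the case $\tf\equiv 1$ corresponds to $\bx_0 = 0$, $\lambda = 1$, i.e.\ $\zeta_0 = \phi_\m(0) = e_N$, via the normalization $\wmck_{n,\ga}1 \equiv 1$.

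\textbf{Proof of part (b).} Since $U := \mck_{n,\ga}f$ solves \eqref{eq:degeq}, the relation $\wtv \circ \phi_\m = \Phi \cdot U$ allows us to push \eqref{eq:degeq} forward through $\phi_\m$. The resulting equation on $\B^N$ has the form $-\textup{div}_{\bg_\pb}(\rho_\pb^{\mz}\nabla_{\bg_\pb}\wtv) + E(y)\rho_\pb^{\mz}\wtv = 0$, where $E$ collects two contributions: the conformal transformation of $-\textup{div}(x_N^{\mz}\nabla\cdot)$ under the metric change $|dx|^2 \to \phi_\m^*\bg_\pb$, and the rescaling $U \to \wtv = \Phi U$. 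Explicit computation, using $\rho_\pb(y)(1+|y|)^2 = 1-|y|^2$ together with the formulas from part~(a), identifies $E(y) = \frac{n(n-2\ga)}{4}(1+|y|)^2/|y|$; the singularity at $y=0$ simply reflects that $\bg_\pb$ is only Lipschitz (not $C^1$) at the image of infinity under $\phi_\m$. Uniqueness in $H^{1,2}(\B^N;\rho_\pb^{\mz},\bg_\pb)$ follows from a standard energy argument, since $\rho_\pb^{\mz}$ is a Muckenhoupt $A_2$-weight and $E \ge 0$, while the Hölder continuity of $\wtv$ and $\rho_\pb^{\mz}\pa\wtv/\pa\rho_\pb$ on $\overline{\B^N}$ follows from the known De Giorgi-Nash-Moser and Schauder-type regularity theory for Caffarelli-Silvestre-type extension problems, transferred via $\phi_\m$ from the corresponding regularity of $U$ on $\overline{\R^N_+}$.

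\textbf{Main obstacle.} The delicate step is the explicit identification of $E(y) = \frac{n(n-2\ga)}{4}(1+|y|)^2|y|^{-1}$. This requires careful bookkeeping of the conformal transformation of $-\textup{div}(x_N^{\mz}\nabla\cdot)$ together with the pushforward of $\Phi$, with particular attention paid to the non-smooth point of $\bg_\pb$ at $y=0$; the remainder of the proof consists of routine changes of variables and standard regularity theory.
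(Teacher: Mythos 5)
Your part (a) and the derivation of the equation in part (b) follow essentially the same route as the paper: pull $\tf$ back to $f$ on $\R^n$ via $\phi_\m$, establish the intertwining identity $\wtv\circ\phi_\m=u^{\frac{n-2\ga}{2}}\,\mck_{n,\ga}f$ with $u=x_N/\rho_\pb(\phi_\m(\cdot))=\bigl[\tfrac{1}{2}(|x+e_N|+|x-e_N|)\bigr]^2$ (your $\Phi=u^{\frac{n-2\ga}{2}}$), check that the powers of $u$ cancel in both norms, and identify the zeroth-order coefficient with the weighted scalar curvature $\tfrac{n-2\ga}{4(n+1-2\ga)}R^{\mz}_{\bg_\pb,\rho_\pb}=\tfrac{n(n-2\ga)}{4}\tfrac{(1+|y|)^2}{|y|}$. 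The paper packages this through the smooth-metric-measure-space formalism (Lemma \ref{lemma:smms} and the covariance law \eqref{eq:wcl2}), but the computation is the same. One small factual slip: $y=0$ is $\phi_\m(e_N)$, an interior point; the image of infinity is $-e_N\in\S^n$. The singularity of the potential at $y=0$ comes from the non-smoothness of $|y|$ there and is harmless for the weak formulation.

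The genuine gap is in part (b), in two related places. First, you never establish that $\wtv\in H^{1,2}(\B^N;\rho_\pb^{\mz},\bg_\pb)$, which is both part of the assertion \eqref{eq:main12eq} and the hypothesis needed for your energy-based uniqueness argument to apply to $\wtv$ itself. Finiteness of the gradient term does not follow by a pointwise change of variables: the weighted Dirichlet energy is conformally covariant only up to boundary terms, and the paper obtains \eqref{eq:smms4} by testing the equation with $\wtv$ and invoking the boundary identity $\tfrac{d_\ga}{2\ga}\rho_\pb^{\mz}\pa_{\rho_\pb}\wtv\to P^{\ga}_{g_\pb^+,\bh_\pb}\tf$, which in turn requires Lemma \ref{lemma:smms5}. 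Second, your claim that the H\"older continuity of $\wtv$ and $\rho_\pb^{\mz}\pa_{\rho_\pb}\wtv$ on all of $\overline{\B^N}$ "transfers via $\phi_\m$ from the regularity of $U$ on $\overline{\R^N_+}$" fails at $-e_N$: that point corresponds to spatial infinity in the half-space, so local regularity of $U$ on compact subsets of $\overline{\R^N_+}$ gives nothing there, and you would need uniform decay estimates at infinity or a Kelvin-transform argument, neither of which you supply. The paper instead works directly on the ball with the explicit kernel $\wtk_{n,\ga}$, computing the radial Puiseux expansions \eqref{eq:smms51}--\eqref{eq:smms52} and using spherical harmonics with the Funk--Hecke formula to prove uniform continuity up to $\S^n$ and to identify the weighted Neumann trace; this is the technical core of the corollary and is missing from your proposal.
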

\begin{remark}
We have two remarks for Corollary \ref{cor:main12}.

\medskip \noindent
(1) Assume that $\ga = \frac{1}{2}$. If we define $\wtv = \wmck_{n,1/2}\tf$ and
\[\whv(y) = \frac{2^{N-2}}{(1+|y|)^{N-2}}\wtv(y)
= \frac{1}{\left|\S^n\right|} \int_{\S^n} \frac{1-|y|^2}{|y-\zeta|^N} \tf(\zeta) (dv_{g_{\S^n}})_{\zeta} \quad \textup{for } y \in \overline{\B^N}\]
where $|\S^n|$ is the surface area of the unit sphere $\S^n$, then \eqref{eq:main12eq} reads
\[\begin{cases}
-\(\dfrac{1+|y|}{2}\)^{N-2} \Delta \whv(y) = -\Delta_{\bg_\pb} \wtv(y) + \dfrac{n(n-1)}{4} \dfrac{(1+|y|)^2}{|y|} \wtv(y) = 0 &\textup{for } y \in \B^N,\\
\whv = \tf &\textup{on } \S^n.
\end{cases}\]
Recall that $\frac{1-|y|^2}{|\S^n||y-\zeta|^N}$ is the Poisson kernel on $\B^N$.

\medskip \noindent
(2) The reader may want to compare Corollary \ref{cor:main12}(a) with its variants in \cite[Theorem 1]{Ch} and \cite[Theorem 2]{Ti}.
It may also be worthwhile to compare Corollary \ref{cor:main12}(b) with \cite[Propositions 1--3]{Ti}.
Notably, Tian in \cite{Ti} used the defining function $\rho_0(y) := \frac{1-|y|^2}{2}$ and the Euclidean metric $g_{\R^N}$, whereas we employ $\rho_\pb$ and $\bg_\pb$.
The choice of $\rho_0$ by Tian can be supported by the work of Ache and Chang \cite{AC}.
In contrast, our selection of $\rho_\pb$ can be justified by well-known facts that $\rho_\pb$ is a geodesic defining function of $(\S^n,\bh_\pb)$ (see \eqref{eq:rhopbgeo})
and the value of $-\log \rho_\pb(y)$ is the hyperbolic distance from $0$ to $y \in \B^N$.
Also, the following discussions concerning Theorem \ref{thm:main21} further validate our selection.
\end{remark}

\begin{conv}\label{conv}
In the sequel, unless stated otherwise, we always assume that $n = N-1 \in \N$, $\ga \in (0,1)$, $\mz = 1-2\ga \in (-1,1)$, $(X^N,g^+)$ is an $N$-dimensional CCE manifold, $(M^n,[\bh])$ is its conformal infinity,
$\rho$ is a smooth defining function of $M$, and $\bg = \rho^2g^+$ is a smooth metric on $\ox$ such that $(\ox,\bg)$ is compact and $\bh = \bg|_{TM}$.
These notions induce a smooth metric measure space $(\ox,\bg,\rho^{\mz}dv_{\bg},-2\ga)$ where $dv_{\bg}$ is the Riemannian volume form on $(\ox,\bg)$.
Let $L^{\mz}_{\bg,\rho}$ be the weighted conformal Laplacian on $(\ox,\bg,\rho^{\mz}dv_{\bg},-2\ga)$, $\rhog$ a geodesic defining function of $(M,\bh)$, and $\bgg = \rhog^2g^+$; see Subsections \ref{subsec:fcl} and \ref{subsec:smms} for terminologies.
\end{conv}

For any fixed metric $\th \in [\bh]$, let $\trh_\geo$ be a geodesic defining function of $(M,\th)$. We define the admissible class $\Xi_{g^+,\th}$ of defining functions of $M$ by
\begin{multline}\label{eq:Xi}
\Xi_{g^+,\th} = \left\{\trh: \trh \textup{ is a defining function of } M \textup{ such that } \trh,\, \nabla\trh \in L^{\infty}_{\text{loc}}(X) \right. \\
\left. \trh = \trh_\geo \left[1 + \Phi \trh_\geo^{2\ga} + o'\(\trh_\geo^{2\ga}\)\right] \textup{ for } \trh_\geo > 0 \textup{ small and } \Phi \in C^{\infty}(M)\right\}.\ \footnotemark
\end{multline}
\footnotetext{This class naturally appears in the study of fractional conformal Laplacians.
It can be suitably extended for the study of fractional GJMS operators with $\ga \in (1,\frac{n}{2}) \setminus \N$. See \cite{CC, Ca2}.}Here, $o'(\trh_\geo^{2\ga})$
denotes a function on $\ox$ such that both the function and its tangential derivatives of any order are bounded by $C_{\trh_\geo} \trh_\geo^{2\ga}$ where $C_{\trh_\geo} \to 0$ as $\trh_\geo \to 0$.
Moreover, its normal derivative is bounded by $C\trh_\geo^{2\ga-1}$ for some $C > 0$.
Let $\tg = \trh^2g^+$. Then, $(\ox,\tg)$ is compact and $\tg|_{TM} = \th$. Given any $\trh \in \Xi_{g^+,\th}$,
we set the total weighted volume and the weighted isoperimetric ratio of the smooth metric measure space $(\ox,\tg,\trh^{\mz}dv_{\tg},-2\ga)$ as
\[\textup{Volume}_{\tg,\trh}\(\ox\) = \int_{\ox} \trh^{\mz} dv_{\tg} \quad \textup{and} \quad
I_{n,\ga}\(\ox,\tg,\trh\) = \frac{\textup{Volume}_{\tg,\trh}\(\ox\)}{\textup{Area}_{\th}(M)^{\frac{n-2\ga+2}{n}}},\]
respectively.\footnote{Let $\trh^*$ be the adapted defining function of $(M,\th)$ (see Subsection \ref{subsec:ext}), and $\ep > 0$ a sufficiently small number.
The integral $\int_{\{\trh > \ep\}} (\trh^*)^{n/2-\ga} dv_{g^+}$ provides a different type of the total weighted volume known as the renormalized weighted volume \cite{Go2}.
The leading and subsequent order terms of its expansion in $\ep$ are determined by $\textup{Area}_{\th}(M)$ and the total fractional scalar curvature $\int_M Q^{\ga}_{g^+,\th} dv_{\th}$, respectively.}
We consider a variational problem involving the ratio $I_{n,\ga}(\ox,\tg,\trh)$ over weighted scalar-flat conformal classes:
\begin{equation}\label{eq:thetanw}
\Theta_{n,\ga}\(X,g^+,[\bh]\) = \sup\left\{I_{n,\ga}(\ox,\tg,\trh): \tg \in \mcb_{g^+,\,[\bh]}\right\}
\end{equation}
with
\begin{equation}\label{eq:mcb}
\mcb_{g^+,\,[\bh]} := \left\{\tg = \trh^2g^+: \th \in [\bh],\, \trh \in \Xi_{g^+,\th}, R^{\mz}_{\tg,\trh} = 0 \textup{ in } X\right\}.
\end{equation}
Here, $R^{\mz}_{\tg,\trh}$ is the weighted scalar curvature on $(\ox,\tg,\trh^{\mz}dv_{\tg},-2\ga)$ defined in Subsection \ref{subsec:smms}.

As the following theorem depicts, we have an extension of \eqref{eq:hwy20}. Let $\lambda_1(-\Delta_{g^+})$ be the bottom of the $L^2$-spectrum of the Laplace-Beltrami operator $-\Delta_{g^+}$.
\begin{theorem}\label{thm:main21}
Assume that $n \in \N$, $n > 2\ga$, and $\ga \in (0,1)$. If $\lambda_1(-\Delta_{g^+}) > \frac{n^2}{4}-\ga^2$, then
\begin{equation}\label{eq:thetaest}
\infty > \Theta_{n,\ga}\(X,g^+,[\bh]\) \ge \Theta_{n,\ga}\(\B^N,g_\pb^+,[g_{\S^n}]\)
\end{equation}
and $\Theta_{n,\ga}(X,g^+,[\bh])$ is attained by a metric in $\mcb_{g^+,\,[\bh]}$ provided
\begin{equation}\label{eq:thetaest2}
\Theta_{n,\ga}(X,g^+,[\bh]) > \Theta_{n,\ga}(\B^N,g_\pb^+,[g_{\S^n}]).
\end{equation}
\end{theorem}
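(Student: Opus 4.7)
The plan is to recast the supremum in \eqref{eq:thetanw} as a critical-exponent Sobolev trace quotient for the weighted conformal Laplacian and then follow the Aubin--Escobar blueprint, with the Euclidean sharp inequality replaced by Theorem \ref{thm:main11} (equivalently Corollary \ref{cor:main12}). Fix the geodesic reference $\bgg = \rhog^2 g^+$ and parametrize any admissible $\tg = \trh^2 g^+ \in \mcb_{g^+,[\bh]}$ as $\tg = u^{4/(n-2\ga)} \bgg$ with $\trh = u^{2/(n-2\ga)} \rhog$ for a positive function $u$ on $\ox$. Conformal covariance of $L^{\mz}_{\bg,\rho}$ converts the defining constraint $R^{\mz}_{\tg,\trh} = 0$ into the linear extension equation $L^{\mz}_{\bgg,\rhog} u = 0$ in $X$, while the asymptotic class $\Xi_{g^+,\th}$ pins down precisely the Chang--Gonz\'alez boundary behavior of $u$. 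Setting $f := u|_M$ and noting that $n+2-2\ga$ is the effective dimension of the smooth metric measure space, a direct computation yields
\[
I_{n,\ga}\(\ox,\tg,\trh\) = \frac{\int_X u^{2(n+2-2\ga)/(n-2\ga)} \rhog^{\mz}\, dv_{\bgg}}{\(\int_M f^{2n/(n-2\ga)}\, dv_{\bh}\)^{(n-2\ga+2)/n}},
\]
so the variational problem becomes a Poisson-type Sobolev trace quotient on $(\ox,\bgg,\rhog^{\mz}dv_{\bgg},-2\ga)$.

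The hypothesis $\lambda_1(-\Delta_{g^+}) > \tfrac{n^2}{4}-\ga^2$ is equivalent to strict positivity of the fractional conformal Laplacian $P^{\ga}_{g^+,\bh}$, hence both to nonemptiness of $\mcb_{g^+,[\bh]}$ and to coercivity of the weighted Dirichlet energy on the extension side. Combined with the trace inequality \eqref{eq:Sob2}, this bounds the numerator uniformly by $\|f\|_{L^{2n/(n-2\ga)}(M,\bh)}^{2(n+2-2\ga)/(n-2\ga)}$, yielding the finiteness half of \eqref{eq:thetaest}. For the lower bound, I would localize at an arbitrary $p \in M$, pass to Fermi-type coordinates in which $(X,g^+)$ is asymptotic at $p$ to the Poincar\'e half-space $(\R^N_+,g_\pb^+)$, and test $I_{n,\ga}$ against suitably cut-off rescalings of the sharp extremizers \eqref{eq:bubble}. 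Expanding in the concentration parameter $\lambda \to 0^+$, the leading term equals the sharp half-space ratio which, by Corollary \ref{cor:main12}, coincides with $\Theta_{n,\ga}(\B^N,g_\pb^+,[g_{\S^n}])$; curvature and cutoff errors enter only at subleading order.

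Assume now the strict gap \eqref{eq:thetaest2}. Take a maximizing sequence $\{f_k\}$ normalized by $\|f_k\|_{L^{2n/(n-2\ga)}(M,\bh)} = 1$ with extensions $u_k$, and extract a weak limit $u_\infty$ in $H^{1,2}(\ox;\rhog^{\mz},\bgg)$. A weighted concentration-compactness analysis in the spirit of P.-L.~Lions, adapted to the Muckenhoupt $A_2$ weight $\rhog^{\mz}$, decomposes any loss of compactness into boundary bubbles that after rescaling converge to sharp half-space extremizers of Theorem \ref{thm:main11}; each such bubble contributes exactly $\Theta_{n,\ga}(\B^N,g_\pb^+,[g_{\S^n}])$ to the limiting quotient. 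The strict gap then excludes every bubble, forcing strong convergence $u_k \to u_\infty > 0$, and elliptic regularity for degenerate equations with $A_2$ weights upgrades the Euler--Lagrange extremal to a smooth metric in $\mcb_{g^+,[\bh]}$.

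The hard part is the weighted concentration-compactness step: one must identify the concentrated mass of each bubble with the sharp constant $C_{n,\ga}$ from Theorem \ref{thm:main11}, which demands a quantitative profile decomposition in the degenerate/singular setting and careful handling of the boundary degeneracy of $\rhog^{\mz}$ when localizing. A secondary technical point is that the test metrics in the lower-bound step must respect the precise admissibility asymptotic $\trh = \trh_\geo\bigl[1 + \Phi\trh_\geo^{2\ga} + o'(\trh_\geo^{2\ga})\bigr]$ prescribed by $\Xi_{g^+,\th}$, so the test bubbles have to be built in the Chang--Gonz\'alez form rather than as naive pushforwards of the Euclidean extremizers \eqref{eq:bubble}.
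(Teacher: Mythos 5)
Your reformulation of $I_{n,\ga}$ as a quotient in $f = u|_M$ is correct and matches the paper's Lemma \ref{lemma:wir}, and your test-function argument for the lower bound and the Lions-type concentration-compactness for the existence part are in the right spirit (the paper's Theorem \ref{thm:main21g} and Proposition \ref{prop:ccp} do essentially this). However, there is a genuine gap in your proof of the finiteness half of \eqref{eq:thetaest}, and it stems from misidentifying the functional at stake. The quotient $\|\mck_{\bgg,\rhog}^{\mz}f\|_{L^{2(n-2\ga+2)/(n-2\ga)}(X;\rhog^{\mz},\bgg)}/\|f\|_{L^{2n/(n-2\ga)}(M,\bh)}$ is the operator norm of the weighted Poisson integral between two Lebesgue spaces; it is \emph{not} a Sobolev trace quotient (the paper's Remark \ref{remark:var} makes exactly this distinction). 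Your proposed chain --- coercivity plus the trace inequality \eqref{eq:Sob2} bounds the numerator by the boundary $L^{2n/(n-2\ga)}$ norm --- does not close: \eqref{eq:Sob2} controls $\|f\|_{L^{2n/(n-2\ga)}(M)}$ \emph{by} the Dirichlet energy of $U$, i.e.\ it points in the wrong direction, and to control the numerator by the energy you would need the weighted Sobolev inequality \eqref{eq:Sob}, which the paper shows admits no universal constant for $\ga\in(\tfrac12,1)$. Moreover, the energy of the extension of $f$ is comparable to $\|f\|_{\dot H^\ga}$, which is not dominated by $\|f\|_{L^{2n/(n-2\ga)}}$. The paper avoids the energy space entirely: finiteness (Lemma \ref{lemma:Poiest2}) follows from the pointwise upper bound \eqref{eq:Poi21} on the kernel $\mck_{\bgg,\rhog}^{\mz}$ together with a weak-$(1,\frac{n-2\ga+2}{n})$ bound and Marcinkiewicz interpolation, exactly as in Lemma \ref{lemma:Poiest1}. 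Constructing that kernel and its estimates (Proposition \ref{prop:Poi}) is a substantial part of the proof that your outline omits.

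Two further points. First, in the concentration-compactness step the decisive input is not a profile decomposition for the energy but the local operator-norm estimate on thin collars (Lemma \ref{lemma:tech3}), which says that the $L^p\to L^{(n-2\ga+2)p/n}$ norm of $\mck_{\bgg,\rhog}^{\mz}$ restricted to $\mct_\delta$ approaches the sharp Euclidean constant $s_{n,\ga,p}$ as $\delta\to0$; this is what ties each Dirac bubble to $\Theta_{n,\ga}(\B^N,g_\pb^+,[g_{\S^n}])$ and is proved by comparing $\mck_{\bgg,\rhog}^{\mz}$ with the model kernel $\mck_0^{\mz}$. Second, to produce a metric in $\mcb_{g^+,[\bh]}$ one must show the maximizer $f$ is smooth and positive; the Euler--Lagrange relation \eqref{eq:main21g2} is a nonlocal integral equation (Poisson kernel composed with its adjoint), so local weighted elliptic regularity does not apply directly, and the paper develops a separate bootstrap (Proposition \ref{prop:reg}) via potential estimates and tangential Schauder-type arguments. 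One must also verify that the resulting $\trh = U^{2/(n-2\ga)}\rhog$ actually lies in $\Xi_{g^+,\th}$, which uses the boundary expansion $U = F + G\rhog^{2\ga}$ from Proposition \ref{prop:ext2}; you flag this but do not supply it.
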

\begin{remark}
We present three remarks concerning Theorem \ref{thm:main21}. An additional remark will be given in Remark \ref{remark:var}.

\medskip \noindent
(1) Theorem \ref{thm:main21} with $\ga = \frac{1}{2}$ corresponds to \cite[Theorem 1.1]{HWY2} in the analytical sense.
However, to settle Theorem \ref{thm:main21} in a geometrically robust framework, we must introduce a spectral condition involving a CCE metric $g^+$ in $X$ in place of $\lambda_1(L_{\bg}) > 0$ in \cite{HWY2}.
A natural candidate turns out to be $\lambda_1(-\Delta_{g^+}) > \frac{n^2}{4}-\ga^2$, because it guarantees $\mcb_{g^+,\,[\bh]} \ne \emptyset$ as shown in \cite[Lemma 6.1]{CC}.
Refer to Subsection \ref{subsec:ext} for more explanation.

Note that a conformal change of a metric on $\ox$ was allowed in \cite{HWY2}, whereas we are constrained to perform a conformal change of a metric only on its boundary $M$.
This geometric restriction compels us to carefully select the admissible class $\Xi_{g^+,\th}$ of defining functions of $M$ and the admissible class $\mcb_{g^+,\,[\bh]}$ of metrics in $X$
(as given in \eqref{eq:Xi} and \eqref{eq:mcb}, respectively)
so that the variational problem \eqref{eq:thetanw} can be reformulated as a sharp inequality involving the weighted Poisson kernel $\mck^{\mz}_{\bg,\rho}$; refer to Definition \ref{def:wPk} and Subsection \ref{subsec:wir}.

\medskip \noindent
(2) Similar to Theorem \ref{thm:main11}, we will investigate a more general family \eqref{eq:thetanwg} of variational problems than \eqref{eq:thetanw}. The above result can be obtained as a special case $p = \frac{2n}{n-2\ga}$.
Refer to Theorem \ref{thm:main21g} and Proposition \ref{prop:reg} below.

\medskip \noindent
(3) We have opted to work on CCE manifolds in Theorem \ref{thm:main21} to apply the results in \cite{GZ, CC} directly. The description of the results will be given in the subsequent paragraphs.
We expect that our arguments, combined with the idea in \cite[Section 5]{CG}, will apply almost verbatim to general asymptotically hyperbolic manifolds, provided their conformal infinities are minimal for $\ga \in (\frac{1}{2},1)$.
\end{remark}

In \eqref{eq:wir2}, we will prove that $\Theta_{n,\ga}(\B^N,g_\pb^+,[g_{\S^n}]) = C_{n,\ga}^{\frac{2(n-2\ga+2)}{n-2\ga}}$ where $C_{n,\ga}$ is the constant in \eqref{eq:main11} and \eqref{eq:main12},
which manifests the linkage between problem \eqref{eq:thetanw} and inequalities \eqref{eq:main11} and \eqref{eq:main12}.
Since $\mck_{n,\ga}f$ and $\wmck_{n,\ga}\tf$ solve equations \eqref{eq:degeq} and \eqref{eq:main12eq}, respectively, one may ask if there is an equation in $X$ naturally associated with \eqref{eq:thetanw}.
To answer this question, we need to recall the Chang-Gonz\'alez extension theorem \cite{CG, CC}; see Proposition \ref{prop:ext}.

Suppose that $(X,g^+)$ is a CCE manifold with conformal infinity $(M,[\bh])$, $\ga \in (0,\frac{n}{2})$, and $\frac{n^2}{4}-\ga^2$ is not a pure point spectrum of $-\Delta_{g^+}$ (i.e., $\frac{n^2}{4}-\ga^2 \notin \sigma_{\pp}(-\Delta_{g^+})$).
In \cite{GZ}, Graham and Zworski proved the existence of a conformally covariant pseudo-differential operator $P^{\ga}_{g^+,\bh}$ on $M$ whose principal symbol is that of the fractional Laplacian $(-\Delta)^{\ga}$.
It is called the fractional GJMS operator (or the fractional conformal Laplacian for $\ga \in (0,1)$); see Subsection \ref{subsec:fcl} for more explanation.

The analysis of properties of solutions to equations involving the operator $P^{\ga}_{g^+,\bh}$ is challenging due to its abstractness and nonlocality.
The Chang-Gonz\'alez extension provides an alternative description of $P^{\ga}_{g^+,\bh}$ for $n > 2\ga$ and $\ga \in (0,1)$, which makes the analysis more tractable: Choose $f \in C^{\infty}(M)$. According to Proposition \ref{prop:ext2}, the boundary value problem
\begin{equation}\label{eq:degeq3}
\begin{cases}
L^{\mz}_{\bgg,\,\rhog} U = 0 &\textup{in } X,\\
U = f &\textup{on } M
\end{cases}
\end{equation}
has the unique solution $U_0$ in $H^{1,2}(X;\rho^{\mz},\bg)$ provided $\frac{n^2}{4}-\ga^2 \notin \sigma_{\pp}(-\Delta_{g^+})$. Then it holds that
\begin{equation}\label{eq:degeq4}
P^{\ga}_{g^+,\bh} f = \frac{d_{\ga}}{2\ga} \lim_{\rhog \to 0} \rhog^{\mz}{\frac{\pa U_0}{\pa \rhog}} \quad \textup{on } M
\end{equation}
where $d_{\ga}$ is the constant in \eqref{eq:degeq2}. As shown in Lemma \ref{lemma:wir}, the value of $\Theta_{n,\ga}(X,g^+,[\bh])$ equals
the supremum of integrals $\int_X \rhog^{\mz} U_0^{\frac{2(n-2\ga+2)}{n-2\ga}} dv_{\bgg}$ over the set of $L^{\frac{2n}{n-2\ga}}$-normalized positive functions $f$ in $C^{\infty}(M)$.
This fact is crucial in deriving \eqref{eq:wir2}.

In addition, if $(X,g^+)$ is the Poincar\'e half-space model $(\R^N_+,g_\ph^+)$ with the hyperbolic metric
\[g_\ph^+(x) := \frac{|dx|^2}{x_N^2} = \frac{|d\bx|^2 \oplus dx_N^2}{x_N^2} \quad \textup{for } x=(\bx,x_N) \in \R^N_+,\]
then \eqref{eq:degeq3}--\eqref{eq:degeq4} is reduced to the Caffarelli-Silvestre extension \eqref{eq:degeq}--\eqref{eq:degeq2}, which returns us to the setting of Theorem \ref{thm:main11}.

\begin{remark}\label{remark:var}
Assume that $(\ox,\bg)$ is a smooth compact Riemannian manifold with boundary $M$. Let $\mck_{\bg}$ be the Poisson kernel for the conformal Laplacian $L_{\bg}$ on $(\ox,\bg)$.
The following correspondence was originally noted by Jin and Xiong \cite{JX}: Let $\bh = \bg|_{TM}$.
\bgroup
\def\arraystretch{1.2}
\begin{table}[H]
\small
\begin{tabular}{|c|c|}
\hline
Variational problems & Corresponding maps \\ \hhline{|=|=|}
Problem \eqref{eq:thetan} & $\mck_{\bg}: L^{\frac{2(N-1)}{N-2}}(M,\bh) \to L^{\frac{2N}{N-2}}(X,\bg)$ \\ \hline
Yamabe problem \cite{Che, Es} & Sobolev embedding: $H^{1,2}(X,\bg) \to L^{\frac{2N}{N-2}}(X,\bg)$ \\ \hline
\begin{tabular} {@{}c@{}} Boundary Yamabe problem \cite{Che, Es2} \\ (= Riemann mapping problem of Escobar) \end{tabular}
&\begin{tabular} {@{}c@{}} Sobolev trace embedding: \\ $H^{1,2}(X,\bg) \to L^{\frac{2(N-1)}{N-2}}(M,\bh)$ \end{tabular}
\\ \hline
\end{tabular}
\end{table}
\egroup

This viewpoint still makes sense under our setting, at least for $\ga \in (0,\frac{1}{2}]$:
Let $\mck^{\mz}_{\bg,\rho}$ be the weighted Poisson kernel for the weighted conformal Laplacian $L^{\mz}_{\bg,\rho}$, which will be studied in Subsection \ref{subsec:wpk}.
\bgroup
\def\arraystretch{1.2}
\begin{table}[H]
\small
\begin{tabular}{|c|c|}
\hline
Variational problems & Corresponding maps \\ \hhline{|=|=|}
Problem \eqref{eq:thetanw} & $\mck_{\bg,\rho}^{\mz}: L^{\frac{2n}{n-2\ga}}(M,\bh) \to L^{\frac{2(n-2\ga+2)}{n-2\ga}}(X;\rho^{\mz},\bg)$ \\ \hline
\begin{tabular} {@{}c@{}} Yamabe problem \\ on smooth measure metric spaces\footnotemark \end{tabular}
& \begin{tabular} {@{}c@{}} weighted Sobolev embedding: \\ $H^{1,2}(X;\rho^{\mz},\bg) \to L^{\frac{2(n-2\ga+2)}{n-2\ga}}(X;\rho^{\mz},\bg)$ \end{tabular} \\ \hline
\begin{tabular} {@{}c@{}} Chang-Gonz\'alez extension \\ of the fractional Yamabe problem \cite{GQ, GW, KMW, DSV, MN2} \end{tabular}
& \begin{tabular} {@{}c@{}} weighted Sobolev trace embedding: \\ $H^{1,2}(X;\rho^{\mz},\bg) \to L^{\frac{2n}{n-2\ga}}(M,\bh)$ \end{tabular} \\ \hline
\end{tabular}
\end{table}
\egroup
\footnotetext{Case \cite{Ca3} introduced a Yamabe-type problem on compact smooth measure metric spaces without boundary.
Posso \cite{Po} generalized it to the non-empty boundary case. They studied when the characteristic constant $\mu$ (in Subsection \ref{subsec:smms}) is $0$, while in our case $\mu = -2\ga$.
As far as the authors know, the Yamabe-type problem has yet to be treated under our setting.}
\end{remark}

Before closing this subsection, we propose the following conjecture.
\begin{conj}
If $n > 2\ga$ and $(\ox,\bg)$ is not conformally diffeomorphic to $(\B^N,g_{\R^N})$, then \eqref{eq:thetaest2} holds. Consequently, the variational problem $\Theta_{n,\ga}(X,g^+,[\bh])$ is attainable.
\end{conj}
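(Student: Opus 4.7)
The plan is to follow the test-function strategy pioneered by Escobar \cite{Es,Es2} and adapted by Hang-Wang-Yan \cite{HWY2} for the case $\ga = \tfrac{1}{2}$, now extended to the fractional/CCE setting developed in this paper. The attainability assertion in the conjecture follows immediately from Theorem \ref{thm:main21} once the strict inequality \eqref{eq:thetaest2} is proven, so I focus on establishing that inequality. By Lemma \ref{lemma:wir}, it is enough to exhibit a single $f \in C^{\infty}(M)$, normalized in $L^{\frac{2n}{n-2\ga}}(M,\bh)$, whose Chang-Gonz\'alez extension $U_0 = \mck^{\mz}_{\bgg,\rhog} f$ from \eqref{eq:degeq3} satisfies
\[ \int_X \rhog^{\mz}\, U_0^{\frac{2(n-2\ga+2)}{n-2\ga}}\, dv_{\bgg} > \Theta_{n,\ga}\(\B^N, g_\pb^+, [g_{\S^n}]\). \]

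First I would fix a point $p \in M$, introduce Fermi coordinates adapted to the geodesic defining function $\rhog$ so that $\bgg$ splits as $d\rhog^2 \oplus \bh_{\rhog}$ near $M$, and transplant the extremizer bubble $w_{\lambda,0}$ of Theorem \ref{thm:main11} to a normal neighborhood of $p$ via these coordinates, cutting off outside a small ball to obtain $f_{\lambda} \in C^{\infty}(M)$. Next I would compare $U_{0,\lambda} := \mck^{\mz}_{\bgg,\rhog} f_{\lambda}$ with the model extension $\mck_{n,\ga} w_{\lambda,0}$ on $\R^N_+$, exploiting the variational characterization of the extension in $H^{1,2}(X;\rho^{\mz},\bg)$ together with the conformal covariance of $P^{\ga}_{g^+,\bh}$. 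The third step is an asymptotic expansion in $\lambda$ as $\lambda \to 0^{+}$: the leading term reproduces the half-space extremum $\Theta_{n,\ga}(\B^N, g_\pb^+, [g_{\S^n}])$ (consistent with \eqref{eq:wir2}), while the first nontrivial correction should be expressible through local conformal invariants of $(M,\bh)$ and of the ambient CCE geometry at $p$.

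I expect the resulting case analysis to parallel items (i)-(iii) of Subsection \ref{subsec:iso}. When $M$ possesses a point of nontrivial fractional second fundamental form --- the appropriate $\ga$-analogue of non-umbilicity, appearing in the scattering-theoretic expansion of $\bgg$ near $M$ --- the correction at that point has the desired sign in sufficiently high dimensions. When $M$ is totally umbilic in this sense but the boundary Weyl tensor is nonzero somewhere, the gain arises from higher-order Weyl contributions. In the remaining locally conformally flat case I would replace $f_{\lambda}$ by a globally corrected test function built from the Green's function of $L^{\mz}_{\bgg,\rhog}$ with pole at $p$, and read off the correction from a fractional ADM-type mass on the asymptotically Poincar\'e ball model, which should be strictly positive unless $(X,g^+)$ is isometric to $(\B^N, g_\pb^+)$.

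The principal obstacles lie at both ends of this argument. On the local side, carrying out the asymptotic expansion to the required order is delicate because of the degenerate weight $\rhog^{\mz}$ and the nonlocal character of $P^{\ga}_{g^+,\bh}$: one must simultaneously control the geodesic expansion of $\bgg$, the discrepancy between $U_{0,\lambda}$ and the model bubble extension $\mck_{n,\ga}w_{\lambda,0}$, and the tail contributions outside the cutoff region. On the global side, a positive-mass-type theorem for $L^{\mz}_{\bgg,\rhog}$ on CCE manifolds covering the full range $\ga \in (0,1)$ appears to be unavailable, and formulating and establishing even a restricted rigidity statement of this kind --- one that vanishes only on $(\B^N, g_\pb^+)$ --- would be the true crux of the locally conformally flat case of the conjecture.
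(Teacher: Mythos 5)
The statement you are addressing is presented in the paper as a \emph{conjecture}: the authors do not prove it, and they explicitly flag the main obstruction, namely that for $\ga \in (0,1)\setminus\{\tfrac12\}$ the extension $W_{1,0}$ of the extremal bubble is not a rational function (see Footnote \ref{fnlabel}), so the integral computations that drive the existence results of \cite{JX, CJR, GlZ} in the case $\ga=\tfrac12$ are not available. Your proposal is therefore not being measured against a proof in the paper; it is a research program, and as written it is not a proof. You correctly reduce the problem via Theorem \ref{thm:main21} and Lemma \ref{lemma:wir} to producing a test function beating $\Theta_{n,\ga}(\B^N,g_\pb^+,[g_{\S^n}])$, and the bubble-transplantation strategy is the natural one. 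But every step that would actually close the argument is left open: you do not compute the subleading term in the $\lambda$-expansion, you do not identify the geometric invariant controlling its sign, and you do not verify that this sign is favorable in any dimension range. The paper's point is precisely that this expansion cannot be carried out by the known methods because the relevant integrals of $W_{1,0}$ against curvature terms have no closed form for general $\ga$; asserting that ``the correction at that point has the desired sign in sufficiently high dimensions'' is the conjecture restated, not an argument.

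The locally conformally flat case has the same status. A positive-mass-type theorem for the weighted conformal Laplacian $L^{\mz}_{\bgg,\rhog}$ on CCE manifolds, with rigidity exactly at $(\B^N,g_\pb^+)$, is not known for $\ga\in(0,1)$, and you acknowledge this yourself. There is also a structural subtlety your outline glosses over: unlike in \cite{HWY2}, conformal changes here are permitted only on the boundary $M$ (the interior metric is constrained by the admissible class $\Xi_{g^+,\th}$ and the Einstein condition on $g^+$), so the ``fractional second fundamental form'' you invoke is not a free datum --- the boundary is automatically totally geodesic for a CCE manifold, as noted in Subsection \ref{subsec:fcl} --- and the case analysis modeled on (i)--(iii) of Subsection \ref{subsec:iso} would need to be reorganized around the scattering expansion of $h_{\rhog}$ rather than around umbilicity. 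In short, your proposal is a reasonable roadmap consistent with the authors' own stated expectations, but it contains no completed step beyond what Theorem \ref{thm:main21} already provides, and the two crux points (sign of the local correction for general $\ga$, and a fractional positive mass/rigidity theorem) remain genuine gaps.
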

\noindent The above conjecture is inspired by \cite[Conjecture 1.1]{HWY2}, which treats the unweighted case $\ga = \frac{1}{2}$.
To tackle the conjecture, a delicate qualitative analysis of the quotient $I_{n,\ga}(\ox,\tg,\trh)$ (or equivalently, the rightmost side of \eqref{eq:wir0}) is necessary.
In the existence results \cite{JX, CJR, GlZ} for $\ga = \frac{1}{2}$, the authors crucially used the fact that $W_{1,0}$ is a rational function.
It seems that $W_{1,0}$ cannot be expressed explicitly in terms of elementary functions for $\ga \in (0,1) \setminus \{\frac{1}{2}\}$ (refer to Footnote \ref{fnlabel}),
so a new approach for evaluating integrals that involve $W_{1,0}$ has to be devised. This technical issue adds further complexity to the conjecture, which we hope to address in future research.

\subsection{Organization of the paper}
We provide an overview of the paper, highlighting the novelty and interesting aspects.

\medskip
In Section \ref{sec:back}, we review the necessary geometric and analytic backgrounds and introduce the functional spaces to work with.
Then, we establish the unique solvability and regularity results of degenerate elliptic equations (see Propositions \ref{prop:ext2} and \ref{prop:ext3}), using only a finite energy condition instead of conventional pointwise conditions.
To achieve them, we largely depend on the equivalence between two degenerate equations \eqref{eq:degeq31} and \eqref{eq:degeq32}, the construction of a formal solution to \eqref{eq:degeq32}, and an approximation argument with elliptic regularity theory.
These findings are noteworthy in their own right, because they are deduced under an analytically natural framework that covers a broader scope than what is typically treated in conformal geometry.

\medskip
In Section \ref{sec:weight}, we first construct the weighted Poisson kernel $\mck^{\mz}_{\bg,\rho}$ of the weighted conformal Laplacian $L^{\mz}_{\bg,\rho}$ on a general CCE manifold $(\ox,\bg,\rho^{\mz}dv_{\bg},-2\ga)$ and derive various qualitative estimates; refer to Proposition \ref{prop:Poi}.
This process is more complicated than one might expect because of the degeneracy of the differential operator $L^{\mz}_{\bg,\rho}$.
For example, obtaining pointwise upper bounds for the weighted Poisson kernel $\mck^{\mz}_{\bg,\rho}$ and its derivatives is not straightforward for $\ga \in (0,1) \setminus \{\frac{1}{2}\}$, in contrast to \cite[Lemma 2.2]{KMW2} that handles the case $\ga = \frac{1}{2}$.
During the proof, we must carefully treat the non-integer exponents $2\ga$ when $\ga \in (0,1) \setminus \{\frac{1}{2}\}$, which appear in equation \eqref{eq:Poibg} and the main-order term \eqref{eq:Poi02} of $\mck^{\mz}_{\bg,\rho}$,
and the logarithmic singularities of the underlying metric $\bgg$ on $\ox$ that occur when $n \ge 4$ is even.

Subsequently, we reformulate the variational problem \eqref{eq:thetanw} by referring to the degenerate elliptic equations appearing in the extension theorem and their associated weighted Poisson integrals, as depicted in Lemma \ref{lemma:wir}.
This standpoint enables us to regard inequalities \eqref{eq:main11} and \eqref{eq:main12} as specific examples of \eqref{eq:thetanw}.

\medskip
In Sections \ref{sec:main11}--\ref{sec:main2}, we prove the main theorems; Theorem \ref{thm:main11}, Corollary \ref{cor:main12}, and Theorem \ref{thm:main21}.
The proof of Theorems \ref{thm:main11} and \ref{thm:main21} is primarily based on the arguments in Hang et al. \cite{HWY, HWY2}.
During the proof, we develop a regularity theory for integral equations (given in \eqref{eq:main11g2} and \eqref{eq:main21g2}) involving the weighted Poisson kernels $\mck^{\mz}_{\bg,\rho}$,
introducing the concept ``tangential Schauder estimates" in the proof of Lemma \ref{lemma:reg3}.
Our approach, based on the qualitative estimates of $\mck^{\mz}_{\bg,\rho}$ and potential analysis, is flexible and can be easily adapted to other relevant contexts.

On the other hand, the proof of Corollary \ref{cor:main12} crucially uses the fact that $(\R^N_+,|dx|^2,x_N^{\mz}dx,-2\ga)$
and $(\B^N,\bg_\pb,\rho_\pb^{\mz}dv_{\bg_\pb},-2\ga)$ are pointwise conformally equivalent smooth metric measure spaces, shown in Lemma \ref{lemma:smms}.
We also need the pointwise boundary behavior of the function $\wtv = \wmck_{n,\ga}\tf$ and its weighted normal derivative (refer to Lemma \ref{lemma:smms5}).
We achieve them with the help of the theory of spherical harmonics and the concrete expression of $P^{\ga}_{g_\pb^+,\bh_\pb} \tf$ presented in \eqref{eq:smms56}.

\medskip
Appendix \ref{sec:app} presents several auxiliary results along with their proofs. These results encompass weighted Sobolev embedding theorems,
the regularity theory for degenerate elliptic equations with Dirichlet boundary conditions, the theory of weighted spherical harmonics, and expansions of the squares of Riemannian distances.
Many of these are lesser-known or entirely new results that may be useful in other literature.

\section{Background}\label{sec:back}
\subsection{Fractional conformal Laplacians}\label{subsec:fcl}
We recall the definition of the fractional conformal Laplacians on the conformal infinity of a CCE manifold. The main references are \cite{GZ, CDLS, An, CG, FG, CC}.

\medskip
Let $N = n+1$ and $X^N$ be an $N$-dimensional smooth (i.e., infinitely differentiable) manifold with smooth boundary $M^n$.
A function $\rho$ in $X$ is called a defining function of $M$ if $\rho > 0$ in $X$, $\rho = 0$ on $M$, and $d\rho \ne 0$ on $M$.
We say that a metric $g^+$ in $X$ is conformally compact if there is a defining function $\rho \in C^{\infty}(\ox)$ of $M$
such that the conformal metric $\bg = \rho^2g^+$ extends smoothly to the closure $\ox$ of $X$ and $(\ox,\bg)$ is compact.
Because the defining function $\rho$ of $M$ is determined up to a multiplication by a smooth positive function on $\ox$,
only the conformal class $[\bh]$ with $\bh = \bg|_{TM}$ is well-defined on $M$. We call $(M,[\bh])$ the conformal infinity of $(X,g^+)$.

A Riemannian manifold $(X,g^+)$ is called asymptotically hyperbolic if $g^+$ is conformally compact and $|d\rho|_{\bg} \to 1$ as $\rho \to 0$.
The latter condition is equivalent to $R_{g^+} \to -n(n+1)$ as $\rho \to 0$ where $R_{g^+}$ is the scalar curvature on $(X,g^+)$.
Moreover, if $(X,g^+)$ is conformally compact and its Ricci curvature tensor $\textup{Ric}_{g^+}$ satisfies $\textup{Ric}_{g^+} = -ng^+$ in $X$,
then it is called conformally compact Einstein (CCE) or Poincar\'e-Einstein. All CCE manifolds are asymptotically hyperbolic.

\medskip
Suppose that $(X,g^+)$ is a CCE manifold and $\bh$ is a representative of the conformal class $[\bh]$ on $M$.
Then there exists a unique defining function $\rhog$ of $M$, called the geodesic defining function of $(M,\bh)$,
such that $|d\rhog|_{\rhog^2g^+} = 1$ in a collar neighborhood $M \times [0,\delta_0]$ of $M$ for small $\delta_0 > 0$.\footnote{One
can suitably extend $\rhog$ to become a positive smooth function in $\ox$. For the rest of the paper, we will assume the smoothness of $\rhog$.} In particular, the metric $g^+$ takes the form
\begin{equation}\label{eq:g+ ext}
g^+ = \rhog^{-2}\bgg = \rhog^{-2}\(h_{\rhog} \oplus (d\rhog)^2\) \quad \textup{on } \mct_{\delta_0} := M \times (0,\delta_0)
\end{equation}
where $h_{\rhog}$ is a one-parameter family of smooth metrics on $M$ such that $h_0 = \bh$.
It is known that $\text{Trace}_{h_{\rhog}} \pa_{\rhog} h_{\rhog} \in \rhog C^{\infty}(\ox)$ and
the asymptotic expansion of $h_{\rhog}$ contains solely even powers of $\rhog$ up to order $n-1$.
When $n \ge 4$ is even, the next-order term is $h_* \rhog^n |\log\rhog|$ and higher-order terms look like
$h_{**} \rhog^{n+l} |\log \rhog|^m$ for $l \in \N$ and $m \in \N \cup \{0\}$, where $h_*$ and $h_{**}$ represent smooth two-tensors on $M$.
When $n$ is odd or $2$, the expansion of $h_{\rhog}$ has no log-order term and $h_{\rhog}$ is smooth on $\overline{\mct_{\delta_0}}$; refer to \cite{CDLS, An}.
A notable consequence is that $(M,\bh)$ is a totally geodesic submanifold of $(\ox,\bg)$.

In addition, if $\frac{n^2}{4}-\ga^2 \notin \sigma_{\pp}(-\Delta_{g^+})$, $f \in C^{\infty}(M)$, and $s = \frac{n}{2}+\ga$ with $\ga \in (0,\frac{n}{2}) \setminus \N$,
the eigenvalue problem $-\Delta_{g^+}u - s(n-s)u = 0$ in $X$ possesses a unique solution of the form $u = F\rhog^{n-s} + G\rhog^s$.
Here, $F \in C^{\infty}(\ox)$ if $n$ is odd or $2$, $F \in C^{n+1,\beta}(\ox)$ for any $\beta \in (0,1)$ if $n \ge 4$ is even, $G \in C^{\infty}(\ox)$, $F|_M = f$, and $\pa_{\rhog} F|_M = \pa_{\rhog} G|_M = 0$; cf. the proof of Proposition \ref{prop:ext3}.
As proved in \cite{GZ}, if one sets $P^{\ga}_{g^+,\bh}f = d_{\ga} G|_M$, then it becomes a formally self-adjoint pseudo-differential operator on $M$
with a principal symbol identical to that of $(-\Delta)^{\ga}$. It also satisfies the conformal covariance property
\begin{equation}\label{eq:fcl}
P^{\ga}_{g^+, \vph^{\frac{4}{n-2\ga}}\bh}f = \vph^{-{\frac{n+2\ga}{n-2\ga}}} P^{\ga}_{g^+, \bh}(\vph f) \quad \textup{for any positive smooth function } \vph \textup{ on } M,
\end{equation}
which led to it being referred to as the fractional GJMS (Paneitz) operator for all $\ga \in (0,\frac{n}{2}) \setminus \N$, and the fractional conformal Laplacian for $\ga \in (0,1)$.

\subsection{Smooth metric measure spaces}\label{subsec:smms}
We briefly describe some basic features of smooth metric measure spaces.
For more details, including their geometric significance, the reader is advised to consult \cite{Ca, Ca3, CC, Ca2}.

\medskip
A smooth metric measure space is a four-tuple $(\mcx^{n+1},g,v^m dv_g,\mu)$ determined by an $(n+1)$-dimensional Riemannian manifold $(\mcx,g)$ with or without boundary,
the Riemannian volume form $dv_g$ on $(\mcx,g)$, a smooth positive function $v$ in $\textup{Int}(\mcx)$, $m \in \R \setminus \{-n\}$, and $\mu \in \R$.
If the boundary $\pa \mcx$ of $\mcx$ is nonempty, we further assume that $v$ is a sufficiently regular nonnegative function on $\mcx$ with $v^{-1}(0) = \pa\mcx$.

Let $R_g$ be the scalar curvature, $\Delta_g$ the Laplace-Beltrami operator, $\nabla_g$ the gradient, $\la \cdot, \cdot \ra_g$ the metric, and $|\cdot|_g$ the norm on $(\mcx,g)$.
On a smooth metric measure space $(\mcx,g,v^m dv_g,\mu)$, we define the weighted Laplacian $\Delta_{g,v}$
by $\Delta_{g,v}u = \Delta_g u + \la \nabla_g (\log v^m), \nabla_g u \ra_g$ for a function $u$ on $\mcx$, the weighted scalar curvature $R^m_{g,v}$ by
\begin{equation}\label{eq:wsc}
R^m_{g,v} = R_g - 2mv^{-1}\Delta_g v - m(m-1)v^{-2}|\nabla_g v|_g^2 + m\mu v^{-2},\ \footnotemark
\end{equation}
\footnotetext{It is a scalar curvature associated with the Bakry-\'Emery Ricci tensor.}and
the weighted conformal Laplacian $L^m_{g,v}$ by
\begin{equation}\label{eq:wcl}
L^m_{g,v} = -\Delta_{g,v} + \frac{m+n-1}{4(m+n)} R^m_{g,v}.
\end{equation}
If $m = 0$, then $L^m_{g,v}$ is reduced to the conformal Laplacian $L_g$ on $(\mcx^N,g)$.

We say that $(\mcx,g,v^m dv_g,\mu)$ and $(\mcx,\hg,\hv^m dv_{\hg},\mu)$ are pointwise conformally equivalent if there exists a positive function $u \in C^{\infty}(\mcx)$ such that $\hg = u^{-2}g$ and $\hv = u^{-1}v$.
The weighted conformal Laplacians $L^m_{g,v}$ and $L^m_{\hg,\hv}$ have the conformal covariance property
\begin{equation}\label{eq:wcl2}
L^m_{\hg,\hv} w = u^{\frac{m+n+3}{2}} L^m_{g,v}\(u^{-\frac{m+n-1}{2}} w\) \quad \textup{for all } w \in C^{\infty}(\mcx),
\end{equation}
which justifies their name.

\subsection{Function spaces}
We introduce functional spaces to work with throughout the paper.
\begin{definition}\label{def:fs1}
Let $x=(\bx,x_N) \in \R^N_+$ where $\bx \in \R^n$ and $x_N \in (0,\infty)$. Assume that $p \in (1,\infty)$ and $q \in (0,\infty]$.
\begin{itemize}
\item[-] Let $L^p(\R^N_+;x_N^{\mz})$ and $L^p_w(\R^N_+;x_N^{\mz})$ be the (weighted) $L^p$-space and the (weighted) weak $L^p$-space on $\R^N_+$ with measure $x_N^{\mz}dx$.
\item[-] Let $\dot{H}^{1,2}(\R^N_+;x_N^{\mz})$ be the homogeneous (weighted) Sobolev space on $\R^N_+$ with measure $x_N^{\mz}dx$.
\item[-] Let $\dot{H}^{\ga}(\R^n)$ be the homogeneous fractional Sobolev space on $\R^n$.
\item[-] Let $L^{p,q}(\R^n)$ be the Lorentz space on $\R^n$.
\end{itemize}
\end{definition}

\begin{definition}\label{def:fs2}
Let $(\mcx,g)$ be a Riemannian manifold with or without boundary $M$, $\rho$ the defining function of $M$, and $h = g|_{TM}$ (if $M \ne \emptyset$). Assume that $p \in (1,\infty]$.
\begin{itemize}
\item[-] Let $L^p(\mcx,g)$ be the $L^p$-space on $\mcx$ with measure $dv_g$.
\item[-] Let $L^p(\mcx;\rho^{\mz},g)$ be the (weighted) $L^p$-space on $\mcx$ with measure $\rho^{\mz}dv_g$.
\item[-] Let $H^{1,2}(\mcx;\rho^{\mz},g)$ be the (weighted) Sobolev space on $\mcx$ with measure $\rho^{\mz}dv_g$.
\item[-] Let $H^{1,2}_0(\mcx;\rho^{\mz},g)$ be the space consisting of functions in $H^{1,2}(\mcx;\rho^{\mz},g)$ whose trace on $M$ is zero.
\item[-] Let $H^{\ga}(M,h)$ be the fractional Sobolev space on $M$, that is, the completion of $C^{\infty}_c(M)$ with respect to the norm obtained by pulling back that of the inhomogeneous fractional Sobolev space $H^{\ga}(\R^n)$
    to $(M,h)$ via coordinate charts.
\end{itemize}
\end{definition}

\subsection{Chang-Gonz\'alez extension theorem and related topics}\label{subsec:ext}
We borrow a result from \cite[Theorem 4.1]{CC}, allowing an inhomogeneous term $Q$. Refer also to \cite[Lemma 4.1, Theorem 4.3]{CG} and \cite[Section 3]{MN}.

\begin{prop}\label{prop:ext}
Assume that $n \in \N$, $n > 2\ga$, $\ga \in (0,1)$, $(X^N,g^+)$ is a CCE manifold with conformal infinity $(M^n,[\bh])$,
$\rhog$ is the geodesic defining function of $(M,\bh)$, and $\rho \in \Xi_{g^+,\bh}$; refer to \eqref{eq:Xi}.
We consider the smooth metric measure spaces $(\ox,\bg=\rho^2g^+,\rho^{\mz}dv_{\bg},-2\ga)$ and $(X,g^+,dv_{g^+},-2\ga)$,
and the corresponding weighted conformal Laplacians $L^{\mz}_{\bg,\rho}$ and $L^{\mz}_{g^+,\,1}$, respectively.
Given $f \in C^{\infty}(M)$ and $Q \in C^0(\ox)$, a function $U$ is the solution to
\begin{equation}\label{eq:degeq31}
\begin{cases}
\rho^{\mz} L^{\mz}_{\bg,\rho} U = -\textup{div}_{\bg}\(\rho^{\mz} \nabla_{\bg} U\) + \dfrac{n-2\ga}{4(n+1-2\ga)} \rho^{\mz} R^{\mz}_{\bg,\rho} U = \rho^{\mz}Q &\text{in } X,\\
U = \rho^{s-n}\(F\rhog^{n-s} + G\rhog^s\) &\text{for } F,\, G \in C^2(\ox),\\
U = f &\text{on } M
\end{cases}
\end{equation}
if and only if $u = \rho^{n-s}U$ is the solution to
\begin{equation}\label{eq:degeq32}
\begin{cases}
L^{\mz}_{g^+,\,1}(u) = -\Delta_{g^+}u - s(n-s)u = \rho^{n-s+2}Q &\text{in } X,\\
u = F\rhog^{n-s} + G\rhog^s &\text{for } F,\, G \in C^2(\ox),\\
F|_{\rho=0} = f
\end{cases}
\end{equation}
where $s = \frac{n}{2}+\ga$. Furthermore, if $Q = 0$ on $\ox$, then there holds that
\begin{equation}\label{eq:degeq41}
P^{\ga}_{g^+,\bh}f - \frac{n-2\ga}{2}\, d_\ga\Phi f = \frac{d_{\ga}}{2\ga} \lim_{\rho \to 0} \rho^{\mz}{\frac{\pa U}{\pa \rho}} \quad \text{on } M
\end{equation}
where $d_{\ga} < 0$ is the constant in \eqref{eq:degeq2}.
\end{prop}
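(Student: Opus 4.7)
My plan is to reduce everything to the conformal covariance identity \eqref{eq:wcl2} followed by a boundary Taylor expansion.

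For the equivalence of \eqref{eq:degeq31} and \eqref{eq:degeq32}, the key observation is that $(\ox,\bg,\rho^{\mz}dv_{\bg},-2\ga)$ and $(X,g^+,dv_{g^+},-2\ga)$ are pointwise conformally equivalent smooth metric measure spaces with conformal factor $\rho$, since $g^+=\rho^{-2}\bg$ and $1=\rho^{-1}\cdot\rho$. Applying \eqref{eq:wcl2} with $m=\mz$, and computing $\frac{m+n-1}{2}=\frac{n-2\ga}{2}=n-s$ and $\frac{m+n+3}{2}=n-s+2$, gives the pointwise identity
\[
L^{\mz}_{g^+,1}\bigl(\rho^{n-s}U\bigr) \;=\; \rho^{n-s+2}\,L^{\mz}_{\bg,\rho}U.
\]
Combined with $L^{\mz}_{\bg,\rho}U=Q$ (extracted from \eqref{eq:degeq31}), this says that $u=\rho^{n-s}U$ satisfies $L^{\mz}_{g^+,1}u=\rho^{n-s+2}Q$. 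To recognize the left-hand side as in \eqref{eq:degeq32}, I would use $v\equiv 1$ (so $\Delta_{g^+,1}=\Delta_{g^+}$) together with the CCE value $R_{g^+}=-n(n+1)$; then \eqref{eq:wsc} yields $R^{\mz}_{g^+,1}=-(n+2\ga)(n+1-2\ga)$, and an algebraic reduction using $s(n-s)=\frac{(n-2\ga)(n+2\ga)}{4}$ produces $L^{\mz}_{g^+,1}u=-\Delta_{g^+}u-s(n-s)u$. The boundary representations are interchangeable via $u=\rho^{n-s}U$: writing $u=\rhog^{n-s}(F+G\rhog^{2\ga})$ one obtains $U=(\rhog/\rho)^{n-s}(F+G\rhog^{2\ga})$, and because $\rho/\rhog\to 1$ on $M$ (by the definition of $\Xi_{g^+,\bh}$ in \eqref{eq:Xi}), the conditions $U|_M=f$ and $F|_M=f$ coincide.

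For \eqref{eq:degeq41}, assume $Q=0$. Then $u$ is precisely the eigenfunction discussed in Subsection~\ref{subsec:fcl}, hence $F=f+O(\rhog^2)$ (using $\pa_{\rhog}F|_M=0$) and $G=G|_M+O(\rhog^2)$. The expansion $\rho=\rhog[1+\Phi\rhog^{2\ga}+o'(\rhog^{2\ga})]$ from \eqref{eq:Xi} gives
\[
(\rhog/\rho)^{n-s}=1-(n-s)\Phi\,\rhog^{2\ga}+o'(\rhog^{2\ga}).
\]
Multiplying out and converting $\rhog^{2\ga}=\rho^{2\ga}(1+o(1))$ leads to the boundary expansion
\[
U \;=\; f+\bigl[G|_M-(n-s)\Phi f\bigr]\rho^{2\ga}+o(\rho^{2\ga})\qquad\textup{as } \rho\to 0^+.
\]
Differentiating and multiplying by $\rho^{\mz}=\rho^{1-2\ga}$ yields $\lim_{\rho\to 0}\rho^{\mz}\pa_\rho U=2\ga\bigl[G|_M-(n-s)\Phi f\bigr]$. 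Multiplying by $d_\ga/(2\ga)$, invoking $d_\ga G|_M=P^\ga_{g^+,\bh}f$ from Subsection~\ref{subsec:fcl}, and using $n-s=(n-2\ga)/2$ produces exactly \eqref{eq:degeq41}.

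The main technical obstacle is justifying that the $o'(\rhog^{2\ga})$ remainder in \eqref{eq:Xi} contributes only $o(1)$ to $\rho^{\mz}\pa_\rho U$ as $\rho\to 0$. This is precisely what the normal-derivative control $|\pa_{\rhog}o'(\rhog^{2\ga})|\le C\rhog^{2\ga-1}$ built into the definition of $o'$ is designed for: after multiplication by $\rho^{1-2\ga}$ the error becomes $O(1)$ with constant tending to zero. The remaining bookkeeping, that quadratic-in-$\rhog$ corrections to $F$ and $G$ contribute only $O(\rho^2)=o(\rho^{2\ga})$ because $\ga<1$, is routine.
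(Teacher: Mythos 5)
The paper does not actually prove Proposition \ref{prop:ext}: it is quoted from Case--Chang \cite[Theorem 4.1]{CC} (with the inhomogeneous term $Q$ added), so there is no internal proof to compare against. Your reconstruction is the standard argument and is essentially correct: the conformal covariance \eqref{eq:wcl2} with $u=\rho$, $m=\mz$ gives exactly $L^{\mz}_{g^+,1}(\rho^{n-s}U)=\rho^{n-s+2}L^{\mz}_{\bg,\rho}U$, the computation $R^{\mz}_{g^+,1}=-n(n+1)-2\ga(1-2\ga)=-(n+2\ga)(n+1-2\ga)$ (using the Einstein condition $R_{g^+}=-n(n+1)$) correctly identifies $L^{\mz}_{g^+,1}$ with $-\Delta_{g^+}-s(n-s)$, and the boundary expansion combining $F=f+O(\rhog^2)$, $\pa_{\rhog}F|_M=0$, $d_\ga G|_M=P^\ga_{g^+,\bh}f$ and $(\rhog/\rho)^{n-s}=1-(n-s)\Phi\rhog^{2\ga}+o'(\rhog^{2\ga})$ yields \eqref{eq:degeq41} with the right constants.

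One caveat on your final paragraph: you claim the $o'(\rhog^{2\ga})$ remainder contributes ``$O(1)$ with constant tending to zero'' after multiplication by $\rho^{1-2\ga}$, but the definition in \eqref{eq:Xi} only guarantees the vanishing constant $C_{\trh_\geo}\to 0$ for the function and its \emph{tangential} derivatives; the \emph{normal} derivative is merely bounded by $C\trh_\geo^{2\ga-1}$ with a fixed $C$. As literally defined, this gives only $\rho^{\mz}\pa_\rho\bigl[o'(\rhog^{2\ga})\bigr]=O(1)$, not $o(1)$, so the existence of the limit in \eqref{eq:degeq41} does not follow from \eqref{eq:Xi} alone. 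This imprecision is inherited from the paper's definition of $\Xi_{g^+,\th}$ rather than being a flaw in your strategy, but you should either strengthen the remainder hypothesis (normal derivative $=o(\rhog^{2\ga-1})$) or interpret the limit weakly, as in \eqref{eq:degeq2}. You should also note explicitly, for $\ga\in(\tfrac12,1)$, that the absence of a linear term in $F$ (i.e.\ $\pa_{\rhog}F|_M=0$, which you do invoke) is essential, since a term $c\,\rhog$ would make $\rho^{1-2\ga}\pa_\rho U$ blow up there.
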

\noindent Equation \eqref{eq:degeq3}--\eqref{eq:degeq4} is a special case of \eqref{eq:degeq31} and \eqref{eq:degeq41} obtained by taking $\rho = \rhog$.

\medskip
We next deduce an unique existence result of \eqref{eq:degeq31} with $Q = 0$, after substituting the pointwise condition $U = \rho^{s-n}(F\rhog^{n-s} + G\rhog^s)$ with an energy condition $U \in H^{1,2}(X;\rho^{\mz},\bg)$.
This can be seen as an extension of the uniqueness assertion in \cite[Proposition 3.4]{GZ} to the energy space.
\begin{prop}\label{prop:ext2}
Assume that the hypotheses stated in Proposition \ref{prop:ext} hold and $\frac{n^2}{4}-\ga^2 \notin \sigma_{\pp}(-\Delta_{g^+})$. Then the boundary value problem
\begin{equation}\label{eq:degeq5}
\begin{cases}
-\textup{div}_{\bg}\(\rho^{\mz} \nabla_{\bg} U\) + \dfrac{n-2\ga}{4(n+1-2\ga)} \rho^{\mz} R^{\mz}_{\bg,\rho} U = 0 &\text{in } X,\\
U = f &\text{on } M,\\
U \in H^{1,2}(X;\rho^{\mz},\bg)
\end{cases}
\end{equation}
is uniquely solvable. In particular, if $(\bg,\rho) = (\bgg,\rhog)$, then $U = F + G\rhog^{2\ga}$ on $\ox$ for some $F,\, G \in C^2(\ox)$ such that $F|_{\rho=0} = f$.
\end{prop}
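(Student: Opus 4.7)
The plan is to prove existence and uniqueness separately, after first reducing to the case $(\bg,\rho)=(\bgg,\rhog)$. Because $\rho \in \Xi_{g^+,\bh}$, the ratio $\vph := \rho/\rhog$ is continuous and positive on $\ox$ with $\vph|_M = 1$, and one checks $\bgg = \vph^{-2}\bg$ and $\rhog = \vph^{-1}\rho$, so the two smooth metric measure spaces $(\ox,\bg,\rho^{\mz}dv_{\bg},-2\ga)$ and $(\ox,\bgg,\rhog^{\mz}dv_{\bgg},-2\ga)$ are pointwise conformally equivalent. The conformal covariance \eqref{eq:wcl2} (applied in a distributional/approximation sense, since $\vph$ is not smooth at $M$) then shows that $U \in H^{1,2}(X;\rho^{\mz},\bg)$ solves \eqref{eq:degeq5} if and only if $W := \vph^{-(n-2\ga)/2}U \in H^{1,2}(X;\rhog^{\mz},\bgg)$ solves the analogous problem with $(\bg,\rho)$ replaced by $(\bgg,\rhog)$, still with boundary datum $f$; equivalence of the energy norms follows from uniform positivity and boundedness of $\vph$ on $\ox$. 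Henceforth assume $(\bg,\rho)=(\bgg,\rhog)$ and set $s := n/2 + \ga$, so that $s(n-s) = n^2/4-\ga^2$.

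\emph{Uniqueness.} Let $W \in H^{1,2}_0(X;\rhog^{\mz},\bgg)$ be a weak solution to the homogeneous problem, and set $w := \rhog^{n-s}W$. A direct computation, or equivalently a distributional version of the equivalence between \eqref{eq:degeq31} and \eqref{eq:degeq32} from Proposition \ref{prop:ext}, shows that $w$ weakly satisfies $-\Delta_{g^+}w = s(n-s)w$ on $X$. Since $W$ has vanishing trace on $M$, a weighted Hardy inequality on $(\ox,\bgg,\rhog^{\mz}dv_{\bgg})$---valid because $\mz = 1-2\ga \in (-1,1)$---yields
\[
\int_X \rhog^{-1-2\ga}\,W^2\, dv_{\bgg} \le C\int_X \rhog^{1-2\ga}\,|\nabla_{\bgg}W|_{\bgg}^2\, dv_{\bgg} < \infty.
\]
Using $dv_{g^+} = \rhog^{-(n+1)}\,dv_{\bgg}$ and the arithmetic identity $2(n-s)-(n+1) = -1-2\ga$, this exactly says $w \in L^2(X,dv_{g^+})$. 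Thus $w$ is an $L^2$-eigenfunction of $-\Delta_{g^+}$ at the eigenvalue $n^2/4-\ga^2$, and the hypothesis $n^2/4-\ga^2 \notin \sigma_{\pp}(-\Delta_{g^+})$ forces $w \equiv 0$, whence $W \equiv 0$.

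\emph{Existence and regularity.} Following the scheme of Graham--Zworski \cite{GZ}, construct $F \in C^{\infty}(\ox)$ with $F|_M = f$ by formally solving the recursive transport ODEs for the Taylor coefficients of $F$ in $\rhog$, so that the residual $r := (-\Delta_{g^+} - s(n-s))(F\rhog^{n-s})$ vanishes to sufficiently high order in $\rhog$ near $M$. The expansion of $h_{\rhog}$ recalled in Subsection \ref{subsec:fcl} (including the $\rhog^n|\log\rhog|$ term when $n \ge 4$ is even) is precisely what makes this procedure run. Since the spectral hypothesis places $s(n-s)$ strictly below the essential spectrum $[n^2/4,\infty)$ of $-\Delta_{g^+}$ and outside $\sigma_{\pp}(-\Delta_{g^+})$, the resolvent $\mcr := (-\Delta_{g^+}-s(n-s))^{-1}$ exists as a bounded operator on $L^2(X,dv_{g^+})$, and $v := -\mcr r$ is well-defined. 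Elliptic regularity of $-\Delta_{g^+}$ at interior points, together with the asymptotic analysis near $M$ carried out in \cite{GZ}, upgrades $v$ to the form $v = G\rhog^s$ with $G \in C^2(\ox)$. Setting $U := \rhog^{s-n}(F\rhog^{n-s} + v) = F + G\rhog^{2\ga}$ produces the desired solution; a direct check of $|\nabla_{\bgg}U|_{\bgg}^2\,\rhog^{1-2\ga} + U^2\rhog^{1-2\ga}$ near $M$, using $\ga > 0$, confirms $U \in H^{1,2}(X;\rhog^{\mz},\bgg)$.

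\emph{Main obstacle.} The principal technical hurdle is to upgrade Proposition \ref{prop:ext} from its pointwise/classical setting (where the solution is \emph{assumed} to have the form $\rho^{s-n}(F\rhog^{n-s}+G\rhog^s)$) to a purely distributional equivalence compatible with our finite-energy hypothesis; this legitimizes both the conformal reduction to $(\bgg,\rhog)$ and the change of unknown $W \mapsto \rhog^{n-s}W$ in the uniqueness argument. The natural approach is to approximate $W \in H^{1,2}_0(X;\rhog^{\mz},\bgg)$ by smooth compactly supported functions on $X$ and pass to the limit, but care is required to accommodate the degenerate weight $\rhog^{\mz}$ alongside the possible log-order terms in $h_{\rhog}$ when $n \ge 4$ is even.
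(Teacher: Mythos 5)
Your proposal is correct in substance, but the uniqueness argument takes a genuinely different route from the paper's, and the difference is worth recording. The paper first establishes the pointwise decay $|U-\wtu|\le C\rho^{2\ga-\ep}$ near $M$ via a barrier/weak maximum principle argument for the degenerate equation (this is the content of \eqref{eq:degeq51}--\eqref{eq:degeq53}, carried out in Appendix \ref{subsec:app2}), and only then concludes $u-\tu\in L^2(X,g^+)$ by integrating the pointwise bound. You instead get $w=\rhog^{n-s}W\in L^2(X,g^+)$ directly from the Hardy inequality $\int_X\rhog^{\mz-2}W^2\,dv_{\bgg}\le C\|W\|^2_{H^{1,2}(X;\rhog^{\mz},\bgg)}$ for zero-trace $W$, which bypasses the maximum-principle step entirely. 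Two caveats: (i) the inequality you need is strictly stronger than the paper's Lemma \ref{lemma:embed} (whose left-hand weight is $\rhog^{-\mz}$, and $\mz-2<-\mz$ always); the Cauchy--Schwarz argument used to prove that lemma does not reach the exponent $\mz-2$, so you must invoke the genuine one-dimensional Hardy inequality with weight $t^{\mz}$, $\mz<1$, proved by integration by parts on a collar for $C^\infty_c(X)$ functions and extended by density and Fatou --- this is standard and valid, but should be stated and proved, and the right-hand side should carry the zeroth-order term $\int_X\rhog^{\mz}W^2$ to handle the interior. (ii) Your upfront conformal reduction via $\vph=\rho/\rhog$ requires controlling the cross term $\int U^2|\nabla\vph|^2\rhog^{\mz}\sim\int U^2\rhog^{-\mz}$ for a function $U$ with \emph{nonzero} trace, whereas Lemma \ref{lemma:embed} applies only to zero-trace functions; this is fixable by subtracting a smooth extension of $f$, but the paper sidesteps it more cleanly by transferring only the zero-trace difference $U-\wtu$. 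The existence half of your argument coincides with the paper's (both defer to the Graham--Zworski construction and the resolvent), and the membership $U=F+G\rhog^{2\ga}\in H^{1,2}(X;\rhog^{\mz},\bgg)$ is checked the same way. What your route buys is the elimination of the barrier argument from the uniqueness proof; what it costs is a sharper Hardy inequality and a slightly more delicate conformal reduction.
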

\begin{proof}
According to \cite{GZ}, 
\eqref{eq:degeq32} with $Q=0$ has a unique solution $\tu$ provided $\frac{n^2}{4}-\ga^2 \notin \sigma_{\pp}(-\Delta_{g^+})$.
By Proposition \ref{prop:ext} and \eqref{eq:Xi}, $\wtu := \rho^{s-n}\tu$ is a solution to \eqref{eq:degeq31} and
\[\int_X \rho^{\mz} \(\big|\nabla_{\bg} \wtu\big|_{\bg}^2 + \wtu^2\) dv_{\bg} \le C \int_X \rho^{\mz} \(\rho^{-2\mz} + 1\) dv_{\bg} \le C,\]
which means that $\wtu \in H^{1,2}(X;\rho^{\mz},\bg)$.

Let $U$ be a solution to \eqref{eq:degeq5} and choose any $\ep \in (0,\ga)$. We claim that there exists a constant $C > 0$ independent of $\rho$ (but it may depend on $U$, $\wtu$, and $\ep$), such that
\begin{equation}\label{eq:degeq51}
\big|U-\wtu\big| \le C\rho^{2\ga-\ep} \textup{ in a collar neighborhood of } M.
\end{equation}
If we write $Z = (\rho/\rhog)^{\frac{n-2\ga}{2}}(U-\wtu)$ and $\bgg = \rhog^2g^+$, then
\[\int_X \rhog^{\mz} \(\big|\nabla_{\bgg} Z|_{\bgg}^2 + Z^2\) dv_{\bgg} \le C \int_X \rho^{\mz} \left[\big|\nabla_{\bg} \big(U-\wtu\big)\big|_{\bg}^2 + \(\rho^{-2\mz} + 1\) \big(U-\wtu\big)^2\right] dv_{\bg} < \infty\]
where the latter inequality is a consequence of Lemma \ref{lemma:embed}. This, \eqref{eq:degeq5}, and \eqref{eq:wcl2} yield
\begin{equation}\label{eq:degeq52}
\begin{cases}
-\textup{div}_{\bgg}\(\rhog^{\mz} \nabla_{\bgg} Z\) + \dfrac{n-2\ga}{4(n+1-2\ga)} \rhog^{\mz} R^{\mz}_{\bgg,\rhog} Z= 0 &\textup{in } X,\\
Z = 0 &\textup{on } M,\\
Z \in H^{1,2}(X;\rhog^{\mz},\bgg).
\end{cases}
\end{equation}
By \eqref{eq:Xi} and the compactness of $(\ox,\bgg)$, it suffices to prove that
\begin{equation}\label{eq:degeq53}
|Z| \le C\rhog^{2\ga-\ep} \quad \textup{on } B^n_{\bh}(\sigma,r_0) \times [0,r_0) \subset \ox
\end{equation}
where $B^n_{\bh}(\sigma,r_0)$ is the geodesic ball on $M$ centered at $\sigma \in M$ and of small radius $r_0 > 0$.
Identifying $B^n_{\bh}(\sigma,2r_0)$ with the ball $B^n(0,2r_0)$ in $\R^n$ of radius $2r_0$ centered at $0$ via normal coordinates,
we denote $\mcc^N(0,2r_0) = B^n(0,2r_0) \times (0,2r_0)$. Equation \eqref{eq:degeq52} is then reduced to
\begin{equation}\label{eq:degeq54}
\begin{cases}
-\textup{div}_{\bgg}\(x_N^{\mz} \nabla_{\bgg} Z\) + x_N^{\mz} AZ = 0 &\textup{in } \mcc^N(0,2r_0),\\
Z = 0 &\textup{on } B^n(0,2r_0),\\
Z \in H^{1,2}\(\mcc^N(0,2r_0);x_N^{\mz},\bgg\)
\end{cases}
\end{equation}
where $x_N = \rhog$ and $A := \frac{n-2\ga}{4(n+1-2\ga)} R^{\mz}_{\bgg,\rhog}$.
From the assumption that $(X,g^+)$ is CCE and \eqref{eq:wsc}, we infer
\begin{equation}\label{eq:degeq55}
A 
= \frac{n-2\ga}{4(n+1-2\ga)} \(R_{\bgg} - \frac{2\mz}{\sqrt{|\bgg|}} \frac{\pa_{x_N} \sqrt{|\bgg|}}{x_N}\)
\in C^{\max\{1,n-3\},\beta}\(\overline{\mcc^N(0,2r_0)}\),
\end{equation}
and $A$ is smooth in the tangential directions. In Appendix \ref{subsec:app2}, we will show that all solutions to \eqref{eq:degeq54} satisfy \eqref{eq:degeq53}, thereby proving \eqref{eq:degeq51}.

Let $u-\tu = \rho^{n-s}(U-\wtu)$. By \eqref{eq:degeq51},
\[\int_X (u-\tu)^2 dv_{g^+} = \int_X \rho^{\mz} \left[\rho^{-1} \big(U-\wtu\big)\right]^2 dv_{\bg} \le C\left[1 + \rho^{2(\ga-\ep)}\right] < \infty.\]
Therefore,
\[\left[-\Delta_{g^+}-s(n-s)\right](u-\tu) = 0 \quad \textup{in } X \quad \textup{and} \quad u-\tu \in L^2(X,g^+).\]
Because $s(n-s) 
\notin \sigma_{\pp}(-\Delta_{g^+})$, it holds that $u-\tu = 0$ in $X$. Consequently, $U = \wtu$ on $\ox$.
\end{proof}

Given the geodesic defining function $\rhog$ of $(M,\bh)$ and $\bgg = \rhog^2 g^+$, we also study the unique existence and a priori estimate for a solution to the inhomogeneous problem
\begin{equation}\label{eq:degeq6}
\begin{cases}
-\textup{div}_{\bgg}\(\rhog^{\mz} \nabla_{\bgg} U\) + \dfrac{n-2\ga}{4(n+1-2\ga)} \rhog^{\mz} R^{\mz}_{\bgg,\rhog} U = \rhog^{\mz} Q &\text{in } X,\\
U = 0 &\text{on } M,\\
U \in H^{1,2}(X;\rhog^{\mz},\bgg).
\end{cases}
\end{equation}
On $\overline{\mct_{\delta_0}} = M \times [0,\delta_0] \subset \ox$ (see \eqref{eq:g+ ext}), let $\nabla_{\bx}$ be the tangential gradient.
\begin{prop}\label{prop:ext3}
Assume that the hypotheses stated in Proposition \ref{prop:ext} hold, $\frac{n^2}{4}-\ga^2 \notin \sigma_{\pp}(-\Delta_{g^+})$, and $Q \in L^2(X;\rhog^{\mz},\bgg)$.
Then \eqref{eq:degeq6} admits a unique solution $U$ and
\begin{equation}\label{eq:extreg11}
\|U\|_{H^{1,2}(X;\rhog^{\mz},\bgg)} \le C\|Q\|_{L^2(X;\rhog^{\mz},\bgg)}
\end{equation}
where $C > 0$ depends only on $n$, $\ga$, $(\ox,\bgg)$, and $\rhog$. Furthermore,
\begin{multline}\label{eq:extreg12}
\|U\|_{C^{0,\beta}(\ox)} + \|U\|_{C^{2,\beta}(X \setminus \mct_{\delta_0/4})} + \sum_{\ell=1}^2 \left\|\nabla_{\bx}^{\ell} U\right\|_{C^{0,\beta}(\overline{\mct_{\delta_0}})}
+ \left\|\rhog^{\mz}\pa_{\rhog} U\right\|_{C^{0,\beta}(\overline{\mct_{\delta_0}})} \\
\le C\left[\sum_{\ell=0}^2 \left\|\nabla_{\bx}^{\ell} Q\right\|_{L^{\infty}(\mct_{\delta_0})} + \|Q\|_{C^{0,\beta}(X \setminus \mct_{\delta_0/8})}\right]
\end{multline}
provided the right-hand side is finite. Here, $C > 0$ and $\beta \in (0,1)$ depend only on $n$, $\ga$, $(\ox,\bgg)$, and $\rhog$.
\end{prop}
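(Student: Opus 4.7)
The plan is to exploit the equivalence with the eigenvalue equation for $-\Delta_{g^+}$ furnished by Proposition \ref{prop:ext} to reduce existence, uniqueness, and the energy bound to spectral theory on $(X,g^+)$, and then to split the regularity analysis into an interior part (uniformly elliptic) and a boundary part (degenerate, handled via Fermi coordinates and the weighted Schauder theory of Appendix \ref{subsec:app2}). Setting $s=\tfrac{n}{2}+\ga$, Proposition \ref{prop:ext} shows that $U$ solves \eqref{eq:degeq6} if and only if $u:=\rhog^{n-s}U$ solves $(-\Delta_{g^+}-s(n-s))u=\rhog^{n-s+2}Q$ in $X$. Since $s(n-s)=\tfrac{n^2}{4}-\ga^2$ lies strictly below the bottom $\tfrac{n^2}{4}$ of the essential spectrum of $-\Delta_{g^+}$ on an asymptotically hyperbolic manifold and, by hypothesis, is not in $\sigma_{\pp}(-\Delta_{g^+})$, the Fredholm alternative produces a unique $u\in L^2(X,g^+)$; the right-hand side $\rhog^{n-s+2}Q$ indeed belongs to $L^2(X,g^+)$ thanks to $dv_{\bgg}=\rhog^{N}dv_{g^+}$ and the hypothesis $Q\in L^2(X;\rhog^{\mz},\bgg)$. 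Transposing the resulting $L^2$-bound through this equivalence, and combining with a weighted Poincar\'e inequality on $H^{1,2}_0(X;\rhog^{\mz},\bgg)$ (available from Lemma \ref{lemma:embed} and the trace $U|_M=0$) to absorb the bounded lower-order potential $R^{\mz}_{\bgg,\rhog}$, delivers \eqref{eq:extreg11}.

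For \eqref{eq:extreg12}, I split $\ox$ into $X\setminus\mct_{\delta_0/4}$ and $\overline{\mct_{\delta_0}}$. On $X\setminus\mct_{\delta_0/8}$ the weight $\rhog^{\mz}$ is bounded above and below, so \eqref{eq:degeq6} is uniformly elliptic with coefficients of class $C^{\max\{1,n-3\},\beta}$ by \eqref{eq:degeq55}, and classical Schauder theory yields the $C^{2,\beta}(X\setminus\mct_{\delta_0/4})$ bound. Near $M$, Fermi coordinates give $\bgg=h_{\rhog}(\bx)\oplus(d\rhog)^2$, so \eqref{eq:degeq6} becomes
\[
-\pa_{\rhog}\bigl(\rhog^{\mz}\pa_{\rhog}U\bigr)-\rhog^{\mz}\Delta_{h_{\rhog}}U+\textup{l.o.t.}=\rhog^{\mz}Q,
\]
with coefficients smooth in the tangential variable $\bx$. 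A first application of the weighted Dirichlet--Schauder theory from Appendix \ref{subsec:app2} gives $U\in C^{0,\beta}(\ox)$. To capture tangential derivatives, differentiate the equation $\ell$ times in $\bx$: since $\rhog^{\mz}$ commutes with $\nabla_{\bx}$, each $\nabla_{\bx}^{\ell}U$ satisfies an equation of the same structural form with right-hand side controlled by $\|\nabla_{\bx}^{\ell}Q\|_{L^{\infty}(\mct_{\delta_0})}$ and lower-order tangential quantities, and bootstrapping the degenerate Schauder estimate yields the $C^{0,\beta}$ bound on $\nabla_{\bx}^{\ell}U$ for $\ell=1,2$. Finally, integrating the equation in $\rhog$ from $0$ along each Fermi ray expresses $\rhog^{\mz}\pa_{\rhog}U$ as a one-dimensional integral of $\Delta_{h_t}U$, the potential term, and $Q$, whose $C^{0,\beta}$ regularity on $\overline{\mct_{\delta_0}}$ follows from the tangential bounds already established.

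The most delicate step is this boundary regularity: producing H\"older estimates up to $M$ for a degenerate operator whose weight $\rhog^{\mz}$ is of Muckenhoupt $A_2$-type and whose coefficients are only $C^{\max\{1,n-3\},\beta}$ in $\rhog$, due to the non-integer exponent $2\ga$ in the Chang--Gonz\'alez expansion and the $\rhog^{n}|\log\rhog|$ singularity of $h_{\rhog}$ when $n\ge 4$ is even. The scheme above succeeds precisely because the coefficients remain smooth in the tangential variable, so tangential differentiation preserves the structural form of the equation and the degeneracy interacts only with $\pa_{\rhog}$. The real technical engine is therefore the weighted Schauder-type result in Appendix \ref{subsec:app2}, which I would formulate with asymmetric regularity hypotheses---merely $C^{0,\beta}$ in the normal direction but smooth in the tangential ones---tailored to the present setting.
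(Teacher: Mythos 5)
Your regularity argument for \eqref{eq:extreg12} is essentially the paper's: classical Schauder estimates away from $M$, then Fermi coordinates, tangential differentiation of the equation, and recovery of $\rhog^{\mz}\pa_{\rhog}U$ by integrating the equation along the normal direction; this is exactly the content of Lemma \ref{lemma:ereg} combined with a covering argument, so that part is sound.

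The gap is in the existence step. You invoke the equivalence of Proposition \ref{prop:ext} and the $L^2(X,g^+)$ Fredholm alternative to produce $u$, and then ``transpose the $L^2$-bound through this equivalence.'' But the equivalence in Proposition \ref{prop:ext} is stated only for solutions admitting the structural expansion $u=F\rhog^{n-s}+G\rhog^s$ with $F,G\in C^2(\ox)$, and a bare $L^2(X,g^+)$ solution of $(-\Delta_{g^+}-s(n-s))u=\rhog^{n-s+2}Q$ is not known to have this form. In particular, neither the finite weighted energy $U=\rhog^{s-n}u\in H^{1,2}(X;\rhog^{\mz},\bgg)$ nor the trace condition $U|_M=0$ (equivalently $F|_M=0$) follows from $u\in L^2(X,g^+)$ alone; the zero Dirichlet datum is encoded precisely in the absence of the $\rhog^{n-s}$ leading term, which requires asymptotic information near $M$. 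This is why the paper first treats smooth $Q$: it constructs a formal (polyhomogeneous, with $\rhog^{j}|\log\rhog|^k$ corrections when $n\ge 4$ is even) series solution, upgrades it via Borel's lemma to an approximate solution with error in $\dot{C}^{\infty}(\ox)$, and only then applies the resolvent, using the Mazzeo--Melrose mapping property $R(s):\dot{C}^{\infty}(\ox)\to\rhog^{s}C^{\infty}(\ox)$ to guarantee that the correction decays like $\rhog^{s}$ and hence does not disturb the boundary condition. The general $Q\in L^2(X;\rhog^{\mz},\bgg)$ is then handled by approximation, which in turn requires the a priori estimate \eqref{eq:extreg11}; the paper obtains that estimate not from the resolvent bound but from a Caccioppoli inequality, uniqueness (via the barrier argument of Proposition \ref{prop:ext2}), and the compact embedding $H^{1,2}(X;\rhog^{\mz},\bgg)\hookrightarrow L^2(X;\rhog^{\mz},\bgg)$ through a contradiction argument. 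To repair your proof you would need to supply this polyhomogeneous construction (or an equivalent mapping-property argument for the resolvent on weighted spaces), together with an independent derivation of \eqref{eq:extreg11} that does not presuppose the solution already lies in $H^{1,2}_0(X;\rhog^{\mz},\bgg)$.
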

\begin{proof}
We divide the proof into three steps.

\medskip \noindent \textsc{Step 1: Existence result for smooth $Q$}.
We aim to build a solution to \eqref{eq:degeq6} provided $Q \in C^{\infty}(\ox)$. Considering Proposition \ref{prop:ext}, it is enough to find a solution to \eqref{eq:degeq32} with $f = 0$ on $M$.
We will achieve this goal by adapting the argument in \cite{GZ}; cf. \cite[Section 3]{MN}.

For the moment, we assume that the metric $h_{\rhog}$ in \eqref{eq:g+ ext} is smooth and even in $\rhog$ for all orders, which holds when $n$ is odd or $2$.
Let $\sum_{j=0}^{\infty} q_j\rhog^j$ be the formal Taylor expansion of $Q$ in $\rhog$ where $q_j \in C^{\infty}(M)$. For $j \in \N \cup \{0\}$ and $f_j \in C^{\infty}(M)$, it holds that
\[\left[-\Delta_{g^+}-s(n-s)\right]\(f_j\rhog^{n-s+j}\) = \rhog^{n-s+1} \mcd_{n,\ga} \(f_j\rho_g^j\) \quad \text{in } \mct_{\delta_0}\]
where
\begin{align*}
&\ \mcd_{n,\ga}\(f_j\rhog^j\) \\
&:= \left[-\rhog \pa_{\rhog}^2 + \left\{2\ga-1-\frac{\rhog}{2} \text{Trace}_{h_{\rhog}} \pa_{\rhog} h_{\rhog}\right\} \pa_{\rhog} - \frac{n-2\ga}{4} \text{Trace}_{h_{\rhog}} \pa_{\rhog} h_{\rhog} - \rhog \Delta_{h_{\rhog}}\right]\(f_j\rhog^j\) \\
&= -j(j-2\ga) f_j\rhog^{j-1} + O\(\rhog^j\).
\end{align*}
Beginning with $F_1 = 0$, we set $f_j$ and $F_j$ for $j \ge 2$ by
\[\begin{cases}
\displaystyle f_j = \frac{1}{j(j-2\ga)} \left[\Big(\rhog^{1-j}\mcd_{n,\ga}(F_{j-1}) - \sum_{k=0}^{j-3} q_k\rhog^{k+2-j}\Big)\Big|_{\rhog=0} - q_{j-2}\right],\\
\displaystyle F_j = F_{j-1}+f_j\rhog^j.
\end{cases}\]
Then $\rhog^{n-s} F_{\infty} := \rhog^{n-s} \sum_{j=2}^{\infty} f_j\rhog^j$ is a formal solution to $[-\Delta_{g^+}-s(n-s)]u-\rhog^{n-s+2}Q = O(\rhog^{\infty})$ in $\mct_{\delta_0}$.
By employing Borel's lemma, we can slightly adjust $F_{\infty}$ to construct $\wtf_{\infty} \in C^{\infty}(\ox)$
such that $[-\Delta_{g^+}-s(n-s)](\rhog^{n-s}\wtf_{\infty})-\rhog^{n-s+2}Q \in \dot{C}^{\infty}(\ox)$
where $\dot{C}^{\infty}(\ox)$ is the space of smooth functions on $\ox$ vanishing to infinite order on $M$. Finally, using \cite[Lemma 6.13]{MM}, we observe that
\[u = \rhog^{n-s}\wtf_{\infty} - R(s)\left[\{-\Delta_{g^+}-s(n-s)\}\(\rhog^{n-s}\wtf_{\infty}\)-\rhog^{n-s+2}Q\right] \in \rhog^{n-s}C^{\infty}(\ox) + \rhog^sC^{\infty}(\ox)\]
solves \eqref{eq:degeq32} with $f = 0$ on $M$.
Here, $R(s) = [-\Delta_{g^+}-s(n-s)]^{-1}$ is the resolvent, a bounded operator in $L^2(X,g^+)$ provided $\frac{n^2}{4}-\ga^2 \notin \sigma_{\pp}(-\Delta_{g^+})$. It maps $\dot{C}^{\infty}(\ox)$ to $\rhog^sC^{\infty}(\ox)$.

Generally, when $n \ge 4$ is even, the expansion of $\mcd_{n,\ga} (f_j\rho_g^j)$ may have log-order terms due to the part $\rhog \Delta_{h_{\rhog}} (f_j\rho_g^j)$.
Accordingly, it is necessary to modify $F_{\infty}$ into a polyhomogeneous function:
A smooth function $F_{\infty}$ in $\mct_{\delta_0}$ is said to be polyhomogeneous if there exist a 
sequence $\{k_j\}_{j=0}^{\infty}$ of nonnegative integers and functions $f_{jk} \in C^{\infty}(M)$ such that
\[F_{\infty} = \sum_{j=2}^{\infty} \sum_{k=0}^{k_j} f_{jk} \rhog^j|\log \rhog|^k \quad \text{and} \quad k_j = 0 \quad \text{for } \begin{cases}
j = 2,\ldots,n+1 &\text{if } n \ge 4 \text{ is even},\\
j \in \N &\text{if } n \text{ is odd or } 2;
\end{cases}\]
cf. \cite[Section 5]{CDLS}. Using this new $F_{\infty}$, we can construct a genuine solution to \eqref{eq:degeq32} as in the previous paragraph.

\medskip \noindent \textsc{Step 2: Existence result for $Q \in L^2(X;\rhog^{\mz},\bgg)$}.
We have a Caccioppoli estimate
\begin{equation}\label{eq:extreg21}
\|U\|_{H^{1,2}(X;\rhog^{\mz},\bgg)} \le C\left[\|U\|_{L^2(X;\rhog^{\mz},\bgg)} + \|Q\|_{L^2(X;\rhog^{\mz},\bgg)}\right].
\end{equation}
The proof of Proposition \ref{prop:ext2} implies that a solution to \eqref{eq:degeq6} is unique.
Thus a standard contradiction argument that utilizes \eqref{eq:extreg21}, the uniqueness property,
and a compact embedding $H^{1,2}(X;\rhog^{\mz},\bgg) \hookrightarrow L^2(X;\rhog^{\mz},\bgg)$ yields \eqref{eq:extreg11}.

Fix any $Q_{\infty} \in L^2(X;\rhog^{\mz},\bgg)$.
By applying a smooth partition of unity and convolution, one obtains a sequence $\{Q_i\}_{i=1}^{\infty}$ in $C^{\infty}(\ox)$ such that $Q_i \to Q_{\infty}$ in $L^2(X;\rhog^{\mz},\bgg)$ as $i \to \infty$.
According to the previous step, there is a unique solution $U_i$ to \eqref{eq:degeq6} with $Q = Q_i$.
Also, we see from \eqref{eq:extreg11} that $\{U_i\}_{i=1}^{\infty}$ is bounded in $H^{1,2}(X;\rhog^{\mz},\bgg)$.
Its weak limit $U_{\infty}$ solves \eqref{eq:degeq6} with $Q = Q_{\infty}$.

\medskip \noindent \textsc{Step 3: Existence result for regular $Q$}.
From the classical Schauder theory\footnote{Here and after, the `classical' theory
means that the theory for uniformly elliptic partial differential equations.} and a simple covering argument with Lemma \ref{lemma:ereg}, we deduce that
\begin{equation}\label{eq:extreg22}
\begin{cases}
\begin{medsize}
\displaystyle \|U\|_{C^{2,\beta}(X \setminus \mct_{\delta_0/4})}
\le C\left[\|U\|_{L^2(X \setminus \mct_{\delta_0/8};\rhog^{\mz},\bgg)} + \|Q\|_{C^{0,\beta}(X \setminus \mct_{\delta_0/8})} \right],
\end{medsize} \\
\begin{medsize}
\displaystyle \sum_{\ell=0}^2 \left\|\nabla_{\bx}^{\ell} U\right\|_{C^{0,\beta}(\overline{\mct_{\delta_0/2}})}
+ \left\|\rhog^{\mz}\pa_{\rhog} U\right\|_{C^{0,\beta}(\overline{\mct_{\delta_0/2}})}
\le C\left[\|U\|_{L^2(\mct_{\delta_0};\rhog^{\mz},\bgg)} + \sum_{\ell=0}^2 \left\|\nabla_{\bx}^{\ell} Q\right\|_{L^{\infty}(\mct_{\delta_0})}\right]
\end{medsize}
\end{cases}
\end{equation}
for some $\beta \in (0,1)$. Estimate \eqref{eq:extreg12} is a consequence of \eqref{eq:extreg22} and the uniqueness property of \eqref{eq:degeq6}.
\end{proof}

We close this section by quoting the comment of Case and Chang \cite{CC} on the spectral condition $\lambda_1(-\Delta_{g^+}) > \frac{n^2}{4}-\ga^2$.
This condition is necessary and sufficient for the unique solution $u$ to \eqref{eq:degeq32} with $Q=0$ and $f=1$ to be positive in $X$.
By Proposition \ref{prop:ext}, $U = \rhog^{s-n}u$ satisfies \eqref{eq:degeq31}, and hence $L^{\mz}_{\bgg,\rhog} U = 0$ in $X$.
The map $\rho^* := u^{\frac{2}{n-2\ga}}$ is a defining function of $M$ with expansion
\begin{equation}\label{eq:rho*_exp}
\rho^* = \rhog \left[1 + \frac{1}{d_{\ga}}\, Q^{\ga}_{g^+,\bh} \rhog^{2\ga} + O\(\rhog^{2\min\{1,2\ga\}}\)\right] \quad \textup{for } \rhog > 0 \textup{ small},
\end{equation}
where $Q^{\ga}_{g^+,\bh} := \frac{2}{n-2\ga} P^{\ga}_{g^+,\bh}1$ is the fractional scalar curvature on $(M,\bh)$. Hence \eqref{eq:degeq41} reads
\[P^{\ga}_{g^+,\bh} f - \frac{n-2\ga}{2}\, Q^{\ga}_{g^+,\bh} f = \frac{d_{\ga}}{2\ga} \lim_{\rho^* \to 0} (\rho^*)^{\mz}{\frac{\pa U}{\pa \rho^*}} \quad \textup{on } M\]
Because of its special role, $\rho^*$ is called the adapted defining function of $(M,\bh)$.\footnote{Because the adapted defining function allows one to handle the fractional scalar curvature directly, it became an essential tool in studying the fractional Yamabe problem \cite{GQ, GW, KMW, DSV, MN2}.}
Owing to \eqref{eq:wcl} and \eqref{eq:wcl2}, $R^{\mz}_{(\rho^*)^2g^+,\rho^*} = 0$ in $X$ so that $(\rho^*)^2g^+ \in \mcb_{g^+,\,[\bh]}$.

\section{Weighted Poisson kernel and weighted isoperimetric ratio}\label{sec:weight}
For this section, we recall Convention \ref{conv}.

\subsection{Weighted Poisson kernel}\label{subsec:wpk}
Let $\delta_0 > 0$ be a small number such that \eqref{eq:g+ ext} holds,
$\mct_{\delta_0} = M \times (0,\delta_0)$, and $\pi: \overline{\mct_{\delta_0}} \to M$ be the orthogonal projection onto $M$
so that any $\xi \in \overline{\mct_{\delta_0}}$ can be represented as $\xi = (\pi(\xi),\rhog(\xi))$ where $\rhog(\xi) \in [0,\delta_0]$ is the value of the geodesic defining function of $(M,\bh)$ at $\xi$.

\begin{definition}\label{def:wPk}
We say that $\mck^{\mz}_{\bg,\rho}: \ox \times M \to \R$ \footnote{Although the precise domain of $\mck^{\mz}_{\bg,\rho}$ is $\{(\xi,\sigma) \in \ox \times M: \xi \ne \sigma\}$,
we used a slight abuse of notation for the sake of brevity. We may set $\mck^{\mz}_{\bg,\rho}(\xi,\sigma) = +\infty$ if $\xi = \sigma \in M$.} is the weighted Poisson kernel if it satisfies the following property:
For any given $f \in C^{\infty}(M)$, a solution $U$ to \eqref{eq:degeq5} is written as
\begin{equation}\label{eq:Poibg}
U(\xi) = \(\mck^{\mz}_{\bg,\rho} f\)(\xi) := \int_M \mck^{\mz}_{\bg,\rho}(\xi,\sigma)f(\sigma) (dv_{\bh})_{\sigma} \quad \textup{for } \xi \in \ox.
\end{equation}
Formally, it holds that
\begin{equation}\label{eq:Poi01}
\begin{cases}
-\textup{div}_{\bg}\(\rho^{\mz} \nabla_{\bg} \mck^{\mz}_{\bg,\rho}(\cdot,\sigma)\) + \dfrac{n-2\ga}{4(n+1-2\ga)} \rho^{\mz} R^{\mz}_{\bg,\rho} \mck^{\mz}_{\bg,\rho}(\cdot,\sigma) = 0 &\textup{in } X,\\
\lim\limits_{\rhog(\xi) \to 0} \mck^{\mz}_{\bg,\rho}(\xi,\sigma) = \delta_{\sigma}(\pi(\xi)) &\textup{for } \xi \in \mct_{\delta_0}
\end{cases}
\end{equation}
given any $\sigma \in M$. Here, $\delta_{\sigma}$ signifies the Dirac delta measure at $\sigma$.
\end{definition}
\noindent Mayer and Ndiaye \cite{MN} previously established the existence of the Poisson kernel and its local expansion near $\sigma$ (similar to \eqref{eq:Poi03})
when $\ga \in (0,1) \setminus \{\frac{1}{2}\}$ and $\bgg$ is smooth on $\ox$; the latter condition is implicitly assumed in their paper.
In general, the tensor $h_*$ appearing in Subsection \ref{subsec:fcl} obstructs the smoothness of $\bgg$ on $M$, and $\bgg$ is smooth on $\ox$ only if $h_* = 0$ on $M$.
In the following proposition, we reformulate and refine the results in \cite{MN}, including the cases that $\ga = \frac{1}{2}$ and $\bgg$ is non-smooth.
\begin{prop}\label{prop:Poi}
Assume that $n \in \N$, $n > 2\ga$, $\ga \in (0,1)$, $\mz = 1-2\ga$, $(X,g^+)$ is CCE, and $\frac{n^2}{4}-\ga^2 \notin \sigma_{\pp}(-\Delta_{g^+})$.
Let $d_{\bgg}(\xi_1,\xi_2)$ be the distance between $\xi_1$ and $\xi_2$ on $\ox$ with respect to the metric $\bgg$
and $d_{\bh}(\sigma_1,\sigma_2)$ the distance between $\sigma_1$ and $\sigma_2$ on $M$ with respect to the metric $\bh$. We also define
\begin{equation}\label{eq:Poi02}
\mck_0^{\mz}(\xi,\sigma) = \ka_{n,\ga} \frac{\rhog(\xi)^{2\ga}}{[d_{\bh}(\pi(\xi),\sigma)^2 + \rhog(\xi)^2]^{\frac{n+2\ga}{2}}}
\end{equation}
for $\xi = (\pi(\xi),\rhog(\xi)) \in \overline{\mct_{\delta_0}}$ and $\sigma \in M$. 
Then there exists a weighted Poisson kernel $\mck_{\bgg,\rhog}^{\mz}$ having the following properties:

\medskip \noindent \textup{(a)} We choose small $r_1 \in (0,\frac{1}{2}\min\{\delta_0,\textup{inj}(M,\bh)\})$ where $\textup{inj}(M,\bh)$ is the injectivity radius of $(M,\bh)$.
Fixing $\sigma \in M$, let $x=(\bx,x_N) \in \overline{\mcc^N(0,r_1)} = \overline{B^n(0,r_1)} \times [0,r_1]$ be Fermi coordinates on $\ox$ around $\sigma$.
We set $r = |x|$ and $\theta = (\bth,\theta_N) = (\theta_1,\ldots,\theta_N) = \frac{x}{|x|} \in \S^n_+ := \S^n \cap \R^N_+$. Then
\begin{multline}\label{eq:Poi03}
\begin{medsize}
\mck_{\bgg,\rhog}^{\mz}((\exp_{\sigma}\bx,x_N),\sigma)
\end{medsize} \\
\begin{medsize}
\displaystyle =
\begin{cases}
\displaystyle \ka_{n,\ga} \Bigg[\frac{1}{r^n}\Bigg\{\theta_N^{2\ga} + \sum_{j=2}^{n+4}r^ja_j(\theta)
+ \sum_{j=n}^{n+4}\sum_{k=1}^{k_j} r^j|\log r|^ka_{jk}(\theta)\Bigg\} + \mcr(x)\Bigg]
&\text{if } \ga \in (0,1) \setminus \left\{\frac{1}{2}\right\}, \\
\displaystyle \ka_{n,1/2} \Bigg[\frac{1}{r^n}\Bigg\{\theta_N + \sum_{j=2}^{n+1}r^ja_j(\theta) + \sum_{j=n}^{n+1}\sum_{k=1}^{k_j} r^j|\log r|^ka_{jk}(\theta)\Bigg\} + \mcr(x)\Bigg]
&\text{if } \ga = \frac{1}{2}
\end{cases}
\end{medsize}
\end{multline}
for $x \in \overline{\mcc^N(0,\frac{r_1}{2})}$. Here, $\exp$ is the exponential map on $M$ and $\ka_{n,\ga} > 0$ is the number in \eqref{eq:gaharext2}. In addition,
\begin{itemize}
\item[-] $a_j$ is a function on $\overline{\S^n_+}$ such that $a_j = 0$ on $\pa\S^n_+$.
     The function $a_j$ itself, its tangential derivatives $\nabla_{\bth}^{\ell}a_j$ for $\ell \in \N$,
     and its weighted normal derivative $\theta_N^{\mz}\pa_{\theta_N}a_j$ are H\"older continuous on $\overline{\S^n_+}$.
     Also, $a_j, \nabla_{\bth}^{\ell} a_j \in \theta_N^{2\ga} C^0(\overline{\S^n_+})$ for $\ell \in \N$.
\item[-] $a_{jk}$ is a function on $\overline{\S^n_+}$ that vanishes on $\pa\S^n_+$ and has the same regularity as $a_j$. Also, $\{k_j\}_{j=n}^{\infty}$ is a sequence of nonnegative integers.
    If the metric $h_{\rhog}$ in \eqref{eq:g+ ext} is smooth and even in $\rhog$ for all orders, then
\begin{equation}\label{eq:Poi06}
\begin{cases}
a_{n+4} = a_{jk} = 0 &\text{if } \ga \in (0,1) \setminus \{\frac{1}{2}\},\\
a_{nk} = a_{(n+1)2} = \cdots = a_{(n+1)k_{n+1}} = 0 &\text{if } \ga = \frac{1}{2}.
\end{cases}
\end{equation}
\item[-] $\mcr$ is a function on $\overline{\mcc^N(0,r_1)}$ such that $\mcr = 0$ on $\overline{B^n(0,r_1)} \times \{0\}$.
    The functions $\mcr$, $\nabla_{\bx}\mcr$, $\nabla_{\bx}^2\mcr$, and $x_N^{\mz}\pa_{x_N}\mcr$ are H\"older continuous on $\overline{\mcc^N(0,r_1)}$. In particular, $\mcr \in x_N^{2\ga}C^0(\overline{\mcc^N(0,r_1)})$.
\end{itemize}

\medskip \noindent \textup{(b)} There exists a constant $C > 0$ depending only on $n$, $\ga$, $(\ox,\bgg)$, and $\rhog$ such that
\begin{equation}\label{eq:Poi21}
\left|\mck_{\bgg,\rhog}^{\mz}(\xi,\sigma)\right| \le C\mck_0^{\mz}(\xi,\sigma) \quad \text{for } (\xi,\sigma) \in \ox \times M
\end{equation}
and
\begin{align}
& \left|\(\mck_{\bgg,\rhog}^{\mz} - \mck_0^{\mz}\)(\xi,\sigma)\right| \le C \frac{\rhog(\xi)^{2\ga}}{d_{\bgg}(\xi,\sigma)^{n+2\ga-2}} \label{eq:Poi22}, \\
& \left|\nabla_{\pi(\xi)} \mck_{\bgg,\rhog}^{\mz}(\xi,\sigma)\right| \le C \frac{\rhog(\xi)^{2\ga}}{d_{\bgg}(\xi,\sigma)^{n+2\ga+1}},\quad
\left|\pa_{\rhog(\xi)} \mck_{\bgg,\rhog}^{\mz}(\xi,\sigma)\right| \le C \frac{\rhog(\xi)^{2\ga-1}}{d_{\bgg}(\xi,\sigma)^{n+2\ga}}, \label{eq:Poi23} \\
& \begin{medsize}
\displaystyle \left|\nabla_{\pi(\xi)} \(\mck_{\bgg,\rhog}^{\mz}-\mck_0^{\mz}\)(\xi,\sigma)\right| \le C \frac{\rhog(\xi)^{2\ga}}{d_{\bgg}(\xi,\sigma)^{n+2\ga-1}},\quad
\left|\pa_{\rhog(\xi)} \(\mck_{\bgg,\rhog}^{\mz}-\mck_0^{\mz}\)(\xi,\sigma)\right| \le C \frac{\rhog(\xi)^{2\ga-1}}{d_{\bgg}(\xi,\sigma)^{n+2\ga-2}}
\end{medsize}\label{eq:Poi24}
\end{align}
for $\xi = (\pi(\xi),\rhog(\xi)) \in \overline{\mct_{\delta_0}}$ and $\sigma \in M$. Here, $\nabla_{\pi(\xi)}$ denotes the tangential gradient.

\medskip \noindent \textup{(c)} For each fixed $\xi \in X$, the kernel $\mck_{\bgg,\rhog}^{\mz}(\xi,\sigma)$ is continuous with respect to the $\sigma$-variable.
Furthermore, there exists a constant $C > 0$ depending only on $n$, $\ga$, $(\ox,\bgg)$, and $\rhog$ such that
\begin{equation}\label{eq:Poi26}
\left|\nabla_{\sigma} \mck_{\bgg,\rhog}^{\mz}(\xi,\sigma)\right| \le C \frac{\rhog(\xi)^{2\ga}}{d_{\bgg}(\xi,\sigma)^{n+2\ga+1}},
\quad \left|\nabla_{\sigma} \(\mck_{\bgg,\rhog}^{\mz}-\mck_0^{\mz}\)(\xi,\sigma)\right| \le C \frac{\rhog(\xi)^{2\ga}}{d_{\bgg}(\xi,\sigma)^{n+2\ga-1}}
\end{equation}
for $(\xi,\sigma) \in \ox \times M$.

\medskip \noindent \textup{(d)} If $\lambda_1(-\Delta_{g^+}) > \frac{n^2}{4}-\ga^2$, then
\begin{equation}\label{eq:Poi25}
\mck_{\bgg,\rhog}^{\mz}(\xi,\sigma) \ge 0 \quad \text{for } (\xi,\sigma) \in \ox \times M.
\end{equation}
If $\xi \in X$ and $\sigma \in M$ are elements of the same connected component of $\ox$, then $\mck_{\bgg,\rhog}^{\mz}(\xi,\sigma) > 0$. Otherwise, $\mck_{\bgg,\rhog}^{\mz}(\xi,\sigma) = 0$.
\end{prop}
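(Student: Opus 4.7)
The plan is to first invoke Proposition~\ref{prop:ext2}: for any $f\in C^{\infty}(M)$ the Dirichlet problem \eqref{eq:degeq5} with $(\bg,\rho)=(\bgg,\rhog)$ has a unique solution $U$ in $H^{1,2}(X;\rhog^{\mz},\bgg)$, and Proposition~\ref{prop:ext3} applied to the difference of $U$ with a smooth extension of $f$ shows that $f\mapsto U(\xi)$ is a continuous linear functional on $C^{\infty}(M)$ for each $\xi\in X$. The Schwartz kernel theorem then furnishes a distributional kernel $\mck^{\mz}_{\bgg,\rhog}(\xi,\cdot)$ on $M$ representing $U$ as in \eqref{eq:Poibg}. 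The remaining content of the proposition is the structure and regularity of this kernel, which I would extract from a parametrix construction pinning down its leading behavior near the diagonal $\{\pi(\xi)=\sigma,\;\rhog(\xi)=0\}$.

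\textbf{Local expansion (a).} Fix $\sigma\in M$ and work in Fermi coordinates $x=(\bx,x_N)$, so that $\bgg=h_{x_N}\oplus dx_N^2$ with $h_0=\bh$. The leading candidate $\mck_0^{\mz}$ from \eqref{eq:Poi02} is precisely the Caffarelli-Silvestre kernel of \eqref{eq:degeq}--\eqref{eq:degeq2} once $h_{x_N}$ is frozen at $h_0$ and $d_{\bh}$ is replaced by Euclidean distance. To refine it, I substitute the ansatz in \eqref{eq:Poi03} into the first equation of \eqref{eq:Poi01}, expand the coefficients of $L^{\mz}_{\bgg,\rhog}$ using the polyhomogeneous Taylor expansion of $h_{x_N}$ from Subsection~\ref{subsec:fcl}, and collect like powers of $r$. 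Each order produces a linear equation of the form
\[
\theta_N^{-\mz}\,\textup{div}_{\S^n_+}\!\(\theta_N^{\mz}\nabla_{\S^n_+}a_j\) + c_j\,a_j = F_j(\theta) \quad \text{on } \S^n_+, \qquad a_j|_{\pa\S^n_+}=0,
\]
where $F_j$ is determined by the previously constructed terms. Each such equation is solvable within the weighted spherical harmonic framework developed in Appendix~\ref{sec:app}, and the log-order coefficients $a_{jk}$ must be introduced precisely at the orders $j\ge n$ where the polyhomogeneous expansion of $h_{x_N}$ generates logarithmic obstructions; this yields \eqref{eq:Poi06} when $h_{x_N}$ is smooth and even. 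Truncating at $j=n+4$ (resp.\ $j=n+1$ for $\ga=\tfrac12$) and applying Proposition~\ref{prop:ext3} to the residual equation produces the remainder $\mcr$ with the stated H\"older bounds on $\mcr$, $\nabla_{\bx}\mcr$, $\nabla_{\bx}^2\mcr$, and $x_N^{\mz}\pa_{x_N}\mcr$.

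\textbf{Main obstacle.} The hardest step is controlling the non-integer exponent $2\ga$ that enters $\theta_N^{2\ga}$ in the leading term and propagates into every correction $a_j$. Because $\theta_N^{2\ga}\notin C^1(\overline{\S^n_+})$ when $\ga\in(0,1)\setminus\{\tfrac12\}$, the standard smooth parametrix is unavailable, and I must work within the mixed regularity class $a_j\in\theta_N^{2\ga}C^0(\overline{\S^n_+})$ with H\"older tangential derivatives $\nabla_{\bth}^{\ell}a_j$ and H\"older weighted normal derivative $\theta_N^{\mz}\pa_{\theta_N}a_j$. Establishing solvability of the spherical equations in this class, reading off the correct log-order structure when $n\ge 4$ is even, and choosing the truncation order large enough to place $\mcr$ in the required H\"older-weighted spaces constitute the main technical burden of the proof. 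The bookkeeping parallels Step~1 of Proposition~\ref{prop:ext3} but relies on the refined spherical eigenvalue analysis of Appendix~\ref{sec:app}.

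\textbf{Global estimates and positivity.} Estimates (b) and (c) follow by combining (a) with classical elliptic regularity. When $d_{\bgg}(\xi,\sigma)\le r_1/2$, the bounds \eqref{eq:Poi21}--\eqref{eq:Poi24} and \eqref{eq:Poi26} are read off directly from \eqref{eq:Poi03} and from differentiating $\mck_0^{\mz}$ explicitly; when $d_{\bgg}(\xi,\sigma)\ge r_1/2$ they reduce via Lemma~\ref{lemma:ereg} applied to \eqref{eq:Poi01} to controlling the $L^{\infty}$ norm of the kernel on a compact set away from $\sigma$, which is harmless since $d_{\bgg}(\xi,\sigma)\ge c>0$ there. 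Continuity and differentiability in $\sigma$ are obtained by translating Fermi coordinates and repeating the expansion, which transfers the tangential $\sigma$-regularity of the smooth pieces to derivative bounds matching those in $\xi$. Finally, under $\lambda_1(-\Delta_{g^+})>\tfrac{n^2}{4}-\ga^2$, the weak maximum principle for $L^{\mz}_{\bgg,\rhog}$ of \cite[Lemma~6.1]{CC} applies, so the solution $U$ of \eqref{eq:degeq5} is nonnegative whenever $f\ge 0$; this transfers to $\mck^{\mz}_{\bgg,\rhog}\ge 0$ by approximation of Dirac measures by smooth nonnegative data, and the Harnack inequality for the degenerate operator upgrades it to strict positivity on each connected component of $\ox$ containing $\sigma$, while vanishing on disconnected components is immediate from uniqueness.
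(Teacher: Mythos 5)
Your construction of the kernel is essentially the paper's: an iterative expansion $r^{-n}\{\theta_N^{2\ga}+\sum r^j a_j+\dots\}$ in Fermi coordinates, with each $a_j$ solving a weighted equation on $\S^n_+$ (solvable for $\ga\ne\frac12$ because $(n-j)(j-2\ga)$ misses the spectrum of $\textup{div}_{g_{\S^n}}(\theta_N^{\mz}\nabla_{g_{\S^n}}\cdot)$, with a log-correction at $j=n+1$ when $\ga=\frac12$ and at $j\ge n$ when $h_{\rhog}$ has log terms), the remainder obtained from Proposition~\ref{prop:ext3}, and parts (b)--(d) derived as you describe. So the route for (a)--(d) matches.

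There is, however, a genuine gap in how you identify the constructed object with the weighted Poisson kernel of Definition~\ref{def:wPk}. The Schwartz kernel theorem only furnishes a distribution on $M$ for each fixed $\xi$; it does not give the pointwise integral representation \eqref{eq:Poibg} against $dv_{\bh}$, nor does it identify that distribution with your explicit parametrix. Conversely, once you build the explicit kernel (exact solution of the PDE in $\xi$ for each fixed $\sigma$, by absorbing the residual into $\mcr$ via Proposition~\ref{prop:ext3}), you still must verify that $f\mapsto\int_M\mck^{\mz}_{\bgg,\rhog}(\cdot,\sigma)f(\sigma)\,dv_{\bh}$ solves \eqref{eq:degeq5}: (i) that it attains the boundary value $f$, which requires the comparison estimate \eqref{eq:Poi22} together with the fact that $\int_M\mck_0^{\mz}(\xi,\sigma)f(\sigma)\,dv_{\bh}\to f(\xi_0)$ (the constant $\ka_{n,\ga}$ is calibrated exactly so that $\mck_{n,\ga}1=1$); and (ii) that it lies in $H^{1,2}(X;\rhog^{\mz},\bgg)$, which is not automatic because the naive gradient bound $\int_M\rhog^{2\ga-1}d_{\bgg}(\xi,\sigma)^{-(n+2\ga)}|f|\,dv_{\bh}$ is not square-integrable against $\rhog^{\mz}dv_{\bgg}$; one has to split the integral as in \eqref{eq:Poi32} and exploit cancellations (the difference $f(\sigma)-f(\pi(\xi))$, the difference $\mck^{\mz}_{\bgg,\rhog}-\mck_0^{\mz}$, and the near-constancy of $\int_M\mck_0^{\mz}\,dv_{\bh}$) to get the weighted $L^2$ gradient bound. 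Only after (i) and (ii) does the uniqueness assertion of Proposition~\ref{prop:ext2} apply and certify that your construction is the Poisson kernel; without this step the positivity argument in (d) (which compares $\mck^{\mz}_{\bgg,\rhog}f_j$ with the positive solution coming from the adapted defining function) also has nothing to stand on. This verification occupies a full step of the paper's proof and should be supplied.
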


To prove this proposition, we will employ the next two elementary results.
\begin{lemma}\label{lemma:Poi1}
Under the setting of Proposition \ref{prop:Poi}(a), the following expansions of the metric $\bgg$ hold near $0$:
\[\sqrt{|\bgg|}(x) = 1 - \frac{1}{2} R_{NN}[\bgg]x_N^2 - \sum_{k,l=1}^n\frac{1}{6} R_{kl}[\bh]x_kx_l + O\(|x|^3\),\quad \pa_{x_N}\sqrt{|\bgg|}(x) = O(x_N),\]
and
\[\bgg^{ij}(x) = \delta_{ij} + \frac{1}{3} \sum_{k,l=1}^n R_{ikjl}[\bh]x_kx_l + R_{iNjN}[\bgg] x_N^2 + O\(|x|^3\) \quad \text{for } i, j = 1,\ldots,n.\]
Here, $R_{ikjl}[\bh]$ and $R_{iNjN}[\bgg]$ are components of the Riemannian curvature tensors on $(M,\bh)$ and $(\ox,\bgg)$, respectively,
$R_{ij}[\bh] = \sum_{k=1}^n R_{ikjk}[\bh]$, and $R_{NN}[\bg] = \sum_{i=1}^n R_{iNiN}[\bg]$. Each tensor in the expansions is evaluated at $0$.
\end{lemma}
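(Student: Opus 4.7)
The plan is to use Fermi coordinates around $\sigma \in M$, with $(x_1,\ldots,x_n)$ playing the role of $\bh$-normal coordinates centered at $\sigma$ and $x_N = \rhog$ playing the role of the geodesic defining function. In this chart the metric $\bgg$ takes the product form $h_{\rhog}(\bx) + (dx_N)^2$, so the $(n+1)\times(n+1)$ matrix $(\bgg_{\alpha\beta})$ is block diagonal with tangential block $h_{\rhog,ij}$ and $\bgg_{NN} = 1$. Consequently $|\bgg|(x) = |h_{\rhog}|(\bx)$ and, for $i,j = 1,\ldots,n$, $(\bgg^{ij}(x))$ is the inverse of $(h_{\rhog,ij}(\bx))$. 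The whole computation thus reduces to expanding $h_{\rhog,ij}(\bx)$ to second order in $x$ and extracting determinant, square root, and inverse.

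The key expansion of $h_{\rhog}$ is a Taylor expansion in $\rhog$ around $\rhog = 0$, combined with the normal-coordinate expansion on $(M,\bh)$. Since $(M,\bh)$ is totally geodesic in $(\ox,\bgg)$ (noted in Subsection~\ref{subsec:fcl}), the second fundamental form vanishes and $\pa_{\rhog} h_{\rhog}|_{\rhog=0} = 0$. The Riccati equation for the shape operator of the level hypersurfaces $\{\rhog = \textup{const}\}$ then relates $\tfrac{1}{2}\pa_{\rhog}^2 h_{\rhog,ij}|_0$ to $-R_{iNjN}[\bgg]$ at $\sigma$. Combining this with the classical normal-coordinate expansion
\[
\bh_{ij}(\bx) = \delta_{ij} - \tfrac{1}{3}\sum_{k,l=1}^n R_{ikjl}[\bh]\, x_k x_l + O(|\bx|^3)
\]
yields
\[
\bgg_{ij}(x) = \delta_{ij} - \tfrac{1}{3}\sum_{k,l=1}^n R_{ikjl}[\bh]\, x_k x_l - R_{iNjN}[\bgg]\, x_N^2 + O(|x|^3).
\]
Writing $\bgg_{ij} = \delta_{ij} + A_{ij}$ with $A = O(|x|^2)$, the Neumann series $\bgg^{ij} = \delta_{ij} - A_{ij} + O(|x|^4)$ gives the stated expansion of $\bgg^{ij}$, and the identity $\det(I+A) = 1 + \textup{tr}(A) + O(|A|^2)$ combined with $R_{kl}[\bh] = \sum_i R_{ikil}[\bh]$ and $R_{NN}[\bgg] = \sum_i R_{iNiN}[\bgg]$ produces the expansion of $|\bgg|(x)$, whose square root (halving the quadratic coefficients) produces the claimed expansion of $\sqrt{|\bgg|}(x)$.

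The remaining bound $\pa_{x_N}\sqrt{|\bgg|}(x) = O(x_N)$ does not follow from simply differentiating the $O(|x|^3)$ remainder, and is best proved by evenness. Subsection~\ref{subsec:fcl} tells us that $h_{\rhog}$ expands in only even powers of $\rhog$ up to order $n-1$; hence $\sqrt{|\bgg|}$ admits a representation of the form $F_0(\bx) + x_N^2 F_2(\bx) + x_N^4 F_4(\bx) + \cdots$ with tangentially smooth bounded coefficients, valid through order $n-1$ in $x_N$. Every term of $\pa_{x_N}$ acting on this even part carries an explicit factor of $x_N$, yielding $O(x_N)$ uniformly in $\bx$. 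The main obstacle is controlling the log-order correction that enters for even $n \ge 4$: the first such term is of order $\rhog^n|\log \rhog|$, but its $\pa_{\rhog}$-derivative is $O(x_N^{n-1}|\log x_N|) = O(x_N)$ near $x_N = 0$ whenever $n \ge 2$, and is therefore absorbed into the desired bound. Once this log bookkeeping is settled, the lemma is a direct consequence of Fermi coordinates and the Gauss--Codazzi formalism.
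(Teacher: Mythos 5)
Your proposal is correct and follows the same underlying route as the paper: the paper's proof consists of citing Escobar's Lemmas 3.1 and 3.2 (the standard Fermi-coordinate expansions) together with the vanishing of the second fundamental form for a CCE manifold, and your derivation via the block structure $\bgg = h_{\rhog}\oplus(d\rhog)^2$, the Riccati equation, the normal-coordinate expansion of $\bh$, and the Neumann/determinant expansions is precisely the content of those cited lemmas specialized to the totally geodesic case. The one place where you go beyond the paper's one-line proof is the bound $\pa_{x_N}\sqrt{|\bgg|}=O(x_N)$ and the treatment of the $\rhog^n|\log\rhog|$ corrections for even $n\ge 4$, which the bare citation to Escobar (who works with smooth metrics) does not literally cover; your evenness argument handles this correctly, and an even quicker route is to note that $\pa_{x_N}\sqrt{|\bgg|}=\tfrac12\sqrt{|\bgg|}\,\textup{Trace}_{h_{\rhog}}\pa_{\rhog}h_{\rhog}$ together with the fact, recorded in Subsection \ref{subsec:fcl}, that $\textup{Trace}_{h_{\rhog}}\pa_{\rhog}h_{\rhog}\in\rhog C^{\infty}(\ox)$.
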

\begin{proof}
The derivation of the expansions are found in \cite[Lemmas 3.1 and 3.2]{Es}. We also use the fact that the second fundamental form on $M$ vanishes for a CCE manifold $X$.
\end{proof}
\begin{lemma}[Lemma 2.1 in \cite{FF}]\label{lemma:Poi2}
Let $U \in H^{1,2}(B^N_+(0,R))$ where $B^N_+(0,R) = B^N(0,R) \cap \R^N_+$ for $R > 0$. If $U(x) = a(\theta)b(r)$ a.e. for $x \in \R^N_+$ with $r=|x| < R$ and $\theta = \frac{x}{|x|} \in \S^n_+$, then
\[\textup{div}\(x_N^{\mz} \nabla U\) = \frac{1}{r^n} \pa_r\(r^{n+1-2\ga} \pa_r b(r)\) \theta_N^{1-2\ga}a(\theta) + \frac{1}{r^{1+2\ga}} b(r) \textup{div}_{g_{\S^n}}\(\theta_N^{\mz} \nabla_{g_{\S^n}} a(\theta)\)\]
in the weak sense.
\end{lemma}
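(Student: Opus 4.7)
The plan is to establish the identity in the distributional sense by testing the claimed formula against an arbitrary $\phi\in C_c^\infty(B^N_+(0,R))$ and reducing to a direct polar-coordinate computation. Concretely, I would verify
\[
-\int_{B^N_+(0,R)} x_N^{\mz}\nabla U\cdot\nabla\phi\, dx = \int_{B^N_+(0,R)}\left[\frac{\theta_N^{\mz}a(\theta)}{r^n}\pa_r\(r^{n+1-2\ga}b'(r)\) + \frac{b(r)}{r^{1+2\ga}}\textup{div}_{g_{\S^n}}\(\theta_N^{\mz}\nabla_{g_{\S^n}}a\)\right]\phi\, dx,
\]
which is the weak formulation of the asserted identity.

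First I would pass to polar coordinates $(r,\theta)\in(0,R)\times\S^n_+$ with $dx = r^n\,dr\,dv_{g_{\S^n}}(\theta)$, $x_N=r\theta_N$, and the orthogonal decomposition $\nabla V\cdot\nabla W = (\pa_rV)(\pa_rW)+r^{-2}\la\nabla_{g_{\S^n}}V,\nabla_{g_{\S^n}}W\ra_{g_{\S^n}}$. Substituting $U=a(\theta)b(r)$ yields $\pa_rU=a(\theta)b'(r)$ and $\nabla_{g_{\S^n}}U=b(r)\nabla_{g_{\S^n}}a(\theta)$, so the left-hand side splits as $-(I_r+I_\theta)$ with
\[
I_r=\int_0^R\!\!\int_{\S^n_+} r^{n+1-2\ga}\theta_N^{\mz}a(\theta)b'(r)\pa_r\phi\, dv_{g_{\S^n}}dr,\quad I_\theta=\int_0^R\!\!\int_{\S^n_+} r^{n-1-2\ga}\theta_N^{\mz}b(r)\la\nabla_{g_{\S^n}}a,\nabla_{g_{\S^n}}\phi\ra_{g_{\S^n}}dv_{g_{\S^n}}dr.
\]
Integration by parts in $r$ recasts $I_r$ as $-\int\!\!\int a(\theta)\theta_N^{\mz}\pa_r(r^{n+1-2\ga}b'(r))\phi\, dv_{g_{\S^n}}dr$; the boundary contribution at $r=R$ vanishes because $\phi$ is compactly supported in $B^N(0,R)$, and at $r=0$ the weight $r^{n+1-2\ga}$ vanishes since $n\ge 1$ and $\ga<1$. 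A second integration by parts, this time on $\S^n_+$ via the weighted divergence theorem, recasts $I_\theta$ as $-\int\!\!\int r^{n-1-2\ga}b(r)\textup{div}_{g_{\S^n}}(\theta_N^{\mz}\nabla_{g_{\S^n}}a)\phi\, dv_{g_{\S^n}}dr$, with no boundary term on $\pa\S^n_+=\{\theta_N=0\}$ because that locus corresponds to $\{x_N=0\}$, where $\phi$ vanishes by virtue of compact support in $B^N_+(0,R)\subset\{x_N>0\}$. Reabsorbing $r^n\,dr\,dv_{g_{\S^n}}$ back into $dx$ produces exactly the right-hand side.

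The main technical issue I expect is that at merely $H^{1,2}$ regularity the factors $a$ and $b$ need not be smooth individually, so the pointwise calculations above must be interpreted weakly. I would handle this by a standard approximation argument: one mollifies $a$ on $\S^n_+$ and $b$ on $(0,R)$ separately to obtain smooth sequences $a_k,b_k$ with $a_kb_k\to U$ in $H^{1,2}_{\textup{loc}}$, verifies the identity pointwise for the smooth approximants by the classical spherical decomposition of the Euclidean weighted Laplacian, and passes to the distributional limit against $\phi$ via dominated convergence. The test function $\phi$ keeps the integrands away from the degenerate loci $\{r=0\}$ and $\{\theta_N=0\}$, so all limit passages are routine. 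The cited reference \cite{FF} establishes essentially this computation, so the statement is quoted rather than reproved; the proposal above indicates how one recovers it directly.
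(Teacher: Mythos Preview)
Your proposal is correct and is precisely the standard polar-coordinate computation underlying the cited result. The paper does not supply its own proof of this lemma; it simply quotes Lemma~2.1 of \cite{FF}, and your argument reproduces that computation directly. One minor remark: since the origin lies on $\{x_N=0\}\subset\partial\R^N_+$, compact support of $\phi$ in $B^N_+(0,R)$ already forces $\phi$ to vanish near $r=0$, so the boundary term there drops out for the same reason as on $\{\theta_N=0\}$; your alternative justification via the vanishing weight $r^{n+1-2\ga}$ is also valid but unnecessary.
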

\begin{proof}[Proof of Proposition \ref{prop:Poi}]
The proof is decomposed into five parts.

\medskip \noindent \textsc{Step 1: Construction of $\mck_{\bgg,\rhog}^{\mz}$ and property} (a). We will present a proof motivated by \cite[Section 5]{HWY2} and \cite[Section 4]{MN}.

Let $x = r\theta \in \overline{\mcc^N(0,r_1)}$ be Fermi coordinates on $X$ around a fixed point $\sigma \in M$. By \eqref{eq:g+ ext},
\begin{multline}\label{eq:Poi41}
\begin{medsize}
\displaystyle \textup{div}_{\bgg}\(\rhog^{\mz} \nabla_{\bgg} U\) = \textup{div}\(x_N^{\mz} \nabla U\) + x_N^{\mz} \left[\sum_{i,j=1}^n \(\bgg^{ij}-\delta^{ij}\)\pa_{x_ix_j}U + \sum_{i,j=1}^n \pa_{x_i} \bgg^{ij} \pa_{x_j}U \right.
\end{medsize} \\
\begin{medsize}
\displaystyle \left. + \frac{1}{\sqrt{|\bgg|}} \sum_{i,j=1}^n \bgg^{ij}\pa_{x_i}\sqrt{|\bgg|}\pa_{x_j}U + \frac{1}{\sqrt{|\bgg|}}\pa_{x_N}\sqrt{|\bgg|}\pa_{x_N}U\right].
\end{medsize}
\end{multline}

\medskip
We first investigate the case when the metric $h_{\rhog}$ in \eqref{eq:g+ ext} is smooth and even in $\rhog$ for all orders. Let $\alpha \in \R$ and $a$ be a function on $\overline{\S^n_+}$ such that
\[(\mcd a)(\theta) := \(a, \nabla_{\bth}a, \nabla_{\bth}^2a, \theta_N\pa_{\theta_N}a, \theta_N\pa_{\theta_N}\nabla_{\bth}a,
\theta_N^2\pa_{\theta_N}^2a\)(\theta) \in \R^{N^2+2N+3} \quad \text{for } \theta \in \overline{\S^n_+}\]
has a finite $L^{\infty}(\S^n_+)$-norm. By employing \eqref{eq:degeq31}, Lemmas \ref{lemma:Poi1} and \ref{lemma:Poi2}, and \eqref{eq:Poi41}, we obtain
\begin{align*}
\rhog^{\mz} L^{\mz}_{\bgg,\rhog} \(r^{\alpha}a(\theta)\) &= -r^{\mz-2} \left[r^{\alpha} \left\{\text{div}_{g_{\S^n}}\(\theta_N^{\mz} \nabla_{g_{\S^n}} a(\theta)\) + \theta_N^{\mz} \alpha(n-2\ga+\alpha)a(\theta)\right\}\right. \\
&\hspace{55pt} \left.+ \theta_N^{\mz} O\(r^{\alpha+2}|\mcd a|\)\right].
\end{align*}

Starting with
\[a_0(\theta) = \theta_N^{2\ga},\quad a_1(\theta) = 0, \quad \text{and} \quad A_0(r,\theta) = A_1(r,\theta) = r^{-n}\theta_N^{2\ga},\]
we define $a_j$ and $A_j$ for an integer $j \ge 2$ by the equation
\begin{equation}\label{eq:aj}
\begin{cases}
\begin{medsize}
\displaystyle -\text{div}_{g_{\S^n}}\(\theta_N^{\mz} \nabla_{g_{\S^n}} a_j\) + \theta_N^{\mz} (n-j)(j-2\ga)a_j = \theta_N^{\mz} \left.\left[\(r^{\mz-2-n+j} \theta_N^{\mz}\)^{-1} \rhog^{\mz} L^{\mz}_{\bgg,\rhog}(A_{j-1})\right]\right|_{r=0}
\end{medsize}
&\begin{medsize}
\displaystyle \text{in } \S^n_+,
\end{medsize} \\
\begin{medsize}
\displaystyle a_j = 0
\end{medsize}
&\begin{medsize}
\displaystyle \text{on } \pa\S^n_+,
\end{medsize} \\
\begin{medsize}
a_j \in H^{1,2}\(\S^n_+;\theta_N^{\mz},g_{\S^n}\)
\end{medsize}
\end{cases}
\end{equation}
and the recurrence relation
\[A_j(x) := A_j(r,\theta) = A_{j-1}(r,\theta) + r^{-n+j}a_j(\theta) \quad \text{for } x = r\theta \in \overline{\mcc^N(0,r_1)}.\]
A direct computation yields
\[\rhog^{\mz} L^{\mz}_{\bgg,\rhog}(A_j) = r^{\mz-1-n+j} \theta_N^{\mz}\mce_j(r,\theta) \quad \text{for } j \in \N \cup \{0\}\]
where $\mce_j$ is a function in $\mcc^N(0,r_1)$ such that $\mce_j = O(\sum_{m=0}^{j-1} |\mcd a_m| + r|\mcd a_j|)$ for $j = 1$ and $O(r)$ for $j = 0$.
Hence the right-hand side of the first equation of \eqref{eq:aj} is well-behaved.

If $\ga \in (0,1) \setminus \{\frac{1}{2}\}$, Proposition \ref{prop:spherical} shows that
$(n-j)(j-2\ga) \notin \textup{Spec}(\textup{div}_{g_{\S^n}}(\theta_N^{\mz} \nabla_{g_{\S^n}} \cdot))$ for all $j \ge 2$. Thus, $a_j$ and $A_j$ are uniquely determined for all $j \in \N \cup \{0\}$.
By Lemma \ref{lemma:ereg2}, $a_j$, $\theta_N^{\mz}\pa_{\theta_N}a_j$, and their tangential derivatives of any order are H\"older continuous on $\overline{\S^n_+}$. 
Since $a_j = 0$ on $\pa\S^n_+$, it also holds that
\begin{equation}\label{eq:Poi47}
\left\|\theta_N^{-2\ga} \nabla_{\bth}^{\ell}a_j\right\|_{C^0(\overline{\S^n_+})}
\le C \left\|\theta_N^{\mz} \pa_{\theta_N}\nabla_{\bth}^{\ell}a_j\right\|_{C^0(\overline{\S^n_+})} \quad \text{for each } \ell \in \N \cup \{0\}.
\end{equation}
Using the above information and scrutinizing each term of $\mce_j$, we deduce that $\nabla_{\bx}^{\ell} \mce_j = O(r^{-\ell})$ for $\ell = 0, 1, 2$, which implies that
\begin{equation}\label{eq:Poi42}
\rhog^{\mz} L^{\mz}_{\bgg,\rhog}(A_{n+3}) = x_N^{\mz} r^2\mce_{n+3}(r,\theta) \quad \text{with } \nabla_{\bx}^{\ell} \(r^2\mce_{n+3}\) = O(1) \quad \text{for } \ell = 0, 1, 2.
\end{equation}

If $\ga = \frac{1}{2}$, namely, $\mz = 0$, then the above iteration scheme works only up to $j = n$, since $(n-(n+1))((n+1)-2\ga) = -n$ is the first eigenvalue of $\Delta_{g_{\S^n}}$. In this case, a log-order correction term leads to
\begin{align*}
L^0_{\bgg,\rhog}(A_{n+1} + |r\log r|a_*(\theta)) &= \frac{1}{r} \mce_n(0,\theta)
+ \frac{\log r}{r}[\Delta_{g_{\S^n}}a_*(\theta)+na_*(\theta)] \\
&\ + \frac{n+1}{r}a_*(\theta) - \frac{1}{r}[\Delta_{g_{\S^n}}a_{n+1}(\theta)+na_{n+1}(\theta)] + O(1).
\end{align*}
Let us write $\mce_n(0,\theta) = \mce_{n0}(\theta) + (n+1)\nu_n\theta_N$ where
\[\int_{\S^n_+} \mce_{n0}(0,\theta)\theta_N (dv_{g_{\S^n}})_{\theta} = 0 \quad \text{and} \quad
\nu_n := \frac{1}{n+1} \frac{\int_{\S^n_+}\mce_n(0,\theta)\theta_N (dv_{g_{\S^n}})_{\theta}}{\int_{\S^n_+} \theta_N^2 (dv_{g_{\S^n}})_{\theta}}.\]
According to the Fredholm alternative, the equation
\[\begin{cases}
- \Delta_{g_{\S^n}}a(\theta)-na(\theta) = -\mce_{n0}(0,\theta) &\text{in } \S^n_+,\\
a = 0 &\text{on } \pa\S^n_+,\\
\int_{\S^n_+} a(\theta)\theta_N (dv_{g_{\S^n}})_{\theta} = 0
\end{cases}\]
has a unique solution in $H^{1,2}(\S^n_+;\theta_N^{\mz},g_{\S^n})$, which we call $a_{n+1}$. We also set
\[a_*(\theta) = - \nu_n\theta_N \quad \text{and} \quad A_*(r,\theta) = A_{n+1}(r,\theta) + |r\log r|a_*(\theta).\]
Then
\begin{equation}\label{eq:Poi43}
L^0_{\bgg,\rhog}(A_*) = \mce_{n+1}(r,\theta) = O(1).
\end{equation}

Now, after choosing cut-off functions $\chi_{01} \in C^{\infty}_c(B^n(0,r_1))$ and $\chi_{02} \in C^{\infty}_c([0,r_1))$ such that $\chi_{01} = 1$ in $B^n(0,\frac{r_1}{2})$ and $\chi_{02} = 1$ on $[0,\frac{r_1}{2}]$,
we set
\begin{equation}\label{eq:chi0}
\chi_0(x) = \chi_{01}(\bx)\chi_{02}(x_N) \quad \text{for } x=(\bx,x_N) \in \mcc^N(0,r_1).
\end{equation}
If $\ga \in (0,1) \setminus \{\frac{1}{2}\}$, we then define $\mcr_{n+3}$ on $\ox$ as a solution to
\begin{equation}\label{eq:Poi44}
\begin{cases}
-\textup{div}_{\bgg}\(\rhog^{\mz} \nabla_{\bgg} \mcr_{n+3}\) + \dfrac{n-2\ga}{4(n+1-2\ga)} \rhog^{\mz} R^{\mz}_{\bgg,\rhog} \mcr_{n+3} = \rhog^{\mz} \mcq_{n+3} &\text{in } X,\\
\mcr_{n+3} = 0 &\text{on } M,\\
\mcr_{n+3} \in H^{1,2}(X;\rhog^{\mz},\bgg)
\end{cases}
\end{equation}
where
\begin{multline*}
\mcq_{n+3}(\xi) := -\left[\chi_0r^2\mce_{n+3} + A_{n+3} \Delta_{\bgg}\chi_0 + 2\la \nabla_{\bgg}A_{n+3}, \nabla_{\bgg}\chi_0 \ra_{\bgg} + \frac{\mz}{x_N} A_{n+3}\chi_{01}(\pa_{x_N}\chi_{02})\right](x) \\
\text{for } \xi = (\pi(\xi),\rhog(\xi)) \in \overline{B^n_{\bh}(\sigma,r_1)} \times [0,r_1] \text{ and } x = \(\exp_{\sigma}^{-1}(\pi(\xi)),\rhog(\xi)\) \in \overline{\mcc^N(0,r_1)}.
\end{multline*}
Note that $r^2 = |x|^2 = d_{\bh}(\xi,\sigma)^2+\rhog(\xi)^2$. By Proposition \ref{prop:ext3}, $\mcr_{n+3}$ is uniquely determined and meets the regularity condition specified in the statement.
If $\ga = \frac{1}{2}$, we define a function $\mcr_*$ on $\ox$ as a solution to \eqref{eq:Poi44} where $\mcq_{n+3}$ is replaced with
\[\mcq_*(\xi) := -\left[\chi_0\mce_{n+1} + A_* \Delta_{\bgg}\chi_0 + 2\la \nabla_{\bgg}A_*, \nabla_{\bgg}\chi_0 \ra_{\bgg} + \frac{\mz}{x_N} A_*\chi_{01}(\pa_{x_N}\chi_{02})\right](x).\]
The classical elliptic regularity implies that $\mcr_*$ satisfies the required regularity property.

For $(\xi,\sigma) \in \ox \times M$, let
\begin{equation}\label{eq:Poi45}
\mck_{\bgg,\rhog}^{\mz}(\xi,\sigma) = \begin{cases}
\displaystyle \ka_{n,\ga} \left[\chi_0(x)(A_{n+3})_{\sigma}(x) + (\mcr_{n+3})_{\sigma}(\xi)\right] &\text{if } \ga \in (0,1) \setminus \left\{\frac{1}{2}\right\},\\
\displaystyle \ka_{n,1/2} \left[\chi_0(x)(A_*)_{\sigma}(x) + (\mcr_*)_{\sigma}(\xi)\right] &\text{if } \ga = \frac{1}{2}
\end{cases}
\end{equation}
where $x = (\exp_{\sigma}^{-1}(\pi(\xi)),\rhog(\xi))$, which is well-defined since the support of $\chi_0$ is in $\overline{\mcc^N(0,r_1)}$.
The subscripts $\sigma$ are used to emphasize the dependence of $A_{n+3}$ and the other functions on $\sigma$.
Then, from \eqref{eq:Poi42}--\eqref{eq:Poi45}, we deduce that $\mck_{\bgg,\rhog}^{\mz}$ is a solution to the first equation of \eqref{eq:Poi01}.
Moreover, \eqref{eq:Poi45} boils down to \eqref{eq:Poi03} in $\overline{\mcc^N(0,\frac{r_1}{2})}$ where \eqref{eq:Poi06} and $a_{(n+1)1} = a_*$ hold.

\medskip
Let us turn to the case when the metric $h_{\rhog}$ has log-order terms in its expansion. In this case, it holds that \[\rhog^{\mz} L^{\mz}_{\bgg,\rhog}(A_{n-1}) = O\(x_N^{\mz}r^{-2}|\log r|\),\]
so the next-order term to $A_{n-1}$ must contain logarithms even if $\ga \in (0,1) \setminus \{\frac{1}{2}\}$. Using the identity
\begin{align*}
&\ \textup{div}\(x_N^{\mz} \nabla \(r^{\alpha}|\log r|^k a(\theta)\)\) \\
&= r^{\alpha+\mz-2} \left[\left\{\text{div}_{g_{\S^n}}\(\theta_N^{\mz} \nabla_{g_{\S^n}} a(\theta)\) + \theta_N^{\mz} \alpha(n-2\ga+\alpha)a(\theta)\right\} |\log r|^k \right. \\
&\hspace{55pt} \left. - k(n-2\ga+2\alpha)|\log r|^{k-1}\theta_N^{\mz}a(\theta) + k(k-1)|\log r|^{k-2} \theta_N^{\mz}a(\theta)\right]
\quad \text{for } \alpha,\, k \in \R
\end{align*}
and arguing as in the previous paragraphs, we observe that
\[\rhog^{\mz} L^{\mz}_{\bgg,\rhog}(A_n + |\log r|a_{n1}(\theta)) = O\(x_N^{\mz} r^{-1}|\log r|^{k_0}\) \quad \text{for some } k_0 \in \N.\]
By progressively extracting higher-order terms of $\mck_{\bgg,\rhog}^{\mz}(\cdot,\sigma)$, we can define
\begin{align}
&\ \begin{medsize}
\mck_{\bgg,\rhog}^{\mz}(\xi,\sigma)
\end{medsize} \label{eq:Poi04} \\
&= \begin{cases}
\begin{medsize}
\displaystyle \ka_{n,\ga} \Bigg[\chi_0(x)\Bigg\{(A_{n+4})_{\sigma}(x) + \sum_{j=n}^{n+4}\sum_{k=1}^{k_j} r^{-n+j}|\log r|^k(a_{jk})_{\sigma}(\theta)\Bigg\} + (\mcr_{n+4})_{\sigma}(\xi)\Bigg]
\end{medsize}
&\begin{medsize}
\text{if } \ga \in (0,1) \setminus \left\{\frac{1}{2}\right\},
\end{medsize} \\
\begin{medsize}
\displaystyle \ka_{n,1/2} \Bigg[\chi_0(x)\Bigg\{(A_{n+1})_{\sigma}(x) + \sum_{j=n}^{n+1}\sum_{k=1}^{k_j} r^{-n+j}|\log r|^k(a_{jk})_{\sigma}(\theta)\Bigg\} + (\mcr_{n+1})_{\sigma}(\xi)\Bigg]
\end{medsize}
&\begin{medsize}
\text{if } \ga = \frac{1}{2}
\end{medsize}
\end{cases} \nonumber
\end{align} 
for $(\xi,\sigma) \in \ox \times M$, where $x = (\exp_{\sigma}^{-1}(\pi(\xi)),\rhog(\xi))$.
Then it solves the first equation of \eqref{eq:Poi01}, and boils down to \eqref{eq:Poi03} in $\overline{\mcc^N(0,\frac{r_1}{2})}$.
It is worth mentioning that the number $k_j \in \N$ relies only on the metric $h_{\rhog}$ and is independent of a particular choice of $\sigma \in M$.

\medskip \noindent \textsc{Step 2: Property} (b). Assume that $\ga \in (0,1) \setminus \{\frac{1}{2}\}$. By Lemma \ref{lemma:ereg2} and the construction of $a_j$ and $a_{jk}$,
there exist numbers $C > 0$ and $\beta \in (0,1)$ depending only on $n$ and $\ga$ such that
\begin{multline}\label{eq:Poi46}
\sup_{\sigma \in M} \Bigg[\sum_{j=2}^{n+4} \left\|\big(\wmcd a_j\big)_{\sigma}\right\|_{C^{0,\beta}(\overline{\S^n_+})}
+ \sum_{j=n}^{n+4}\sum_{k=1}^{k_j} \left\|\big(\wmcd a_{jk}\big)_{\sigma}\right\|_{C^{0,\beta}(\overline{\S^n_+})}\Bigg] \le C \\
\quad \text{where } \wmcd a := (a, \nabla_{\bth}a, \theta_N^{\mz}\pa_{\theta_N}a).
\end{multline}
An analogous estimate holds for $\ga = \frac{1}{2}$.

We separately examine three distinct cases $\xi = (\pi(\xi),\rhog(\xi)) \in B^n_{\bh}(\sigma,\frac{r_1}{2}) \times [0,\frac{r_1}{2})$,
$\xi \in (M \setminus B^n_{\bh}(\sigma,\frac{r_1}{2})) \times [0,\frac{r_1}{2})$, and $\rho_g(\xi) \ge \frac{r_1}{2}$.
For each of these cases, we can verify \eqref{eq:Poi21}--\eqref{eq:Poi24} by utilizing \eqref{eq:Poi03}, \eqref{eq:Poi46},
and the uniform $C^1(B^n_{\bh}(\sigma,r_1))$-boundedness of the map $\exp_{\sigma}^{-1}$ with respect to $\sigma \in M$.
We leave the details to the reader.

\medskip \noindent \textsc{Step 3: Verification of \eqref{eq:Poibg}}. For any $f \in C^1(M)$, we will check that $\mck_{\bgg,\rhog}^{\mz}f$ solves \eqref{eq:degeq5} with $(\bg,\rho) = (\bgg,\rhog)$.

Thanks to the first equation of \eqref{eq:Poi01} and the classical elliptic regularity, the map $\xi \in X \mapsto \mck_{\bgg,\rhog}^{\mz}(\xi,\sigma)$ is smooth for each fixed $\sigma \in M$.
From this fact, we observe that $\mck_{\bgg,\rhog}^{\mz}f$ satisfies the first equation in \eqref{eq:degeq5}.

Suppose that $\xi \in X$ approaches to $\xi_0 \in M$. It holds that
\begin{multline}\label{eq:Poi31}
\left|\int_M \mck_{\bgg,\rhog}^{\mz}(\xi,\sigma)f(\sigma) (dv_{\bh})_{\sigma} - f(\xi_0)\right| \\
\le \left|\int_M \mck_0^{\mz}(\xi,\sigma)f(\sigma) (dv_{\bh})_{\sigma} - f(\xi_0)\right| + \int_M \left|\mck_{\bgg,\rhog}^{\mz}(\xi,\sigma)-\mck_0^{\mz}(\xi,\sigma)\right| |f(\sigma)| (dv_{\bh})_{\sigma}.
\end{multline}
The first term in the right-hand side of \eqref{eq:Poi31} is bounded by
\begin{multline*}
\left|\ka_{n,\ga} \int_{B^n(0,r_1)} \frac{x_N^{2\ga}}{(|\bx-\bw|^2+x_N^2)^{\frac{n+2\ga}{2}}} f(\bw) d\bw - f(0)\right| + o(1) \\
\le \ka_{n,\ga} \|f\|_{C^1(M)} \int_{B^n(0,r_1)} \frac{x_N^{2\ga}|\bw|}{(|\bx-\bw|^2+x_N^2)^{\frac{n+2\ga}{2}}} d\bw + o(1) = o(1).
\end{multline*}
Here, $o(1) \to 0$ as $\xi \to \xi_0$, and $\xi$ is identified with $x=(\bx,x_N) \in \overline{\mcc^N(0,r_1)}$ through Fermi coordinates on $\ox$ around $\xi_0$. 
Furthermore, the second term in the right-hand side of \eqref{eq:Poi31} converges to 0 as $\xi \to \xi_0$, because of \eqref{eq:Poi22} and
\[\int_M \frac{\rhog(\xi)^{2\ga}}{d_{\bgg}(\xi,\sigma)^{n+2\ga-2}} |f(\sigma)| (dv_{\bh})_{\sigma}
\le C\|f\|_{C^0(M)} \rhog(\xi)^{2\ga}.\]
Thus the second equation in \eqref{eq:degeq5} is fulfilled.

Next, for each $\xi = (\pi(\xi),\rhog(\xi)) \in M \times (0,r_1) \subset X$, we write
\begin{align}
(\mck_{\bgg,\rhog}^{\mz}f)(\xi) &= \int_M \mck_0^{\mz}(\xi,\sigma) [f(\sigma)-f(\pi(\xi))] (dv_{\bh})_{\sigma}
+ \int_M \left[\mck^{\mz}_{\bg,\rho}(\xi,\sigma) - \mck_0^{\mz}(\xi,\sigma)\right] f(\sigma) (dv_{\bh})_{\sigma} \nonumber \\
&\ + f(\pi(\xi)) \int_M \mck_0^{\mz}(\xi,\sigma) (dv_{\bh})_{\sigma} =: (\mcj_{11} + \mcj_{12} + \mcj_{13})(\xi). \label{eq:Poi32}
\end{align}
Then, by applying \eqref{eq:Poi02} and \eqref{eq:Poi24}, we compute
\begin{equation}\label{eq:Poi33}
\begin{aligned}
|\nabla \mcj_{11}(\xi)| &\le C\|f\|_{C^1(M)} \left[\int_M \left|\nabla_{\xi} \mck_0^{\mz}(\xi,\sigma)\right| d_{\bh}(\pi(\xi),\sigma) (dv_{\bh})_{\sigma} + 1\right] \\
&\le C\|f\|_{C^1(M)} \left[\int_M \frac{\rhog(\xi)^{2\ga-1}}{d_{\bgg}(\xi,\sigma)^{n+2\ga-1}} (dv_{\bh})_{\sigma} + 1\right] \\
&\le C\|f\|_{C^1(M)} \begin{cases}
\rhog(\xi)^{-\mz} &\text{for } \ga \in (0,\frac{1}{2}),\\
|\log \rhog(\xi)| &\text{for } \ga = \frac{1}{2},\\
1 &\text{for } \ga \in (\frac{1}{2},1)
\end{cases}
\end{aligned}
\end{equation}
and
\begin{equation}\label{eq:Poi34}
|\nabla \mcj_{12}(\xi)| \le C\|f\|_{C^0(M)} \int_M \frac{\rhog(\xi)^{2\ga-1}}{d_{\bgg}(\xi,\sigma)^{n+2\ga-2}} (dv_{\bh})_{\sigma} \le C\|f\|_{C^0(M)} \rhog(\xi)^{-\mz}.
\end{equation}
In order to estimate $|\nabla \mcj_{13}(\xi)|$, we associate $\xi \in B^n_{\bh}(\pi(\xi),r_1) \times (0,r_1)$ with $(0,x_N) \in \mcc^N(0,r_1)$
and $\sigma \in M$ with $\bw \in B^n(0,r_1)$ through Fermi coordinates $\ox$ around $(\pi(\xi),0)$. Since $\mck_{n,\ga}1 = 1$, we have
\begin{align*}
\left|\nabla_{\xi} \int_M \mck_0^{\mz}(\xi,\sigma) (dv_{\bh})_{\sigma}\right| &\le \int_{M \setminus B^n_{\bh}(\pi(\xi),r_1)} \left|(\nabla_{\xi} \mck_0^{\mz})(\xi,\sigma)\right| (dv_{\bh})_{\sigma} \\
&\ + \int_{\R^n \setminus B^n(0,r_1)} \left|(\nabla_x \mck_{n,\ga})((0,x_N),\bw)\right| d\bw \\
&\ + C\int_{B^n(0,r_1)} \left|(\nabla_x \mck_{n,\ga})((0,x_N),\bw)\right| |\bw|^2 d\bw \le C\rhog(\xi)^{-\mz}
\end{align*}
where we used \eqref{eq:Poi23} as well. It follows that
\begin{equation}\label{eq:Poi35}
|\nabla \mcj_{13}(\xi)| \le C\|f\|_{C^1(M)} + C\|f\|_{C^0(M)}\rhog(\xi)^{-\mz}.
\end{equation}
Putting \eqref{eq:Poi32}--\eqref{eq:Poi35} together, we see that
$\nabla(\mck_{\bgg,\rhog}^{\mz}f) \in L^2(X;\rhog^{\mz},\bgg)$.
Showing $\mck_{\bgg,\rhog}^{\mz}f \in L^2(X;\rhog^{\mz},\bgg)$ is simpler, so we omit it. It follows that $\mck_{\bgg,\rhog}^{\mz}f \in H^{1,2}(X;\rhog^{\mz},\bgg)$.

By virtue of the uniqueness assertion in Proposition \ref{prop:ext2}, a solution $U$ to \eqref{eq:degeq5} must satisfy $U = \mck_{\bgg,\rhog}^{\mz}f$, so $\mck_{\bgg,\rhog}^{\mz}$ is certainly a weighted Poisson kernel.

\medskip \noindent \textsc{Step 4: Property} (c). Let us prove \eqref{eq:Poi26}.
To this end, we examine each term consisting of $\mck_{\bgg,\rhog}^{\mz}$; refer to \eqref{eq:Poi45}--\eqref{eq:Poi04}. As a starting point, we consider
\begin{equation}\label{eq:Poi36}
\begin{aligned}
\chi_0(x)|x|^{-n}a_0\(\frac{x}{|x|}\) &= \chi_0(x) \frac{x_N^{2\ga}}{|x|^{n+2\ga}} \\
&= \chi_{01}\(\exp_{\sigma}^{-1}(\pi(\xi))\) \chi_{02}(\rhog(\xi)) \frac{\rhog(\xi)^{2\ga}}{[d_{\bh}(\pi(\xi),\sigma)^2+\rhog(\xi)^2]^{\frac{n+2\ga}{2}}}
\end{aligned}
\end{equation}
where $x = (\exp_{\sigma}^{-1}(\pi(\xi)),\rhog(\xi))$. The right-hand side of \eqref{eq:Poi36} is nonzero only if $d_{\bh}(\pi(\xi),\sigma) \in [0,r_1)$ and $\rhog(\xi) \in [0,r_1)$.
Hence, by adjusting the value of $r_1$ as needed, we ensure that its derivative with respect to $\sigma$ is well-defined and bounded by $C\rhog(\xi)^{2\ga}d_{\bgg}(\xi,\sigma)^{-(n+2\ga+1)}$.

We next treat
\begin{equation}\label{eq:Poi37}
\chi_0(x)|x|^{-n+2}(a_2)_{\sigma}\(\frac{x}{|x|}\) =
\frac{\chi_{01}\(\exp_{\sigma}^{-1}(\pi(\xi))\) \chi_{02}(\rhog(\xi))}{[d_{\bh}(\pi(\xi),\sigma)^2+\rhog(\xi)^2]^{\frac{n-2}{2}}}
(a_2)_{\sigma}\(\frac{\(\exp_{\sigma}^{-1}(\pi(\xi)),\rhog(\xi)\)}{[d_{\bh}(\pi(\xi),\sigma)^2+\rhog(\xi)^2]^{\frac{1}{2}}}\).
\end{equation}
Contrary to $a_0$, the function $(a_2)_{\sigma}$ depends on $\sigma$, so we have to verify the well-definedness of $\nabla_{\sigma}(a_2)_{\sigma}$ and determine its bound.
For $\sigma_1, \sigma_2 \in M$, we can derive an equation of $(a_2)_{\sigma_1}-(a_2)_{\sigma_2}$ from \eqref{eq:aj}.
An application of Lemma \ref{lemma:ereg2} to that equation yields numbers $C > 0$ and $\beta \in (0,1)$ depending only on $n$, $\ga$, $(\ox,\bgg)$, and $\rhog$ such that
\[\sum_{j=2}^{n+4} \left\|\big(\wmcd a_j\big)_{\sigma_1} - \big(\wmcd a_j\big)_{\sigma_2}\right\|_{C^{0,\beta}(\overline{\S^n_+})}
+ \sum_{j=n}^{n+4}\sum_{k=1}^{k_j} \left\|\big(\wmcd a_{jk}\big)_{\sigma_1} - \big(\wmcd a_{jk}\big)_{\sigma_2}\right\|_{C^{0,\beta}(\overline{\S^n_+})} \le Cd_{\bh}(\sigma_1,\sigma_2)\]
for any $\sigma_1, \sigma_2 \in M$; see \eqref{eq:Poi46} for the definition of $\wmcd$. Taking $\sigma_2 \to \sigma_1$ gives
\begin{equation}\label{eq:Poi38}
\sup_{\sigma \in M} \Bigg[\sum_{j=2}^{n+4} \left\|\wmcd \nabla_{\sigma} (a_j)_{\sigma}\right\|_{C^{0,\beta}(\overline{\S^n_+})}
+ \sum_{j=n}^{n+4}\sum_{k=1}^{k_j} \left\|\wmcd \nabla_{\sigma} (a_j)_{\sigma}\right\|_{C^{0,\beta}(\overline{\S^n_+})}\Bigg] \le C.
\end{equation}
By using \eqref{eq:Poi38} and considering \eqref{eq:Poi47} with the substitution of $\nabla_{\sigma}a_j$ for $a_j$, we see that the derivative of the right-hand side of \eqref{eq:Poi37} with respect to $\sigma$ is bounded by $C\rhog(\xi)^{2\ga}d_{\bgg}(\xi,\sigma)^{-(n+2\ga-1)}$.
Through similar reasoning, we can treat the terms of the form $\chi_0(x)|x|^{-n+j}(a_j)_{\sigma}(x/|x|)$ and $\chi_0(x)|x|^{-n+j}|\log|x||^k(a_{jk})_{\sigma}(x/|x|)$.

Lastly, employing \eqref{eq:Poi44} and \eqref{eq:extreg12}, we obtain $|\nabla_{\sigma}(\mcr_{n+3})_{\sigma}| \le C\rhog(\xi)^{2\ga}$; analogous estimates for $|\nabla_{\sigma}(\mcr_*)_{\sigma}|$, etc. are also available.
From the preceding discussion, we establish \eqref{eq:Poi26}.

In light of the previous analysis, it is clear that $\mck_{\bgg,\rhog}^{\mz}(\xi,\sigma)$ is continuous with respect to the $\sigma$-variable whenever $\rhog(\xi) > 0$.

\medskip \noindent \textsc{Step 5: Property} (d). Assume that $\lambda_1(-\Delta_{g^+}) > \frac{n^2}{4}-\ga^2$. We know that there exists the adapted defining function $\rho^*$ of $(M,\bh)$.

Let $f$ be a positive smooth function on $M$. Propositions \ref{prop:ext} and \ref{prop:ext2} tell us that
if $u$ is a solution to \eqref{eq:degeq32} with $Q = 0$, then $U^* := (\rho^*)^{s-n}u$ solves \eqref{eq:degeq5} with $(\bg,\rho) = ((\rho^*)^2g^+,\rho^*)$ (so that $R^{\mz}_{\bg,\rho} = 0$ in $X$)
and $\mck_{\bgg,\rhog}^{\mz}f = (\rho^*/\rhog)^{n-s} U^*$ on $\ox$. Also, by \eqref{eq:rho*_exp}, $U^* = f > 0$ on $M$.
Testing \eqref{eq:degeq5} with $(U^*)_- := \max\{-U^*,0\}$ and applying the classical strong maximum principle, we deduce $U^* > 0$ on $\ox$. It follows that 
$\mck_{\bgg,\rhog}^{\mz}f > 0$ on $\ox$.

Given any $\sigma_0 \in M$, let $\{f_j\}_{j \in \N}$ be an approximation to the identity on $M$ such that $f_j$ is positive and smooth on $M$ for all $j \in \N$
and $f_j \rightharpoonup \delta_{\sigma_0}$ in $\mcm(M)$ as $j \to \infty$ where $\mcm(M)$ is the space of bounded nonnegative measures on $M$.
Invoking Step 4, we easily verify that $(\mck_{\bgg,\rhog}^{\mz}f_j)(\xi) \to \mck_{\bgg,\rhog}^{\mz}(\xi,\sigma_0)$ for all $\xi \in X$.
From the inequality $\mck_{\bgg,\rhog}^{\mz}f_j > 0$ in $\ox$, we conclude that \eqref{eq:Poi25} is true.

For a fixed $\sigma \in M$, classical Harnack's inequality implies that the zero set of $\mck_{\bgg,\rhog}^{\mz}(\cdot,\sigma)$ is both open and closed in $X$.
On the other hand, we know from \eqref{eq:Poi03} that $\mck_{\bgg,\rhog}^{\mz}(\xi,\sigma) > 0$ if $\xi \in X$ is sufficiently close to $\sigma$.
Consequently, if $\xi$ and $\sigma$ are points in the same connected component of $\ox$, then $\mck_{\bgg,\rhog}^{\mz}(\xi,\sigma) > 0$.
Otherwise, we infer from \eqref{eq:Poi01} and Proposition \ref{prop:ext2} that $\mck_{\bgg,\rhog}^{\mz}(\xi,\sigma) = 0$ for all $(\xi,\sigma) \in X \times M$.
\end{proof}
\begin{remark}\label{remark:Poi}
We have two remarks for Proposition \ref{prop:Poi}.

\medskip \noindent
(1) By exploiting the conformal covariance property \eqref{eq:wcl2}, one can define a weighted Poisson kernel $\mck^{\mz}_{\bg,\rho}$
for an arbitrary pair $(\bg,\rho)$ of a defining function $\rho$ of $M$ and the compactified metric $\bg = \rho^2g^+$ by
\[\mck^{\mz}_{\bg,\rho}(\xi,\sigma) = \left[\frac{\rhog(\xi)}{\rho(\xi)}\right]^{\frac{n-2\ga}{2}}
\mck_{\bgg,\rhog}^{\mz}(\xi,\sigma) \quad \textup{for } (\xi,\sigma) \in \ox \times M.\]

\medskip \noindent
(2) Assume that $\ga \in (0,1) \setminus \left\{\frac{1}{2}\right\}$. While defining the kernel $\mck_{\bgg,\rhog}^{\mz}$ using \eqref{eq:Poi04}, it is possible to further decompose $(\mcr_{n+4})_{\sigma}$ into
\begin{equation}\label{eq:Poi05}
(\mcr_{n+4})_{\sigma}(\xi)
= \frac{\chi_0(x)}{r^n} \Bigg[\sum_{j=n+5}^{\ell} r^j(a_j)_{\sigma}(\theta)
+ \sum_{j=n+5}^{\ell}\sum_{k=1}^{k_j} \(r^j|\log r|^k(a_{jk}))_{\sigma}(\theta)\)_{\sigma}\Bigg] + (\mcr_{\ell})_{\sigma}(\xi)
\end{equation}
where $\ell$ is any integer satisfying $\ell \ge n+5$.
This decomposition is achieved by employing the uniqueness assertion for \eqref{eq:degeq5} in Proposition \ref{prop:ext2}.
Identity \eqref{eq:Poi05} is useful when one aims to control higher-order tangential derivatives of the kernel with respect to the both variables $\xi$ and $\sigma$,
since Lemma \ref{lemma:ereg}(a) guarantees that $(\mcr_{\ell})_{\sigma}$ has better regularity than $(\mcr_{n+4})_{\sigma}$.
For example, the $m$-th tangential derivatives of $(\mcr_{\ell})_{\sigma}$ with respect to $\xi$ exist and are H\"older continuous for all $m = 0,\ldots,\ell-(n+2)$.
The same idea can be applied when $\ga = \frac{1}{2}$.
\end{remark}

\subsection{Weighted isoperimetric ratio}
We are concerned with the relationship among the variational problem \eqref{eq:thetanw}, the Chang-Gonz\'alez extension \eqref{eq:degeq31} and \eqref{eq:degeq41}, and inequalities \eqref{eq:main11} and \eqref{eq:main12}.

\medskip
Suppose that $\th = f^{\frac{4}{n-2\ga}} \bh$ on $M$ for a positive function $f \in C^{\infty}(M)$, $\trh \in \Xi_{g^+,\th}$,
$\rhog$ is the geodesic defining function of $(M,\bh)$, $\bgg = \rhog^2 g^+$, and $\tg = \trh^2 g^+ = (\trh/\rhog)^2 \bgg \in \mcb_{g^+,\,[\bh]}$; refer to \eqref{eq:mcb}.
By \eqref{eq:wcl} and \eqref{eq:wcl2}, the function $U = (\trh/\rhog)^{\frac{n-2\ga}{2}}$ on $\ox$ is a positive solution to
\begin{equation}\label{eq:wir3}
\begin{cases}
L^{\mz}_{\bgg,\rhog} U = U^{\frac{n-2\ga+4}{n-2\ga}} \cdot \dfrac{n-2\ga}{4(n+1-2\ga)} R^{\mz}_{\tg,\trh} = 0 &\textup{in } X,\\
U = f &\textup{on } M.
\end{cases}
\end{equation}

We claim that $U \in H^{1,2}(X;\rhog^{\mz},\bgg)$. As mentioned in \cite[Lemma 2.3]{GQ}, if we write $\trh_\geo = e^w\rhog$ on $\ox$, then $w$ solves a first-order partial differential equation
\begin{equation}\label{eq:wir4}
\begin{cases}
\displaystyle \frac{\pa w}{\pa \rhog} + \frac{\rhog}{2} \left[\(\frac{\pa w}{\pa \rhog}\)^2 + |\nabla w|^2_{h_{\rhog}}\right] = 0 &\text{near } M,\\
e^{2w} = f^{\frac{4}{n-2\ga}} &\text{on } M
\end{cases}
\end{equation}
where the metric $h_{\rhog}$ on $M$ is defined by \eqref{eq:g+ ext}. Because this equation is non-characteristic, it has a unique solution near $M$, which must be $w$.
Also, because $\bgg$ belongs to the class $C^{3,\beta}$ for any $\beta \in (0,1)$, we have that $w \in C^3(\ox)$ (after extending $w$ suitably) and thus $\trh_\geo/\rhog \in C^3(\ox)$. 
Meanwhile, \eqref{eq:Xi} implies that
\[U = \(\frac{\trh}{\trh_\geo}\)^{\frac{n-2\ga}{2}} \(\frac{\trh_\geo}{\rhog}\)^{\frac{n-2\ga}{2}}
= \left[1 + \Phi \trh_\geo^{2\ga} + o'\(\trh_\geo^{2\ga}\)\right]^{\frac{n-2\ga}{2}} \(\frac{\trh_\geo}{\rhog}\)^{\frac{n-2\ga}{2}} \quad \text{near } M.\]
The assertion now follows.

Now, by virtue of Proposition \ref{prop:ext2}, we infer that $U = \mck_{\bgg,\rhog}^{\mz}f$ and
\begin{equation}\label{eq:wir0}
I_{n,\ga}(\ox,\tg,\trh) = \frac{\int_X \trh^{\mz} dv_{\tg}}{(\int_M dv_{\th})^{\frac{n-2\ga+2}{n}}}
= \left[\frac{\left\|\mck_{\bgg,\rhog}^{\mz}f\right\|_{L^{\frac{2(n-2\ga+2)}{n-2\ga}}(X;\rhog^{\mz},\bgg)}} {\|f\|_{L^{\frac{2n}{n-2\ga}}(M,\bh)}}\right]^{\frac{2(n-2\ga+2)}{n-2\ga}}.
\end{equation}
From this, we deduce the following result.
\begin{lemma}\label{lemma:wir}
Assume that $n \in \N$, $n > 2\ga$, $\ga \in (0,1)$, and $\lambda_1(-\Delta_{g^+}) > \frac{n^2}{4}-\ga^2$. Let
\begin{equation}\label{eq:mcs}
\begin{aligned}
\mcs_1 &= \sup\left\{\frac{\left\|\mck_{\bgg,\rhog}^{\mz}f\right\|_{L^{\frac{2(n-2\ga+2)}{n-2\ga}}(X;\rhog^{\mz},\bgg)}} {\|f\|_{L^{\frac{2n}{n-2\ga}}(M,\bh)}}: f \in C^{\infty}(M),\, f > 0 \textup{ on } M\right\},\\
\mcs_2 &= \sup\left\{\frac{\left\|\mck_{\bgg,\rhog}^{\mz}f\right\|_{L^{\frac{2(n-2\ga+2)}{n-2\ga}}(X;\rhog^{\mz},\bgg)}} {\|f\|_{L^{\frac{2n}{n-2\ga}}(M,\bh)}}: f \in L^{\frac{2n}{n-2\ga}}(M,\bh) \setminus \{0\}\right\},\\
\mcs_3 &= \sup\left\{\left\|\mck_{\bgg,\rhog}^{\mz}f\right\|_{L^{\frac{2(n-2\ga+2)}{n-2\ga}}(X;\rhog^{\mz},\bgg)}: \|f\|_{L^{\frac{2n}{n-2\ga}}(M,\bh)} = 1\right\}.
\end{aligned}
\end{equation}
Then it holds that
\begin{equation}\label{eq:wir1}
\left[\Theta_{n,\ga}(X,g^+,[\bh])\right]^{\frac{n-2\ga}{2(n-2\ga+2)}} = \mcs_1 = \mcs_2 = \mcs_3 < \infty.
\end{equation}
In particular,
\begin{equation}\label{eq:wir2}
\Theta_n\(\R^N_+, g_\ph^+, \left[4(1+|\bx|^4)^{-2}g_{\R^N}\right]\) = \Theta_{n,\ga}\(\B^N,g_\pb^+,[g_{\S^n}]\) = C_{n,\ga}^{\frac{2(n-2\ga+2)}{n-2\ga}}
\end{equation}
where $\bx \in \R^N_+$ and $C_{n,\ga} > 0$ is the constant in \eqref{eq:main11} and \eqref{eq:main12}.
\end{lemma}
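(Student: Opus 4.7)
The plan is to establish the chain of equalities \eqref{eq:wir1} via a correspondence between smooth positive boundary data on $M$ and admissible metrics in $\mcb_{g^+,[\bh]}$, then to deduce \eqref{eq:wir2} by specializing to the ball model using Corollary \ref{cor:main12}(a) and transferring to the half-space through the M\"obius map.

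For $[\Theta_{n,\ga}(X,g^+,[\bh])]^{(n-2\ga)/(2(n-2\ga+2))} = \mcs_1$, one direction is already essentially contained in the computation \eqref{eq:wir0} preceding the lemma: given $\tg = \trh^2g^+ \in \mcb_{g^+,[\bh]}$, writing $\th = \tg|_{TM} = f^{4/(n-2\ga)}\bh$ produces a positive $f \in C^{\infty}(M)$, and the unique solvability in Proposition \ref{prop:ext2} applied to \eqref{eq:wir3} identifies $(\trh/\rhog)^{(n-2\ga)/2} = \mck^{\mz}_{\bgg,\rhog}f$, so that $I_{n,\ga}(\ox,\tg,\trh)$ equals the Poisson ratio on the right of \eqref{eq:wir0}. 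Conversely, given positive $f \in C^{\infty}(M)$, I would set $U = \mck^{\mz}_{\bgg,\rhog}f$, which is positive by Proposition \ref{prop:Poi}(d) since $\lambda_1(-\Delta_{g^+}) > n^2/4 - \ga^2$, and define $\trh = \rhog\, U^{2/(n-2\ga)}$. The expansion $U = F + G\rhog^{2\ga}$ from Proposition \ref{prop:ext2} verifies $\trh \in \Xi_{g^+,\th}$ with $\th = f^{4/(n-2\ga)}\bh$, and \eqref{eq:wcl2} gives $R^{\mz}_{\tg,\trh} = 0$, placing $\tg \in \mcb_{g^+,[\bh]}$.

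The equalities $\mcs_1 = \mcs_2 = \mcs_3$ reduce to three observations. First, $\mcs_2 = \mcs_3$ is immediate from the $0$-homogeneity of the quotient in $f$, and $\mcs_1 \le \mcs_2$ is trivial. For $\mcs_2 \le \mcs_1$, nonnegativity of $\mck^{\mz}_{\bgg,\rhog}$ from Proposition \ref{prop:Poi}(d) gives $|\mck^{\mz}_{\bgg,\rhog}f| \le \mck^{\mz}_{\bgg,\rhog}|f|$; I would then mollify $|f| + \ep_j$ to produce positive $f_j \in C^{\infty}(M)$ with $f_j \to |f|$ in $L^{2n/(n-2\ga)}(M,\bh)$, and pass to the limit using the continuity of $\mck^{\mz}_{\bgg,\rhog}: L^{2n/(n-2\ga)}(M,\bh) \to L^{2(n-2\ga+2)/(n-2\ga)}(X;\rhog^{\mz},\bgg)$. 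This continuity, which also supplies $\mcs_3 < \infty$, will be obtained by combining the sharp pointwise bound \eqref{eq:Poi21} near $M$ with interior elliptic estimates away from $M$; via a finite Fermi coordinate cover of $\overline{\mct_{\delta_0}}$ the local estimates reduce to the Euclidean $L^p \to L^q$ mapping property of the Caffarelli--Silvestre Poisson operator, which is treated independently in the paper.

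For \eqref{eq:wir2}, I would specialize to $(X,g^+) = (\B^N,g_\pb^+)$ with defining function $\rho_\pb$ and compactified metric $\bg_\pb$. Since $\lambda_1(-\Delta_{g_\pb^+}) = n^2/4 > n^2/4 - \ga^2$, the chain \eqref{eq:wir1} applies; Corollary \ref{cor:main12}(a) then states exactly $\mcs_3 = C_{n,\ga}$ for this model, and the smooth positive extremizer exhibited there realizes $\mcs_1 = C_{n,\ga}$ as well. Raising to the $2(n-2\ga+2)/(n-2\ga)$ power yields the second equality in \eqref{eq:wir2}. The first equality follows from conformal invariance: $\phi_\m$ in \eqref{eq:Mobius} is a CCE isometry between $(\R^N_+,g_\ph^+)$ and $(\B^N,g_\pb^+)$, and it pulls the boundary conformal class $[g_{\S^n}]$ back to the class on the ideal boundary of the half-space indicated in the statement, preserving both the set $\mcb_{g^+,[\bh]}$ and the ratio $I_{n,\ga}$. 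The main obstacle is the density step above, specifically extracting the uniform $L^p$--$L^q$ boundedness of $\mck^{\mz}_{\bgg,\rhog}$ in the curved setting, which must be read off carefully from the local kernel estimates in Proposition \ref{prop:Poi} together with the Euclidean Poisson bound.
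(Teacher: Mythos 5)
Your proposal is correct and follows essentially the same route as the paper: the identification of $I_{n,\ga}$ with the Poisson ratio via \eqref{eq:wir0}, the reverse construction $\trh = \rhog\, U^{2/(n-2\ga)}$ checked against $\Xi_{g^+,\th}$ through the expansion $U = F + G\rhog^{2\ga}$ and the geodesic defining function equation, the scaling/nonnegativity/density argument for $\mcs_1=\mcs_2=\mcs_3$ with finiteness from the curved analogue of Lemma \ref{lemma:Poiest1}, and the specialization to the ball and half-space models for \eqref{eq:wir2}. The only place you are lighter than the paper is in verifying the precise $o'(\trh_\geo^{2\ga})$ asymptotics relative to $\trh_\geo$ (which requires the tangential regularity $F = f + O'(\rhog^2)$, $G = g^{(0)} + O'(\rhog^2)$ and the first-order PDE for $w$ with $\trh_\geo = e^w\rhog$), but the ingredients you cite are the right ones.
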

\begin{proof}
In Lemma \ref{lemma:Poiest2}, we will show that $\mcs_3 < \infty$. The equality $\mcs_2$ = $\mcs_3$ is a consequence of a simple scaling argument.
In addition, $\mcs_1 = \mcs_2$ follows from \eqref{eq:Poi25} and the fact that a nonzero nonnegative function in $L^{\frac{2n}{n-2\ga}}(M)$ can be approximated with arbitrary precision by positive smooth functions on $M$.

We next verify the first equality in \eqref{eq:wir1}. By the paragraphs before the statement of Lemma \ref{lemma:wir}, it holds that $[\Theta_{n,\ga}(X,g^+,[\bh])]^{\frac{n-2\ga}{2(n-2\ga+2)}} \le \mcs_1$.
Hence it suffices to check the reverse inequality.
Given a positive function $f \in C^{\infty}(M)$, we set $U = \mck_{\bgg,\rhog}^{\mz}f$ on $\ox$, $\tg = U^{\frac{4}{n-2\ga}} \bgg$ on $\ox$, and $\th = \tg|_{TM} = f^{\frac{4}{n-2\ga}} \bh \in [\bh]$.
By Proposition \ref{prop:Poi}(d) and \eqref{eq:wir3}, 
$\trh = U^{\frac{2}{n-2\ga}}\rhog$ is a defining function of $M$ and $R^{\mz}_{\tg,\trh} = 0$ in $X$.
Because $U \in H^{1,2}(X;\rhog^{\mz},\bgg)$ owing to Step 3 of the proof of Proposition \ref{prop:Poi} (see also Definition \ref{def:wPk}), Proposition \ref{prop:ext2} implies that $U = F + G\rhog^{2\ga}$ on $\ox$ with $F,\, G \in C^2(\ox)$. We also have that
\[F = f + O'(\rhog^2) \text{ and } G = g^{(0)} + O'(\rhog^2) \quad \text{for some } g^{(0)} \in C^{\infty}(M).\] 
Here, $O'(\rhog^2)$ denotes a function on $\ox$ such that both the function and its tangential derivatives of any order are bounded by $C\rhog^2$, and its normal derivative is bounded by $C\rhog$ for some $C > 0$.
Equation \eqref{eq:wir4} gives that $\frac{\pa w}{\pa \rhog} = 0$ on $M$, so
\[\frac{\trh_\geo}{\rhog} = e^w = f^{\frac{2}{n-2\ga}} + O'\(\rhog^2\) \quad \text{near } M.\] 
Thus
\[\begin{aligned}
\trh &= \rhog \(F + G\rhog^{2\ga}\)^{\frac{2}{n-2\ga}} = \rhog f^{\frac{2}{n-2\ga}} \left[1 + \frac{2}{n-2\ga} \frac{g^{(0)}}{f} \rhog^{2\ga} + o'\(\rhog^{2\ga}\)\right] \\
&= \trh_\geo \left[1 + \frac{2}{n-2\ga} g^{(0)}f^{-\frac{n+2\ga}{n-2\ga}} \trh_\geo^{2\ga} + o'\(\trh_\geo^{2\ga}\)\right]
\end{aligned} \quad \text{near } M,\]
so $\trh \in \Xi_{g^+,\th}$. Consequently, $\tg \in \mcb_{g^+,\,[\bh]}$ and \eqref{eq:wir0} yields that $[\Theta_{n,\ga}(X,g^+,[\bh])]^{\frac{n-2\ga}{2(n-2\ga+2)}} \ge \mcs_1$.

\medskip
Finally, \eqref{eq:wir2} follows from \eqref{eq:main11}, \eqref{eq:main12}, \eqref{eq:wir1}, and the relation $\phi_\m^*g_\pb^+ = g_\ph^+$.
\end{proof}

\section{Results on the upper half-space}\label{sec:main11}
\subsection{Basic inequalities}
We derive inequality \eqref{eq:main11} under a broader setting.
\begin{lemma}\label{lemma:Poiest1}
Assume that $n \in \N$ and $\ga \in (-\infty,1)$. There exists a constant $C > 0$ depending only on $n$, $\ga$, and $p$ such that
\begin{equation}\label{eq:Poiest11}
\|\mck_{n,\ga}f\|_{L^{\frac{(n-2\ga+2)p}{n}}(\R^N_+;x_N^{\mz})} \le C\|f\|_{L^p(\R^n)}
\end{equation}
for all $p \in (1,\infty]$ and $f \in L^p(\R^n)$, and
\begin{equation}\label{eq:Poiest12}
\|\mck_{n,\ga}f\|_{L^{\frac{(n-2\ga+2)p}{n}}(\R^N_+;x_N^{\mz})} \le C\|f\|_{L^{p,\frac{(n-2\ga+2)p}{n}}(\R^n)}
\end{equation}
for all $p \in (1,\infty)$ and $f \in L^p(\R^n)$.
\end{lemma}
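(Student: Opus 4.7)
The strategy is to work slice-by-slice, writing $\mck_{n,\ga}f(\bx, x_N) = (f \ast P_{x_N})(\bx)$, where
\[P_{x_N}(\bx) := \ka_{n,\ga}\frac{x_N^{2\ga}}{(|\bx|^2+x_N^2)^{(n+2\ga)/2}}\]
is the $L^1$-normalized Poisson-type slice at height $x_N$. First I would establish two complementary pointwise bounds. On one hand, Hölder's inequality in $\bw$ together with the rescaling $\bw \mapsto x_N\bw$ yields $|\mck_{n,\ga}f(\bx, x_N)| \le \|P_{x_N}\|_{L^{p'}(\R^n)}\|f\|_{L^p(\R^n)} \le Cx_N^{-n/p}\|f\|_{L^p(\R^n)}$, which holds whenever $\ga > -n/(2p)$ so that $P_1 \in L^{p'}(\R^n)$. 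On the other hand, a dyadic decomposition of $\R^n$ into annuli around $\bx$ of radii comparable to $2^k x_N$, together with the elementary bound $P_{x_N}(\bx-\bw) \lesssim 2^{-k(n+2\ga)} x_N^{-n}$ on the $k$-th shell, gives (after summing a geometric series in $k$, which converges when $\ga > 0$) $|\mck_{n,\ga}f(\bx, x_N)| \le C\, Mf(\bx)$, where $Mf$ denotes the Hardy-Littlewood maximal function of $f$ on $\R^n$.

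Combining these two bounds yields $|\mck_{n,\ga}f(x)| \le C\min\{Mf(\bx),\, x_N^{-n/p}\|f\|_{L^p(\R^n)}\}$. Raising to the power $q := (n-2\ga+2)p/n$, integrating against $x_N^{\mz}dx_N$, and splitting the $x_N$-integral at the natural crossover $\tau(\bx) := (\|f\|_{L^p}/Mf(\bx))^{p/n}$, a short computation using the identities $1+\mz = 2-2\ga$, $nq/p = n-2\ga+2$, and $(2-2\ga)p/n = q-p$ shows that the inner and outer pieces both contribute $C\, Mf(\bx)^p \|f\|_{L^p}^{q-p}$. Integrating in $\bx$ and invoking the $L^p$-boundedness of the Hardy-Littlewood operator (valid for $p > 1$) then closes the argument and produces \eqref{eq:Poiest11}. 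The endpoint $p = \infty$ is trivial since $\|\mck_{n,\ga}f\|_\infty \le \|f\|_\infty$.

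For \eqref{eq:Poiest12}, the plan is to invoke the Marcinkiewicz real interpolation theorem in its Lorentz-space form. Pick $1 < p_0 < p < p_1 < \infty$ with associated critical exponents $q_i := (n-2\ga+2)p_i/n$; because $1/p_i \mapsto 1/q_i$ is affine, interpolating at the parameter $\theta \in (0,1)$ satisfying $1/p = (1-\theta)/p_0 + \theta/p_1$ automatically places $q = (n-2\ga+2)p/n$. The strong-type estimates of \eqref{eq:Poiest11} then interpolate to $\mck_{n,\ga}: L^{p,r}(\R^n) \to L^{q,r}(\R^N_+;x_N^{\mz})$ for every $r \in [1,\infty]$, and the choice $r = q$ lands in $L^{q,q} = L^q$, which is exactly \eqref{eq:Poiest12}.

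The one genuine technical point is the pointwise maximal bound: the geometric sum $\sum_k 2^{-2\ga k}$ converges only for $\ga > 0$, so the naive argument fails for $\ga \in (-n/(2p), 0]$. In that remaining range I would run a balanced Hedberg-type estimate, truncating the dyadic sum at a threshold $k_0 = k_0(\bx, x_N)$ that equalizes the interior $Mf$-contribution and the exterior Hölder contribution; this produces a refined pointwise bound of the form
\[|\mck_{n,\ga}f(x)| \le C\, x_N^{2\ga}\, Mf(\bx)^{1+2\ga p/n}\|f\|_{L^p}^{-2\ga p/n} \quad \text{on } \{x_N \le \tau(\bx)\},\]
which merges continuously with the Hölder bound on $\{x_N > \tau(\bx)\}$. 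Rerunning the same $x_N$-integration and Hardy-Littlewood step then closes the argument uniformly across the stated range.
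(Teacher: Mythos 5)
For $\ga\in(0,1)$ --- the only range actually used elsewhere in the paper --- your argument is correct but follows a genuinely different route. The paper's proof is an interpolation argument at the endpoints: it deduces the weak-type $(1,\tfrac{n-2\ga+2}{n})$ estimate by following \cite[Proposition 2.1]{HWY}, pairs it with the trivial bound $\|\mck_{n,\ga}f\|_{L^\infty}\le\|f\|_{L^\infty}$, and obtains both \eqref{eq:Poiest11} and \eqref{eq:Poiest12} from the standard and off-diagonal Marcinkiewicz interpolation theorems. You instead prove the strong bound \eqref{eq:Poiest11} directly from the pointwise domination $|\mck_{n,\ga}f(x)|\le C\min\{Mf(\bx),\,x_N^{-n/p}\|f\|_{L^p}\}$ and an explicit $x_N$-integration; your exponent bookkeeping is right (with $q=\tfrac{(n-2\ga+2)p}{n}$ the two pieces at the crossover $\tau(\bx)$ both give $Mf(\bx)^p\|f\|_{L^p}^{q-p}$, and the Hardy--Littlewood maximal theorem finishes). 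You then recover \eqref{eq:Poiest12} by Lorentz-space interpolation between two strong off-diagonal bounds, which is the same mechanism the paper uses, just with different endpoints. Your route avoids the endpoint weak-type computation at $p=1$ at the cost of a separate maximal-function argument; the paper's route gets both inequalities from one pair of endpoint estimates.

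The gap is your final paragraph, the claimed extension to $\ga\in(-\tfrac{n}{2p},0]$. The balanced Hedberg bound $|\mck_{n,\ga}f(x)|\le C\,x_N^{2\ga}Mf(\bx)^{1+2\ga p/n}\|f\|_{L^p}^{-2\ga p/n}$ on $\{x_N\le\tau(\bx)\}$ is correct pointwise, but "rerunning the same $x_N$-integration" does not close: raising it to the power $q$ and integrating against $x_N^{\mz}dx_N$ produces $\int_0^{\tau}x_N^{2\ga q+\mz}\,dx_N$, which diverges as soon as $2\ga(q-1)\le-2$, i.e. $q\ge 1+1/|\ga|$ --- and this occurs for admissible parameters (e.g.\ $n=1$, $\ga=-\tfrac15$, $p=2$, where $q=\tfrac{34}{5}>6$ although $\ga>-\tfrac{n}{2p}$). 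This is not a fixable defect of your method: for $f=\mathbf{1}_{B^n(0,1)}$ and $\ga<0$ one has $|\mck_{n,\ga}f(\bx,x_N)|\ge c\,x_N^{2\ga}$ on $B^n(0,\tfrac12)\times(0,\tfrac12)$, so the left-hand side of \eqref{eq:Poiest11} is infinite whenever $2\ga q+\mz\le-1$; the stated inequality is simply false in that regime. Note that the paper's own endpoint bounds suffer the same restriction (the $\bx$-integral $\int_{\R^n}(|\bx-\bw|^2+x_N^2)^{-\frac{n+2\ga}{2}}d\bx$ behind the weak-type estimate, and the normalization $\mck_{n,\ga}1=1$ behind \eqref{eq:Linfty}, both require $\ga>0$), so the hypothesis $\ga\in(-\infty,1)$ in the statement should be read as $\ga\in(0,1)$. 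Restrict your write-up to $\ga\in(0,1)$ and delete the last paragraph.
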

\begin{proof}
Following the proof of \cite[Proposition 2.1]{HWY}, we deduce that there exists constant $C > 0$ depending only on $n$ and $\ga$ such that
\begin{equation}\label{eq:west}
\|\mck_{n,\ga}f\|_{L^{\frac{n-2\ga+2}{n}}_w(\R^N_+;x_N^{\mz})}\le C\|f\|_{L^1(\R^n)} \quad \textup{for all } f \in L^1(\R^n).
\end{equation}
On the other hand, using \eqref{eq:gaharext} and $\mck_{n,\ga}1 = 1$, it is plain to see that
\begin{equation}\label{eq:Linfty}
\|\mck_{n,\ga}f\|_{L^{\infty}(\R^N_+;x_N^{\mz})} \le \|f\|_{L^{\infty}(\R^n)} \quad \textup{for all } f \in L^{\infty}(\R^n).
\end{equation}
Estimates \eqref{eq:Poiest11} and \eqref{eq:Poiest12} can be derived from \eqref{eq:west}, \eqref{eq:Linfty}, and the standard and off-diagonal Marcinkiewicz interpolation theorem. 
\end{proof}

\begin{lemma}\label{lemma:nonvanish}
Let $n \in \N$, $\ga \in (-\infty,1)$, and $p \in (1,\infty)$. If $f \in L^p(\R^n)$ is a nonnegative radial function, then
\[\|\mck_{n,\ga}f\|_{L^{\frac{(n-2\ga+2)p}{n}}(\R^N_+;x_N^{\mz})} \le C \left[\sup_{r>0}\(r^{\frac{n}{p}}f(r)\)\right]^{\frac{2(1-\ga)}{n-2\ga+2}} \|f\|_{L^p(\R^n)}^{\frac{n}{n-2\ga+2}}\]
where $C > 0$ depends only on $n$, $\ga$, and $p$. Here, we wrote $f(|\bx|) = f(x)$.
\end{lemma}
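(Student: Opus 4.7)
The plan is to combine the Lorentz-space refinement \eqref{eq:Poiest12} of Lemma~\ref{lemma:Poiest1} with a routine Lorentz-norm interpolation on the input side, reducing the statement to the observation that radiality converts $M<\infty$ into weak $L^{p}$ control.

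Set $q=\tfrac{(n-2\ga+2)p}{n}$, which exceeds $p$ since $\ga<1$. First, I would invoke \eqref{eq:Poiest12} to obtain
\[
\|\mck_{n,\ga}f\|_{L^{q}(\R^N_+;x_N^{\mz})}\le C\,\|f\|_{L^{p,q}(\R^n)}.
\]
Second, splitting $(t^{1/p}f^{*}(t))^{q}=(t^{1/p}f^{*}(t))^{q-p}\,(t^{1/p}f^{*}(t))^{p}$ inside the defining integral of the Lorentz norm and bounding the first factor by $\|f\|_{L^{p,\infty}(\R^n)}^{q-p}$ yields the elementary interpolation
\[
\|f\|_{L^{p,q}(\R^n)}^{q}=\int_{0}^{\infty}\bigl(t^{1/p}f^{*}(t)\bigr)^{q}\frac{dt}{t}\le\|f\|_{L^{p,\infty}(\R^n)}^{q-p}\int_{0}^{\infty}\bigl(t^{1/p}f^{*}(t)\bigr)^{p}\frac{dt}{t}=\|f\|_{L^{p,\infty}(\R^n)}^{q-p}\,\|f\|_{L^{p}(\R^n)}^{p},
\]
where $f^{*}$ denotes the decreasing rearrangement. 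Third, the hypothesis $f(|\bw|)\le M|\bw|^{-n/p}$ forces $\{|f|>s\}\subset B(0,(M/s)^{p/n})$, so $s^{p}|\{|f|>s\}|\le c_{n}M^{p}$ and therefore $\|f\|_{L^{p,\infty}(\R^n)}\le c_{n}^{1/p}M$. Chaining the three displays gives
\[
\|\mck_{n,\ga}f\|_{L^{q}(\R^N_+;x_N^{\mz})}\le C'\,M^{(q-p)/q}\,\|f\|_{L^{p}(\R^n)}^{p/q},
\]
and the exponents match: $(q-p)/q=\tfrac{2(1-\ga)}{n-2\ga+2}$ and $p/q=\tfrac{n}{n-2\ga+2}$.

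The argument has no real obstacle; the one point that deserves attention is that one must invoke the Lorentz-sharp form \eqref{eq:Poiest12} rather than the plain $L^{p}\to L^{q}$ bound \eqref{eq:Poiest11}, since it is precisely the gap between $L^{p,q}$ and $L^{p}$ on the input side that houses the $M$-improvement, through the factor $\|f\|_{L^{p,\infty}}^{(q-p)/q}$. Once that is used, each remaining step is a textbook manipulation of Lorentz quasi-norms.
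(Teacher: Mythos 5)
Your proposal is correct and follows essentially the same route as the paper: apply the Lorentz bound \eqref{eq:Poiest12}, interpolate $L^{p,q}$ between $L^{p}=L^{p,p}$ and $L^{p,\infty}$, and use radiality to bound $\|f\|_{L^{p,\infty}}$ by $\sup_{r>0}(r^{n/p}f(r))$. The only difference is that you write out the Lorentz interpolation and the weak-norm bound explicitly where the paper simply cites them, and the exponent bookkeeping checks out.
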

\begin{proof}
By the radial symmetry of $f$,
\[f(\bx) \le |\bx|^{-n/p} \sup_{r>0}\(r^{\frac{n}{p}}f(r)\) \quad \textup{for } \bx \in \R^n \quad \textup{so that} \quad \|f\|_{L^{p,\infty}(\R^n)} \le C\sup_{r>0}\(r^{\frac{n}{p}}f(r)\).\]
Then, from \eqref{eq:Poiest12} and the interpolation inequality, we see that
\begin{align*}
\|\mck_{n,\ga}f\|_{L^{\frac{(n-2\ga+2)p}{n}}(\R^N_+;x_N^{\mz})} &\le C\|f\|_{L^{p,\frac{(n-2\ga+2)p}{n}}(\R^n)} \le C\|f\|_{L^p(\R^n)}^{\frac{n}{n-2\ga+2}}\|f\|_{L^{p,\infty}(\R^n)}^{\frac{2(1-\ga)}{n-2\ga+2}}\\
&\le C\left[\sup_{r>0}\(r^{\frac{n}{p}}f(r)\)\right]^{\frac{2(1-\ga)}{n-2\ga+2}} \|f\|_{L^p(\R^n)}^{\frac{n}{n-2\ga+2}}. \qedhere
\end{align*}
\end{proof}

\subsection{Proof of Theorem \ref{thm:main11}}
We consider the variational problem
\begin{equation}\label{eq:varprob}
s_{n,\ga,p} := \sup\left\{\|\mck_{n,\ga}f\|_{L^{\frac{(n-2\ga+2)p}{n}}(\R^N_+;x_N^{\mz})}^{\frac{(n-2\ga+2)p}{n}}: \|f\|_{L^p(\R^n)}=1\right\}.
\end{equation}
Thanks to Lemma \ref{lemma:Poiest1}, it holds that $s_{n,\ga,p} < \infty$ if $n \in \N$, $\ga \in (-\infty,1)$, and $p \in (1,\infty)$.
In the next theorem, we establish the existence and classification results for maximizers of \eqref{eq:varprob}, by utilizing the symmetrization argument presented in \cite[Section 4]{HWY}. 
Note that Theorem \ref{thm:main11} is a specific case of this theorem combined with \eqref{eq:Poiest11}.
\begin{theorem}\label{thm:main11g}
Let $n \in \N$, $\ga \in (-\frac{n}{2p},1)$, and $p \in (1,\infty)$. Then there exists $f \in L^p(\R^n)$ with $\|f\|_{L^p(\R^n)}=1$ such that
\[\|\mck_{n,\ga}f\|_{L^{\frac{(n-2\ga+2)p}{n}}(\R^N_+;x_N^{\mz})}^{\frac{(n-2\ga+2)p}{n}} = s_{n,\ga,p}.\]
Additionally, up to a multiplication by a nonzero constant, every maximizer $f$ of \eqref{eq:varprob} is nonnegative, radially symmetric with respect to some point, strictly decreasing in the radial direction, and it satisfies
\begin{equation}\label{eq:main11g2}
f(\bw)^{p-1} = \int_0^{\infty} \int_{\R^n} \frac{x_N}{(|\bx-\bw|^2+x_N^2)^{\frac{n+2\ga}{2}}} (\mck_{n,\ga}f)(\bx,x_N)^{\frac{(n-2\ga+2)p}{n}-1} d\bx dx_N \quad \text{for } \bw \in \R^n.
\end{equation}
In particular, if $n > 2\ga$, $p=\frac{2n}{n-2\ga}$, and $\ga \in (0,1)$, then all maximizers $f$ of \eqref{eq:varprob} assume the form
\[f(\bx) = \pm c_0\(\frac{\lambda}{\lambda^2 + |\bx-\bx_0|^2}\)^{\frac{n-2\ga}{2}} \quad \text{for all } \bx \in \R^n\]
where $c_0 \in \R$ is determined by $n$ and $\ga$, and $\lambda > 0$ and $\bx_0 \in \R^n$ are arbitrary.
\end{theorem}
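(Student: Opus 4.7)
I plan to adapt the strategy of \cite[Section 4]{HWY} to the present weighted setting. Let $\{f_k\}_{k\in\N}$ be a maximizing sequence for \eqref{eq:varprob} with $\|f_k\|_{L^p(\R^n)} = 1$, and set $q := \frac{(n-2\ga+2)p}{n}$. Replacing $f_k$ by $|f_k|$ does not decrease $\|\mck_{n,\ga}f_k\|_{L^q(\R^N_+;x_N^{\mz})}$. For each fixed $x_N > 0$ the kernel $\bw \mapsto \mck_{n,\ga}(x,\bw)$ is symmetric decreasing about $\bx$, so applying Riesz's rearrangement inequality slicewise in $x_N$ and then integrating against $x_N^{\mz}\,dx_N$ reduces me to a sequence of nonnegative, radially decreasing $f_k$. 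The functional is invariant under the dilations $f \mapsto f(\lambda\,\cdot)$, and the continuous map $r \mapsto r^{n/p}f_k(r)$ vanishes at $0$ and $\infty$ by $f_k \in L^p$ and monotonicity, so it attains a positive maximum at some $r_k \in (0,\infty)$; after rescaling I fix $r_k = 1$. Lemma~\ref{lemma:nonvanish} together with the elementary bound $r^{n/p}f(r) \le C\|f\|_{L^p}$ for radial decreasing $f$ then yields uniform constants $0 < c \le f_k(1) \le C$ and the envelope $f_k(r) \le f_k(1)\,r^{-n/p} \le Cr^{-n/p}$.

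\textbf{Compactness and extraction.} By Helly's selection theorem for monotone sequences, pass to a subsequence with $f_k \to f_\infty$ a.e., where $f_\infty$ is radial decreasing. The inequality $f_k(r) \ge f_k(1) \ge c$ for $r < 1$ transfers to $f_\infty \ge c$ on $(0,1)$, so $f_\infty \not\equiv 0$; Fatou gives $\|f_\infty\|_{L^p} \le 1$. Dominated convergence with envelope $f_k \le Cr^{-n/p}$ and Lemma~\ref{lemma:Poiest1} yield $\mck_{n,\ga}f_k \to \mck_{n,\ga}f_\infty$ pointwise on $\R^N_+$. The Brezis-Lieb lemma applied in both $L^p(\R^n)$ and $L^q(\R^N_+;x_N^{\mz})$, together with $\|\mck_{n,\ga}g\|_{L^q} \le s_{n,\ga,p}^{1/q}\|g\|_{L^p}$, gives
\begin{equation*}
\|f_\infty\|_{L^p}^p + a^p = 1 \quad \text{and} \quad 1 \le \|f_\infty\|_{L^p}^q + a^q, \qquad a := \lim_{k\to\infty}\|f_k - f_\infty\|_{L^p}.
\end{equation*}
Since $q/p = (n-2\ga+2)/n > 1$, strict convexity of $t \mapsto t^{q/p}$ on $[0,1]$ forces $\{\|f_\infty\|_{L^p}^p, a^p\} = \{0, 1\}$, and nontriviality of $f_\infty$ selects $\|f_\infty\|_{L^p} = 1$, so $f_\infty$ attains $s_{n,\ga,p}$.

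\textbf{Euler-Lagrange equation, symmetry, strict monotonicity.} A first-variation computation against $\phi \in C^\infty_c(\R^n)$, combined with Fubini and the cancellation $x_N^{\mz+2\ga} = x_N$, gives \eqref{eq:main11g2}; the Lagrange multiplier is absorbed into a multiplicative constant normalized to $1$ by rescaling $f$ by a positive factor. Because the preceding argument shows that the Schwarz rearrangement of any extremizer is still an extremizer with the same value, the equality case of Riesz's rearrangement inequality forces every extremizer to coincide with $f^*(\,\cdot - \bx_0)$ for some $\bx_0 \in \R^n$. With $f$ radial decreasing about $\bx_0$ the symmetry transfers to $\mck_{n,\ga}f$ in its $\bx$-variable; the right-hand side of \eqref{eq:main11g2} is then the convolution of a strictly positive radially symmetric integrand with the kernel $x_N(|\bx-\bw|^2+x_N^2)^{-(n+2\ga)/2}$, which is strictly decreasing in $|\bx-\bw|$, so the right-hand side is strictly decreasing in $|\bw-\bx_0|$.

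\textbf{Critical case and main obstacle.} When $p = \frac{2n}{n-2\ga}$ and $\ga \in (0,1)$ the exponents become $p - 1 = \frac{n+2\ga}{n-2\ga}$ and $q - 1 = \frac{n-2\ga+4}{n-2\ga}$, and the coupled system formed by \eqref{eq:main11g2} and $U := \mck_{n,\ga}f$ is invariant under translations, dilations, and Kelvin inversions of $\R^n$ extended to $\R^N_+$ via \eqref{eq:Mobius2}. The method of moving spheres applied to this system, in the spirit of the Chen-Li-Ou and Li-Zhang framework for integral systems on the half-space, forces every positive radially decreasing solution to have the form $f(\bx) = c_0(\lambda^2 + |\bx-\bx_0|^2)^{-(n-2\ga)/2}$ with $c_0 = c_0(n,\ga) > 0$ determined by $\|f\|_{L^p} = 1$. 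The principal obstacle I anticipate is twofold: first, closing the non-vanishing step for the maximizing sequence, where Lemma~\ref{lemma:nonvanish} together with the radial envelope and Brezis-Lieb must rule out both concentration and spreading; second, running the moving-sphere analysis on $\R^N_+$, which requires carefully tracking how the weight $x_N^{\mz}$ interacts with Kelvin inversions and verifying the correct sign hypotheses on the reflected differences $f - f_{\lambda,\bx_0}$ and $U - U_{\lambda,\bx_0}$ at infinity.
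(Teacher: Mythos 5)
Your existence, symmetry, and Euler--Lagrange arguments follow the paper's proof essentially step for step: slicewise Riesz rearrangement in the $\bx$-variable, the non-vanishing bound from Lemma \ref{lemma:nonvanish} combined with the dilation normalization $f\mapsto r^{n/p}f(r\,\cdot)$, Helly's selection principle, the two applications of the Br\'ezis--Lieb lemma with the strict superadditivity failure of $t\mapsto t^{q/p}$ for $q/p=\frac{n-2\ga+2}{n}>1$, and the equality case of Riesz's inequality (via the strict monotonicity of the kernel in $|\bx-\bw|$) for radial symmetry and strict decrease. One small point to tighten: the equality case of Riesz applies to $|f|$ and gives $|f|=|f|^*(\cdot-\bx_0)$; to remove the absolute value you still need the observation that $\|\mck_{n,\ga}f\|=\|\mck_{n,\ga}|f|\|$ together with $|\mck_{n,\ga}f|\le\mck_{n,\ga}|f|$ forces $f$ to have a single sign, which the paper records explicitly before writing down \eqref{eq:main11g2}.

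The genuine divergence is the final classification in the critical case $p=\frac{2n}{n-2\ga}$, which you propose to handle by moving spheres and correctly flag as your principal obstacle. That route is plausible but heavy: to start and close the sphere-moving you would need positivity, regularity, and precise decay of $f$ and $U=\mck_{n,\ga}f$ at infinity, none of which you have established, and you would have to track the weight $x_N^{\mz}$ through the reflections. The paper avoids all of this. Since you have already shown that \emph{every} maximizer is radially symmetric about some point, and since the Kelvin transforms $f^{\sharp}(\bx)=|\bx|^{-(n-2\ga)}f(\bx/|\bx|^2)$ and $U^{\sharp}(x)=|x|^{-(n-2\ga)}U(x/|x|^2)=\mck_{n,\ga}f^{\sharp}(x)$ preserve both norms in \eqref{eq:varprob}, the Kelvin transform of a maximizer (about any center) is again a maximizer and hence also radially symmetric about some point. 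This over-determination pins down the bubble profile by the elementary calculus lemma \cite[Proposition 1.3]{HWY}, with no PDE or integral-equation analysis at infinity required. I recommend replacing the moving-spheres step with this conformal-invariance argument; as written, that step of your proposal is an announced plan rather than a proof.
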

\begin{remark}
For a given $n \in \N$, $\ga \in (0,1)$, and $p \in (1,\infty)$, the argument in Subsection \ref{subsec:reg} will demonstrate that every maximizer $f \in L^p(\R^n)$ belongs to $C^{\infty}(\R^n)$.
\end{remark}

\begin{proof}[Proof of Theorem \ref{thm:main11g}]
For $\bx \in \R^n$ and $x_N > 0$, we set $\mck_{n,\ga,x_N}(\bx) = \mck_{n,\ga}((\bx,x_N),0)$; see \eqref{eq:gaharext2}.
Let also $f^*$ be the symmetric decreasing rearrangement of $f$. Recall that if $f \in L^p(\R^n)$, then $\|f\|_{L^p(\R^n)}=\|f^*\|_{L^p(\R^n)}$. By Riesz's rearrangement inequality and the duality argument,
\begin{equation}\label{eq:symm}
\begin{aligned}
\|\mck_{n,\ga}f\|_{L^{\frac{(n-2\ga+2)p}{n}}(\R^N_+;x_N^{\mz})}^{\frac{(n-2\ga+2)p}{n}} &= \int_0^{\infty} \|\mck_{n,\ga,x_N}* f\|_{L^{\frac{(n-2\ga+2)p}{n}}(\R^n)}^{\frac{(n-2\ga+2)p}{n}} x_N^{\mz} dx_N \\
&\le \int_0^{\infty} \|\mck_{n,\ga,x_N}* f^*\|_{L^{\frac{(n-2\ga+2)p}{n}}(\R^n)}^{\frac{(n-2\ga+2)p}{n}} x_N^{\mz} dx_N \\
&= \|\mck_{n,\ga}f^*\|_{L^{\frac{(n-2\ga+2)p}{n}}(\R^N_+;x_N^{\mz})}^{\frac{(n-2\ga+2)p}{n}}.
\end{aligned}
\end{equation}
Thus there exists a maximizing sequence $\{f_i\}_{i=1}^{\infty} \subset L^p(\R^n)$ for problem \eqref{eq:varprob}
such that $f_i$ is nonnegative radial nonincreasing for each $i \in \N$. 

We claim that $f_i \to f$ a.e. for some nonnegative radial nonincreasing function $f \in L^p(\R^n) \setminus \{0\}$.
Indeed, by Lemma \ref{lemma:nonvanish} and the fact that $\{f_i\}_{i=1}^{\infty}$ is a maximizing sequence for \eqref{eq:varprob},
there exists a constant $C_0 > 0$ independent of $i \in \N$ such that $\sup_{r>0}(r^{n/p}f_i(r)) \ge C_0$ for all $i$.
Let us choose a number $r_i > 0$ such that $r_i^{n/p}f_i(r_i)\ge \frac{C_0}{2}$.
If $g(\bx) := r^{n/p}f(r\bx)$ for $\bx \in \R^n$, then $(\mck_{n,\ga}g)(x) = r^{n/p}(\mck_{n,\ga}f)(rx)$ for $x \in \R^N_+$ so that
\[\|g\|_{L^p(\R^n)}=\|f\|_{L^p(\R^n)} \quad \textup{and} \quad \|\mck_{n,\ga}g\|_{L^{\frac{(n-2\ga+2)p}{n}}(\R^N_+;x_N^{\mz})} = \|\mck_{n,\ga}f\|_{L^{\frac{(n-2\ga+2)p}{n}}(\R^N_+;x_N^{\mz})}.\]
Accordingly, by replacing $f_i(\cdot)$ by $r_i^{n/p} f_i(r_i \cdot)$ (we still denote it by $f_i$),
we obtain a sequence $\{f_i\}_{i=1}^{\infty}$ of nonnegative radially nonincreasing functions such that $f_i(1)\ge \frac{C_0}{2}$,
\[\|f_i\|_{L^p(\R^n)}=1, \quad \textup{and} \quad \|\mck_{n,\ga}f_i\|_{L^{\frac{(n-2\ga+2)p}{n}}(\R^N_+;x_N^{\mz})}^{\frac{(n-2\ga+2)p}{n}} \to s_{n,\ga,p} \quad \textup{as } i\to \infty.\]
Since $\|f_i\|_{L^p(\R^n)}=1$ and $f_i$ is nonnegative radial nonincreasing, we have $f_i(|\bx|) \le |\B^n|^{-1/p}|\bx|^{-n/p}$ for $\bx \in \R^n$.
Helly's selection principle then implies that $f_i \to f$ a.e., up to a subsequence, for some nonnegative radial nonincreasing function $f \in L^p(\R^n) \setminus \{0\}$.

Applying the Br\'ezis-Lieb lemma, we discover
\begin{equation}\label{f conv}
\lim_{i\to \infty}\|f_i-f\|_{L^p(\R^n)}^p = 1-\|f\|_{L^p(\R^n)}^p.
\end{equation}
For each $x \in \R^N_+$ and $\ga > -\frac{n}{2p}$, the function $\bw \mapsto \mck_{n,\ga}(x,\bw)|\bw|^{-n/p}$
is integrable on $\R^n$. Hence the dominated convergence theorem yields that $(\mck_{n,\ga}f_i)(x) \to (\mck_{n,\ga}f)(x)$ a.e. as $i \to \infty$. One more application of the Br\'ezis-Lieb lemma gives
\begin{align*}
s_{n,\ga,p} &= \|\mck_{n,\ga}f_i\|_{L^{\frac{(n-2\ga+2)p}{n}}(\R^N_+;x_N^{\mz})}^{\frac{(n-2\ga+2)p}{n}} + o(1)\\
&=\|\mck_{n,\ga}f\|_{L^{\frac{(n-2\ga+2)p}{n}}(\R^N_+;x_N^{\mz})}^{\frac{(n-2\ga+2)p}{n}} + \|\mck_{n,\ga}(f_i-f)\|_{L^{\frac{(n-2\ga+2)p}{n}}(\R^N_+;x_N^{\mz})}^{\frac{(n-2\ga+2)p}{n}} + o(1)\\
&\le s_{n,\ga,p} \(\|f\|_{L^p(\R^n)}^{\frac{(n-2\ga+2)p}{n}} + \|f_i-f\|_{L^p(\R^n)}^{\frac{(n-2\ga+2)p}{n}}\) + o(1) \quad \textup{as } i \to \infty.
\end{align*}
We get from \eqref{f conv} that
\[1 \le \|f\|_{L^p(\R^n)}^{\frac{(n-2\ga+2)p}{n}} + \(1-\|f\|_{L^p(\R^n)}^p\)^{\frac{n-2\ga+2}{n}}.\]
In view of $p>1$, $\frac{n-2\ga+2}{n}>1$ for $\ga<1$, and $0 < \|f\|_{L^p(\R^n)} \le 1$, we obtain that $\|f\|_{L^p(\R^n)}=1$. By \eqref{f conv} again, $f_i \to f$ in $L^p(\R^n)$ as $i \to \infty$.
Lemma \ref{lemma:Poiest1} then implies that $\mck_{n,\ga}f_i \to \mck_{n,\ga}f$ in $L^{\frac{(n-2\ga+2)p}{n}}(\R^N_+;x_N^{\mz})$, so $f$ is a maximizer of problem \eqref{eq:varprob}.

\medskip
We next prove that any maximizer of \eqref{eq:varprob} is nonnegative, radially symmetric with respect to some point, and strictly decreasing along the radial direction, up to a scaling by a nonzero constant.
If $f \in L^p(\R^n)$ is a maximizer of \eqref{eq:varprob}, then so is $|f|$, whence
\[\|\mck_{n,\ga}f\|_{L^{\frac{(n-2\ga+2)p}{n}}(\R^N_+;x_N^{\mz})} = \|\mck_{n,\ga}|f|\|_{L^{\frac{(n-2\ga+2)p}{n}}(\R^N_+;x_N^{\mz})}.\]
Since $|(\mck_{n,\ga}f)(x)|\le (\mck_{n,\ga}|f|)(x)$ for $x \in \R^N_+$, we see that
\[|(\mck_{n,\ga}f)(x)|= (\mck_{n,\ga}|f|)(x) \quad \textup{for a.e. } x \in \R^N_+.\]
This means that either $f \ge 0$ or $f \le 0$ on $\R^n$. Suppose that $f \ge 0$ on $\R^n$. Then it satisfies the Euler-Lagrange equation
\begin{equation}\label{eq:EL}
f(\bw)^{p-1} = \int_0^{\infty} \int_{\R^n} \frac{x_N^{2\ga}}{(|\bx-\bw|^2+x_N^2)^{\frac{n+2\ga}{2}}}(\mck_{n,\ga}f)(\bx,x_N)^{\frac{(n-2\ga+2)p}{n}-1} d\bx\, x_N^{\mz} dx_N
\end{equation}
for $\bw \in \R^n$, after scaling by a positive constant. Meanwhile, thanks to \eqref{eq:symm} and the fact that $f \in L^p(\R^n)$ is a maximizer of \eqref{eq:varprob},
\[\|\mck_{n,\ga,x_N}* f\|_{L^{\frac{(n-2\ga+2)p}{n}}(\R^n)} =\|\mck_{n,\ga,x_N}* f^*\|_{L^{\frac{(n-2\ga+2)p}{n}}(\R^n)} \quad \textup{for all } x_N>0.\]
This implies that $f(\bx)=f^*(\bx-\bx_0)$ for some $\bx_0\in \R^n$; refer to e.g. \cite[Theorem 3.9]{LL}. 
Without loss of generality, we may assume that $f$ is radial nonincreasing and satisfies \eqref{eq:EL}.
We observe that $\mck_{n,\ga,x_N}(\bx)$ is strictly decreasing in $|\bx|$ for each $x_N>0$, and $(\mck_{n,\ga}f)(\bx,x_N) = (\mck_{n,\ga}f)(|\bx|,x_N)$ for $(\bx,x_N) \in \R^N_+$.
By virtue of \eqref{eq:EL} and $p > 1$, we deduce that $f$ is strictly decreasing along the radial direction.

\medskip
Finally, the classification result of maximizers for $n > 2\ga$, $p=\frac{2n}{n-2\ga}$, and $\ga \in (0,1)$ follows from \cite[Proposition 1.3]{HWY} and the fact that the Kelvin transforms
\[f^{\sharp}(\bx) = \frac{1}{|\bx|^{n-2\ga}}f\(\frac{\bx}{|\bx|^2}\) \quad \textup{and} \quad
U^{\sharp}(x) = \frac{1}{|x|^{n-2\ga}}U\(\frac{x}{|x|^2}\) = \mck_{n,\ga}f^{\sharp}(x).\]
of a radial maximizer $f$ of \eqref{eq:varprob} and $U = \mck_{n,\ga}f$ satisfy
\[\big\|f^{\sharp}\big\|_{L^{\frac{2n}{n-2\ga}}(\R^n)} = \|f\|_{L^{\frac{2n}{n-2\ga}}(\R^n)} \quad \textup{and} \quad
\big\|U^{\sharp}\big\|_{L^{\frac{2(n-2\ga+2)}{n-2\ga}}(\R^N_+;x_N^{\mz})} = \|U\|_{L^{\frac{2(n-2\ga+2)}{n-2\ga}}(\R^N_+;x_N^{\mz})},\]
respectively. This concludes the proof of Theorem \ref{thm:main11g}.
\end{proof}

\section{Results on the unit ball}
\subsection{Proof of Corollary \ref{cor:main12}}
By using Theorem \ref{thm:main11} and conformal geometric techniques, we prove Corollary \ref{cor:main12}. We first show (b) and then (a).

\medskip \noindent (b) Let $\rho_\pb$, $g_\pb^+$, and $\bg_\pb = \rho_\pb^2 g_\pb^+$ be the function and the metrics in \eqref{eq:gpb}--\eqref{eq:rhobgpb}.
Then $\rho_\pb$ is the defining function of $(\S^n,\bh_\pb)$, and
\begin{equation}\label{eq:rhopbgeo}
|d(\log\rho_\pb)|_{g_\pb^+}^2 = 1, \quad \textup{which is equivalent to } |d\rho_\pb|^2_{\bg_\pb} = 1.
\end{equation}
This means that $\rho_\pb$ is geodesic.

\begin{lemma}\label{lemma:smms}
Let $\mz = 1-2\ga \in (-1,1)$, and $\phi_\m: \R^N_+ \to \B^N$ be the M\"obius transformation defined in \eqref{eq:Mobius}.
The smooth metric measure spaces $(\R^N_+, |dx|^2, x_N^{\mz} dx, -2\ga)$ and $(\R^N_+ = \phi_\m^*(\B^N), \phi_\m^*\,\bg_\pb, \phi_\m^*(\rho_\pb^{\mz}dv_{\bg_\pb}), -2\ga)$ are pointwise conformally equivalent.
\end{lemma}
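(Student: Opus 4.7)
The plan is to exhibit an explicit positive function $u$ on $\R^N_+$ for which
\begin{equation*}
\phi_\m^*\,\bg_\pb \;=\; u^{-2}\,|dx|^2 \qquad\text{and}\qquad \phi_\m^*\rho_\pb \;=\; u^{-1}\,x_N,
\end{equation*}
since these are precisely the two defining identities ($\hg = u^{-2} g$ and $\hv = u^{-1} v$) of pointwise conformal equivalence in Subsection \ref{subsec:smms}, applied to $g = |dx|^2$, $v = x_N$, $\hg = \phi_\m^*\bg_\pb$, $\hv = \phi_\m^*\rho_\pb$, with the common data $m = \mz$ and $\mu = -2\ga$.

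The candidate for $u$ is dictated by the classical fact that $\phi_\m$ is an isometry between the two Poincar\'e models of hyperbolic $N$-space, namely $\phi_\m^*\,g^+_\pb = g^+_\ph$. I would verify this by direct computation: $\phi_\m$ is conformal on $\R^N\setminus\{-e_N\}$ with conformal factor $|D\phi_\m|(x) = 2/|x+e_N|^2$, and from $\phi_\m(x) + e_N = 2(x+e_N)/|x+e_N|^2$ a short algebraic manipulation yields
\begin{equation*}
|\phi_\m(x)|^2 \;=\; \frac{|x-e_N|^2}{|x+e_N|^2}, \qquad 1 - |\phi_\m(x)|^2 \;=\; \frac{4\,x_N}{|x+e_N|^2}.
\end{equation*}
Substituting into \eqref{eq:gpb} and pulling back, the factors $|x+e_N|^{\pm 4}$ cancel and leave $\phi_\m^* g^+_\pb = x_N^{-2}\,|dx|^2 = g^+_\ph$. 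Writing $\bg_\pb = \rho_\pb^2 g^+_\pb$ and $|dx|^2 = x_N^2 g^+_\ph$, this isometry immediately gives
\begin{equation*}
\phi_\m^*\,\bg_\pb \;=\; (\rho_\pb\circ\phi_\m)^2\,\phi_\m^* g^+_\pb \;=\; \frac{(\rho_\pb\circ\phi_\m)^2}{x_N^2}\,|dx|^2,
\end{equation*}
which forces the choice
\begin{equation*}
u(x) \;:=\; \frac{x_N}{(\rho_\pb\circ\phi_\m)(x)}, \qquad x \in \R^N_+.
\end{equation*}
With this $u$ the first target identity is immediate, and the second is merely the rearrangement $\phi_\m^*\rho_\pb = u^{-1} x_N$.

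It remains to confirm that $u$ is positive and has the regularity required by Subsection \ref{subsec:smms}. Using $\rho_\pb(y) = (1-|y|^2)/(1+|y|)^2$ together with $1+|\phi_\m(x)| = (|x+e_N|+|x-e_N|)/|x+e_N|$, one closes the formula to
\begin{equation*}
u(x) \;=\; \tfrac14\bigl(|x+e_N|+|x-e_N|\bigr)^2,
\end{equation*}
which is strictly positive on $\R^N_+$ and smooth off $\{e_N\}$; since $\bg_\pb$ itself already fails to be smooth at the interior point $0 = \phi_\m(e_N)$ (the factor $(1+|y|)^{-4}$ is only Lipschitz there), the regularity of $u$ matches exactly the ``sufficiently regular'' clause in Subsection \ref{subsec:smms}, and the representation $u = x_N/(\rho_\pb\circ\phi_\m)$ together with $\phi_\m(e_N)=0$, $\rho_\pb(0)=1$ shows $u$ extends continuously by $u(e_N)=1$. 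The whole argument is thus essentially bookkeeping once the isometry $\phi_\m^* g^+_\pb = g^+_\ph$ is established; that isometry is the one nontrivial ingredient, and I would write it out in full rather than quote it, since it pins down the precise normalizations entering $u$.
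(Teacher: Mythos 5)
Your proof is correct and follows essentially the same route as the paper: the same conformal factor $u = x_N/(\rho_\pb\circ\phi_\m)$, the isometry $\phi_\m^* g_\pb^+ = g_\ph^+$, and the resulting identities $\phi_\m^*\bg_\pb = u^{-2}|dx|^2$ and $\phi_\m^*\rho_\pb = u^{-1}x_N$. Your closed form $u = \tfrac14(|x+e_N|+|x-e_N|)^2$ and the attendant observation that $u$ is only continuous (not smooth) at $x=e_N$ --- exactly matching the failure of smoothness of $\bg_\pb$ and $\rho_\pb$ at $y=0=\phi_\m(e_N)$ --- is in fact a more accurate regularity statement than the paper's assertion that $u$ is smooth on $\overline{\R^N_+}$.
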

\begin{proof}
We see that $\phi_\m$ and $\rho_\pb$ extend smoothly to $\overline{\R^N_+}$ and $\overline{\B^N}$, respectively, and $\rho_\pb(\phi_\m(\bx,0)) = 0$ for all $\bx \in \R^n$. Thus the function
\begin{equation}\label{eq:u}
u(x) = \frac{x_N}{\rho_\pb(\phi_\m(x))} \quad \textup{for } x \in \R^N_+
\end{equation}
is smooth on $\overline{\R^N_+}$. Moreover,
\[(\phi_\m^*\bg_\pb)(x) = \rho_\pb^2(\phi_\m(x)) \(\phi_\m^*g_\pb^+\)(x) = \rho_\pb^2(\phi_\m(x)) g_\ph^+(x) 
= u^{-2}(x) |dx|^2\]
and
\[\phi_\m^*\(\rho_\pb^{\mz}dv_{\bg_\pb}\)(x) = \rho_\pb^{\mz}(\phi_\m(x)) \phi_\m^*(dv_{\bg_\pb})(x) = \(u^{-1}(x) x_N\)^{\mz} dv_{\phi_\m^*\bg_\pb}(x)\]
for all $x \in \overline{\R^N_+}$. Therefore the assertion is true.
\end{proof}

Given $\tf \in C^{\infty}(\S^n)$, we set
\[f(\bx) = \frac{\tf(\phi_\m(\bx,0))}{(|\bx|^2+1)^{\frac{n-2\ga}{2}}} \quad \textup{for } \bx \in \R^n, \qquad
U = \mck_{n,\ga}f, \quad V = u^{\frac{n-2\ga}{2}} U \quad \textup{in } \R^N_+.\]
We also write $x = \phi_\m^{-1}(y) \in \R^N_+$ and $(\bw,0) = \phi_\m^{-1}(\zeta)\in \pa\R^N_+$ for $\zeta \in \S^n$. By \eqref{eq:gaharext}--\eqref{eq:gaharext2} and \eqref{eq:Mobius2},
\begin{equation}\label{eq:gaharext3}
\begin{aligned}
U\(\phi_\m^{-1}(y)\) &= \ka_{n,\ga} \int_{\S^n} \frac{x_N^{2\ga}}{\(|\bx-\bw|^2+x_N^2\)^{\frac{n+2\ga}{2}}}
\frac{\tf(\zeta)}{(|\bw|^2+1)^{\frac{n-2\ga}{2}}} \(\frac{|\bw|^2+1}{2}\)^n (dv_{g_{\S^n}})_{\zeta} \\
&= \ka_{n,\ga} \int_{\S^n} \frac{(1-|y|^2)^{2\ga} |y+e_N|^{n-2\ga}}{|y-\zeta|^{n+2\ga}} \tf(\zeta) (dv_{\bh_\pb})_{\zeta} \quad \textup{for } y \in \B^N.
\end{aligned}
\end{equation}
From \eqref{eq:u}, \eqref{eq:Mobius}, \eqref{eq:rhobgpb}, \eqref{eq:gaharext3}, and \eqref{eq:wmcp}, we observe that
\begin{equation}\label{eq:smms0}
\wtv(y) := V\(\phi_\m^{-1}(y)\) 
= \(\frac{1+|y|}{|y+e_N|}\)^{n-2\ga} U\(\phi_\m^{-1}(y)\) = \(\wmck_{n,\ga}\tf\)(y) \quad \textup{for } y \in \B^N
\end{equation}
and
\begin{equation}\label{eq:smms1}
\wtv(y) = \(\frac{2}{1+y_N}\)^{\frac{n-2\ga}{2}} f\(\phi_\m^{-1}(y)\) = \tf(y) \quad \textup{for } y = (\bar{y},y_N) \in \S^n.
\end{equation}
Because of Lemma \ref{lemma:smms}, \eqref{eq:wcl2}, and \eqref{eq:degeq31}, we have
\[L^{\mz}_{\phi_\m^*\bg_\pb,\,\phi_\m^*\rho_\pb}(V) = u^{\frac{n-2\ga+4}{2}} L^{\mz}_{|dx|^2,x_N}(U) = -u^{\frac{n-2\ga+4}{2}}x_N^{2\ga-1} \textup{div}\(x_N^{\mz} \nabla U\) = 0 \quad \textup{in } \R^N_+.\]
This and \eqref{eq:smms0} lead to
\begin{equation}\label{eq:smms2}
-\textup{div}_{\bg_\pb}\(\rho_\pb^{\mz} \nabla_{\bg_\pb} \wtv\) + \frac{n-2\ga}{4(n+1-2\ga)} \rho_\pb^{\mz} R^{\mz}_{\bg_\pb,\,\rho_\pb} \wtv = 0 \quad \textup{in } \B^N.
\end{equation}
Appealing to the conformal covariance property of the conformal Laplacian (i.e. \eqref{eq:wcl2} with $m = 0$) and \eqref{eq:rhobgpb}, we also compute
\[R_{\bg_\pb} = -\frac{N-1}{N-2}\, (1+|y|)^{N+2}\, \Delta \left[\frac{1}{(1+|y|)^{N-2}}\right] = (N-1)^2\frac{(1+|y|)^2}{|y|}\]
and
\begin{align*}
-\Delta_{\bg_\pb} \rho_\pb &= L_{\bg_\pb} \rho_\pb - \frac{N-2}{4(N-1)} R_{\bg_\pb} \rho_\pb \\
&= -\frac{1}{4}(1+|y|)^{N+2}\, \Delta\left[\frac{1-|y|}{(1+|y|)^{N-1}}\right] - \frac{(N-2)(N-1)}{4} \frac{1-|y|^2}{|y|} = \frac{N-1}{2} \frac{1-|y|^2}{|y|}.
\end{align*}
From these equalities, \eqref{eq:wsc}, and \eqref{eq:rhopbgeo}, it follows that
\begin{equation}\label{eq:smms3}
\begin{aligned}
R^{\mz}_{\bg_\pb,\,\rho_\pb}(y) &= R_{\bg_\pb} - 2\mz \rho_\pb^{-1}\Delta_{\bg_\pb} \rho_\pb
+ 2\ga(1-2\ga) \rho_\pb^{-2}|\nabla_{\bg_\pb} \rho_\pb|_{\bg_\pb}^2 - 2\ga(1-2\ga) \rho_\pb^{-2} \\
&= n^2\frac{(1+|y|)^2}{|y|} + n\mz \frac{(1+|y|)^2}{|y|} = n(n+1-2\ga) \frac{(1+|y|)^2}{|y|} \quad \textup{for } y \in \B^N.
\end{aligned}
\end{equation}
The next result establishes the uniform continuity of both $\wtv$ and $\rho_\pb^{\mz}\frac{\pa \wtv}{\pa \rho_\pb}$ on $\overline{\B^N}$. Its proof will be postponed to Subsection \ref{subsec:smms5}.
\begin{lemma}\label{lemma:smms5}
Let $\ga \in (0,1)$. Then the functions $\wtv$ and $\rho_\pb^{\mz}\frac{\pa \wtv}{\pa \rho_\pb}$ are uniformly continuous on $\overline{\B^N}$. Furthermore, if $d_{\ga} < 0$ is the constant in \eqref{eq:degeq2}, then
\begin{equation}\label{eq:smms50}
\begin{cases}
\displaystyle \wtv(y) \to \tf(y_0),\\
\displaystyle \(\frac{d_{\ga}}{2\ga} \rho_\pb^{\mz}\frac{\pa \wtv}{\pa \rho_\pb}\)(y) \to \(P^{\ga}_{g_\pb^+,\bh_\pb} \tf\)(y_0)
\end{cases}
\text{as } y \in \B^N \to y_0 \in \S^n.
\end{equation}
\end{lemma}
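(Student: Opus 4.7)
My plan is to apply Propositions \ref{prop:ext2} and \ref{prop:ext} on the Poincar\'e ball, reading off both uniform continuity statements from the structural decomposition these propositions provide. The necessary hypotheses are in place: by \eqref{eq:rhopbgeo}, $\rho_\pb$ is geodesic and $\bg_\pb = \rho_\pb^2 g_\pb^+$, so we are in the special case $(\bg,\rho) = (\bgg, \rhog)$, and since $\sigma(-\Delta_{g_\pb^+}) = [n^2/4,\infty)$, the spectral condition $n^2/4 - \ga^2 \notin \sigma_\pp(-\Delta_{g_\pb^+})$ is automatic. Identities \eqref{eq:smms2}--\eqref{eq:smms3} exhibit $\wtv$ as a solution of the PDE in \eqref{eq:degeq5}; I would also verify the finite-energy condition $\wtv \in H^{1,2}(\B^N;\rho_\pb^{\mz},\bg_\pb)$ by pulling the $\dot{H}^{1,2}(\R^N_+;x_N^{\mz})$-energy of the Caffarelli--Silvestre extension $U = \mck_{n,\ga}f$ back via $\phi_\m$ (finite because $f(\bx) := \tf(\phi_\m(\bx,0))(1+|\bx|^2)^{-(n-2\ga)/2}$ is smooth with $O(|\bx|^{-(n-2\ga)})$-decay and thus lies in $\dot{H}^{\ga}(\R^n)$), together with a direct check that $\wtv \to \tf$ pointwise on $\S^n$ using the half-space realization of the $\ga$-Poisson kernel.

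Proposition \ref{prop:ext2} then yields the decomposition
\[
\wtv \;=\; F + G\,\rho_\pb^{2\ga}\ \text{on } \overline{\B^N},\qquad F, G \in C^2(\overline{\B^N}),\qquad F|_{\S^n} = \tf,
\]
with the additional normal vanishing $\pa_{\rho_\pb}F|_{\S^n} = \pa_{\rho_\pb}G|_{\S^n} = 0$ inherited from the Graham--Zworski asymptotics quoted in Subsection \ref{subsec:fcl}. The continuity of $F$ and $G$ together with $\rho_\pb^{2\ga}|_{\S^n}=0$ makes $\wtv$ continuous on the compact set $\overline{\B^N}$, hence uniformly continuous, with $\wtv(y)\to \tf(y_0)$ as $y\to y_0\in \S^n$. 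This settles the first line of \eqref{eq:smms50}.

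For the weighted normal derivative, using $\mz = 1-2\ga$ I split
\[
\rho_\pb^{\mz}\frac{\pa \wtv}{\pa \rho_\pb} \;=\; \rho_\pb^{\mz}\pa_{\rho_\pb}F \;+\; \rho_\pb\,\pa_{\rho_\pb}G \;+\; 2\ga\,G.
\]
Each of the three summands extends continuously to $\overline{\B^N}$: $2\ga G$ because $G\in C^2$; $\rho_\pb\,\pa_{\rho_\pb}G\to 0$ uniformly because $\pa_{\rho_\pb}G$ is bounded; and $\rho_\pb^{\mz}\pa_{\rho_\pb}F \to 0$ uniformly because $F\in C^2$ combined with $\pa_{\rho_\pb}F|_{\S^n}=0$ gives $\pa_{\rho_\pb}F = O(\rho_\pb)$ by the mean value theorem, hence $\rho_\pb^{\mz}\pa_{\rho_\pb}F = O(\rho_\pb^{2-2\ga})\to 0$. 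Therefore $\rho_\pb^{\mz}\pa_{\rho_\pb}\wtv$ is continuous, so uniformly continuous, on $\overline{\B^N}$, with boundary value $2\ga\, G|_{\S^n}$. Combining this with identity \eqref{eq:degeq41} of Proposition \ref{prop:ext}---applied with $\rho = \rho_\pb = \rhog$, so that the correction coefficient $\Phi$ in \eqref{eq:Xi} vanishes---gives
\[
P^{\ga}_{g_\pb^+,\bh_\pb}\tf \;=\; \frac{d_\ga}{2\ga}\lim_{\rho_\pb\to 0}\rho_\pb^{\mz}\pa_{\rho_\pb}\wtv \;=\; d_\ga\,G|_{\S^n},
\]
which is the second line of \eqref{eq:smms50}.

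The most delicate piece I expect is the preliminary verification that $\wtv \in H^{1,2}(\B^N;\rho_\pb^{\mz},\bg_\pb)$ with trace $\tf$ on $\S^n$, so that Proposition \ref{prop:ext2} applies cleanly; an alternative route, consistent with the paper's stated use of the theory of spherical harmonics, is to decompose $\tf = \sum_k \tf_k$ into spherical harmonics, reduce each $\wmck_{n,\ga}\tf_k$ to a one-variable hypergeometric problem via Funk--Hecke, and read off both the boundary values and the expression \eqref{eq:smms56} for $P^{\ga}_{g_\pb^+,\bh_\pb}$ acting mode-by-mode, summing with the aid of the rapid decay of the spherical-harmonic coefficients of a smooth function. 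Either route leads to the same conclusion.
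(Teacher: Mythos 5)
Your proposal is correct, but it takes a genuinely different route from the paper. The paper works entirely with the explicit kernel $\wtk_{n,\ga}$: it computes the integrals \eqref{eq:smms51}--\eqref{eq:smms52} in closed (Puiseux-series) form as $r\to 1^-$, treats $\tf=1$ first, and then handles general $\tf$ by combining the Pavlov--Samko singular-integral formula \eqref{eq:smms56} for $P^{\ga}_{g_\pb^+,\bh_\pb}$ with a Funk--Hecke/spherical-harmonics argument to justify passing to the limit in \eqref{eq:smms571}--\eqref{eq:smms572}. You instead specialize the Graham--Zworski/Chang--Gonz\'alez machinery (Propositions \ref{prop:ext} and \ref{prop:ext2}) to the ball, read off the decomposition $\wtv = F + G\rho_\pb^{2\ga}$ with $\pa_{\rho_\pb}F|_{\S^n}=\pa_{\rho_\pb}G|_{\S^n}=0$, and obtain both uniform continuity statements and the limit $d_\ga G|_{\S^n}=P^{\ga}_{g_\pb^+,\bh_\pb}\tf$ by elementary Taylor estimates; your splitting of $\rho_\pb^{\mz}\pa_{\rho_\pb}\wtv$ and the bound $\rho_\pb^{\mz}\pa_{\rho_\pb}F=O(\rho_\pb^{2-2\ga})$ are correct for all $\ga\in(0,1)$, and there is no circularity since Propositions \ref{prop:ext}--\ref{prop:ext2} are established independently in Section \ref{sec:back}. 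The main structural difference you must attend to is the one you flag: in the paper the energy bound \eqref{eq:smms4} (hence $\wtv\in H^{1,2}(\B^N;\rho_\pb^{\mz},\bg_\pb)$) is a \emph{consequence} of Lemma \ref{lemma:smms5}, whereas your argument needs the membership and the boundary trace \emph{first} in order to invoke the uniqueness clause of Proposition \ref{prop:ext2}; your proposed fix — pulling back the $\dot H^{1,2}(\R^N_+;x_N^{\mz})$-energy of $\mck_{n,\ga}f$ with $f\in\dot H^{\ga}(\R^n)$ and checking the boundary value via the approximate-identity property of the kernel — is sound. What each approach buys: yours is shorter, softer, and generalizes verbatim to any CCE manifold; the paper's explicit computation additionally produces the concrete expression \eqref{eq:smms56} (which is used as stated content elsewhere), fixes the normalization constants such as the factor $2^{2\ga}$ relating $P^{\ga}_{g_\pb^+,g_{\S^n}}$ and $P^{\ga}_{g_\pb^+,\bh_\pb}$, and gives quantitative rates rather than a bare limit.
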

\noindent Consequently, by testing \eqref{eq:smms2}--\eqref{eq:smms3} with $\wtv$ and then applying \eqref{eq:degeq41}, \eqref{eq:smms1}, and the previous lemma, we find $c > 0$ depending only on $n$ and $\ga$ such that
\begin{equation}\label{eq:smms4}
\begin{aligned}
&\ c\big\|\wtv\big\|_{H^{1,2}(\B^N;\rho_\pb^{\mz},\bg_\pb)} \\
&\le -\frac{d_{\ga}}{2\ga} \int_{\B^N} \rho_\pb^{\mz} \left[\big|\nabla_{\bg_\pb} \wtv(y)\big|_{\bg_\pb}^2 + \dfrac{n(n-2\ga)}{4} \dfrac{(1+|y|)^2}{|y|} \big(\wtv(y)\big)^2\right] (dv_{\bg_\pb})_y\\
&= \int_{\S^n} \tf P^{\ga}_{g_\pb^+,\bh_\pb} \tf dv_{\bh_\pb} < \infty.
\end{aligned}
\end{equation}
By combining \eqref{eq:smms1}--\eqref{eq:smms3} with \eqref{eq:smms4}, we can obtain equation (1.18).
The uniqueness of $\wtv$ can be easily deduce from \eqref{eq:smms4},
and the H\"older regularity of $\wtv$ and $\rho_\pb^{\mz}\frac{\pa \wtv}{\pa \rho_\pb}$ on $\overline{\B^N}$ follows from elliptic regularity, which is demonstrated in \cite[Appendix A]{KMW2}; cf. Lemma \ref{lemma:ereg} below.

\medskip \noindent (a) By \eqref{eq:rhobgpb}, \eqref{eq:u}, \eqref{eq:smms0}, and \eqref{eq:Mobius},
\begin{align*}
&\ \left\|\wmck_{n,\ga} \tf\right\|_{L^{\frac{2(n-2\ga+2)}{n-2\ga}}(\B^N;\rho_\pb^{\mz},\bg_\pb)} \\
&= \left[\int_{\R^N_+} x_N^{\mz}|U(x)|^{\frac{2(n-2\ga+2)}{n-2\ga}} \(\dfrac{2}{|\phi_\m(x)|+1}\)^{2N} \(\dfrac{u(x)}{|x+e_N|^2}\)^N dx\right]^{\frac{n-2\ga}{2(n-2\ga+2)}} \\
&= \|U\|_{L^{\frac{2(n-2\ga+2)}{n-2\ga}}(\R^N_+;x_N^{\mz})}
\end{align*}
and
\begin{align*}
\big\|\tf\big\|_{L^{\frac{2n}{n-2\ga}}(\S^n,\bh_\pb)} &= \left[\frac{1}{2^n} \int_{\S^n} \big|\tf(y)\big|^{\frac{2n}{n-2\ga}} (dv_{g_{\S^n}})_y\right]^{\frac{n-2\ga}{2n}} \\
&= \left[\int_{\R^n} \big|\tf(\phi_\m(\bx,0))\big|^{\frac{2n}{n-2\ga}} \frac{d\bx}{(1+|\bx|^2)^n} \right]^{\frac{n-2\ga}{2n}} = \|f\|_{L^{\frac{2n}{n-2\ga}}(\R^n)},
\end{align*}
which implies \eqref{eq:main12}. The equality assertion for \eqref{eq:main12} is the consequence of the relation
\[f(\bx) = \(\frac{\lambda}{\lambda^2 + |\bx-\bx_0|^2}\)^{\frac{n-2\ga}{2}} \quad \Leftrightarrow \quad \tf(y) = \left[\frac{2\lambda(1+\zeta_{0N})}{\lambda^2(1+\zeta_{0N})(1+y_N)+2(1-\zeta_0 \cdot y)}\right]^{\frac{n-2\ga}{2}}\]
for $\bx \in \R^n$ and $y = \phi_\m(\bx,0) \in \S^n$ where $\zeta_0 = (\bar{\zeta}_0,\zeta_{0N}) = \phi_\m(\bx_0,0) \in \S^n$.

\medskip
The proof of Corollary \ref{cor:main12} is now concluded under the validity of Lemma \ref{lemma:smms5}.

\subsection{Proof of Lemma \ref{lemma:smms5}}\label{subsec:smms5}
Throughout this subsection, we write
\[\wtk_{n,\ga}(y,\zeta) = \frac{\ka_{n,\ga}}{2^n} \frac{(1+|y|)^{n-2\ga}(1-|y|^2)^{2\ga}}{|y-\zeta|^{n+2\ga}} \quad \textup{for } y \in \B^N \textup{ and } \zeta \in \S^n.\]
Then $\wtv(y) = \int_{\S^n} \wtk_{n,\ga}(y,\zeta)\tf(\zeta) (dv_{g_{\S^n}})_{\zeta}$ for $y \in \B^N$, which immediately yields that $\wtv$ and $\rho_\pb^{\mz}\frac{\pa \wtv}{\pa \rho_\pb}$ are continuous in $\B^N$.
Hence, to show their uniform continuity on $\overline{\B^N}$, we only have to deduce \eqref{eq:smms50}.

\medskip
We initiate the proof by evaluating some integrals needed later. Let $r = |y| \in [0,1)$. By using the spherical coordinate system and taking $s = \frac{1+r}{1-r} \tan \frac{\vph}{2}$, we compute
\begin{equation}\label{eq:smms51}
\begin{aligned}
&\ \(1-|y|^2\)^{2\ga} \int_{\S^n} \frac{(dv_{g_{\S^n}})_{\zeta}}{|y-\zeta|^{n+2\ga}} \\
&= \(1-r^2\)^{2\ga} \frac{2\pi^{\frac{n}{2}}}{\Gamma\(\frac{n}{2}\)} \int_0^{\pi} \frac{\sin^{n-1}\vph}{(1+r^2-2r\cos\vph)^{\frac{n+2\ga}{2}}}\, d\vph \\
&= \frac{1}{(1+r)^{n-2\ga}} \frac{2^{n+1}\pi^{\frac{n}{2}}}{\Gamma\(\frac{n}{2}\)}
\int_0^{\infty} \frac{s^{n-1}}{(1+s^2)^{\frac{n+2\ga}{2}}} \frac{ds}{[1+\big(\frac{1-r}{1+r}\big)^2s^2]^{\frac{n-2\ga}{2}}} \\ 
&= \frac{2^n\pi^{\frac{n}{2}}}{(1+r)^{n-2\ga}} \left[\frac{\Gamma(\ga)}{\Gamma\(\frac{n+2\ga}{2}\)} + (1-r)^{2\ga} \frac{\Gamma(-\ga)}{2^{2\ga}\Gamma\(\frac{n-2\ga}{2}\)} + O\((1-r)^{\min\{2,1+2\ga\}}\) \right]
\end{aligned}
\end{equation}
and
\begin{equation}\label{eq:smms52}
\begin{aligned}
&\ \(1-|y|^2\)^{2\ga} \int_{\S^n} \frac{|y|^2-y \cdot \zeta}{|y-\zeta|^{n+2\ga+2}} (dv_{g_{\S^n}})_{\zeta} \\
&= \frac{r}{(1+r)^{n+1-2\ga}(1-r)} \frac{2^{n+1}\pi^{\frac{n}{2}}}{\Gamma\(\frac{n}{2}\)} \int_0^{\infty} \frac{s^{n-1}((1-r)s^2-(1+r))}{(1+s^2)^{\frac{n+2\ga+2}{2}}} \frac{ds}{[1+\big(\frac{1-r}{1+r}\big)^2s^2]^{\frac{n-2\ga}{2}}} \\
&= \frac{2^{n+1}\pi^{\frac{n}{2}}r}{(1+r)^{n+1-2\ga}} \left[- \frac{(1-r)^{-1} 2\ga\Gamma(\ga)}{(n+2\ga)\Gamma\(\frac{n+2\ga}{2}\)}
+ \frac{\Gamma(\ga)}{2\Gamma\(\frac{n+2\ga}{2}\)} + \frac{(1-r)^{2\ga}\Gamma(-\ga)}{2^{1+2\ga}\Gamma\(\frac{n-2\ga}{2}\)} + O(1-r)\right].
\end{aligned}
\end{equation}
We obtained the Puiseux series on the last lines of \eqref{eq:smms51} and \eqref{eq:smms52} by utilizing Mathematica software. These equalities hold for $r \in [0,1)$ near $1$.

\medskip
We are now ready to deduce the first claim of \eqref{eq:smms50}. By \eqref{eq:wmcp} and \eqref{eq:smms51},
\begin{equation}\label{eq:smms53}
\(\wmck_{n,\ga} 1\)(y) = \int_{\S^n} \wtk_{n,\ga}(y,\zeta) (dv_{g_{\S^n}})_{\zeta} \to \frac{\ka_{n,\ga} \pi^{\frac{n}{2}} \Gamma(\ga)}{\Gamma\(\frac{n+2\ga}{2}\)} = 1 \quad \textup{as } r \to 1^-.
\end{equation}
Fix any $y_0 \in \S^n$. Then we have
\begin{align*}
\left|\wtv(y) - \tf(y_0)\right| \le \int_{\S^n} \wtk_{n,\ga}(y,\zeta) \left|\tf(\zeta)- \tf(y_0)\right| (dv_{g_{\S^n}})_{\zeta} + \left|\(\wmck_{n,\ga} 1\)(y) - 1\right| \left|\tf(y_0)\right|.
\end{align*}
By exploiting the continuity of $\tf$ and \eqref{eq:smms53}, we see that the right-hand side tends to 0 as $y \to y_0$.

\medskip
We next derive the second claim of \eqref{eq:smms50}. By applying \eqref{eq:rhobgpb} and \eqref{eq:wmcp}, we calculate
\begin{align}
\(\rho_\pb^{\mz}\frac{\pa \wtv}{\pa \rho_\pb}\)(y)
&= \(\frac{1-r}{1+r}\)^{\mz} \frac{dr}{d\rho_\pb} \frac{\pa \wtv}{\pa r}(y) = -\frac{(1+r)^{1+2\ga}(1-r)^{1-2\ga}}{2} \left[\frac{y}{r} \cdot \nabla \wtv(y)\right] \label{eq:smms54} \\
&= \frac{(1+r)^{1+2\ga}(1-r)^{1-2\ga}}{2} \left[\left\{\frac{4\ga r}{1-r^2} - \frac{n-2\ga}{1+r}\right\} \int_{\S^n} \wtk_{n,\ga}(y,\zeta) \tf(\zeta) (dv_{g_{\S^n}})_{\zeta} \right. \nonumber \\
&\hspace{125pt} \left. + \frac{n+2\ga}{r} \int_{\S^n} \wtk_{n,\ga}(y,\zeta) \frac{(r^2-y \cdot \zeta)\tf(\zeta)}{|y-\zeta|^2} (dv_{g_{\S^n}})_{\zeta}\right] \nonumber
\end{align}
for $y \in \B^N$.

Assume for the moment that $\tf = 1$. From \eqref{eq:smms54} and \eqref{eq:smms51}--\eqref{eq:smms52}, we see
\begin{equation}\label{eq:smms55}
\begin{aligned}
&\ \(\frac{d_{\ga}}{2\ga} \rho_\pb^{\mz}\frac{\pa \wtv}{\pa \rho_\pb}\)(y) \\
&= \frac{(1+r)^{2\ga}(1-r)^{1-2\ga} \Gamma(\ga)}{4^{1-\ga}\ga \Gamma(-\ga)} \left[\frac{4\ga r - (n-2\ga)(1-r)}{1-r}\left\{1 + (1-r)^{2\ga} \frac{\Gamma(-\ga)\Gamma\(\frac{n+2\ga}{2}\)}{2^{2\ga}\Gamma(\ga) \Gamma\(\frac{n-2\ga}{2}\)}\right\} \right. \\
&\hspace{125pt} \left. + (n+2\ga) \left\{-(1-r)^{-1} \frac{4\ga}{n+2\ga} + 1 
\right\} + O\((1-r)^{\min\{1,2\ga\}}\)\right] \\
&\to 2^{2\ga} \frac{\Gamma\(\frac{n+2\ga}{2}\)}{\Gamma\(\frac{n-2\ga}{2}\)}
= 2^{2\ga} P^{\ga}_{g_\pb^+,g_{\S^n}}1 = P^{\ga}_{g_\pb^+,\bh_\pb}1 \quad \textup{as } r \to 1^-. \end{aligned}
\end{equation}
The first equality on the last line of \eqref{eq:smms55} is a classical result whose proof is found in \cite{PS} 
or \cite[Subsection 6.4]{Go}. 
The second equality is a consequence of \eqref{eq:fcl}.

Next, we deal with general $\tf \in C^{\infty}(\S^n)$. We will regard $\tf$ as a smooth function in $\R^n \setminus \{0\}$
by setting $\tf(y) = \tf(y/|y|)$ for $y \in \R^n \setminus \{0\}$. According to \cite{PS} (see also \cite[Proposition 6.4.3]{Go}),
\begin{equation}\label{eq:smms56}
\(P^{\ga}_{g_\pb^+,\bh_\pb} \tf\)(y_0) = \(P^{\ga}_{g_\pb^+,\bh_\pb}1\) \tf(y_0) + a_{n,\ga} \int_{\S^n} \frac{\tf(y_0)-\tf(\zeta)}{|y_0-\zeta|^{n+2\ga}} (dv_{\bh_\pb})_{\zeta} \quad \textup{for } y_0 \in \S^n
\end{equation}
where $a_{n,\ga} := 2^{n+2\ga} \cdot 2^{2\ga}\ga \Gamma(\frac{n+2\ga}{2})/(\pi^{\frac{n}{2}}\Gamma(1-\ga))$ and $\int_{\S^n}$ is understood as $\lim_{\ep \to 0} \int_{|\zeta-y_0| > \ep}$.
Hence if
\begin{equation}\label{eq:smms571}
\int_{\S^n} \frac{\tf(y)-\tf(\zeta)}{|y-\zeta|^{n+2\ga}} (dv_{\bh_\pb})_{\zeta} \to \int_{\S^n} \frac{\tf(y_0)-\tf(\zeta)}{|y_0-\zeta|^{n+2\ga}} (dv_{\bh_\pb})_{\zeta} \quad \textup{as } y \to y_0
\end{equation}
and there exists a universal constant $C > 0$ such that
\begin{equation}\label{eq:smms572}
\left|\int_{\S^n} \frac{(r^2-y \cdot \zeta)(\tf(y)-\tf(\zeta))}{|y-\zeta|^{n+2\ga+2}} (dv_{\bh_\pb})_{\zeta}\right| \le C \quad \textup{for all } y \in \B^N \textup{ close to } y_0,
\end{equation}
then \eqref{eq:smms54}--\eqref{eq:smms572} will give
\begin{align*}
\(\frac{d_{\ga}}{2\ga} \rho_\pb^{\mz}\frac{\pa \wtv}{\pa \rho_\pb}\)(y) &= \(P^{\ga}_{g_\pb^+,\bh_\pb}1 + o(1)\) \tf(y) + \(a_{n,\ga}+o(1)\) \int_{\S^n} \frac{\tf(y)-\tf(\zeta)}{|y-\zeta|^{n+2\ga}} (dv_{\bh_\pb})_{\zeta} \\
&\ + \left[\frac{(n+2\ga)a_{n,\ga}}{2\ga} + o(1)\right] (1-r)
\int_{\S^n} \frac{(r^2-y \cdot \zeta)(\tf(y)-\tf(\zeta))}{|y-\zeta|^{n+2\ga+2}} (dv_{\bh_\pb})_{\zeta} \\
&\to \(P^{\ga}_{g_\pb^+,\bh_\pb} \tf\)(y_0) \quad \textup{as } y \to y_0,
\end{align*}
as claimed.

Let us verify \eqref{eq:smms571}. The following argument is in the spirit of the proof of \cite[Lemma 2.1]{Sa}.
We represent the basis of normalized spherical harmonics in $L^2(\S^n,g_{\S^n})$ by $\{Y_{m,\ell}\}$,
where $\ell \in \N \cup \{0\}$ denotes the degree of a spherical harmonic and $m = 1, \ldots, d(\ell)$ labels the degeneracy.
Recall that $Y_{m,\ell}$ is the restriction of a harmonic homogeneous polynomial of degree $\ell$, which we will keep denoting by $Y_{m,\ell}$.
It is well-known that
\begin{equation}\label{eq:smms581}
d(\ell) \simeq \ell^{n-1} \quad \textup{and} \quad |\nabla^{\alpha} Y_{m,\ell}(x)| \le C \ell^{\frac{n-1}{2}+|\alpha|} |x|^{\ell-|\alpha|} \quad \textup{for any } x \in \R^N \textup{ and multi-index } \alpha.
\end{equation}
Writing $y = r\ty \in \B^N$ with $r = |y|$ and $\ty = y/|y| \in \S^n$, and utilizing the partial wave decomposition of $\tf$ and the Funk-Hecke formula, we obtain
\begin{equation}\label{eq:smms582}
\int_{|\zeta-y| > \ep} \frac{\tf(y)-\tf(\zeta)}{|y-\zeta|^{n+2\ga}} (dv_{g_{\S^n}})_{\zeta}
= \left|\S^{n-1}\right| \sum_{\ell=1}^{\infty}\sum_{m=1}^{d(\ell)} c_{m,\ell} \mci_{\ell}(r) Y_{m,\ell}(\ty)
\end{equation}
provided $0 < \ep < 1-r$. Here,
\[c_{m,\ell} := \int_{\S^n} (\tf Y_{m,\ell}) dv_{g_{\S^n}}, \quad \mci_{\ell}(r) := \int_{-1}^1 \frac{(1-s^2)^{\frac{n-2}{2}}}{(1+r^2-2rs)^{\frac{n+2\ga}{2}}} [1-P_{\ell}(s)] ds > 0,\]
and $P_{\ell}$ is the Legendre polynomial of degree $\ell$. 
Because $0 \le 1-P_{\ell}(s) \le C\ell^2(1-s)$ for all $s \in [-1,1]$ and $\ga \in (0,1)$, we have that $\mci_{\ell}(r) < \infty$ for $r$ near $1$.
Also, thanks to the smoothness of $\tf$, if we select any $i \in \N$, then there exists $C_i > 0$ depending only on $n$, $\tf$,
and $i$ such that $|c_{m,\ell}| \le C_i\ell^{\frac{n-1}{2}-2i}$. 
Consequently, by applying \eqref{eq:smms581}, the mean value theorem, and the dominated convergence theorem, and then choosing sufficiently large $i \in \N$, we find
\begin{align*}
&\ \sum_{\ell=1}^{\infty}\sum_{m=1}^{d(\ell)} |c_{m,\ell}| \left|\mci_{\ell}(r) Y_{m,\ell}(\ty) - \mci_{\ell}(1) Y_{m,\ell}(y_0)\right| \\
&\le CC_i \sum_{\ell=1}^{\infty} \ell^{n-1} \ell^{\frac{n-1}{2}-2i} (|\mci_{\ell}(r)-\mci_{\ell}(1)| \ell^{\frac{n-1}{2}} + \ell^2 \cdot \ell^{\frac{n-1}{2}+1}|\ty-y_0|) \\
&\le CC_i \sum_{\ell=1}^{\infty} \ell^{2n+1-2i} \left[\int_{-1}^1 \left|\frac{1}{(1+r^2-2rs)^{\frac{n+2\ga}{2}}} - \frac{1}{(2-2s)^{\frac{n+2\ga}{2}}}\right| (1-s)^{\frac{n}{2}} ds + |\ty-y_0|\right] \to 0
\end{align*}
as $y \to y_0$ (so that $r \to 1$ and $\ty \to y_0$). In view of \eqref{eq:smms582}, this implies \eqref{eq:smms571}. The proof of \eqref{eq:smms572} is similar, so we skip it.

\medskip
This finishes the proof of Lemma \ref{lemma:smms5}.

\section{The variational problem involving the weighted isoperimetric ratio}\label{sec:main2}
\subsection{The existence of nonnegative maximizers of variational problem $\mcs_3$}
We again recall Convention \ref{conv}.

In Lemma \ref{lemma:wir}, we established the `equivalence' between the variational problem \eqref{eq:thetanw} and the problems denoted as $\mcs_1$, $\mcs_2$, and $\mcs_3$ in \eqref{eq:mcs},
thereby reducing a variational problem involving the weighted isoperimetric ratio into ones involving the weighted Poisson kernels.
In this subsection, we prove the finiteness of $\mcs_3$, the validity of \eqref{eq:thetaest}, and the existence of nonnegative maximizers of $\mcs_3$ under the condition \eqref{eq:thetaest2}.
To achieve them, we employ the concentration-compactness argument in \cite[Section 3]{HWY} and \cite[Section 6]{HWY2}.
As previously stated, we work in a broader setting than Lemma \ref{lemma:wir} or Theorem \ref{thm:main21}.

\begin{lemma}\label{lemma:Poiest2}
Assume that $n \in \N$, $n > 2\ga$, $\ga \in (0,1)$, and $p \in (1,\infty]$.
For a function $f$ on $M$, let $\mck_{\bgg,\rhog}^{\mz}f$ be the function on $\ox$ given in \eqref{eq:Poibg}.
Then there exists a constant $C > 0$ depending only on $n$, $\ga$, $p$, $(\ox,\bgg)$, and $\rhog$ such that
\[\|\mck_{\bgg,\rhog}^{\mz}f\|_{L^{\frac{(n-2\ga+2)p}{n}}(X;\rhog^{\mz},\bgg)} \le C\|f\|_{L^p(M,\bh)} \quad \text{for all } f \in L^p(M,\bh).\]
In particular, if $p \in (1,\infty)$, then we have
\begin{equation}\label{eq:thetanwg}
\begin{aligned}
\mcs_{3;p} &= \mcs_{3;n,\ga,p}\(X,\bgg,\rhog\) \\
&:= \sup\left\{\left\|\mck_{\bgg,\rhog}^{\mz}f\right\|_{L^{\frac{(n-2\ga+2)p}{n}}(X;\rhog^{\mz},\bgg)}^{\frac{(n-2\ga+2)p}{n}}: \|f\|_{L^p(M,\bh)} = 1\right\} < \infty.
\end{aligned}
\end{equation}
\end{lemma}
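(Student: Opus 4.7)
The plan is to mimic the proof of Lemma \ref{lemma:Poiest1}, using the pointwise kernel estimates established in Proposition \ref{prop:Poi} to reduce the analysis on the CCE manifold $(\ox,\bgg)$ to the Euclidean model. The qualitative bound \eqref{eq:Poi21} shows that
\[\left|\(\mck_{\bgg,\rhog}^{\mz}f\)(\xi)\right| \le C \int_M \mck_0^{\mz}(\xi,\sigma) |f(\sigma)| (dv_{\bh})_\sigma \quad \text{for } \xi \in \ox,\]
so it suffices to bound the positive operator $T$ with kernel $\mck_0^{\mz}$ from $L^p(M,\bh)$ into $L^{\frac{(n-2\ga+2)p}{n}}(X;\rhog^{\mz},\bgg)$. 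As for \eqref{eq:Poiest11}, I will establish this by interpolating between a weak-type bound at $p=1$ and a strong bound at $p=\infty$.

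First I would obtain the endpoint $p=\infty$: for this it is enough to show
\[\sup_{\xi \in \ox} \int_M \mck_0^{\mz}(\xi,\sigma) (dv_{\bh})_\sigma \le C.\]
When $\rhog(\xi) \ge \delta_0/2$ the integrand is bounded and $(M,\bh)$ is compact, so this is trivial. When $\rhog(\xi) < \delta_0/2$, I would introduce Fermi coordinates around $\pi(\xi) \in M$: identifying the geodesic ball $B^n_{\bh}(\pi(\xi),r_1) \simeq B^n(0,r_1)$ via the exponential map, the density $dv_{\bh}$ becomes $(1+O(|\bw|^2))d\bw$, and $d_{\bh}(\pi(\xi),\sigma)^2 \le C|\bw|^2$. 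On this chart the integrand is dominated by a constant multiple of the Euclidean Poisson density at height $\rhog(\xi)$ centered at the origin, hence the local contribution is bounded using $\mck_{n,\ga}1 = 1$. The remaining integral on $M \setminus B^n_{\bh}(\pi(\xi),r_1)$ is bounded because there $d_{\bh}(\pi(\xi),\sigma) \ge r_1$, so $\mck_0^{\mz}(\xi,\sigma) \le C\rhog(\xi)^{2\ga} \le C$.

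Next I would prove the weak-type estimate $\|Tf\|_{L^{\frac{n-2\ga+2}{n}}_w(X;\rhog^{\mz},\bgg)} \le C\|f\|_{L^1(M,\bh)}$, following the argument of \cite[Proposition 2.1]{HWY}. By sublinearity one may assume $\|f\|_{L^1(M,\bh)} = 1$ and $f \ge 0$. Combining the two bounds
\[\mck_0^{\mz}(\xi,\sigma) \le \ka_{n,\ga}\rhog(\xi)^{-n} \quad \text{and} \quad \mck_0^{\mz}(\xi,\sigma) \le \ka_{n,\ga}\frac{\rhog(\xi)^{2\ga}}{d_{\bh}(\pi(\xi),\sigma)^{n+2\ga}},\]
a standard layer-cake computation shows that for any $t > 0$ the $\rhog^{\mz}dv_{\bgg}$-measure of $\{Tf > t\}$ is at most $Ct^{-\frac{n-2\ga+2}{n}}$. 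More precisely, on the region $\rhog(\xi) \ge t^{-1/n}$ we use the first bound, while on the complement we integrate $\rhog^{\mz} \cdot \rhog^{2\ga}/d_{\bh}^{n+2\ga}$ against $|f|$ and exploit $\mz + 2\ga = 1$ together with compactness of $M$; these two contributions combine to the claimed decay rate.

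Then Marcinkiewicz interpolation between the weak endpoint at $p=1$ and the strong endpoint at $p=\infty$ yields the desired strong bound $T: L^p(M,\bh) \to L^{\frac{(n-2\ga+2)p}{n}}(X;\rhog^{\mz},\bgg)$ for all $p \in (1,\infty]$. The finiteness \eqref{eq:thetanwg} of $\mcs_{3;p}$ then follows immediately upon restricting to $p \in (1,\infty)$. The main obstacle is the weak-type $L^1$ estimate: although morally identical to the Euclidean case \cite{HWY}, its execution requires carefully patching Fermi coordinates so that the layer-cake argument uses the Riemannian distance $d_{\bh}$ and weighted Riemannian volume $\rhog^{\mz}dv_{\bgg}$ uniformly across $M$. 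The compactness of $(\ox,\bgg)$ and the uniform pointwise control provided by Proposition \ref{prop:Poi}(b) make this patching routine but somewhat technical.
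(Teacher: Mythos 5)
Your proposal is correct and follows essentially the same route as the paper: the paper's proof simply says that, with the pointwise bound \eqref{eq:Poi21} in hand, one repeats the argument of Lemma \ref{lemma:Poiest1}, i.e.\ a weak-type $(1,\frac{n-2\ga+2}{n})$ estimate plus the $L^\infty$ endpoint (via $\mck_{n,\ga}1=1$ in Fermi coordinates) followed by Marcinkiewicz interpolation, which is exactly what you do.
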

\begin{proof}
Having \eqref{eq:Poi21} in hand, one can follow the argument in the proof of Lemma \ref{lemma:Poiest1}.
\end{proof}

\begin{lemma}\label{lemma:tech1}
Let $n \in \N$, $\ga \in (0,1)$, $p \in (1,\infty)$, and $\delta_0 > 0$ be a small number such that \eqref{eq:g+ ext} holds.
Pick any $t \in (1,\frac{n-2\ga+2}{n-p})$ if $1 < p < n$ and $t \in (1,\infty)$ if $p \ge n$.
Then there exists a constant $C > 0$ depending only on $n$, $\ga$, $p$, $(\ox,\bgg)$, and $\rhog$ such that
\[\left\|\int_M \frac{\rhog^{2\ga} f(\sigma)}{d_{\bgg}(\cdot,\sigma)^{n+2\ga-1}}(dv_{\bh})_\sigma \right\|_{L^{tp}(\mct_{\delta_0};\rhog^{\mz},\bgg)}
\le C\|f\|_{L^p(M,\bh)} \quad \text{for all } f \in L^p(M,\bh).\]
where $\mct_{\delta_0} = M \times (0,\delta_0)$.
\end{lemma}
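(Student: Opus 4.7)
The plan is to reduce the estimate to a Euclidean estimate on the slices $\{\rhog = r\}$ via Fermi coordinates, then apply Young's convolution inequality in the tangential variable, and finally integrate in $r$ against the weight $\rhog^{\mz} = r^{1-2\ga}$; the hypothesized range of $t$ will turn out to correspond exactly to the integrability threshold in $r$.

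\textbf{Step 1 (Reduction to Euclidean slices).} Since $M$ is compact, I cover a collar neighborhood by finitely many Fermi patches $B^n(0,r_0) \times [0,\delta_0]$ in which $\xi = (\bar x, r)$ with $r = \rhog(\xi)$. In such coordinates the metric $\bgg$ is uniformly comparable to the Euclidean product metric, so $d_{\bgg}(\xi,\sigma)^2 \asymp |\bar x-\bw|^2 + r^2$ for $\sigma$ in the same chart (with coordinate $\bw$), while $dv_{\bgg} \asymp d\bar x\,dr$ and $dv_{\bh} \asymp d\bw$. A partition of unity splits $f$ into pieces supported in a single chart; the cross-chart contributions use $d_{\bgg}(\xi,\sigma)^{-(n+2\ga-1)}$ as a uniformly bounded kernel and are absorbed trivially by H\"older. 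The claim is then reduced to showing, for each chart and each $f \in L^p(\R^n)$ supported in $B^n(0,r_0)$,
\[
\int_0^{\delta_0}\!\!\int_{\R^n} r^{1-2\ga}\,|K_r * f(\bar x)|^{tp}\,d\bar x\,dr \;\le\; C\|f\|_{L^p(\R^n)}^{tp}, \qquad K_r(\bar x) := \frac{r^{2\ga}}{(|\bar x|^2+r^2)^{(n+2\ga-1)/2}}.
\]

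\textbf{Step 2 (Young's inequality on each slice).} For fixed $r>0$, I choose the Young index $q \in [1,\infty)$ by $\tfrac{1}{q} + \tfrac{1}{p} = 1 + \tfrac{1}{tp}$, equivalently $q = tp/(t(p-1)+1)$; the condition $q\ge 1$ is just $t\ge 1$. Young's inequality then gives $\|K_r * f\|_{L^{tp}(\R^n)} \le \|K_r\|_{L^q(\R^n)}\,\|f\|_{L^p(\R^n)}$. A rescaling $u = \bar x/r$ produces
\[
\|K_r\|_{L^q(\R^n)}^q \;=\; r^{n - (n+2\ga-1)q}\int_{\R^n}(|u|^2+1)^{-(n+2\ga-1)q/2}\,du,
\]
so $\|K_r\|_{L^q(\R^n)} = C r^{n/q + 1 - n}$ whenever $(n+2\ga-1)q > n$. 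If $(n+2\ga-1)q \le n$, the same rescaling on the bounded chart yields instead an $O(1)$ (or at worst logarithmic in $r$) bound, which is \emph{strictly} more favorable for the subsequent $r$-integration since $r^{1-2\ga}$ is already integrable near $0$ by $\mz \in (-1,1)$.

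\textbf{Step 3 ($r$-integration and identification of the threshold).} Inserting the kernel bound and using $1/q = 1-1/p+1/(tp)$, the exponent of $r$ appearing on the left-hand side becomes
\[
\alpha \;:=\; (1-2\ga) + tp\bigl(\tfrac{n}{q}+1-n\bigr) \;=\; t(p-n) + (n-2\ga+2) - 1,
\]
so $\int_0^{\delta_0} r^{\alpha}\,dr < \infty$ if and only if $\alpha > -1$, i.e. $t(p-n) > -(n-2\ga+2)$. This condition is automatic when $p\ge n$, and becomes $t < (n-2\ga+2)/(n-p)$ when $p < n$, matching the hypothesis exactly. Summing over the finitely many charts completes the proof.

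The main obstacle I anticipate is the delicate interplay between the three exponents $q$, $tp$ and $p$: the constraint that Young's inequality produce a kernel bound whose $r$-scaling integrates against $r^{1-2\ga}$ up to \emph{precisely} the sharp threshold $(n-2\ga+2)/(n-p)$ forces the particular relationship among these exponents used above, and verifying this correspondence is the one genuinely quantitative point. A minor secondary issue is that $\bgg$ is only $C^{n+1,\beta}$ (with logarithmic terms when $n$ is even), but since only $L^\infty$-comparability of $d_{\bgg}$ with the Euclidean distance is used in the reduction, this regularity is more than sufficient.
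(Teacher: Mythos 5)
Your argument is correct, but it takes a genuinely different route from the paper. The paper works intrinsically and pointwise in $\xi$: it splits $|f|\,d_{\bgg}(\xi,\sigma)^{-(n+2\ga-1)}$ into three factors and applies H\"older with exponents $(a,q,p')=(tp,\tfrac{tp\cdot p}{tp-p},\tfrac{p}{p-1})$, using an auxiliary exponent $b$ whose admissible interval is nonempty precisely when $t(n-p)<n-2\ga+2$; it then raises to the power $tp$, applies Fubini, and uses the uniform bound $\int_{\mct_{\delta_0}}\rhog^{2\ga tp+\mz}d_{\bgg}(\xi,\sigma)^{-(n+2\ga-1)b}\,dv_{\bgg}\le C$. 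Your proof instead localizes to Fermi charts, applies Young's convolution inequality on each slice $\{\rhog=r\}$, and integrates explicitly in $r$; the threshold on $t$ emerges transparently as the condition $\alpha>-1$ for the radial integral, whereas in the paper it is hidden in the nonemptiness of the interval for $b$. What the paper's approach buys is brevity and coordinate-freeness (no partition of unity, no comparison of $d_{\bgg}$ with the Euclidean distance, no case analysis on whether $K_r\in L^q(\R^n)$ globally); what yours buys is a clearer identification of where the sharp exponent comes from and a more elementary main tool. Two minor points: your displayed formula for $\|K_r\|_{L^q}^q$ omits the factor $r^{2\ga q}$ coming from the numerator $r^{2\ga}$ (the subsequent conclusion $\|K_r\|_{L^q}=Cr^{n/q+1-n}$ is nevertheless the correct one once that factor is restored), and in the regime $(n+2\ga-1)q\le n$ the truncated kernel bound is $Cr^{2\ga}$ (up to a logarithm), which is actually a \emph{larger} bound than the scaling formula would suggest but still yields the $r$-exponent $1-2\ga+2\ga tp>-1$, so the conclusion stands.
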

\begin{proof}
Owing to the assumption on $t$, there is a constant $b$ satisfying
\[tp-\frac{nt(p-1)}{n+2\ga-1} < b < \frac{n-2\ga+2+2\ga tp}{n+2\ga-1}.\]
We write $a=tp>p$, $q=\frac{ap}{a-p}>1$, and $p'=\frac{p}{p-1}>1$. By H\"older's inequality with $\frac{1}{a}+\frac{1}{q}+\frac{1}{p'}=1$ and the fact that $\frac{(n+2\ga-1)(a-b)p'}{a} < n$, we get
\begin{equation}\label{eq:tech11}
\int_M \frac{|f(\sigma)|}{d_{\bgg}(\xi,\sigma)^{n+2\ga-1}}(dv_{\bh})_\sigma
\le C \left[\int_M \frac{|f(\sigma)|^p}{d_{\bgg}(\xi,\sigma)^{(n+2\ga-1)b}} (dv_{\bh})_\sigma\right]^{\frac{1}{tp}} \left[\int_M |f(\sigma)|^p(dv_{\bh})_\sigma\right]^{\frac{a-p}{ap}}
\end{equation}
where $C > 0$ is independent of $\xi \in \overline{\mct_{\delta_0}}$. Since $(n+2\ga-1)b-(2\ga tp+\mz)<n+1$, it follows from \eqref{eq:tech11} that
\begin{align*}
&\ \int_{\mct_{\delta_0}} \left|\int_M\frac{f(\sigma)}{d_{\bgg}(\xi,\sigma)^{n+2\ga-1}}(dv_{\bh})_\sigma\right|^{tp} \rhog(\xi)^{2\ga t p+\mz} (dv_{\bgg})_{\xi}\\
&\le C\left[\int_M |f(\sigma)|^p(dv_{\bh})_\sigma\right]^{\frac{(a-p)t}{a}} \int_M |f(\sigma)|^p \int_{\mct_{\delta_0}}
\frac{\rhog(\xi)^{2\ga tp + \mz}}{d_{\bgg}(\xi,\sigma)^{(n+2\ga-1)b}}(dv_{\bgg})_{\xi} (dv_{\bh})_\sigma\\
&\le C\left[\int_M |f(\sigma)|^p(dv_{\bh})_\sigma\right]^t. \qedhere
\end{align*}
\end{proof}

\begin{lemma}\label{lemma:tech2}
Under the assumptions of Lemma \ref{lemma:tech1}, we set
\[(\mck_0^{\mz}f)(\xi) = \int_M \mck_0^{\mz}(\xi,\sigma)f(\sigma) (dv_{\bh})_{\sigma} \quad \text{for } \xi \in \overline{\mct_{\delta_0}}\]
where $\mck_0^{\mz}$ is the function defined in \eqref{eq:Poi02}.
Let also $\eta \in C^{0,1}(M)$ (i.e., $\eta$ is Lipschitz continuous on $M$) and $\pi: \overline{\mct_{\delta_0}} \to M$ the orthogonal projection onto $M$.
Then there exists a constant $C > 0$ depending only on $n$, $\ga$, $p$, $(\ox,\bgg)$, and $\rhog$ such that
\begin{equation}\label{eq:tech21}
\|\mck_{\bgg,\rhog}^{\mz}f - \mck_0^{\mz}f\|_{L^{tp}(\mct_{\delta_0};\rhog^{\mz},\bgg)} \le C\|f\|_{L^p(M,\bh)}
\end{equation}
and
\begin{equation}\label{eq:tech22}
\left\|(\eta \circ \pi) \cdot \mck_{\bgg,\rhog}^{\mz}f - \mck_{\bgg,\rhog}^{\mz}(\eta f)\right\|_{L^{tp}(\mct_{\delta_0};\rhog^{\mz},\bgg)} \le C\|\nabla \eta\|_{L^{\infty}(M,\bh)}\|f\|_{L^p(M,\bh)}.
\end{equation}
for all $f \in L^p(M,\bh)$.
\end{lemma}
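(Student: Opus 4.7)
The plan is to reduce both estimates \eqref{eq:tech21} and \eqref{eq:tech22} to direct applications of Lemma \ref{lemma:tech1} by means of the pointwise bounds on $\mck_{\bgg,\rhog}^{\mz}$ furnished by Proposition \ref{prop:Poi}.

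For \eqref{eq:tech21}, I would invoke \eqref{eq:Poi22}, which shows that the difference kernel has a strictly milder singularity than $\mck_0^{\mz}$ itself:
\[
\left|(\mck_{\bgg,\rhog}^{\mz} - \mck_0^{\mz})(\xi,\sigma)\right| \le C\, \frac{\rhog(\xi)^{2\ga}}{d_{\bgg}(\xi,\sigma)^{n+2\ga-2}}.
\]
Since $(\ox,\bgg)$ is compact, its diameter $D := \text{diam}(\ox,\bgg)$ is finite, so one can absorb one factor of $d_{\bgg}$ to get $d_{\bgg}(\xi,\sigma)^{-(n+2\ga-2)} \le D\, d_{\bgg}(\xi,\sigma)^{-(n+2\ga-1)}$. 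Inserting this into the integral representation of $(\mck_{\bgg,\rhog}^{\mz} - \mck_0^{\mz})f$ and quoting Lemma \ref{lemma:tech1} yields \eqref{eq:tech21} immediately.

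For \eqref{eq:tech22}, I would write
\[
\bigl[(\eta \circ \pi)\,\mck_{\bgg,\rhog}^{\mz}f - \mck_{\bgg,\rhog}^{\mz}(\eta f)\bigr](\xi) = \int_M \mck_{\bgg,\rhog}^{\mz}(\xi,\sigma)\,[\eta(\pi(\xi)) - \eta(\sigma)]\, f(\sigma)\,(dv_{\bh})_\sigma
\]
and combine three ingredients: the Lipschitz hypothesis, $|\eta(\pi(\xi)) - \eta(\sigma)| \le \|\nabla \eta\|_{L^\infty(M,\bh)}\, d_{\bh}(\pi(\xi),\sigma)$; the distance comparison $d_{\bh}(\pi(\xi),\sigma) \le C\, d_{\bgg}(\xi,\sigma)$ on $\overline{\mct_{\delta_0}} \times M$; and the upper bound from \eqref{eq:Poi21}--\eqref{eq:Poi02}, $|\mck_{\bgg,\rhog}^{\mz}(\xi,\sigma)| \le C\, \rhog(\xi)^{2\ga}/d_{\bgg}(\xi,\sigma)^{n+2\ga}$. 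Together these dominate the integrand by $C\,\|\nabla \eta\|_{L^\infty}\, \rhog(\xi)^{2\ga}\, d_{\bgg}(\xi,\sigma)^{-(n+2\ga-1)}\, |f(\sigma)|$, and Lemma \ref{lemma:tech1} concludes the proof.

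The only step that requires verification is the distance comparison $d_{\bh}(\pi(\xi),\sigma) \le C\, d_{\bgg}(\xi,\sigma)$. This follows locally from the warped-product form \eqref{eq:g+ ext}: in Fermi coordinates around $\sigma$ the point $\xi$ has coordinates $(\bx,x_N)$ with $\rhog(\xi) = x_N$ and $d_{\bh}(\pi(\xi),\sigma) \approx |\bx|$, while $d_{\bgg}(\xi,\sigma) \approx \sqrt{|\bx|^2 + x_N^2} \ge |\bx|$. For points outside a fixed collar, both distances are uniformly bounded above and below and the inequality is trivial after adjusting $C$. I expect this verification, together with tracking that the weaker singularity exponent $n+2\ga-2$ in \eqref{eq:Poi22} causes no issues (indeed, it only enlarges the admissible range of $t$), to be the main---though rather mild---technical point.
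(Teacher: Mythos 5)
Your proposal is correct and follows essentially the same route as the paper: both estimates are reduced to Lemma \ref{lemma:tech1} via the kernel bounds \eqref{eq:Poi21}--\eqref{eq:Poi22}, the Lipschitz bound on $\eta$, and the comparison $d_{\bh}(\pi(\xi),\sigma)\le C\,d_{\bgg}(\xi,\sigma)$ (which the paper uses implicitly and you verify explicitly). The observation that the milder exponent $n+2\ga-2$ in \eqref{eq:tech21} is absorbed by compactness is exactly what the paper does.
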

\begin{proof}
By \eqref{eq:Poi22}, we have
\begin{align*}
\left|\(\mck_{\bgg,\rhog}^{\mz}f\)(\xi) - \(\mck_0^{\mz}f\)(\xi)\right|
&\le \int_M \left|\mck_{\bgg,\rhog}^{\mz}(\xi,\sigma) - \mck_0^{\mz}(\xi,\sigma)\right| |f(\sigma)| (dv_{\bh})_\sigma \\
&\le C\int_M\frac{\rhog(\xi)^{2\ga}|f(\sigma)|} {d_{\bgg}(\xi,\sigma)^{n+2\ga-2}}(dv_{\bh})_\sigma,
\end{align*}
and
\begin{align*}
\left|(\eta \circ \pi)(\xi) (\mck_{\bgg,\rhog}^{\mz}f)(\xi)-\mck_{\bgg,\rhog}^{\mz}(\eta f)(\xi)\right|
&\le \int_M |\eta(\pi(\xi))-\eta(\sigma)| \left|\mck_{\bgg,\rhog}^{\mz}(\xi,\sigma)\right| \left|f(\sigma)\right| (dv_{\bh})_\sigma\\
&\le C\|\nabla \eta\|_{L^{\infty}(M,\bh)}\int_M\frac{\rhog(\xi)^{2\ga}|f(\sigma)|}{d_{\bgg}(\xi,\sigma)^{n+2\ga-1} }(dv_{\bh})_\sigma
\end{align*}
for $\xi \in \overline{\mct_{\delta_0}}$. Therefore, inequalities \eqref{eq:tech21} and \eqref{eq:tech22} follow from Lemma \ref{lemma:tech1}.
\end{proof}

\begin{lemma}\label{lemma:tech3}
Let $n \in \N$, $n > 2\ga$, $\ga \in (0,1)$, and $p \in (1,\infty)$. For any $\ep>0$, there exists small $\delta \in (0,\frac{1}{2}\min\{\delta_0,\textup{inj}(M,\bh)\})$ such that
\[\left\|\mck_{\bgg,\rhog}^{\mz}f\right\|_{L^{\frac{(n-2\ga+2)p}{n}}(\mct_{\delta};\rhog^{\mz},\bgg)}
\le \left[s_{n,\ga,p}^{\frac{n}{(n-2\ga+2)p}} + \ep\right] \|f\|_{L^p(M,\bh)} \quad \text{for all } f \in L^p(M,\bh)\]
where $s_{n,\ga,p} > 0$ is the quantity introduced in \eqref{eq:varprob}.
\end{lemma}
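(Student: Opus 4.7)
The plan is to decompose $\mck_{\bgg,\rhog}^{\mz}f = \mck_0^{\mz}f + Rf$ where $\mck_0^{\mz}$ is the model kernel from \eqref{eq:Poi02} and $R := \mck_{\bgg,\rhog}^{\mz} - \mck_0^{\mz}$, and to show separately that (a) the remainder $Rf$ has $L^{(n-2\ga+2)p/n}(\mct_\delta;\rhog^{\mz},\bgg)$-norm bounded by $C\delta^\beta\|f\|_{L^p(M,\bh)}$ for some $\beta > 0$ independent of $f$, and (b) the model part $\mck_0^{\mz}f$ has $L^{(n-2\ga+2)p/n}(\mct_\delta;\rhog^{\mz},\bgg)$-norm at most $(s_{n,\ga,p}^{n/((n-2\ga+2)p)} + o(1))\|f\|_{L^p(M,\bh)}$ as $\delta \to 0$. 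Both estimates together deliver the lemma upon choosing $\delta$ so small that each contribution is less than $(\ep/2)\|f\|_{L^p(M,\bh)}$.

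For (a), I apply Lemma~\ref{lemma:tech2} with $t := (n-2\ga+2)/n > 1$; this is admissible in Lemma~\ref{lemma:tech1} since $(n-2\ga+2)/n < (n-2\ga+2)/(n-p)$ when $1 < p < n$ (and $t > 1$ is unrestricted when $p \ge n$). Revisiting the proof of Lemma~\ref{lemma:tech1} with the outer integration restricted from $\mct_{\delta_0}$ to $\mct_\delta$, the inner integral $\int_{\mct_\delta} \rhog^{2\ga tp + \mz}/d_{\bgg}^{(n+2\ga-1)b}\,dv_{\bgg}$ evaluated at fixed $\sigma \in M$ acquires an extra factor $\delta^\alpha$ with $\alpha := 2\ga tp + \mz + n + 1 - (n+2\ga-1)b > 0$; positivity of $\alpha$ follows from the admissibility of $b$ established in Lemma~\ref{lemma:tech1}, and the claim itself is verified by the change of variables $x \mapsto \delta x$ in Fermi coordinates at $\sigma$. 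Consequently
\[\|Rf\|_{L^{(n-2\ga+2)p/n}(\mct_\delta;\rhog^{\mz},\bgg)} \le C\delta^{\alpha/(tp)}\|f\|_{L^p(M,\bh)}.\]

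For (b), I argue by contradiction. Assume there exist $\ep_0 > 0$, $\delta_i \to 0$, and $f_i \in L^p(M,\bh)$ with $\|f_i\|_{L^p(M,\bh)} = 1$ satisfying $\|\mck_0^{\mz}f_i\|_{L^{(n-2\ga+2)p/n}(\mct_{\delta_i};\rhog^{\mz},\bgg)} \ge s_{n,\ga,p}^{n/((n-2\ga+2)p)} + \ep_0$. Since $\int_0^{\delta_i} r^{\mz} dr \sim \delta_i^{2-2\ga} \to 0$, any subsequence that converges strongly in $L^p(M,\bh)$ would yield a vanishing left-hand side, so by concentration-compactness the sequence $f_i$ must, up to subsequence, concentrate at some scale $\lambda_i \to 0$ around some point $\sigma_0 \in M$. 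I rescale through Fermi coordinates $\Psi_{\sigma_0}$ by setting $\tilde{f}_i(\tilde{w}) := \lambda_i^{n/p} f_i(\Psi_{\sigma_0}(\lambda_i\tilde{w},0))$, so that $\|\tilde{f}_i\|_{L^p(\R^n)} \le 1 + o(1)$. The scaling identity $N + \mz = n - 2\ga + 2 = nq/p$ (with $q := (n-2\ga+2)p/n$) ensures the rescaled norms match exactly, and the metric expansions in Lemma~\ref{lemma:Poi1} combined with \eqref{eq:Poi02} give $\mck_0^{\mz} = \mck_{n,\ga}(1 + O(\lambda_i^2))$ in the rescaled coordinates. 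Therefore
\[\|\mck_0^{\mz}f_i\|_{L^q(\mct_{\delta_i};\rhog^{\mz},\bgg)} = \|\mck_{n,\ga}\tilde{f}_i\|_{L^q(\R^n \times (0,\delta_i/\lambda_i);y_N^{\mz})} + o(1) \le s_{n,\ga,p}^{1/q} + o(1),\]
where the last inequality uses Lemma~\ref{lemma:Poiest1} applied to the half-space $\R^N_+$ (the collar slab $\R^n \times (0,\delta_i/\lambda_i)$ is contained in $\R^N_+$, and restriction only decreases the norm). This contradicts the assumption for $i$ large, completing the proof.

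The principal obstacle is the blow-up/concentration reduction in step (b). A naive partition-of-unity decomposition of $M$ at scale $\delta$ would require summing local estimates over $\sim \delta^{-n}$ charts and would introduce a combinatorial multiplicative factor that would destroy the sharp Euclidean constant. The contradiction argument bypasses this by extracting a single concentration center $\sigma_0$ and scale $\lambda_i$, reducing directly to the Euclidean sharp inequality of Lemma~\ref{lemma:Poiest1}. The delicate technical point is rigorously justifying the concentration dichotomy; it rests on the pointwise decay estimates \eqref{eq:Poi21}--\eqref{eq:Poi23} of Proposition~\ref{prop:Poi}, the strict excess $\ep_0 > 0$ (which excludes mass-splitting between multiple concentration points), and the degeneracy of the weight $\rhog^{\mz}$ on the shrinking collar (which excludes vanishing).
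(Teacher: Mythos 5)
Your step (a) is workable: redoing the computation of Lemma \ref{lemma:tech1} with the outer integration restricted to $\mct_{\delta}$ and $t=\frac{n-2\ga+2}{n}$ (which is admissible) does produce a positive power of $\delta$, so $\|(\mck_{\bgg,\rhog}^{\mz}-\mck_0^{\mz})f\|_{L^{(n-2\ga+2)p/n}(\mct_{\delta};\rhog^{\mz},\bgg)}\le C\delta^{\beta}\|f\|_{L^p(M,\bh)}$ is fine. The gap is in step (b). The assertion that a sequence $f_i$ with $\|f_i\|_{L^p(M,\bh)}=1$ and $\|\mck_0^{\mz}f_i\|_{L^q(\mct_{\delta_i})}\ge s_{n,\ga,p}^{1/q}+\ep_0$ must, up to a subsequence, concentrate at a \emph{single} point $\sigma_0$ and a \emph{single} scale $\lambda_i\to 0$ is precisely the content that needs proof, and nothing available at this stage of the paper supplies it: such a sequence can a priori split its mass among several points and scales or leave a diffuse remainder, and ruling this out requires a full profile decomposition for the two-parameter (point, scale) family adapted to the shrinking collar, with control of cross terms between profiles and the non-concentrating part. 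That is at least as hard as the lemma itself. Moreover, the only concentration-compactness statement in the paper, Proposition \ref{prop:ccp}, is itself proved \emph{using} Lemma \ref{lemma:tech3}, so an appeal to "concentration-compactness" here risks circularity. Even granting a single bubble, your comparison $\mck_0^{\mz}=\mck_{n,\ga}(1+O(\lambda_i^2))$ is only valid for the portion of $f_i$ supported within distance $O(\lambda_i)$ of $\sigma_0$; the contribution of the remaining mass of $f_i$ to $\mck_0^{\mz}f_i$ on $\mct_{\delta_i}$, and its interaction with the bubble in the $L^q$ norm, are not addressed.

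The motivation you give for rejecting the localization route is also mistaken: a partition of unity does \emph{not} destroy the sharp constant. Since $q:=\frac{(n-2\ga+2)p}{n}\ge p$, one has $\|F\|_{L^q}^p=\|F^p\|_{L^{q/p}}\le\sum_i\|\eta_iF^p\|_{L^{q/p}}=\sum_i\|(\eta_i\circ\pi)^{1/p}F\|_{L^q}^p$, and after commuting $\eta_i^{1/p}$ through the kernel (Lemma \ref{lemma:tech2}, with errors carrying positive powers of $\delta$) and comparing $\mck_0^{\mz}$ with $\mck_{n,\ga}$ on each chart, the main terms recombine \emph{exactly} via $\sum_i\|\eta_i^{1/p}f\|_{L^p(M,\bh)}^p=\|f\|_{L^p(M,\bh)}^p$; no combinatorial loss occurs in the leading constant. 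This is the paper's proof (following Hang--Wang--Yan), and it avoids any compactness or blow-up argument entirely. I would recommend replacing your step (b) by this localization.
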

\begin{proof}
We set $\mcc^N_{\bgg}(\sigma,2\delta) = B^n_{\bh}(\sigma,2\delta) \times [0,2\delta) \subset \ox$ for $\sigma \in M$ and $\delta > 0$ small enough.
There exist $m \in \N$ and $\{\sigma_i\}_{i=1}^m \subset M$ such that $\overline{\mct_{\delta}} \subset \cup_{i=1}^m \mcc_{i,2\delta}$ where $\mcc_{i,2\delta} := \mcc^N_{\bgg}(\sigma_i,2\delta)$.
Let $\{\eta_i\}_{i=1}^m \subset C^{\infty}(M)$ be a partition of unity for $\overline{\mct_{\delta}}$ subordinate to $\{\mcc_{i,2\delta}\}_{i=1}^m$
such that $\eta_i^{1/p} \in C^{\infty}(M)$ for each $i = 1,\ldots,m$. Then it holds that
\begin{equation}\label{eq:tech31}
\left\|\mck_{\bgg,\rhog}^{\mz}f\right\|_{L^{\frac{(n-2\ga+2)p}{n}}(\mct_{\delta};\rhog^{\mz},\bgg)}^p
\le \sum_{i=1}^m \left\|(\eta_i \circ \pi)^{\frac{1}{p}} \mck_{\bgg,\rhog}^{\mz}f \right\|_{L^{\frac{(n-2\ga+2)p}{n}}(\mcc_{i,2\delta};\rhog^{\mz},\bgg)}^p.
\end{equation}
By virtue of Lemma \ref{lemma:tech2}, we can select $t > \frac{n-2\ga+2}{n}$ such that \eqref{eq:tech21} and \eqref{eq:tech22} hold.
Employing \eqref{eq:tech22} and H\"older's inequality with $\frac{n-2\ga+2}{nt}+\frac{nt-(n-2\ga+2)}{nt}=1$, we find
\begin{align}
&\ \left\|(\eta_i \circ \pi)^{\frac{1}{p}} \mck_{\bgg,\rhog}^{\mz}f \right\|_{L^{\frac{(n-2\ga+2)p}{n}}(\mcc_{i,2\delta};\rhog^{\mz},\bgg)} \label{eq:tech32} \\
&\le \left\|\mck_{\bgg,\rhog}^{\mz}(\eta_i^{\frac{1}{p}} f)\right\|_{L^{\frac{(n-2\ga+2)p}{n}}(\mcc_{i,2\delta};\rhog^{\mz},\bgg)} + \left\|(\eta_i \circ \pi)^{\frac{1}{p}} \cdot \mck_{\bgg,\rhog}^{\mz}f
- \mck_{\bgg,\rhog}^{\mz} (\eta_i^{\frac{1}{p}}f)\right\|_{L^{\frac{(n-2\ga+2)p}{n}}(\mcc_{i,2\delta};\rhog^{\mz},\bgg)} \nonumber \\
&\le \left\|\mck_{\bgg,\rhog}^{\mz}(\eta_i^{\frac{1}{p}} f)\right\|_{L^{\frac{(n-2\ga+2)p}{n}}(\mcc_{i,2\delta};\rhog^{\mz},\bgg)}
+ C\delta^{\frac{2(1-\ga)}{p}(\frac{n}{n-2\ga+2}-\frac{1}{t})} \Big\|\nabla (\eta_i^{\frac{1}{p}})\Big\|_{L^{\infty}(M,\bh)} \|f\|_{L^p(M,\bh)}. \nonumber
\end{align}
Besides, we see from \eqref{eq:tech21} and \eqref{eq:varprob} that
\begin{equation}\label{eq:tech33}
\begin{aligned}
&\ \left\|\mck_{\bgg,\rhog}^{\mz}(\eta_i^{\frac{1}{p}} f)\right\|_{L^{\frac{(n-2\ga+2)p}{n}}(\mcc_{i,2\delta};\rhog^{\mz},\bgg)} \\
&\le \left\|\mck_0^{\mz} (\eta_i^{\frac{1}{p}} f)\right\|_{L^{\frac{(n-2\ga+2)p}{n}}(\mcc_{i,2\delta};\rhog^{\mz},\bgg)}
+ \left\|\mck_{\bgg,\rhog}^{\mz}(\eta_i^{\frac{1}{p}}f) - \mck_0^{\mz}(\eta_i^{\frac{1}{p}} f)\right\|_{L^{\frac{(n-2\ga+2)p}{n}}(\mcc_{i,2\delta};\rhog^{\mz},\bgg)} \\
&\le s_{n,\ga,p}^{\frac{n}{(n-2\ga+2)p}} (1+o(1))\|\eta_i^{\frac{1}{p}}f\|_{L^p(M,\bh)} + C\delta^{\frac{2(1-\ga)}{p}(\frac{n}{n-2\ga+2}-\frac{1}{t})}\|f\|_{L^p(M,\bh)}
\end{aligned}
\end{equation}
where $o(1) \to 0$ as $\delta \to 0$. Combining \eqref{eq:tech31}--\eqref{eq:tech33}, we obtain
\begin{align*}
&\begin{medsize}
\displaystyle \ \left\|\mck_{\bgg,\rhog}^{\mz}f\right\|_{L^{\frac{(n-2\ga+2)p}{n}}(\mct_{\delta};\rhog^{\mz},\bgg)}^p
\end{medsize} \\
&\begin{medsize}
\displaystyle \le \sum_{i=1}^m \left[s_{n,\ga,p}^{\frac{n}{(n-2\ga+2)p}} (1+o(1))\|\eta_i^{\frac{1}{p}}f\|_{L^p(M,\bh)}
+ C\delta^{\frac{2(1-\ga)}{p}(\frac{n}{n-2\ga+2}-\frac{1}{t})} \left\{\Big\|\nabla (\eta_i^{\frac{1}{p}})\Big\|_{L^{\infty}(M,\bh)} + 1\right\} \|f\|_{L^p(M,\bh)}\right]^p
\end{medsize} \\
&\begin{medsize}
\displaystyle \le s_{n,\ga,p}^{\frac{n}{n-2\ga+2}} (1+o(1))\sum_{i=1}^m\|\eta_i^{\frac{1}{p}} f\|_{L^p(M,\bh)}^p+o(1)\|f\|_{L^p(M,\bh)}^p = s_{n,\ga,p}^{\frac{n}{n-2\ga+2}} (1+o(1))\|f\|_{L^p(M,\bh)}^p
\end{medsize}
\end{align*}
as desired.
\end{proof}

\begin{prop}\label{prop:ccp}\textup{[A concentration-compactness principle]} 
Let $n \in \N$, $n > 2\ga$, $\ga \in (0,1)$, and $p \in (1,\infty)$. Assume that
\begin{equation}\label{eq:ccp0}
\begin{cases}
f_i \rightharpoonup f \text{ in } L^p(M), \quad |f_i|^p dv_{\bh} \rightharpoonup \mu \text{ in } \mcm(M),\\
|\mck_{\bgg,\rhog}^{\mz}f_i|^{\frac{(n-2\ga+2)p}{n}} \rhog^{\mz}dv_{\bgg} \rightharpoonup \nu \text{ in } \mcm(\ox)
\end{cases}
\text{as } i \to \infty,
\end{equation}
where $\mcm(M)$ and $\mcm(\ox)$ are the spaces of bounded nonnegative measures on $M$ and $\ox$, respectively.
Then there exist an at most countable set $J$ (used as an index set), a family of $\{\sigma_j\}_{j \in J}$ of distinct points on $M$,
and sequences $\{\nu^{(j)}\}_{j \in J},\, \{\mu^{(j)}\}_{j \in J}$ of numbers in $(0,\infty)$ such that
\begin{enumerate}
\item[(i)] $\nu = |\mck_{\bgg,\rhog}^{\mz}f|^{\frac{(n-2\ga+2)p}{n}} \rhog^{\mz}dv_{\bgg} + \sum\limits_{j \in J} \nu^{(j)} \delta_{\sigma_j}$ so that $\nu^{(j)} = \nu(\sigma_j)$ for each $j \in J$;
\item[(ii)] $\mu \ge |f|^p dv_{\bh} + \sum\limits_{j \in J} \mu^{(j)} \delta_{\sigma_j}$ where we write $\mu^{(j)} = \mu(\sigma_j)$ for each $j \in J$;
\item[(iii)] $(\nu^{(j)})^{\frac{n}{(n-2\ga+2)p}} \le s_{n,\ga,p}^{\frac{n}{(n-2\ga+2)p}} (\mu^{(j)})^{\frac{1}{p}}$ for each $j \in J$.
\end{enumerate}
\end{prop}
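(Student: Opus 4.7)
The proof follows the concentration-compactness framework of P.-L. Lions, adapted to the weighted Poisson extension setting as in \cite[Proposition 3.1]{HWY} and \cite[Lemma 6.2]{HWY2}. Throughout, let $q := (n-2\ga+2)p/n > p$, write $g_i := f_i - f$ (so $g_i \rightharpoonup 0$ in $L^p(M,\bh)$), and abbreviate $s := s_{n,\ga,p}$ and $\mck := \mck^{\mz}_{\bgg,\,\rhog}$.

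\emph{Structure of the defect measures.} By Proposition \ref{prop:Poi}(b), on any compact $K \Subset X$ the kernel $\mck(\xi,\cdot)$ lies in $L^{p'}(M,\bh)$ uniformly in $\xi \in K$, so the weak convergence of $g_i$ forces $\mck g_i \to 0$ locally uniformly on $X$. Combined with the uniform $L^q$ bound on $\mck f_i$ from Lemma \ref{lemma:Poiest2} and the pointwise identity $|\mck f_i|^q - |\mck g_i|^q - |\mck f|^q \to 0$ on $X$, the Brezis-Lieb lemma in $L^1(X;\rhog^{\mz},\bgg)$ yields
\[\int_{\ox} \phi \, d\nu = \int_{\ox} \phi |\mck f|^q \rhog^{\mz} dv_{\bgg} + \lim_{i \to \infty} \int_{\ox} \phi |\mck g_i|^q \rhog^{\mz} dv_{\bgg} \quad \text{for all } \phi \in C(\ox).\]
Hence $\nu' := \nu - |\mck f|^q \rhog^{\mz} dv_{\bgg}$ is non-negative and is the weak-$*$ limit of $|\mck g_i|^q \rhog^{\mz} dv_{\bgg}$; the locally uniform vanishing of $\mck g_i$ on $X$ implies $\nu'$ is supported on $M$. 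Passing to a further subsequence, $|g_i|^p dv_{\bh} \rightharpoonup \tmu'$ weakly-$*$ in $\mcm(M)$ for some $\tmu' \ge 0$.

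\emph{Reverse inequality.} I claim that for every Lipschitz $\eta : M \to \R$,
\[\Big(\int_M |\eta|^q d\nu'\Big)^{1/q} \le s^{1/q} \Big(\int_M |\eta|^p d\tmu'\Big)^{1/p}. \qquad (\dagger)\]
Extending $\eta \circ \pi$ by a smooth cut-off to a Lipschitz function $\bar\eta$ on $\ox$ with $\bar\eta|_M = \eta$, the LHS equals $\lim_i \|\bar\eta \cdot \mck g_i\|_{L^q(X;\rhog^{\mz},\bgg)}$ by weak-$*$ convergence. I decompose $\bar\eta \cdot \mck g_i = \mck(\eta g_i) + T_\eta g_i$, where $T_\eta g_i(\xi) := \int_M [\bar\eta(\xi) - \eta(\sigma)] \mck(\xi,\sigma) g_i(\sigma) (dv_{\bh})_\sigma$. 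The Lipschitz bound $|\bar\eta(\xi) - \eta(\sigma)| \le C d_{\bgg}(\xi,\sigma)$ makes the commutator kernel one degree less singular than $\mck$, so Lemmas \ref{lemma:tech1}--\ref{lemma:tech2} with $t \in ((n-2\ga+2)/n,(n-2\ga+2)/(n-p))$ (so that $tp > q$) give a uniform $L^{tp}(X;\rhog^{\mz},\bgg)$ bound on $T_\eta g_i$; since $T_\eta g_i \to 0$ pointwise on $X$, a Vitali equi-integrability argument promotes this to $\|T_\eta g_i\|_{L^q(X;\rhog^{\mz},\bgg)} \to 0$. For $\mck(\eta g_i)$, Lemma \ref{lemma:tech3} on $\mct_\delta$ yields $\|\mck(\eta g_i)\|_{L^q(\mct_\delta;\rhog^{\mz},\bgg)} \le (s^{1/q}+\ep)\|\eta g_i\|_{L^p(M,\bh)}$, while on $X \setminus \mct_\delta$ the kernel is uniformly bounded, so $\mck(\eta g_i) \to 0$ pointwise and dominated convergence gives $\|\mck(\eta g_i)\|_{L^q(X\setminus\mct_\delta;\rhog^{\mz},\bgg)} \to 0$. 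Combining with $\|\eta g_i\|_{L^p(M,\bh)}^p \to \int_M |\eta|^p d\tmu'$ and letting $\ep \to 0$ proves $(\dagger)$.

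\emph{Atomic extraction and conclusion.} Standard approximation of indicators in $(\dagger)$ gives $\nu'(A)^{1/q} \le s^{1/q} \tmu'(A)^{1/p}$ for every Borel $A \subset M$. Since $q > p$, Lions's classical argument (Radon-Nikodym derivative $d\nu'/d\tmu'$ together with Lebesgue differentiation on $(M,\tmu')$) forces $\nu'$ to concentrate on the at most countably many atoms $\{\sigma_j\}_{j \in J} \subset M$ of $\tmu'$, with $\nu^{(j)} := \nu'(\{\sigma_j\})$ and $\mu^{(j)} := \tmu'(\{\sigma_j\}) > 0$ satisfying $(\nu^{(j)})^{1/q} \le s^{1/q} (\mu^{(j)})^{1/p}$, which yields (i) and (iii). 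For (ii), weak lower semicontinuity of $\|\cdot\|_{L^p(M,\bh)}$ against non-negative test functions gives $\mu \ge |f|^p dv_{\bh}$ as measures, while Minkowski's inequality
\[\|f_i\|_{L^p(B(\sigma_j,\ep),\bh)} \ge \|g_i\|_{L^p(B(\sigma_j,\ep),\bh)} - \|f\|_{L^p(B(\sigma_j,\ep),\bh)},\]
passed through $i \to \infty$ along continuity radii $\ep \to 0$, produces $\mu(\{\sigma_j\}) \ge \tmu'(\{\sigma_j\}) = \mu^{(j)}$, completing (ii).

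\emph{Main obstacle.} The delicate step is the reverse inequality $(\dagger)$: obtaining precisely the sharp constant $s^{1/q}$ requires simultaneously controlling three contributions---the sharp near-$M$ bound from Lemma \ref{lemma:tech3}, the far-from-$M$ remainder (for which compactness/boundedness of $\mck$ away from $M$ is needed), and the commutator $T_\eta g_i$, whose naive norm bound $\|T_\eta g_i\|_{L^q} \le C \|\nabla\eta\|_\infty \|g_i\|_{L^p}$ does \emph{not} vanish with $i$. The vanishing of the commutator is what relies critically on its improved $L^{tp}$-integrability ($tp > q$) from Lemma \ref{lemma:tech2} and a Vitali equi-integrability argument.
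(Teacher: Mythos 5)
Your proof is correct, but it takes a genuinely different route from the paper's. The paper never subtracts the weak limit: it applies Lemma \ref{lemma:tech3} together with \eqref{eq:tech22} directly to $f_i$ on $\mct_{\delta}$, obtaining
\[
\left\|(\vph\circ\pi)\,\mck_{\bgg,\rhog}^{\mz}f_i\right\|_{L^{q}(\mct_{\delta};\rhog^{\mz},\bgg)}
\le \left[s_{n,\ga,p}^{1/q}+\ep\right]\|\vph f_i\|_{L^p(M,\bh)}
+ C\delta^{\alpha}\|\nabla\vph\|_{L^{\infty}(M,\bh)}\|f_i\|_{L^p(M,\bh)},
\]
and then sends $i\to\infty$, $\delta\to 0$, $\ep\to 0$ to get $\nu(E)^{1/q}\le s_{n,\ga,p}^{1/q}\mu(E)^{1/p}$ for all Borel $E\subset M$; the commutator is only ever needed to be \emph{bounded}, since it is killed by the $\delta^{\alpha}$ prefactor rather than by letting $i\to\infty$. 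A Radon--Nikodym/Lebesgue-differentiation argument for $\nu|_M$ with respect to $\mu$ then gives (i)--(iii) at once, using that $|f|^p dv_{\bh}$ and the atomic part are mutually singular. You instead perform the Brezis--Lieb splitting first, prove the reverse H\"older inequality between the defect measures $\nu'$ and $\tmu'$, and transfer back to $\mu$ via the triangle inequality on shrinking balls. This costs you two steps the paper avoids --- the genuine vanishing of the commutator $T_\eta g_i$ in $L^q$ (which you correctly extract from its uniform $L^{tp}$ bound with $tp>q$ plus Vitali) and the comparison $\tmu'(\{\sigma_j\})\le\mu(\{\sigma_j\})$ --- but it buys the slightly sharper form of (iii) with $\tmu'(\{\sigma_j\})$ in place of $\mu(\{\sigma_j\})$. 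Two small points to tighten: the ``locally uniform'' convergence $\mck_{\bgg,\rhog}^{\mz}g_i\to 0$ on compacta of $X$ requires equicontinuity (from the derivative bounds in Proposition \ref{prop:Poi}(b)--(c) or interior elliptic estimates), not merely a uniform $L^{p'}$ bound on the kernel; and in (ii) you should state explicitly that $|f|^p dv_{\bh}$ and $\sum_{j}\mu^{(j)}\delta_{\sigma_j}$ are mutually singular, so that the two separate lower bounds for $\mu$ can be added.
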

\begin{proof}
By \eqref{eq:Poi21} and classical elliptic estimates, we may assume that $\mck_{\bgg,\rhog}^{\mz}f_i \to \mck_{\bgg,\rhog}^{\mz}f$ in $C^{\infty}_{\text{loc}}(X)$ as $i \to \infty$. Thus
\begin{equation}\label{eq:ccp1}
\nu|_X=|\mck_{\bgg,\rhog}^{\mz}f|^{\frac{(n-2\ga+2)p}{n}} \rhog^{\mz}dv_{\bgg}.
\end{equation}

Fix a number $t > \frac{n-2\ga+2}{n}$ for which \eqref{eq:tech21} and \eqref{eq:tech22} hold, and let $\vph \in C^{0,1}(M)$.
By Lemma \ref{lemma:tech3} and \eqref{eq:tech22}, for any $\ep>0$, there exists $\delta \in (0,\delta_0)$ such that
\begin{align*}
&\ \left\|(\vph \circ \pi)\mck_{\bgg,\rhog}^{\mz}f_i\right\|_{L^{\frac{(n-2\ga+2)p}{n}}(\mct_{\delta};\rhog^{\mz},\bgg)} \\
&\le \left\|\mck_{\bgg,\rhog}^{\mz}(\vph f_i)\right\|_{L^{\frac{(n-2\ga+2)p}{n}}(\mct_{\delta};\rhog^{\mz},\bgg)}
+ \left\|\mck_{\bgg,\rhog}^{\mz}(\vph f_i) - (\vph \circ \pi) \mck_{\bgg,\rhog}^{\mz}f_i\right\|_{L^{\frac{(n-2\ga+2)p}{n}}(\mct_{\delta};\rhog^{\mz},\bgg)} \\
&\le \left[s_{n,\ga,p}^{\frac{n}{(n-2\ga+2)p}} + \ep\right] \|\vph f_i\|_{L^p(M,\bh)} + C \delta^{\frac{2(1-\ga)}{p}(\frac{n}{n-2\ga+2}-\frac{1}{t})} \|\nabla \vph\|_{L^{\infty}(M,\bh)} \|f_i\|_{L^p(M,\bh)}.
\end{align*}
We recall that $\sup_{i \in \N} \|f_i\|_{L^p(M,\bh)} < \infty$ by the uniform boundedness principle. Letting $i \to \infty$, $\delta \to 0$ and using \eqref{eq:ccp1}, and then taking $\ep \to 0$, we discover
\[\(\int_M |\vph|^{\frac{(n-2\ga+2)p}{n}}d\nu\)^{\frac{n}{(n-2\ga+2)p}} \le s_{n,\ga,p}^{\frac{n}{(n-2\ga+2)p}} \(\int_M |\vph|^p d\mu\)^{\frac{1}{p}}.\]
This implies
\begin{equation}\label{eq:ccp2}
\nu(E)^{\frac{n}{(n-2\ga+2)p}} \le s_{n,\ga,p}^{\frac{n}{(n-2\ga+2)p}} \mu(E)^{\frac{1}{p}} \quad \textup{for any Borel set } E \subset M.
\end{equation}
Hence the Radon-Nikodym theorem yields
\[\nu(E) = \int_E g\, d\mu \quad \textup{for any Borel set } E \subset M,\]
where the function $g$ on $M$ satisfies
\begin{equation}\label{eq:ccp3}
g(\sigma) = \lim_{r \to 0} \frac{\nu(B(\sigma,r))}{\mu(B(\sigma,r))} \quad \textup{for } \mu\textup{--a.e. } \sigma \in M.
\end{equation}
Because $\mu(M) 
< \infty$, the set $\{\sigma \in M: \mu(\{\sigma\})>0\}$ is at most countable. Let us denote it by $\{\sigma_j\}_{j \in J}$. If $\sigma \in M \setminus \{\sigma_j\}_{j \in J}$, then \eqref{eq:ccp2} and \eqref{eq:ccp3} imply
\[g(\sigma)\le s_{n,\ga,p} \lim_{r \to 0}\mu(B(\sigma,r))^{\frac{2(1-\ga)}{n}} = s_{n,\ga,p} \mu(\{\sigma\})^{\frac{2(1-\ga)}{n}} = 0.\]
Thus (i) holds. Moreover, by \eqref{eq:ccp2} and weak lower semi-continuity of the $L^p(M,\bh)$-norm,
\begin{equation}\label{eq:ccp4}
\mu \ge s_{n,\ga,p}^{-\frac{n}{n-2\ga+2}}\sum_{j \in J}(\nu^{(j)})^{\frac{n}{n-2\ga+2}} \delta_{\sigma_j} \quad \textup{ and } \quad \mu \ge |f|^p dv_{\bh}.
\end{equation}
Since the measures on the right-hand sides in \eqref{eq:ccp4} are mutually singular, (ii) and (iii) must be true.
\end{proof}

We now prove the main result of this section. Thanks to \eqref{eq:wir1} and \eqref{eq:varprob}, we have that $s_{n,\ga,\frac{2n}{n-2\ga}} = \Theta_{n,\ga}(\B^N,g_\pb^+,[g_{\S^n}])$.
\begin{theorem}\label{thm:main21g}
Let $n \in \N$, $n > 2\ga$, $\ga \in (0,1)$, and $p \in (1,\infty)$. If $\lambda_1(-\Delta_{g^+}) > \frac{n^2}{4}-\ga^2$, then
\begin{equation}\label{eq:main21g1}
\mcs_{3;p} = \mcs_{3;n,\ga,p}\(X,\bgg,\rhog\) \ge s_{n,\ga,p}
\end{equation}
and there is $f \in L^p(M,\bh)$ with $\|f\|_{L^p(M,\bh)}=1$ such that
\[\mcs_{3;n,\ga,p}\(X,\bgg,\rhog\) = \left\|\mck_{\bgg,\rhog}^{\mz}f\right\|_{L^{\frac{(n-2\ga+2)p}{n}}(X;\rhog^{\mz},\bgg)}^{\frac{(n-2\ga+2)p}{n}}\]
provided $\mcs_{3;n,\ga,p}(X,\bgg,\rhog) > s_{n,\ga,p}$.
Furthermore, up to a multiplication by a nonzero constant, every maximizer $f$ of the variational problem $\mcs_{3;n,\ga,p}(X,\bgg,\rhog)$ is nonnegative and satisfies
\begin{equation}\label{eq:main21g2}
f(\sigma)^{p-1} = \int_{\ox} \rhog(\xi)^{\mz} \mck_{\bgg,\rhog}^{\mz}(\xi,\sigma) (\mck_{\bgg,\rhog}^{\mz}f)(\xi)^{\frac{(n-2\ga+2)p}{n}-1}(dv_{\bgg})_{\xi} \quad \text{for } \sigma \in M.
\end{equation}
\end{theorem}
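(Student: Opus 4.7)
The plan is to follow the concentration-compactness scheme of Hang, Wang, and Yan \cite{HWY, HWY2}, built on Proposition \ref{prop:ccp}, Theorem \ref{thm:main11g}, and the pointwise bounds for $\mck^{\mz}_{\bgg,\rhog}$ from Proposition \ref{prop:Poi}. First I would prove \eqref{eq:main21g1} by transplanting a Euclidean maximizer. Let $f^{\circ} \in L^p(\R^n)$ be a maximizer of \eqref{eq:varprob} with $\|f^{\circ}\|_{L^p(\R^n)}=1$; by density we may take $f^{\circ}$ compactly supported up to arbitrarily small loss. Fix $\sigma_0 \in M$, choose Fermi coordinates on $\ox$ around $\sigma_0$, and for $\lambda>0$ define
\[
f_\lambda(\sigma) = \lambda^{n/p}\,\eta(\sigma)\, f^{\circ}\!\(\lambda\,\exp_{\sigma_0}^{-1}(\sigma)\),\qquad \sigma \in M,
\]
where $\eta$ is a smooth cutoff supported in a small geodesic ball around $\sigma_0$. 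A change of variables $\bw=\lambda\bx$ together with the leading-order identity $\mck^{\mz}_0((\pi(\xi),\rhog(\xi)),\sigma)\sim \ka_{n,\ga}\rhog(\xi)^{2\ga}/(|\exp_{\sigma_0}^{-1}(\pi(\xi))-\bx|^2+\rhog(\xi)^2)^{(n+2\ga)/2}$ and the rescaling $(\tilde\bx,\tilde x_N)=\lambda(\by_0,z_0)$ turn $\mck^{\mz}_{\bgg,\rhog}f_\lambda$ into $\lambda^{n/p}\mck_{n,\ga}f^{\circ}$ modulo error terms controlled by \eqref{eq:Poi03} and \eqref{eq:Poi22}. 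Since $nq/p-N-\mz=0$ for $q=(n-2\ga+2)p/n$, the ratio $\|\mck^{\mz}_{\bgg,\rhog}f_\lambda\|_{L^q(X;\rhog^{\mz},\bgg)}^q/\|f_\lambda\|_{L^p(M,\bh)}^q$ converges to $s_{n,\ga,p}$ as $\lambda\to\infty$, yielding \eqref{eq:main21g1}.

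For the existence part, take a maximizing sequence $\{f_i\}\subset L^p(M,\bh)$ with $\|f_i\|_{L^p(M,\bh)}=1$. By compactness of $\ox$, passing to a subsequence we may assume \eqref{eq:ccp0} holds and $\nu(\ox)=\mcs_{3;p}$, $\mu(M)=1$. Proposition \ref{prop:ccp} provides the decompositions of $\mu$ and $\nu$; setting $t:=\|f\|_{L^p(M,\bh)}^p\in[0,1]$ and using (i)--(iii), together with the subadditivity $\sum_j (\mu^{(j)})^{(n-2\ga+2)/n}\le(\sum_j\mu^{(j)})^{(n-2\ga+2)/n}\le(1-t)^{(n-2\ga+2)/n}$ valid for $q=(n-2\ga+2)/n\ge 1$,
\begin{align*}
\mcs_{3;p}
&= \int_{\ox}|\mck^{\mz}_{\bgg,\rhog}f|^{(n-2\ga+2)p/n}\rhog^{\mz}dv_{\bgg}+\sum_{j\in J}\nu^{(j)} \\
&\le \mcs_{3;p}\, t^{(n-2\ga+2)/n} + s_{n,\ga,p}\,(1-t)^{(n-2\ga+2)/n}.
\end{align*}
Since $q>1$ implies $t^q+(1-t)^q<1$ for $t\in(0,1)$, the strict inequality $\mcs_{3;p}>s_{n,\ga,p}$ forces $t=1$. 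Consequently $f_i\to f$ strongly in $L^p(M,\bh)$, $\|f\|_{L^p(M,\bh)}=1$, no concentration occurs, and $f$ attains $\mcs_{3;p}$.

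The structural conclusions then follow by standard arguments. Proposition \ref{prop:Poi}(d) gives $|\mck^{\mz}_{\bgg,\rhog}f(\xi)|\le\mck^{\mz}_{\bgg,\rhog}|f|(\xi)$, so $|f|$ is also a maximizer and equality of the $L^q$-norms forces $f$ to be of constant sign; after scaling we may take $f\ge 0$. The Euler--Lagrange equation \eqref{eq:main21g2} comes from the Lagrange multiplier rule applied to $J(f):=\int_{\ox}|\mck^{\mz}_{\bgg,\rhog}f|^{(n-2\ga+2)p/n}\rhog^{\mz}dv_{\bgg}$ under the constraint $I(f):=\int_M|f|^p dv_{\bh}=1$: by Fubini, $J'(f)g$ equals $\tfrac{(n-2\ga+2)p}{n}\int_M g(\sigma)\left[\int_{\ox}\rhog^{\mz}\mck^{\mz}_{\bgg,\rhog}(\cdot,\sigma)(\mck^{\mz}_{\bgg,\rhog}f)^{(n-2\ga+2)p/n-1}dv_{\bgg}\right] dv_{\bh}(\sigma)$, and testing with $g=f$ determines the multiplier; a scaling $f\mapsto cf$ absorbs the constant, giving \eqref{eq:main21g2}.

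The main obstacle is the quantitative verification in the first step. The leading singularity of $\mck^{\mz}_{\bgg,\rhog}$ coincides with $\mck^{\mz}_0$ by \eqref{eq:Poi02}, which in Fermi coordinates reduces to the Euclidean kernel $\mck_{n,\ga}$; the remainder terms $r^j a_j(\theta)$, the logarithmic contributions $r^j|\log r|^k a_{jk}(\theta)$ (arising when $n\ge 4$ is even), and the perturbation $\mcr(x)$ in \eqref{eq:Poi03} must be shown to be of strictly lower order after the blow-up rescaling by $\lambda\to\infty$. Bounding them uniformly in $\lambda$ against $|f_\lambda|^p$ via the change of variables, exploiting \eqref{eq:Poi21}--\eqref{eq:Poi22} and Lemma \ref{lemma:Poiest1} for the tails, will be the core technical step.
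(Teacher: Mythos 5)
Your proposal is correct and follows essentially the same route as the paper: the lower bound \eqref{eq:main21g1} via test functions concentrating at a boundary point (the paper rescales an arbitrary $f \in C^{\infty}_c(\R^n)$ by $\ep \to 0$ and controls the kernel error through Lemma \ref{lemma:tech2}, which is exactly the "core technical step" you flag), existence via Proposition \ref{prop:ccp} and the strict inequality $\mcs_{3;p} > s_{n,\ga,p}$, and the sign/Euler--Lagrange conclusions via the nonnegativity of the kernel from Proposition \ref{prop:Poi}(d). No gaps.
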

\begin{proof}
Given $\sigma \in M$ and $\delta > 0$ small enough, we redefine $\mcc^N_{\bgg}(\sigma,2\delta) = B^n_{\bh}(\sigma,2\delta) \times (0,2\delta) \subset X$.
We identify the sets $C_{\sigma,2\delta}$ and $\mcc^N(0,2\delta) = B^n(0,2\delta) \times (0,2\delta) \subset \R^N_+$ through Fermi coordinates on $\ox$ around $\sigma$.
For a fixed $f \in C^{\infty}_c(\R^n) \setminus \{0\}$, we take $\ep > 0$ so small that the support of $f_{\ep} := \ep^{-\frac{n}{p}}f(\ep^{-1}\cdot)$ is contained in $B^n(0,2\delta)$.
By setting $f_{\ep} = 0$ on $M \setminus B^n_{\bh}(\sigma,2\delta)$, we can also regard it as a function on $M$. Then, \eqref{eq:tech21} gives
\begin{multline*}
\left\|\mck_{\bgg,\rhog}^{\mz} f_{\ep}\right\|_{L^{\frac{(n-2\ga+2)p}{n}}(\mcc^N_{\bgg}(\sigma,2\delta); \rhog^{\mz},\bgg)} \\
\ge \left\|\mck_0^{\mz} f_{\ep}\right\|_{L^{\frac{(n-2\ga+2)p}{n}}(\mcc^N_{\bgg}(\sigma,2\delta); \rhog^{\mz},\bgg)}
- C \delta^{\frac{2(1-\ga)}{p}(\frac{n}{n-2\ga+2}-\frac{1}{t})} \|f_{\ep}\|_{L^p(M,\bh)},
\end{multline*}
\[\|\mck_{n,\ga}f_{\ep}\|_{L^{\frac{(n-2\ga+2)p}{n}}(\mcc^N(0,2\delta);x_N^{\mz})}\le (1+o(1))\|\mck_0^{\mz}f_{\ep}\|_{L^{\frac{(n-2\ga+2)p}{n}}(\mcc^N_{\bgg}(\sigma,2\delta);\rhog^{\mz},\bgg)},\]
and
\[\|f_{\ep}\|_{L^p(M,\bh)} 
\le (1+o(1))\|f_{\ep}\|_{L^p(B^n(0,2\delta))}\]
where $o(1) \to 0$ as $\delta \to 0$ and $t > \frac{n-2\ga+2}{n}$ is a number such that \eqref{eq:tech21} holds. Hence
\begin{align*}
\mcs_{3;p}^{\frac{n}{(n-2\ga+2)p}} &\ge \frac{\left\|\mck_{\bgg,\rhog}^{\mz} f_{\ep}\right\|_{L^{\frac{(n-2\ga+2)p}{n}}(\mcc^N_{\bgg}(\sigma,2\delta); \rhog^{\mz},\bgg)}}{\|f_{\ep}\|_{L^p(M,\bh)}} \\
&\ge (1+o(1)) \frac{\left\|\mck_{n,\ga} f_{\ep}\right\|_{L^{\frac{(n-2\ga+2)p}{n}}(\mcc^N(0,2\delta);x_N^{\mz})}}{\|f_{\ep}\|_{L^p(B^n(0,2\delta))}} - C\delta^{\frac{2(1-\ga)}{p}(\frac{n}{n-2\ga+2}-\frac{1}{t})}.
\end{align*}
Taking $\ep \to 0$ and then $\delta \to 0$, 
we obtain \eqref{eq:main21g1}.

Next, we suppose that $\mcs_{3;p} > s_{n,\ga,p}$. Let $\{f_i\}_{i=1}^{\infty} \subset L^p(M)$ be a maximizing sequence for $\mcs_{3;p}$ with $\|f_i\|_{L^p(M,\bh)}=1$.
Because of Lemma \ref{lemma:Poiest2}, we may assume that \eqref{eq:ccp0} holds as $i\to \infty$. 
If $\mu^{(j)} > 0$ for some $j \in J$, then it follows from Proposition \ref{prop:ccp} and \eqref{eq:main21g1} that
\[\mcs_{3;p} = \nu(\ox) = \left\|\mck_{\bgg,\rhog}^{\mz}f\right\|_{L^{\frac{(n-2\ga+2)p}{n}}(X;\rhog^{\mz},\bgg)}^{\frac{(n-2\ga+2)p}{n}} + \sum_{j \in J}\nu^{(j)}
< \mcs_{3;p} \Bigg[\|f\|_{L^p(M,\bh)}^{\frac{(n-2\ga+2)p}{n}} + \sum_{j \in J}(\mu^{(j)})^{\frac{n-2\ga+2}{n}}\Bigg],\]
which is absurd since
\[1 < \Bigg[\|f\|_{L^p(M,\bh)}^{\frac{(n-2\ga+2)p}{n}} + \sum_{j \in J}(\mu^{(j)})^{\frac{n-2\ga+2}{n}}\Bigg]^{\frac{n}{n-2\ga+2}} \le \|f\|_{L^p(M,\bh)}^p + \sum_{j \in J}\mu^{(j)} \le \mu(M) = 1.\]
Therefore, $\mu^{(j)} = \nu^{(j)} =0$ for all $j \in J$ and $\|f\|_{L^p(M,\bh)}=1$, which means that $f$ is a maximizer of variational problem $\mcs_{3;p}$.

Finally, the proof of Theorem \ref{thm:main11g} with \eqref{eq:Poi25} show that any maximizer of $\mcs_{3;p}$ is nonnegative and satisfies the Euler-Lagrange equation \eqref{eq:main21g2}, up to a scaling by a nonzero constant.
\end{proof}

\subsection{Regularity of the nonnegative maximizers of $\mcs_3$}\label{subsec:reg}
The next proposition tells us that every nonnegative maximizer of $\mcs_3$ in \eqref{eq:mcs} is smooth and positive on $M$, thereby serving as a maximizer of $\mcs_1$.
\begin{prop}\label{prop:reg}
Let $n \in \N$, $n > 2\ga$, $\ga \in (0,1)$, $p \in (1,\infty)$, and $\lambda_1(-\Delta_{g^+}) > \frac{n^2}{4}-\ga^2$.
If $f \in L^p(M,\bh)$ is a nonzero nonnegative function that satisfies \eqref{eq:main21g2}, then $f \in C^{\infty}(M)$ and $f > 0$ on $M$.
\end{prop}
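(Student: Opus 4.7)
The plan is to exploit the Euler--Lagrange equation \eqref{eq:main21g2} as an integral equation for $f$: first upgrade its integrability to $L^{\infty}(M,\bh)$, then deduce strict positivity from the maximum principle, and finally lift the resulting bounded positive function to $C^{\infty}(M)$ via iterative use of the pointwise kernel bounds in Proposition \ref{prop:Poi}. Writing $U := \mck^{\mz}_{\bgg,\rhog}f$, $q := \frac{(n-2\ga+2)p}{n}$, and letting
\[(T^{\ast}V)(\sigma) := \int_{\ox} \rhog(\xi)^{\mz}\, \mck^{\mz}_{\bgg,\rhog}(\xi,\sigma)\, V(\xi)\, (dv_{\bgg})_{\xi}\]
denote the formal adjoint of $T: f \mapsto \mck^{\mz}_{\bgg,\rhog}f$ with respect to the weighted pairing, equation \eqref{eq:main21g2} reads $f^{p-1} = T^{\ast}(U^{q-1})$. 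By Lemma \ref{lemma:Poiest2}, $T$ maps $L^r(M,\bh)$ boundedly into $L^{(n-2\ga+2)r/n}(X;\rhog^{\mz},\bgg)$ for every $r \in (1,\infty]$; duality gives matching mapping properties for $T^{\ast}$.

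The first stage is the $L^{\infty}$ bootstrap. Testing the EL equation against the truncated power $\min(f,L)^{\tau}$ for suitable $\tau > 0$ and combining Lemma \ref{lemma:Poiest2} with the weighted Sobolev trace inequality \eqref{eq:Sob2}, a Brezis--Kato-type iteration yields $L^r$-bounds on $f$ that are uniform in the cut-off level $L$, for every $r < \infty$. Sending $L \to \infty$ and then $r \to \infty$, and finally applying Lemma \ref{lemma:Poiest2} once more to the resulting $f \in L^{\infty}(M,\bh)$, gives $U \in L^{\infty}(X;\rhog^{\mz},\bgg)$. The subcritical case $p < \tfrac{2n}{n-2\ga}$ admits a direct iteration without truncation, while the critical case $p = \tfrac{2n}{n-2\ga}$ requires the full Brezis--Kato argument. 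This critical bootstrap is the principal technical obstacle because of the degenerate weight $\rhog^{\mz}$ and the geometry of $\bgg$ near $M$, which complicate the commutation of $T^{\ast}$ with the truncations.

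With $f \in L^{\infty}(M,\bh)$ in hand, strict positivity follows quickly. Classical interior Schauder theory applied to $L^{\mz}_{\bgg,\rhog}U = 0$ in $X$ -- an equation that is locally uniformly elliptic away from $M$ -- shows that $U$ is smooth in $X$, and, being nonnegative by Proposition \ref{prop:Poi}(d) and nontrivial since $f \not\equiv 0$, is strictly positive in $X$ by the classical strong maximum principle. By Proposition \ref{prop:Poi}(d), $\mck^{\mz}(\xi,\sigma) > 0$ whenever $\xi$ and $\sigma$ lie in the same connected component of $\ox$, so the EL equation forces $f^{p-1}(\sigma) > 0$, and hence $f(\sigma) > 0$, on every component of $M$ bounding a component of $X$ on which $U$ is positive; a short componentwise consistency check (if $f$ vanished identically on a boundary component, $U$ would vanish on the adjacent component of $X$, contradicting nontriviality) concludes that $f > 0$ on all of $M$.

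With $f$ strictly positive and bounded, the map $x \mapsto x^{1/(p-1)}$ is smooth on the range of $f^{p-1}$, so H\"older and differentiability gains on $f^{p-1}$ transfer directly to the same gains on $f$. The pointwise bounds of Proposition \ref{prop:Poi}(b)--(c) show that $\mck^{\mz}(\xi,\sigma)$ behaves like a Riesz-type kernel with singularity $d_{\bgg}(\xi,\sigma)^{-(n+2\ga-1)}$ near the diagonal and with H\"older remainders governed by \eqref{eq:Poi03}; standard potential-theoretic estimates then give $T^{\ast}(U^{q-1}) \in C^{0,\beta}(M,\bh)$ for some $\beta \in (0,1)$, whence $f \in C^{0,\beta}(M)$. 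A classical iteration -- in which H\"older gains on $f$ propagate to tangential H\"older gains on $U$ through the expansion in Proposition \ref{prop:Poi}(a) and the refined decomposition in Remark \ref{remark:Poi}(2), and are transported back to $f$ via a tangential Schauder-type estimate -- then upgrades $f$ by one tangential derivative at each step, yielding $f \in C^{\infty}(M)$.
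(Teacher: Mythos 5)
Your overall architecture ($L^{\infty}$ bound $\to$ positivity $\to$ initial H\"older regularity $\to$ tangential bootstrap to $C^{\infty}$) coincides with the paper's, and your last two stages are essentially the paper's argument: positivity via Proposition \ref{prop:Poi}(d) and the strict positivity of the kernel on connected components, an initial H\"older exponent from potential estimates on the kernel, and then the ``tangential Schauder'' iteration using Proposition \ref{prop:Poi}(a)--(c) and Remark \ref{remark:Poi}(2), exactly as in Lemmas \ref{lemma:reg2} and \ref{lemma:reg3}. The observation that strict positivity and boundedness let you pass freely between $f$ and $f^{p-1}$ is also the right one.

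The genuine gap is in your first stage. Equation \eqref{eq:main21g2} is an integral identity $f^{p-1}=T^{\ast}\bigl((Tf)^{q-1}\bigr)$, not a variational PDE, and the Brezis--Kato scheme you propose does not attach to it. Testing against $\min(f,L)^{\tau}$ produces the pairing $\int_X \rhog^{\mz}\,U^{q-1}\,T\bigl(\min(f,L)^{\tau}\bigr)\,dv_{\bgg}$, and there is no quadratic form of a Dirichlet-to-Neumann type operator here through which the weighted trace inequality \eqref{eq:Sob2} could act: \eqref{eq:Sob2} controls the trace of a finite-energy extension, whereas for $f$ merely in $L^p(M,\bh)$ with general $p\in(1,\infty)$ the Poisson extension $U=\mck^{\mz}_{\bgg,\rhog}f$ is not known to lie in $H^{1,2}(X;\rhog^{\mz},\bgg)$, and $f$ is not known to lie in $H^{\ga}(M,\bh)$. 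Moreover, the exponent bookkeeping is exactly critical for every $p$ (the map $f\mapsto T^{\ast}((Tf)^{q-1})^{1/(p-1)}$ is scale-invariant in the relevant Lebesgue scales), so a global iteration cannot gain integrability; naming the critical case as ``the principal technical obstacle'' does not supply a mechanism to overcome it. The mechanism the paper uses (following Hang--Wang--Yan) is a \emph{localized} bootstrap for the composed double-integral operator: one splits the kernels over a small half-ball $B_R^+$ and its complement, uses the smallness of $\|U\|_{L^a(B_R^+;\rhog^{\mz})}$ or $\|f\|_{L^a(B_R)}$ for $R$ small to absorb the critical term, and iterates with carefully matched exponents --- this is Lemma \ref{lemma:reg11}(a)--(c), applied in two cases according to whether $p_0=\frac{1}{p-1}$ is below or above $\frac{n-2\ga+2}{n}$. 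Without this localization-and-smallness device (or an equivalent substitute), your first stage does not close, and the subsequent stages have nothing to stand on.
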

\noindent We will prove Proposition \ref{prop:reg} by developing a regularity theory for integral equations based on a thorough potential analysis.
We will carry out computations in a general context, which can be easily adapted to other relevant problems.
We split the proof into three parts: Lemmas \ref{lemma:reg1}, \ref{lemma:reg2}, and \ref{lemma:reg3}.
\begin{lemma}\label{lemma:reg1}
Under the setting of Proposition \ref{prop:reg}, we have that $f \in L^{\infty}(M,\bh)$.
\end{lemma}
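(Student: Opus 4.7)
\emph{Setup.} Set $q := (n-2\ga+2)p/n$, so $q > p$ and $\beta := q/p > 1$. Let $U := \mck^{\mz}_{\bgg,\rhog}f$, and denote by $T$ the formal adjoint integral operator on $M$ with kernel $\rhog(\xi)^{\mz}\mck^{\mz}_{\bgg,\rhog}(\xi,\sigma)$. The Euler-Lagrange equation \eqref{eq:main21g2} rewrites as $f^{p-1} = T(U^{q-1})$ on $M$. By duality of Lemma \ref{lemma:Poiest2}, $\mck^{\mz}_{\bgg,\rhog} : L^a(M,\bh) \to L^{\beta a}(X;\rhog^{\mz},\bgg)$ and $T : L^{(\beta a)'}(X;\rhog^{\mz},\bgg) \to L^{a'}(M,\bh)$ are bounded for every $a \in (1,\infty)$.

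\emph{Truncation.} For $L > 0$, split $f = f_L + f^L$ with $f_L := \min(f,L)$ and $f^L := (f-L)_+$, inducing $U = U_L + U^L$ with $U_L := \mck^{\mz}_{\bgg,\rhog}f_L \geq 0$ and $U^L := \mck^{\mz}_{\bgg,\rhog}f^L \geq 0$ by \eqref{eq:Poi25}. The pointwise bound \eqref{eq:Poi21}, together with a direct computation (analogous to the limit \eqref{eq:smms53}), yields that both $\mck^{\mz}_{\bgg,\rhog}\mathbf{1}$ on $\ox$ and $T\mathbf{1}$ on $M$ are uniformly bounded; hence $U_L \leq CL$ and $T(U_L^{q-1}) \leq CL^{q-1}$. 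The elementary inequality $(a+b)^{q-1} \lesssim a^{q-1} + b^{q-1}$ then gives
\begin{equation*}
f^{p-1}(\sigma) \leq CL^{q-1} + C_q\, T\big((U^L)^{q-1}\big)(\sigma) \quad \textup{for } \sigma \in M.
\end{equation*}

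\emph{Crossing the critical exponent.} Since $f \in L^p(M,\bh)$, $\|f^L\|_{L^p(M,\bh)} \to 0$ as $L \to \infty$ and, by Lemma \ref{lemma:Poiest2}, so does $\|U^L\|_{L^q(X;\rhog^{\mz},\bgg)}$. A direct duality estimate applied to $T((U^L)^{q-1})$ only yields $f^{p-1} \in L^{p'}(M)$, i.e., $f \in L^p(M)$, reproducing the hypothesis. To gain strictly better integrability we run a regularity-lifting scheme in the spirit of Chen-Li-Ou: the nonlinear map sending a nonnegative tail $g$ to $[C_q T((\mck^{\mz}_{\bgg,\rhog}g + U_L)^{q-1} - U_L^{q-1})]^{1/(p-1)}$ is, for $L$ sufficiently large, a contraction simultaneously on small balls of $L^p(M,\bh)$ and $L^{a_0}(M,\bh)$ for some $a_0 \in (p,\,(\beta p - 1)/(\beta-1))$. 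The fixed-point characterization of $f^L$ then lifts its integrability from $L^p$ to $L^{a_0}$, and consequently $f \in L^{a_0}(M,\bh)$.

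\emph{Iteration and conclusion.} Once $f \in L^a(M,\bh)$ for some $a \in [p,\,(\beta p - 1)/(\beta-1))$, the chain $\mck^{\mz}_{\bgg,\rhog} f \in L^{\beta a}$, $U^{q-1} \in L^{\beta a/(q-1)}$, and $T(U^{q-1}) \in L^{a/(\beta p - 1 - a(\beta-1))}(M)$ yields $f \in L^{\phi(a)}(M,\bh)$ with
\begin{equation*}
\phi(a) := \frac{(p-1)a}{\beta p - 1 - (\beta - 1)a}.
\end{equation*}
Since $\phi(p) = p$, $\phi'(p) = (q-1)/(p-1) > 1$, and $\phi(a) \to \infty$ as $a \to ((\beta p - 1)/(\beta-1))^-$, iterating $a_k := \phi^{\circ k}(a_0)$ from any $a_0 > p$ escapes every finite bound in finitely many steps, so $f \in L^r(M,\bh)$ for all $r < \infty$. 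A final substitution into the integral EL equation, combined with the pointwise kernel bound \eqref{eq:Poi21} and Hölder's inequality with $r$ arbitrarily large, upgrades $f$ to $L^{\infty}(M,\bh)$.

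\emph{Main obstacle.} The decisive step is crossing the critical exponent $a = p$: any direct duality argument applied to $T(U^{q-1})$ reproduces the $L^p$-integrability of $f$, as the exponents are conformally invariant. The smallness of the truncated tail $\|f^L\|_{L^p}$ must be carefully exploited in a regularity-lifting/contraction framework to produce the initial strict improvement $f \in L^{a_0}$ with $a_0 > p$; once this has been achieved, the remaining bootstrap is subcritical and routine.
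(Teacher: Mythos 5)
Your strategy is genuinely different from the paper's: you obtain the required smallness by truncating $f$ at height $L$ (so that $\|f^L\|_{L^p(M,\bh)}\to0$), whereas the paper localizes in space, fixing $\sigma_0\in M$ and working on small half-balls $B_R^+$ where $\|U_0\|_{L^a(B_R^+;\rhog^{\mz})}$ is small by absolute continuity of the integral; the exterior contributions $f_R$, $U_R$ are continuous and play the role of your $CL^{q-1}$ term. Your bookkeeping for the supercritical iteration $\phi(a)=\frac{(p-1)a}{\beta p-1-(\beta-1)a}$ is correct (the unique fixed point is $a=p$, and once the iterates pass $(q-1)/(\beta-1)$ the finite weighted volume of $X$ lets you reach every $r<\infty$), and the final passage to $L^{\infty}$ via $\mck^{\mz}_{\bgg,\rhog}(\cdot,\sigma)\in L^c(X;\rhog^{\mz},\bgg)$ for $c<\beta$ is fine.

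The gap is exactly where you flag it, and it is not a matter of routine verification. First, after truncation $f^L$ satisfies only the \emph{inequality} $(f^L)^{p-1}\le CL^{q-1}+C\,T\big((\mck^{\mz}_{\bgg,\rhog}f^L)^{q-1}\big)$, so it is a sub-solution rather than a fixed point of your map; the Banach contraction argument does not apply directly, and the comparison form of regularity lifting one would substitute is delicate here because the map is doubly nonlinear (outer power $1/(p-1)$, inner power $q-1$, neither of which need be $\ge 1$, so Lipschitz control near zero can fail). Second, to give the map (or its linearization) small norm on $L^{a_0}$ with $a_0>p$, one must split $U^{q-1}=U^{q-1-r}\cdot U^{r}$ and control $\|(\mck^{\mz}_{\bgg,\rhog}f^L)^{q-1-r}\|$ in the Lebesgue exponent dictated by H\"older --- and that exponent must not exceed $q$ if the norm is to tend to $0$ as $L\to\infty$. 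Whether a compatible $r$ exists depends on the relative size of $\frac{1}{p-1}$ and $\frac{n-2\ga+2}{n}$; this is precisely why the paper's argument (via its local integral estimates adapted from Hang--Wang--Yan) splits into two cases, treating $U_0$ as the unknown when $\frac{1}{p-1}<\frac{n-2\ga+2}{n}$ and $f_0=f^{p-1}$ as the unknown otherwise. Until you carry out this splitting and the attendant case analysis, the claimed "contraction on small balls of $L^p\cap L^{a_0}$" is an assertion, and it is where essentially all of the work of the lemma resides.
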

\noindent To prove this lemma, we rely on the following local regularity results.
\begin{lemma}\label{lemma:reg11}
Let $n=N-1 \in \N$, $n > 2\ga$, $\ga \in (0,1)$, and $\lambda_1(-\Delta_{g^+}) > \frac{n^2}{4}-\ga^2$.
We set $a, b \in (1,\infty)$, and $r \in [1,\infty)$.
Given $\sigma_0 \in M$ and $R \in (0,1)$ small enough, let $B^N_{\bgg,+}(\sigma_0,R)$ be the geodesic half-ball on $\ox$ centered at $\sigma_0$ and of radius $R$.
We write $B_R^+ = B^N_{\bgg,+}(\sigma_0,R)$ and $B_R = B^n_{\bh}(\sigma_0,R)$ for brevity.

\medskip \noindent \textup{(a)} Assume that $\frac{n-2\ga+2}{n}<s<t<\infty$,
\[\frac{n-2\ga+2}{ra}+\frac{n}{b} = \frac{2(1-\ga)}{r}, \quad \text{and} \quad \frac{2(1-\ga)}{n-2\ga+2} < \frac{r}{t}+\frac{1}{a}<\frac{r}{s}+\frac{1}{a} < 1.\]
Suppose also that $u_1,v_1 \in L^s(B^+_R;\rhog^{\mz})$, $U \in L^a(B^+_R;\rhog^{\mz})$, and $F \in L^b(B_R)$ are nonnegative functions such that $v_1 \in L^t(B^+_{R/2};\rhog^{\mz})$ and
\[\|U\|_{L^a(B^+_R;\rhog^{\mz})}^{\frac{1}{r}} \|F\|_{L^b(B_R)} \le \ep\]
where $\ep > 0$ is a small number determined by $n$, $\ga$, $a$, $b$, $r$, $s$, $t$, $(\ox,\bgg)$, and $\rhog$. If
\[u_1(\xi) \le \int_{B_R} \mck_{\bgg,\rhog}^{\mz}(\xi,\sigma) F(\sigma) \left[\int_{B^+_R} \rhog(\vth)^{\mz}
\mck_{\bgg,\rhog}^{\mz}(\vth,\sigma) U(\vth)u_1(\vth)^r (dv_{\bgg})_{\vth}\right]^{\frac{1}{r}} (dv_{\bh})_{\sigma} + v_1(\xi)\]
for $\xi \in B^+_R$, then $u_1 \in L^t(B^+_{R/4};\rhog^{\mz})$ and
\[\|u_1\|_{L^t(B^+_{R/4};\rhog^{\mz})}\le C\left[R^{(n-2\ga+2)(\frac{1}{t}-\frac{1}{s})} \|u_1\|_{L^s(B^+_R;\rhog^{\mz})} + \|v_1\|_{L^t(B^+_{R/2};\rhog^{\mz})}\right]\]
where $C > 0$ depends only on $n$, $\ga$, $a$, $b$, $r$, $s$, $t$, $(\ox,\bgg)$, and $\rhog$.

\medskip \noindent \textup{(b)} Assume that $1<s<t<\infty$,
\[\frac{n}{ra}+\frac{n-2\ga+2}{b}= {2(1-\ga)}, \quad \text{and} \quad 0 < \frac{r}{s}+\frac{1}{a} < 1.\]
Suppose also that $u_2,v_2 \in L^s(B_R)$, $F \in L^a(B_R)$, and $U \in L^b(B^+_R;\rhog^{\mz})$ are nonnegative functions such that $v_2 \in L^t(B_{R/2})$ and
\[\|F\|_{L^a(B_R)}^{\frac{1}{r}} \|U\|_{L^b(B^+_R;\rhog^{\mz})} \le \ep\]
where $\ep > 0$ is a small number determined by $n$, $\ga$, $a$, $b$, $r$, $s$, $t$, $(\ox,\bgg)$, and $\rhog$. If
\[u_2(\sigma) \le \int_{B^+_R} \rhog(\xi)^{\mz} \mck_{\bgg,\rhog}^{\mz}(\xi,\sigma) U(\xi)
\left[\int_{B_R} \mck_{\bgg,\rhog}^{\mz}(\xi,\tau) F(\tau)u_2(\tau)^r (dv_{\bh})_\tau\right]^{\frac{1}{r}} (dv_{\bgg})_{\xi} + v_2(\sigma)\]
for $\sigma \in B_R$, then $u_2 \in L^t(B_{R/4})$ and
\[\|u_2\|_{L^t(B_{R/4})}\le C\left[R^{n(\frac{1}{t}-\frac{1}{s})} \|u_2\|_{L^s(B_R)}+\|v_2\|_{L^t(B_{R/2})}\right]\]
where $C > 0$ depends only on $n$, $\ga$, $a$, $b$, $r$, $s$, $t$, $(\ox,\bgg)$, and $\rhog$.

\medskip \noindent \textup{(c)} Assume that $t \in (1,\infty)$ and $U \in L^{\frac{(n-2\ga+2)t}{n+2t(1-\ga)}}(B^+_R;\rhog^{\mz})$ is a nonnegative function. Then
\[\left\|\int_{B^+_R} \rhog(\xi)^{\mz} \mck_{\bgg,\rhog}^{\mz}(\xi,\cdot) U(\xi) (dv_{\bgg})_{\xi}\right\|_{L^t(B_R)} \le C\|U\|_{L^{\frac{(n-2\ga+2)t}{n+2t(1-\ga)}}(B^+_R;\rhog^{\mz})}\]
where $C > 0$ depends only on $n$, $\ga$, $t$, $(\ox,\bgg)$, and $\rhog$.
\end{lemma}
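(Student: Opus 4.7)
The plan is to reduce everything to a Brezis--Kato-type iteration built on top of part (c), which itself follows by dualizing the weighted Poisson estimate of Lemma~\ref{lemma:Poiest2}. For part (c), I would test the integral against $G \in L^{t'}(B_R)$. Fubini gives
\[
\int_{B_R}\Big(\int_{B_R^+}\rhog^{\mz}\mck^{\mz}_{\bgg,\rhog}(\xi,\cdot)\,U(\xi)\,dv_{\bgg}\Big)G\,dv_{\bh}
= \int_{B_R^+}\rhog^{\mz}\,U\,\big(\mck^{\mz}_{\bgg,\rhog}G\big)dv_{\bgg},
\]
and H\"older with respect to the measure $\rhog^{\mz}dv_{\bgg}$ and exponents $q,q'$ bounds this by $\|U\|_{L^q(B_R^+;\rhog^{\mz})}\|\mck^{\mz}_{\bgg,\rhog}G\|_{L^{q'}(B_R^+;\rhog^{\mz})}$. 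A short arithmetic check shows that $q=\frac{(n-2\ga+2)t}{n+2t(1-\ga)}$ is equivalent to $q'=\frac{(n-2\ga+2)t'}{n}$, so the second factor is bounded by $C\|G\|_{L^{t'}}$ via (the localization of) Lemma~\ref{lemma:Poiest2} with $p=t'$; this yields (c) by duality.

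\medskip

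For parts (a) and (b), let
\[
\Phi_1(v)(\xi):=\int_{B_R}\mck^{\mz}_{\bgg,\rhog}(\xi,\sigma)F(\sigma)\Big[\int_{B_R^+}\rhog^{\mz}\mck^{\mz}_{\bgg,\rhog}(\vth,\sigma)U(\vth)v(\vth)^r\,dv_{\bgg}\Big]^{1/r}dv_{\bh},
\]
and define $\Phi_2$ analogously by swapping the bulk and boundary integrations. I would show that for every $\tilde s\in[s,t]$, $\Phi_1$ maps $L^{\tilde s}(B_R^+;\rhog^{\mz})$ to itself with operator norm at most $C_0\|F\|_{L^b(B_R)}\|U\|_{L^a(B_R^+;\rhog^{\mz})}^{1/r}\le C_0\ep$. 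The chain has four steps: (i) H\"older on the bulk integration against the positive kernel measure $\rhog^{\mz}\mck^{\mz}_{\bgg,\rhog}(\cdot,\sigma)\,dv_{\bgg}$ with exponents $a,\,a/(a-1)$, peeling off $U$; (ii) part (c) applied to the remaining $v^{ra/(a-1)}$ factor with some target exponent $t_1>1$; (iii) H\"older on the boundary integration with exponents $b,\,b/(b-1)$, peeling off $F$; and (iv) Lemma~\ref{lemma:Poiest2} to return to $L^{\tilde s}(B_R^+;\rhog^{\mz})$. A direct computation of the four exponents shows that the output exponent $\tilde s'$ coincides with $\tilde s$ precisely by virtue of the dimensional identity $\frac{n-2\ga+2}{ra}+\frac{n}{b}=\frac{2(1-\ga)}{r}$, while the admissibility of each step is ensured by $\frac{2(1-\ga)}{n-2\ga+2}<\frac{r}{\tilde s}+\frac{1}{a}<1$, which holds for all $\tilde s\in[s,t]$. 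Part (b) proceeds symmetrically, applying Lemma~\ref{lemma:Poiest2} first and part (c) last; the dual identity $\frac{n}{ra}+\frac{n-2\ga+2}{b}=2(1-\ga)$ plays the analogous role.

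\medskip

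Since $\mck^{\mz}_{\bgg,\rhog}\ge 0$ (Proposition~\ref{prop:Poi}(d)) and $(u+v)^r\le 2^{r-1}(u^r+v^r)$, we have $\Phi_j(u+v)\le C_{\Phi}(\Phi_j(u)+\Phi_j(v))$ with $C_{\Phi}:=2^{(r-1)/r}$, and $\Phi_j$ is monotone. Iterating $u_1\le\Phi_1(u_1)+v_1$ yields
\[
u_1\le C_{\Phi}^{N-1}\Phi_1^{N}(u_1)+\sum_{k=0}^{N-1}C_{\Phi}^{k}\Phi_1^{k}(v_1)\quad\text{a.e.}
\]
Choosing $\ep$ so small that $C_{\Phi}C_0\ep<1$, the series converges in $L^t(B_R^+;\rhog^{\mz})$ with sum bounded by $C\|v_1\|_{L^t(B_{R/2}^+;\rhog^{\mz})}$, while the tail $\Phi_1^N(u_1)\to 0$ in $L^s$ by the $L^s$-contraction property. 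Passing to the liminf (using $\Phi_1^N(u_1)\ge 0$) gives $u_1\in L^t$. The factor $R^{(n-2\ga+2)(1/t-1/s)}\|u_1\|_{L^s(B_R^+;\rhog^{\mz})}$ and the restriction to the smaller ball $B_{R/4}^+$ emerge from inserting a smooth cutoff and using the pointwise off-diagonal decay of $\mck^{\mz}_{\bgg,\rhog}$ from Proposition~\ref{prop:Poi}(b) to absorb the contribution of the outer annulus.

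\medskip

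The main obstacle is the exponent bookkeeping: chaining H\"older, part (c), H\"older, and Lemma~\ref{lemma:Poiest2} must close to an $L^{\tilde s}\to L^{\tilde s}$ bound precisely when the stated dimensional identity holds, and the range of admissible $\tilde s$ must cover the closed interval $[s,t]$. A secondary subtlety is that $\Phi_j$ is only subadditive up to the constant $C_{\Phi}=2^{(r-1)/r}$ coming from the $v\mapsto v^{1/r}$ nonlinearity, so the smallness threshold $\ep$ must be chosen so that $C_{\Phi}C_0\ep<1$, not merely $C_0\ep<1$.
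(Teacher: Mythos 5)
Your overall architecture --- proving (c) by dualizing Lemma \ref{lemma:Poiest2} via Fubini and H\"older, then showing that $\Phi_1,\Phi_2$ are bounded on $L^{\tilde s}$ for every $\tilde s\in[s,t]$ with operator norm $\lesssim\ep$, and finally absorbing by iteration --- is exactly what the paper intends: its entire proof is one sentence deferring to \cite[Propositions 2.3, 5.2, 5.3]{HWY} together with \eqref{eq:Poi21}, \eqref{eq:Poi25}, and Lemma \ref{lemma:Poiest2}, so your proposal is essentially a reconstruction of that argument. The duality computation for (c) is correct: $q'=\frac{(n-2\ga+2)t'}{n}$ is indeed conjugate to $q=\frac{(n-2\ga+2)t}{n+2t(1-\ga)}$, and $q>1$ precisely when $t>1$.

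One step in your chain for (a) fails as written, though. In steps (i)--(ii) you first peel off $U$ by H\"older against the kernel measure with exponents $(a,a')$ and then apply part (c) to the remaining factor $\int\rhog^{\mz}\mck_{\bgg,\rhog}^{\mz}(\vth,\sigma)v^{ra'}\,dv_{\bgg}$. Part (c) maps $L^{q_1}(B_R^+;\rhog^{\mz})$ into $L^{t_1}(B_R)$ with $t_1\in(1,\infty)$ only when $\frac{2(1-\ga)}{n-2\ga+2}<\frac{1}{q_1}<1$; here $q_1=\tilde s/(ra')$, so you need $\frac{ra'}{\tilde s}>\frac{2(1-\ga)}{n-2\ga+2}$. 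But the hypothesis $\frac{r}{\tilde s}+\frac1a<1$ forces $\tilde s>ra'$, whence $\frac{ra'}{\tilde s}<\frac{r}{\tilde s}+\frac1a$, and the assumed lower bound on $\frac{r}{\tilde s}+\frac1a$ does not transfer: for instance $n=1$, $\ga=0.1$, $\frac1a=0.6$, $\frac{r}{\tilde s}=0.1$ satisfies every hypothesis of (a), yet $\frac{ra'}{\tilde s}=0.25<\frac{2(1-\ga)}{n-2\ga+2}\approx0.64$, so no admissible $t_1$ exists. The repair is to reverse the order: apply part (c) to the whole product $Uv^r\in L^{q}(B_R^+;\rhog^{\mz})$ with $\frac1q=\frac1a+\frac{r}{\tilde s}$ --- the hypothesis of (a) is then exactly the admissibility condition $\frac{2(1-\ga)}{n-2\ga+2}<\frac1q<1$ --- and only afterwards use H\"older inside the $L^q$ norm to extract $\|U\|_{L^a}\|v\|^r_{L^{\tilde s}}$; your dimensional identity then closes the loop exactly as you computed, and the symmetric remark applies to (b). Two smaller points: tracking the subadditivity constant through the iteration actually produces $C_{\Phi}^{2k-1}$ rather than $C_{\Phi}^{k}$ in front of $\Phi^k(v_1)$ (harmless, but the smallness threshold becomes $C_{\Phi}^2C_0\ep<1$), and the cutoff/annulus argument that generates the term $R^{(n-2\ga+2)(1/t-1/s)}\|u_1\|_{L^s(B_R^+;\rhog^{\mz})}$ and the shrinkage to $B_{R/4}^+$ is only gestured at, though the ingredient you name (the off-diagonal bounds of Proposition \ref{prop:Poi}(b)) is the right one.
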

\begin{proof}
Using \eqref{eq:Poi21}, \eqref{eq:Poi25} and Lemma \ref{lemma:Poiest2}, we can modify the proof of \cite[Propositions 2.3, 5.2, and 5.3]{HWY} to deduce the above assertions. We omit the details.
\end{proof}
\begin{proof}[Proof of Lemma \ref{lemma:reg1}]
We will adopt the argument in \cite[Section 5]{HWY} and \cite[Section 4]{HWY2}. 
Below, we present a brief sketch of the proof.

\medskip
Let $p_0 = \frac{1}{p-1} \in (0,\infty)$, $f_0 = f^{p-1} \in L^{p_0+1}(M)$, and $U_0 = \mck_{\bgg,\rhog}^{\mz}f = \mck_{\bgg,\rhog}^{\mz}f_0^{p_0}$ on $\ox$. Then
\begin{equation}\label{eq:reg10}
f_0(\sigma) = \int_{\ox} \rhog(\xi)^{\mz} \mck_{\bgg,\rhog}^{\mz}(\xi,\sigma) U_0(\xi)^{\frac{n-2\ga+2 +2p_0(1-\ga)}{np_0}} (dv_{\bgg})_{\xi} \quad \text{for } \sigma \in M.
\end{equation}
Fixing any $\sigma_0 \in M$ and $R \in (0,1)$ small, we define
\begin{align*}
f_R(\sigma) &= \int_{\ox \setminus B^+_R} \rhog(\xi)^{\mz} \mck_{\bgg,\rhog}^{\mz}(\xi,\sigma) U_0(\xi)^{\frac{n-2\ga+2 +2p_0(1-\ga)}{np_0}}(dv_{\bgg})_{\xi}, \\
U_R(\xi) &= \int_{M \setminus B_R} \mck_{\bgg,\rhog}^{\mz}(\xi,\sigma) f_0^{p_0}(\sigma) (dv_{\bh})_{\sigma}.
\end{align*}
By Lemma \ref{lemma:Poiest2} and the dominated convergence theorem, $U_R \in C^0(\overline{B^+_{R/2}})$ and $f_R \in C^0(\overline{B_{R/2}})$.

\noindent \medskip \textsc{Case 1: $0<p_0<\frac{n-2\ga+2}{n}$}. Owing to the condition on $p_0$, there exists $r \ge 1$ such that $r \in (\frac{1}{p_0},\frac{n-2\ga+2+2p_0(1-\ga)}{np_0})$. It holds that
\[\begin{medsize}
\displaystyle U_0(\xi) \le \int_{B_R} \mck_{\bgg,\rhog}^{\mz}(\xi,\sigma) f_0(\sigma)^{p_0-\frac{1}{r}} \left[\int_{B^+_R} \rhog(\vth)^{\mz}
\mck_{\bgg,\rhog}^{\mz}(\vth,\sigma) U_0(\vth)^{\frac{n-2\ga+2+2p_0(1-\ga)}{np_0}} (dv_{\bgg})_{\vth}\right]^{\frac{1}{r}} (dv_{\bh})_{\sigma} + V_R(\xi)
\end{medsize}\]
where
\[V_R(\xi) := U_R(\xi) +\int_{B_R} \mck_{\bgg,\rhog}^{\mz}(\xi,\sigma) f_0(\sigma)^{p_0-\frac{1}{r}} f_R (\sigma)^{\frac{1}{r}} (dv_{\bh})_{\sigma}.\]
By appealing Lemma \ref{lemma:Poiest2}, we obtain
\[V_R \in L^{\frac{n-2\ga+2}{n}\frac{p_0+1}{p_0}}(B^+_R; \rhog^{\mz},\bgg) \cap L^{\frac{n-2\ga+2}{n}\frac{(p_0+1)r}{p_0r-1}}(B^+_{R/2}; \rhog^{\mz},\bgg).\]
Let us set
\[F(\sigma)=f_0(\sigma)^{p_0-\frac{1}{r}}, \quad U(\vth)=U_0(\vth)^{\frac{n-2\ga+2+2p_0(1-\ga)}{np_0}-r}, \quad u_1(\vth)=U_0(\vth), \quad v_1(\xi)=V_R(\xi),\]
\[a=\frac{(n-2\ga+2)(p_0+1)}{n-2\ga+2+2p_0(1-\ga)-np_0r}, \quad b=\frac{(p_0+1)r}{p_0r-1}, \quad s=\frac{n-2\ga+2}{n}\frac{p_0+1}{p_0},\]
and
\begin{equation}\label{eq:reg11}
\frac{(n-2\ga+2)(n-2\ga+2+2p_0(1-\ga))}{2np_0(1-\ga)} <t<\frac{n-2\ga+2}{n}\frac{(p_0+1)r}{p_0r-1}.
\end{equation}
Then the conditions necessary to apply Lemma \ref{lemma:reg11}(a) are met provided $R > 0$ is sufficiently small, and so $U_0 \in L^t(B^+_{R/4};\rhog^{\mz})$.
From this, the fact that
\[\mck_{\bgg,\rhog}^{\mz}(\cdot,\sigma) \in L^c(B^+_{R/4};\rhog^{\mz}) \quad \text{for all } \sigma \in B_{R/8} \text{ and } c \in \left[1,\tfrac{n-2\ga+2}{n}\),\]
\eqref{eq:reg10}, H\"older's inequality, and the first inequality in \eqref{eq:reg11}, we see
\[f_0 (\sigma) \le C\|U_0\|_{L^t(B^+_{R/4};\rhog^{\mz},\bgg)}^{\frac{n-2\ga+2+2p_0(1-\ga) }{np_0}} +\|f_{R/4}\|_{L^{\infty}(B_{R/8},\bh)} \quad \text{for } \sigma \in B_{R/8}.\]
Since $\sigma_0\in M$ is arbitrary, $f_0 \in L^{\infty}(M,\bh)$, and hence $U_0 \in L^{\infty}(X,\bgg)$.

\medskip \noindent \textsc{Case 2: $\frac{n-2\ga+2}{n}\le p_0<\infty$}. In contrast to Case 1, we employ Lemma \ref{lemma:reg11}(b)--(c) at this time. For Lemma \ref{lemma:reg11}(b), we choose
\[U(\xi) = U_0(\xi)^{\frac{n-2\ga+2+2p_0(1-\ga)}{np_0}-\frac{1}{r}}, \quad F(\tau) = f_0^{p_0-r}(\tau), \quad u_2(\tau) = f_0(\tau),\]
\[v_2(\sigma) = f_R(\sigma) + \int_{B_R^+} \rhog(\xi)^{\mz} \mck_{\bgg,\rhog}^{\mz}(\xi,\sigma) U_0(\xi)^{\frac{n-2\ga+2+2p_0(1-\ga)}{np_0}-\frac{1}{r}} U_R(\xi)^{\frac{1}{r}}(dv_{\bgg})_{\xi},\]
\[a=\frac{p_0+1}{p_0-r}, \quad b=\frac{(n-2\ga+2)(p_0+1)}{np_0}\(\frac{n-2\ga+2 +2p_0(1-\ga)}{np_0}-\frac{1}{r}\)^{-1}, \quad s=p_0+1,\]
and $p_0+1<t<\infty$. We leave the details for interested readers.
\end{proof}

\begin{lemma}\label{lemma:reg2}
Under the setting of Proposition \ref{prop:reg}, we have that $f > 0$ on $M$ and
\begin{equation}\label{eq:reg20}
f \in C^{0,\beta_0}(M) \quad \text{where } \beta_0 \in
\begin{cases}
(0,1] &\text{for } \ga \in (0,\frac{1}{2}), \\
(0,1) &\text{for } \ga = \frac{1}{2}, \\
(0,2(1-\ga)] &\text{for } \ga \in (\frac{1}{2},1).
\end{cases}
\end{equation}
\end{lemma}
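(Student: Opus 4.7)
\emph{Step 1 (positivity and continuity).} Lemma \ref{lemma:reg1} gives $U_0 := \mck^{\mz}_{\bgg,\rhog}f \in L^{\infty}(X,\bgg)$, so Proposition \ref{prop:Poi}(d), applied on each connected component of $\ox$ separately, forces $U_0 > 0$ in the interior; inserted into \eqref{eq:main21g2}, this yields $f(\sigma)^{p-1} > 0$ at every $\sigma \in M$. The $\sigma$-continuity of $\mck^{\mz}_{\bgg,\rhog}(\xi,\cdot)$ from Proposition \ref{prop:Poi}(c), together with the uniform bound \eqref{eq:Poi21} and $U_0 \in L^{\infty}$, allow dominated convergence in the right-hand side of \eqref{eq:main21g2}, making $\sigma \mapsto f(\sigma)^{p-1}$ continuous. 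Since $M$ is compact, the pointwise positivity then promotes to a uniform lower bound $f \geq c > 0$.

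\emph{Step 2 (H\"older estimate for $f^{p-1}$).} Fix $\sigma_1,\sigma_2 \in M$ with $d := d_{\bh}(\sigma_1,\sigma_2)$ small, and set $q := \tfrac{(n-2\ga+2)p}{n}$. We estimate
\[
\left|f(\sigma_1)^{p-1}-f(\sigma_2)^{p-1}\right|
\leq \|U_0\|_{L^{\infty}}^{q-1} \int_{\ox} \rhog(\xi)^{\mz} \left|\mck^{\mz}_{\bgg,\rhog}(\xi,\sigma_1)-\mck^{\mz}_{\bgg,\rhog}(\xi,\sigma_2)\right| dv_{\bgg}(\xi),
\]
and split the integration at the dyadic threshold $A_{\text{near}} := \{\xi : d_{\bgg}(\xi,\sigma_i) \leq 2d \text{ for some } i\}$. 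On $A_{\text{near}}$ we apply the pointwise bound \eqref{eq:Poi21} to each kernel and reduce to Fermi coordinates near $\sigma_i$, where $\rhog \approx x_N$, $d_{\bgg}(\cdot,\sigma_i) \approx |x|$, $dv_{\bgg} \approx dx$, and the weight collapses as $\rhog^{\mz}\cdot\rhog^{2\ga} = \rhog$. A direct radial computation
\[
\int_{|x| \leq 2d,\, x_N > 0} x_N\, |x|^{-(n+2\ga)} dx \leq C d^{2-2\ga}
\]
bounds the near contribution by $Cd^{2-2\ga}$. On the complement, the mean value theorem along a minimizing $\bh$-geodesic joining $\sigma_1$ and $\sigma_2$, combined with the tangential gradient estimate \eqref{eq:Poi26} and the triangle inequality (so that $d_{\bgg}(\xi,\bar\sigma) \geq \tfrac12 d_{\bgg}(\xi,\sigma_1)$ on $A_{\text{far}}$), gives
\[
\left|\mck^{\mz}_{\bgg,\rhog}(\xi,\sigma_1)-\mck^{\mz}_{\bgg,\rhog}(\xi,\sigma_2)\right|
\leq Cd\, \rhog(\xi)^{2\ga}\, d_{\bgg}(\xi,\sigma_1)^{-(n+2\ga+1)},
\]
whose $\rhog^{\mz}dv_{\bgg}$-integral reduces in Fermi coordinates to $Cd \int_{2d}^{R_0} r^{-2\ga}\,dr$ plus a bounded remainder coming from the region separated from $M$. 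Evaluating this integral yields a far contribution of order $Cd$ for $\ga \in (0,\tfrac12)$, $Cd\,|\log d|$ for $\ga = \tfrac12$, and $Cd^{2-2\ga}$ for $\ga \in (\tfrac12,1)$.

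\emph{Step 3 (conclusion).} Summing the near and far estimates yields $|f(\sigma_1)^{p-1}-f(\sigma_2)^{p-1}| \leq C\,\omega(d)$, where $\omega(d) = d$ for $\ga \in (0,\tfrac12)$ (the near term $d^{2-2\ga}$ is absorbed since $2-2\ga > 1$), $\omega(d) = d\,|\log d|$ for $\ga = \tfrac12$, and $\omega(d) = d^{2-2\ga}$ for $\ga \in (\tfrac12,1)$. These three modulus-of-continuity regimes match precisely the ranges of $\beta_0$ declared in \eqref{eq:reg20}. Because $c \leq f \leq \|f\|_{L^{\infty}}$ by Step 1, the map $t \mapsto t^{1/(p-1)}$ is Lipschitz on the range of $f^{p-1}$, transferring the H\"older modulus from $f^{p-1}$ to $f$ with the same exponent. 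The most delicate point is the borderline case $\ga = \tfrac12$, where the far-region radial integral becomes $\int_{2d}^{R_0} r^{-1} dr \sim |\log d|$; this logarithmic loss is what rules out the endpoint $\beta_0 = 1$ in the middle range of \eqref{eq:reg20} and explains why the three regimes must be treated separately.
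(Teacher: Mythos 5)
Your proposal is correct and follows essentially the same route as the paper: both arguments reduce the H\"older estimate for $f^{p-1}$ to the two radial integrals $\int_0^{d} r^{1-2\gamma}\,dr$ and $d\int_{d}^{R_0} r^{-2\gamma}\,dr$ via the kernel bounds of Proposition \ref{prop:Poi}, and these produce exactly the three regimes in \eqref{eq:reg20}, with positivity obtained from Proposition \ref{prop:Poi}(d) and the integral equation. The only (harmless) organizational difference is that the paper first splits $\mck^{\mz}_{\bgg,\rhog}$ into the explicit model $\mck_0^{\mz}$ plus a correction with Lipschitz dependence on $\sigma$ and does the near/far analysis on the model part, whereas you perform the near/far split at scale $d$ directly on the full kernel and invoke the global gradient bound \eqref{eq:Poi26} on the far region.
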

\begin{proof}
Let $r_1 > 0$ be the number in Proposition \ref{prop:Poi}, $\sigma_0 \in M$, and $\mcc^N_{\bgg}(\sigma_0,\frac{r_1}{2}) = B^n_{\bh}(\sigma_0,\frac{r_1}{2}) \times (0,\frac{r_1}{2})$.
We write $U_1 = U_0^{\frac{n-2\ga+2+2p_0(1-\ga)}{np_0}}$, which belongs to $L^{\infty}(X,\bgg)$ thanks to Lemma \ref{lemma:reg1}. For $\sigma \in B^n_{\bh}(\sigma_0,\frac{r_1}{4})$, we consider
\begin{align*}
&\ f_0(\sigma) - f_0(\sigma_0) \\
&= \int_{\mcc^N(0,\frac{r_1}{2})} x_N^{\mz} \left[\mck_0^{\mz}((\exp_{\sigma_0}\bx,x_N),\exp_{\sigma_0}\bw) - \mck_{n,\ga}(x,0)\right] U_1(x) \sqrt{|\bgg(x)|} dx \\
& \ + \int_{\mcc^N_{\bgg}(\sigma_0,\frac{r_1}{2})} \rhog(\xi)^{\mz} \left[\(\mck_{\bgg,\rhog}^{\mz}-\mck_0^{\mz}\)(\xi,\sigma)
- \(\mck_{\bgg,\rhog}^{\mz}-\mck_0^{\mz}\)(\xi,\sigma_0)\right] U_1(\xi) (dv_{\bgg})_{\xi} \\
& \ + \int_{X \setminus \mcc^N_{\bgg}(\sigma_0,\frac{r_1}{2})} \rhog(\xi)^{\mz} \left[\mck_{\bgg,\rhog}^{\mz}(\xi,\sigma)-\mck_{\bgg,\rhog}^{\mz}(\xi,\sigma_0)\right] U_1(\xi) (dv_{\bgg})_{\xi} \\
&=: \mcj_{21} + \mcj_{22} + \mcj_{23}.
\end{align*}
Here, $\mck_{n,\ga}$ is the function in \eqref{eq:gaharext2}. We also associated $\sigma$ with $\bw \in B^n(0,\frac{r_1}{4})$ and $\xi \in \mcc^N_{\bgg}(\sigma_0,\frac{r_1}{2})$ with $x=(\bx,x_N) \in \mcc^N(0,\frac{r_1}{2})$ via Fermi coordinates on $\ox$ around $\sigma_0$.
We will estimate each of the terms $\mcj_{21}$, $\mcj_{22}$, and $\mcj_{23}$.

Let $H_1(\bx,\bw) = d_{\bh}(\exp_{\sigma_0}\bx,\exp_{\sigma_0}\bw)^2$ so that $H_1(\bx,0) = |\bx|^2$. There is a constant $C > 0$ depending only on $n$, $\ga$, and $(\ox,\bgg)$ such that
\begin{multline}\label{eq:reg21}
\begin{medsize}
\displaystyle |\mcj_{21}| \le C\|U_1\|_{L^{\infty}(X)} \left[\int_0^{\frac{r_1}{2}} \int_{B^n(0,\frac{3}{2}|\bw|)} \frac{x_N}{(H_1(\bx,\bw)+x_N^2)^{\frac{n+2\ga}{2}}} d\bx dx_N
+ \int_0^{\frac{r_1}{2}} \int_{B^n(0,\frac{3}{2}|\bw|)} \frac{x_N}{|x|^{n+2\ga}} d\bx dx_N \right.
\end{medsize} \\
\begin{medsize}
\displaystyle + \left. \int_0^{\frac{r_1}{2}} \int_{B^n(0,\frac{r_1}{2}) \setminus B^n(0,\frac{3}{2}|\bw|)} x_N
\left\{\frac{1}{(H_1(\bx,\bw)+x_N^2)^{\frac{n+2\ga}{2}}} - \frac{1}{(H_1(\bx,0)+x_N^2)^{\frac{n+2\ga}{2}}}\right\} d\bx dx_N \right].
\end{medsize}
\end{multline}
The estimation of the three integrals in \eqref{eq:reg21} can be achieved as follows: In view of \eqref{eq:dist1}, we obtain $H_1(\bx,\bw) \ge \frac{1}{2}|\bx-\bw|^2$ by lowering the value of $r_1$ if necessary. Also, $B^n(0,\frac{3}{2}|\bw|) \subset B^n(\bw,\frac{5}{2}|\bw|)$.
Hence the first integral in \eqref{eq:reg21} is bounded by a constant multiple of
\[\int_0^{\frac{r_1}{2}} \int_{B^n(0,\frac{5}{2}|\bw|)} \frac{x_N}{|x|^{n+2\ga}} d\bx dx_N \le C \begin{cases}
|\bw| &\text{if } n=1 \text{ and } \ga \in (0,\frac{1}{2}),\\
|\bw| |\log|\bw|| &\text{if } n=1 \text{ and } \ga=\frac{1}{2},\\
|\bw|^{2(1-\ga)} &\text{otherwise}.
\end{cases}\] 
The second integral in \eqref{eq:reg21} can be treated in the same fashion. Furthermore, 
\eqref{eq:dist2} implies that the third integral in \eqref{eq:reg21} is bounded by a constant multiple of
\[|\bw| \int_0^{\frac{r_1}{2}} \int_{B^n(0,\frac{r_1}{2}) \setminus B^n(0,\frac{3}{2}|\bw|)} \frac{x_N}{|x|^{n+2\ga+1}} d\bx dx_N
\le C \begin{cases}
|\bw| &\text{if } \ga \in (0,\frac{1}{2}),\\
|\bw| |\log|\bw|| &\text{if } \ga=\frac{1}{2},\\
|\bw|^{2(1-\ga)} &\text{if } \ga \in (\frac{1}{2},1).
\end{cases}\] 
In conclusion, $|\mcj_{21}| \le C|\bw|^{\beta_0}$ for $|\bw| < \frac{r_1}{4}$.

Invoking the mean value theorem, \eqref{eq:Poi26}, and \eqref{eq:dist1}, we discover $|\mcj_{22}| + |\mcj_{23}| \le Cd_{\bh}(\sigma,\sigma_0)$. Therefore, \eqref{eq:reg20} is true.
In addition, the continuity of $f$ on $M$, Proposition \ref{prop:Poi}(d), and \eqref{eq:main21g2} imply that $U_0 = \mck_{\bgg,\rhog}^{\mz}f > 0$ on $\ox$ and $f_0, f > 0$ on $M$.
\end{proof}

\begin{lemma}\label{lemma:reg3}
Under the setting of Proposition \ref{prop:reg}, we have that $f \in C^{\infty}(M)$.
\end{lemma}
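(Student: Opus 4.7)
Starting from $f \in C^{0,\beta_0}(M)$ and $f > 0$ on $M$ (Lemma \ref{lemma:reg2}), I will bootstrap using the Euler--Lagrange equation \eqref{eq:main21g2} and the degenerate elliptic problem \eqref{eq:wir3} satisfied by $U_0 := \mck_{\bgg,\rhog}^{\mz} f$. The scheme is inductive: given $f \in C^{k,\beta}(M)$ for some $k \in \N \cup \{0\}$ and some $\beta \in (0,1)$, I aim to produce $\beta' \in (0,1)$ with $f \in C^{k+1,\beta'}(M)$. Iterating will close the proof.

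The inductive step has two halves. First, because $U_0$ solves \eqref{eq:wir3} with Dirichlet data $f$, Lemma \ref{lemma:ereg}, applied in Fermi coordinate patches along $M$ as in Step 3 of the proof of Proposition \ref{prop:ext3}, yields tangential H\"older bounds $\nabla_{\bx}^{\ell} U_0 \in C^{0,\beta}(\overline{\mct_{\delta_0/2}})$ for $\ell \le k+2$ together with interior $C^\infty$ smoothness. Since $f$ is bounded below by a positive constant, Proposition \ref{prop:Poi}(d) and the classical strong maximum principle give $U_0 > 0$ on $\ox$, so the source $h := U_0^{q}$ with $q := \frac{(n-2\ga+2)p}{n}-1$ inherits the same tangential H\"older estimates up to the boundary. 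Second, I will differentiate \eqref{eq:main21g2} tangentially in $\sigma$ up to order $k+1$. Writing $\mck_{\bgg,\rhog}^{\mz} = \mck_0^{\mz} + (\mck_{\bgg,\rhog}^{\mz}-\mck_0^{\mz})$ and invoking the local expansion \eqref{eq:Poi03}--\eqref{eq:Poi04} together with the uniform H\"older control of the coefficients $(a_j)_\sigma$ and $(a_{jk})_\sigma$ in $\sigma$ established in Step 4 of the proof of Proposition \ref{prop:Poi} (cf. \eqref{eq:Poi38}), I will obtain integrable dominators for $|\nabla_\sigma^{\ell} \mck_{\bgg,\rhog}^{\mz}(\xi,\sigma)|$ weighted by $\rhog^{\mz} h$. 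This justifies differentiation under the integral sign and produces a H\"older estimate on $\nabla_\sigma^{k+1}(f^{p-1})$. Since $f > 0$ is bounded below, the chain rule --- valid even for the non-integer power $p-1$ thanks to the uniform lower bound --- transfers this regularity to $f$.

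The main obstacle is the asymmetric regularity of $\mck_{\bgg,\rhog}^{\mz}(\xi,\sigma)$: it is smooth in tangential directions on $M$ but carries the non-integer power $\rhog^{2\ga}$ and, for even $n \ge 4$, logarithmic factors in the normal direction. The bootstrap therefore cannot apply Schauder estimates symmetrically in both variables; instead, only tangential derivatives along $M$ propagate regularity cleanly, which is exactly the content of the ``tangential Schauder estimates'' flagged in the introduction. To control tangential derivatives of the kernel to arbitrary order while keeping the remainder sufficiently regular in both variables, I will extract enough terms in the refined decomposition \eqref{eq:Poi05} so that $(\mcr_\ell)_\sigma$ has the required joint H\"older regularity in $\xi$ and $\sigma$; taking $\ell$ large enough as a function of $k$ at each step of the induction is the technical heart of the argument.
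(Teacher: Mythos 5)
Your high-level strategy (a tangential bootstrap alternating between the extension $U_0=\mck_{\bgg,\rhog}^{\mz}f$ and the Euler--Lagrange equation \eqref{eq:main21g2}, with the refined decomposition \eqref{eq:Poi05} controlling higher tangential derivatives of the kernel) is the right one and matches the paper's. But two steps as written would fail. First, the claim that Lemma \ref{lemma:ereg} gives $\nabla_{\bx}^{\ell}U_0\in C^{0,\beta}$ up to the boundary for $\ell\le k+2$ from $f\in C^{k,\beta}(M)$ is false: Lemma \ref{lemma:ereg} is stated for zero Dirichlet data, and more fundamentally the weighted Poisson extension of a $C^{k,\beta}$ boundary datum is only $C^{k,\beta}$ in the tangential directions up to $M$, not $C^{k+2,\beta}$ (already for $\ga=\tfrac12$ the harmonic extension of a H\"older function gains no tangential boundary regularity). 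The paper's Step 1 accordingly proves only that $U_0$ inherits the tangential regularity of $f$, by estimating the Poisson representation directly, and the entire regularity \emph{gain} must come from the integral equation.

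Second, and more seriously, differentiating \eqref{eq:main21g2} $k+1$ times in $\sigma$ under the integral sign cannot be justified by dominating $|\nabla_\sigma^{k+1}\mck_{\bgg,\rhog}^{\mz}(\xi,\sigma)|$ pointwise: the natural bound is $C\rhog(\xi)^{2\ga}d_{\bgg}(\xi,\sigma)^{-(n+2\ga+k+1)}$, and after multiplying by $\rhog^{\mz}$ the integral over the bulk behaves like $\int_0^R r^{-2\ga-k}\,dr$, which diverges for every $k\ge1$ (and already for $k=0$ when $\ga\ge\tfrac12$). The paper circumvents this by converting $\sigma$-derivatives into $\bx$-derivatives through the structure $H_2(\bx,\bz)=H_1(\bx,\bx+\bz)$, integrating by parts (see \eqref{eq:reg37}), and exploiting the cancellation $U_2(x)-U_2(\bw,x_N)$ so that the tangential H\"older continuity of $U_0^{q}$ established in the first half renders the singular integral convergent. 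Without this mechanism your second half does not close. Relatedly, the gain per iteration is only $\min\{1,2(1-\ga)\}$ (cf.\ \eqref{eq:reg36}), so for $\ga\in(\tfrac12,1)$ you cannot advance by a full derivative each step; the induction must be run over fractional exponents $\beta_1\in(0,\infty)\setminus\N$ as in the paper. The transfer of regularity from $f^{p-1}$ to $f$ via the positive lower bound is fine.
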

\begin{proof}
We will prove the lemma by establishing ``tangential Schauder estimates" for $U_0$ and $f$.
Throughout the proof, we write $C^{\beta}(M)$ to denote the H\"older space $C^{\lfloor\beta\rfloor,\beta-\lfloor\beta\rfloor}(M)$ where $\lfloor\beta\rfloor$ is the greatest integer less than $\beta \in (0,\infty) \setminus \N$.

\medskip \noindent \textsc{Step 1.} Assume that $f \in C^{\beta_1}(M)$ for some $\beta_1 \in (0,\infty) \setminus \N$.

Given $\xi_0 = (\pi(\xi_0),\rhog(\xi_0)) \in \overline{\mct_{r_1/2}} = M \times [0,\frac{r_1}{2}]$ and $\xi \in \overline{B^n_{\bh}(\pi(\xi_0),\frac{r_1}{4})} \times [0,\frac{r_1}{2}]$ such that $\rhog(\xi) = \rhog(\xi_0)$, we shall deduce the tangential H\"older estimate
\begin{equation}\label{eq:reg34}
\left|\(\nabla_{\pi(\xi)}^jU_0\)(\xi) - \(\nabla_{\pi(\xi)}^jU_0\)(\xi_0)\right| \le C
\begin{cases}
d_{\bh}(\pi(\xi),\pi(\xi_0)) &\text{if } j = 0,\ldots,\lfloor\beta_1\rfloor-1, \\ 
d_{\bh}(\pi(\xi),\pi(\xi_0))^{\beta_1-\lfloor\beta_1\rfloor} &\text{if } j = \lfloor\beta_1\rfloor
\end{cases}
\end{equation}
where $C > 0$ depends only on $n$, $\ga$, $(\ox,\bgg)$, $\rhog$, and $j$.

We write $\xi = (\exp_{\pi(\xi_0)}\bx,x_N)$, $\sigma = \exp_{\pi(\xi_0)}\bw \in B^n_{\bh}(\pi(\xi_0),r_1)$,
$K^{\mz}_{\bgg,\rhog}(x,\bw) = \mck_{\bgg,\rhog}^{\mz}(\xi,\sigma)$, and $K_0^{\mz}(x,\bw) = \mck_0^{\mz}(\xi,\sigma)$. We also set
\[H_1(\bx,\bw) = d_{\bh}(\exp_{\pi(\xi_0)}\bx, \exp_{\pi(\xi_0)}\bw)^2 \quad \text{and} \quad H_2(\bx,\bw) = H_1(\bx,\bx+\bw),\]
and select a cut-off function $\chi_1 \in C^{\infty}_c(B^n(0,r_1))$ such that $\chi_1 = 1$ in $B^n(0,\frac{r_1}{2})$. Then
\begin{align*}
& \ \(\nabla_{\pi(\xi)}^jU_0\)(\xi) - \(\nabla_{\pi(\xi)}^jU_0\)(\xi_0) \\
&= \left[\nabla_{\bx}^j \int_{\R^n} \frac{\ka_{n,\ga} x_N^{2\ga}}{(H_1(\bx,\bw)+x_N^2)^{\frac{n+2\ga}{2}}} f_1(\bw) d\bw
- \left. \nabla_{\bx}^j \int_{\R^n} \frac{\ka_{n,\ga} x_N^{2\ga}}{(H_1(\bx,\bw)+x_N^2)^{\frac{n+2\ga}{2}}} f_1(\bw) d\bw \right|_{\bx=0}\right] \\
&\ + \left[\nabla_{\bx}^j \int_{\R^n} \(K^{\mz}_{\bgg,\rhog}-K_0^{\mz}\)(x,\bw) f_1(\bw) d\bw
- \left. \nabla_{\bx}^j \int_{\R^n} \(K^{\mz}_{\bgg,\rhog}-K_0^{\mz}\)(x,\bw) f_1(\bw) d\bw \right|_{\bx=0}\right] \\
&\ + \int_{B^n(0,r_1)} \left[\(\nabla_{\bx}^jK^{\mz}_{\bgg,\rhog}\)(x,\bw) - \(\nabla_{\bx}^jK^{\mz}_{\bgg,\rhog}\)((0,x_N),\bw)\right] \Big((1-\chi_1)f\sqrt{|\bh|}\Big)(\bw) d\bw \\
&\ + \int_{M \setminus B^n_{\bh}(\pi(\xi_0),r_1)} \left[\(\nabla_{\pi(\xi)}^j \mck_{\bgg,\rhog}^{\mz}\)(\xi,\sigma)
- \(\nabla_{\pi(\xi)}^j \mck_{\bgg,\rhog}^{\mz}\)(\xi_0,\sigma)\right] f(\sigma) (dv_{\bh})_{\sigma} \\
&=: \mcj_{31}^j + \mcj_{32}^j + \mcj_{33}^j + \mcj_{34}^j
\end{align*}
where $f_1 := \chi_1f\sqrt{|\bh|}$. We will estimate each of the terms $\mcj_{31}^j,\ldots,\mcj_{34}^j$.

Suppose that $\lfloor\beta_1\rfloor = 0$. Just as we treated $\mcj_{22}$ and $\mcj_{23}$ in the proof of Lemma \ref{lemma:reg2},
we verify that $|\mcj_{32}^0|+|\mcj_{33}^0|+|\mcj_{34}^0| \le Cd_{\bh}(\pi(\xi),\pi(\xi_0))$. By the mean value theorem, \eqref{eq:dist1}, and \eqref{eq:dist3}, we have
\begin{align}
\left|\mcj_{31}^0\right| &\le C\int_{\R^n} x_N^{2\ga} \left|\frac{1}{(H_2(\bx,\bw)+x_N^2)^{\frac{n+2\ga}{2}}}
- \frac{1}{(H_2(0,\bw)+x_N^2)^{\frac{n+2\ga}{2}}}\right| \left|f_1(\bw+\bx)\right| d\bw \nonumber \\
&\ + C\int_{\R^n} \frac{x_N^{2\ga}}{\(|\bw|^2+x_N^2\)^{\frac{n+2\ga}{2}}} \left|f_1(\bw+\bx) - f_1(\bw)\right| d\bw \label{eq:reg32} \\
&\le C\|f\|_{L^{\infty}(M)}|\bx|\int_{B^n(0,r_1)} \frac{x_N^{2\ga}|\bw|^2}{\(|\bw|^2+x_N^2\)^{\frac{n+2\ga+2}{2}}} d\bw + C\|f\|_{C^{\beta_1}(M)}|\bx|^{\beta_1} \le C\|f\|_{C^{\beta_1}(M)}|\bx|^{\beta_1}. \nonumber
\end{align}
Consequently, we confirm that $|U_0(\xi)-U_0(\xi_0)| \le Cd_{\bh}(\pi(\xi),\pi(\xi_0))^{\beta_1}$, which is \eqref{eq:reg34}.

Suppose next that $\lfloor\beta_1\rfloor \ge 1$. By applying Remark \ref{remark:Poi}(2) with a large number $\ell \ge n+\lfloor\beta_1\rfloor+3$, we observe
\[\left|\(\nabla^j_{\pi(\xi)} \mck_{\bgg,\rhog}^{\mz}\)(\xi,\sigma)\right| \le C \quad \text{if } d_{\bgg}(\xi,\sigma) \ge \frac{r_1}{2} \text{ and } j = 0,\ldots,\lfloor\beta_1\rfloor+1.\]
Using this bound, we get $|\mcj_{33}^j|+|\mcj_{34}^j| \le Cd_{\bh}(\pi(\xi),\pi(\xi_0))$. Furthermore, 
Lemma \ref{lemma:ereg2}, \eqref{eq:ereg22}, and a closer inspection of the inhomogeneous term in equation \eqref{eq:aj} of $a_j$ yield
\begin{equation}\label{eq:reg33}
\left|\(\pa_{\theta_N}^{\ell_1}\nabla_{\bth}^{\ell_2}a_j\)(\theta)\right| \le C\theta_N^{2\ga-\ell_1} \quad \text{for each } \ell_1, \ell_2 \in \N \cup \{0\}.
\end{equation}
Meanwhile, for a multi-index $\alpha_1$ such that $|\alpha_1| = 0,\ldots,\lfloor\beta_1\rfloor$, it holds that
\begin{multline}\label{eq:reg31}
\nabla^{\alpha_1}_{\bx} \int_{\R^n} \left[\(K^{\mz}_{\bgg,\rhog}-K_0^{\mz}\)(x,\bw)\right] f_1(\bw) d\bw \\
= \sum_{\alpha_2+\alpha_3=\alpha_1} \frac{\alpha_1!}{\alpha_2!\alpha_3!} \int_{\R^n} \nabla^{\alpha_2}_{\bx} \left[\(K^{\mz}_{\bgg,\rhog}-K_0^{\mz}\)(x,\bw+\bx)\right] \nabla^{\alpha_3}_{\bx} f_1(\bw+\bx) d\bw.
\end{multline}
Employing \eqref{eq:reg33}, \eqref{eq:dist3}, and \eqref{eq:exp}, we examine each term consisting of $\mck_{\bgg,\rhog}^{\mz}$ (see \eqref{eq:Poi45}--\eqref{eq:Poi04}) as in Step 4 of the proof of Proposition \ref{prop:Poi}. Then we find
\begin{equation}\label{eq:reg35}
\left|\nabla^{\alpha_2}_{\bx} \left[\(K^{\mz}_{\bgg,\rhog}-K_0^{\mz}\)(x,\bw+\bx)\right]\right| \le \frac{Cx_N^{2\ga}}{(|\bw|^2+x_N^2)^{\frac{n+2\ga}{2}}}
\end{equation}
provided the multi-index $\alpha_2$ satisfies $|\alpha_2| \le \lfloor\beta_1\rfloor+1$.
Thus all the integrals on the right-hand side of \eqref{eq:reg31} are uniformly bounded for $x \in \overline{B^n(0,\frac{r_1}{4})} \times [0,\frac{r_1}{2}]$.
From this fact and the mean value theorem, we discover that $|\mcj_{32}^j| \le Cd_{\bh}(\pi(\xi),\pi(\xi_0))$ for $j = 0,\ldots,\lfloor\beta_1\rfloor-1$.
Besides, we can argue as in \eqref{eq:reg32} to derive that $|\mcj_{32}^j| \le Cd_{\bh}(\pi(\xi),\pi(\xi_0))^{\beta_1-\lfloor\beta_1\rfloor}$ for $j = \lfloor\beta_1\rfloor$.
Lastly, \eqref{eq:reg31} and \eqref{eq:reg35} remain valid when we substitute $K^{\mz}_{\bgg,\rhog}-K_0^{\mz}$ with $K_0^{\mz}$.
Therefore, the term $\mcj_{31}^j$ can be controlled as $\mcj_{32}^j$. This validates \eqref{eq:reg34}.

\medskip \noindent \textsc{Step 2.} We assume that \eqref{eq:reg34} holds for some $\beta_1 \in (0,\infty) \setminus \N$. We shall deduce
\begin{equation}\label{eq:reg36}
f \in C^{\beta_2}(M) \quad \text{for any } \beta_2 \in (0,\beta_1+\min\{1,2(1-\ga)\}).
\end{equation}

Let us adopt the notation from the proof of Lemma \ref{lemma:reg2}; particularly, we redefine $H_1(\bx,\bw) = d_{\bh}(\exp_{\sigma_0}\bx,\exp_{\sigma_0}\bw)^2$ and $H_2(\bx,\bw) = H_1(\bx,\bx+\bw)$.
Given $\sigma_0 \in M$, $\sigma \in B^n_{\bh}(\sigma_0,\frac{r_1}{4})$, and $j = 0,\ldots,\lfloor\beta_2\rfloor$, we consider
\begin{align*}
&\begin{medsize}
\displaystyle \ \(\nabla_{\sigma}^jf_0\)(\sigma) - \(\nabla_{\sigma}^jf_0\)(\sigma_0)
\end{medsize} \\
&\begin{medsize}
\displaystyle = \left[\nabla_{\bw}^j \int_{\R^N_+} \frac{\ka_{n,\ga} x_N}{(H_1(\bx,\bw)+x_N^2)^{\frac{n+2\ga}{2}}} U_2(x)dx
- \left. \nabla_{\bw}^j \int_{\R^N_+} \frac{\ka_{n,\ga} x_N}{(H_1(\bx,\bw)+x_N^2)^{\frac{n+2\ga}{2}}} U_2(x)dx \right|_{\bw = 0}\right]
\end{medsize} \\
&\begin{medsize}
\displaystyle \ + \left[\nabla_{\bw}^j \int_{\R^N_+} x_N^{\mz}\(K^{\mz}_{\bgg,\rhog}-K_0^{\mz}\)(x,\bw) U_2(x)dx - \left. \nabla_{\bw}^j \int_{\R^N_+} x_N^{\mz} \(K^{\mz}_{\bgg,\rhog}-K_0^{\mz}\)(x,\bw) U_2(x)dx \right|_{\bw=0}\right] + \mcj_{43}^j
\end{medsize} \\
&\begin{medsize}
=: \mcj_{41}^j + \mcj_{42}^j + \mcj_{43}^j.
\end{medsize}
\end{align*}
Here, $\chi_0 \in C^{\infty}_c(\mcc^N(0,r_1))$ is the cut-off function in \eqref{eq:chi0}, $U_2 := \chi_0U_1\sqrt{|\bgg|}$,
and $\mcj_{43}^j$ is a function such that $|\mcj_{43}^j| \le Cd_{\bh}(\sigma,\sigma_0)$ for all $j = 0,\ldots,\lfloor\beta_2\rfloor$ where $C > 0$ is determined by $n$, $\ga$, $(\ox,\bgg)$, $\rhog$, and $j$.

Suppose that $\lfloor\beta_1\rfloor = 0$. If $\beta_1 + \min\{1,2(1-\ga)\} \le 1$, then 
\begin{align*}
\mcj_{41}^0 &= \int_{\mcc^N(0,r_1)} \left[\frac{\ka_{n,\ga} x_N}{(H_1(\bx,\bw)+x_N^2)^{\frac{n+2\ga}{2}}} - \frac{\ka_{n,\ga} x_N}{(H_1(\bx,0)+x_N^2)^{\frac{n+2\ga}{2}}}\right] [U_2(x)-U_2(\bw,x_N)]dx \\
&\ + \int_{\mcc^N(-\bw,r_1)} \left[\frac{\ka_{n,\ga} x_N}{(H_1(\bx+\bw,\bw)+x_N^2)^{\frac{n+2\ga}{2}}} - \frac{\ka_{n,\ga} x_N}{(H_1(\bx,0)+x_N^2)^{\frac{n+2\ga}{2}}}\right] U_2(\bw,x_N)dx \\
&\ + \int_{\mcc^N(-\bw,r_1) \setminus \mcc^N(0,r_1)} \frac{\ka_{n,\ga} x_N}{(H_1(\bx,0)+x_N^2)^{\frac{n+2\ga}{2}}} U_2(\bw,x_N)dx.
\end{align*}
Applying \eqref{eq:reg34} and estimating as for $\mcj_{21}$ in the proof of Lemma \ref{lemma:reg2},
we see that the first integral is bounded by $C|\bw|^{\beta_2}$ for any $\beta_2 \in (0,\beta_1 + \min\{1,2(1-\ga)\})$ and $|\bw| < \frac{r_1}{4}$.
By \eqref{eq:dist1} and \eqref{eq:dist3}, the second and third integrals are bounded by $C|\bw|$. Therefore, $|\mcj_{41}^0| \le C|\bw|^{\beta_2}$.
Also, $|\mcj_{42}^0| \le C|\bw|$ owing to \eqref{eq:Poi26}. Combining the estimates for $\mcj_{41}^0$, $\mcj_{42}^0$, and $\mcj_{43}^0$,
we obtain \eqref{eq:reg36}. If $\beta_1 + \min\{1,2(1-\ga)\} > 1$, then we infer from the equalities
\[(\nabla_{\bw}H_1)(\bx,\bw) = (\nabla_{\bz}H_2)(\bx,\bz)|_{\bz=\bw-\bx} = -(\nabla_{\bx}H_1)(\bx,\bw) + (\nabla_{\bx}H_2)(\bx,\bz)|_{\bz=\bw-\bx}\]
and the divergence theorem that
\begin{equation}\label{eq:reg37}
\begin{aligned}
&\ \nabla_{\bw} \int_{\R^N_+} \frac{\ka_{n,\ga} x_N}{(H_1(\bx,\bw)+x_N^2)^{\frac{n+2\ga}{2}}} U_2(x)dx \\
&= -\frac{n+2\ga}{2} \int_{\mcc^N(0,r_1)} \frac{\ka_{n,\ga} x_N}{(H_1(\bx,\bw)+x_N^2)^{\frac{n+2\ga+2}{2}}} (\nabla_{\bw}H_1)(\bx,\bw) [U_2(x)-U_2(\bw,x_N)] dx \\
&\ - \int_0^{r_1} \int_{\pa B^n(0,r_1)} \frac{\ka_{n,\ga} x_N}{(H_1(\bx,\bw)+x_N^2)^{\frac{n+2\ga}{2}}} U_2(\bw,x_N) \nu(\bx) dS_{\bx} dx_N \\
&\ -\frac{n+2\ga}{2} \int_{\mcc^N(0,r_1)} \frac{\ka_{n,\ga} x_N}{(H_1(\bx,\bw)+x_N^2)^{\frac{n+2\ga+2}{2}}} (\nabla_{\bx} H_2)(\bx,\bw-\bx)U_2(\bw,x_N) dx
\end{aligned}
\end{equation}
where $dS_{\bx}$ is the surface measure and $\nu$ is the outward unit normal vector on the sphere $\pa B^n(0,r_1)$.
The first integral in the right-hand side of \eqref{eq:reg37} is finite due to \eqref{eq:dist2} and \eqref{eq:reg34}, whereas the last integral is finite due to \eqref{eq:dist3}.
Tedious computation using \eqref{eq:reg37} shows that $|\mcj_{41}^1| \le C|\bw|^{\beta_2-1}$ for $|\bw| < \frac{r_1}{4}$ and $\beta_2 \in (1,\beta_1 + \min\{1,2(1-\ga)\})$.
Also, it holds that $|\mcj_{42}^1| \le C|\bw|$. Hence \eqref{eq:reg36} is again valid.

If $\lfloor\beta_1\rfloor \ge 1$, one can suitably modify the proof for $\lfloor\beta_1\rfloor = 0$ as in Step 1. We omit the details.

\medskip
Having \eqref{eq:reg34} and \eqref{eq:reg36} in hand, one can employ a bootstrap argument to conclude that $f \in C^{\infty}(M)$.
\end{proof}

\subsection{Proof of Theorem \ref{thm:main21}}
Let $p = \frac{2n}{n-2\ga}$. By Lemma \ref{lemma:wir}, Theorem \ref{thm:main21g}, and Proposition \ref{prop:reg},
inequality \eqref{eq:thetaest} holds and there exists a positive maximizer $f \in C^{\infty}(M)$ of variational problem $\mcs_1$ in \eqref{eq:mcs} provided \eqref{eq:thetaest2}.
If we set $U = \mck_{\bgg,\rhog}^{\mz}f$, $\trh = U^{\frac{2}{n-2\ga}}\rhog$, $\tg = U^{\frac{4}{n-2\ga}} \bgg$ on $\ox$,
then \eqref{eq:wir0}, Lemma \ref{lemma:wir}, and its proof guarantee that $\Theta_{n,\ga}\(X,g^+,[\bh]\) = I_{n,\ga}(\ox,\tg,\trh)$ and $\tg \in \mcb_{g^+,\,[\bh]}$.
The proof of Theorem \ref{thm:main21} is completed.

\bigskip
{\small \noindent \textbf{Acknowledgement}
S. Jin was supported by the National Research Foundation of Korea (NRF) grant funded by the Korea government (MSIT) (No. RS-2023-00213407).
S. Kim was supported by Basic Science Research Program through the National Research Foundation of Korea (NRF) funded by the Ministry of Science and ICT (2020R1C1C1A01010133).}

\appendix
\section{}\label{sec:app}
Throughout the appendix, we assume that $n \in \N$, $\ga \in (0,1)$, and $\mz = 1-2\ga$.

\subsection{Embedding theorems}
Here, we present a collection of embedding theorems involving various weighted Sobolev spaces.

\begin{lemma}\label{lemma:embedSob}
Assume that $n > 2\ga$.

\medskip \noindent \textup{(a)} If $\ga \in (0,\frac{1}{2}]$, then there exists a constant $C > 0$ depending only on $n$ and $\ga$
such that the weighted Sobolev inequality \eqref{eq:Sob} on $\R^N_+$ is valid. If $\ga \in (\frac{1}{2},1)$, such $C > 0$ does not exist.

\medskip \noindent \textup{(b)} Suppose that $\ga \in (\frac{1}{2},1)$, $1 \le p < \frac{2(n+1)}{n-1}$, and $R > 0$.\footnote{For the case when $n = 1$, the second condition should be interpreted as $p \ge 1$.}
Then there exists a constant $C > 0$ depending only on $n$, $\ga$, and $p$ such that
\begin{equation}\label{eq:Sob3}
\|U\|_{L^p(B^N_+(0,R);x_N^{\mz})} \le CR^{\frac{n-2\ga+2}{p}-\frac{n-2\ga}{2}} \|\nabla U\|_{L^2(B^N_+(0,R);x_N^{\mz})}
\end{equation}
for $U \in C^{\infty}_c(B^N_+(0,R) \cap B^n(0,R))$.
\end{lemma}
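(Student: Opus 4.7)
\medskip
\textbf{Plan.} Part (a) splits naturally into the affirmative claim for $\ga\in(0,\tfrac12]$ and the counterexample for $\ga\in(\tfrac12,1)$. For $\ga\in(0,\tfrac12]$, i.e.\ $\mz\in[0,1)$, the sharp form of \eqref{eq:Sob} (with explicit extremizers) is established in \cite[Theorem 1.3]{CR} and \cite[Theorem 1.6]{DSWZ}; the existence of a universal constant is imported directly from those works. A self-contained derivation proceeds via the even reflection $\tilde U(\bx,x_N):=U(\bx,|x_N|)$, which converts \eqref{eq:Sob} into a weighted Sobolev inequality on $\R^N$ with the $A_2$-weight $|x_N|^\mz$: when $\mz$ is a non-negative integer, one interprets $\tilde U$ as a function on $\R^{N+\mz}$ that is radially symmetric in the last $\mz+1$ variables and invokes the classical Sobolev inequality at exponent $\tfrac{2(N+\mz)}{N+\mz-2}=\tfrac{2(n-2\ga+2)}{n-2\ga}$; for fractional $\mz\in(0,1)$ one appeals to standard $A_2$-weighted Sobolev theory.

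For the counterexample when $\mz\in(-1,0)$, fix any nontrivial $V\in C^\infty_c(B^N(e_N,\tfrac12))$ and set $U_t(x):=V(\bx,x_N-t)$ for $t\ge 2$, so that $U_t\in C^\infty_c(\R^N_+)$ with $x_N\asymp t$ on its support as $t\to\infty$. Writing $q^\sharp:=\tfrac{2(n-2\ga+2)}{n-2\ga}$ and substituting $s=x_N-t$ (with $(s+t)^\mz=t^\mz(1+O(1/t))$ on the support of $V$), one obtains
\[
\|U_t\|_{L^{q^\sharp}(\R^N_+;x_N^\mz)}^{q^\sharp}\sim t^{\mz}\quad\text{and}\quad \|\nabla U_t\|_{L^2(\R^N_+;x_N^\mz)}^2\sim t^{\mz}
\]
as $t\to\infty$, with implicit constants depending only on $V$. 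Using $\tfrac12-\tfrac{1}{q^\sharp}=\tfrac{1}{n+2-2\ga}$, the ratio $\|U_t\|_{L^{q^\sharp}(\R^N_+;x_N^\mz)}/\|\nabla U_t\|_{L^2(\R^N_+;x_N^\mz)}$ scales as $t^{-\mz/(n+2-2\ga)}\to\infty$ since $\mz<0$, which precludes any universal constant $C$ in \eqref{eq:Sob}.

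For part (b), the plan is to perform the same even reflection $\tilde U(\bx,x_N):=U(\bx,|x_N|)$, giving $\tilde U\in H^1_0(B^N(0,R);|x_N|^\mz)$, and to appeal to the weighted Sobolev inequality for the $A_2$-weight $|x_N|^\mz$ on the bounded ball $B^N(0,R)$. By direct computation $|x_N|^\mz\in A_2(\R^N)$ for $\mz\in(-1,0)$ with $|x_N|^\mz(B^N(0,R))\asymp R^{n+2-2\ga}$, so standard $A_2$-weighted Sobolev theory combined with scale invariance delivers
\[
\|\tilde U\|_{L^{q^\sharp}(B^N(0,R);|x_N|^\mz)}\le C\|\nabla\tilde U\|_{L^2(B^N(0,R);|x_N|^\mz)}
\]
with $C$ independent of $R$. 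A direct algebraic check shows $q^\sharp>\tfrac{2(n+1)}{n-1}$ (equivalent to $\ga>\tfrac12$), so for $1\le p<\tfrac{2(n+1)}{n-1}$ H\"older's inequality on the finite-measure ball yields
\[
\|\tilde U\|_{L^p(B^N(0,R);|x_N|^\mz)}\le CR^{(n+2-2\ga)(\frac{1}{p}-\frac{1}{q^\sharp})}\|\nabla\tilde U\|_{L^2(B^N(0,R);|x_N|^\mz)}.
\]
Simplifying the $R$-exponent via $(n+2-2\ga)/q^\sharp=(n-2\ga)/2$ gives $\tfrac{n-2\ga+2}{p}-\tfrac{n-2\ga}{2}$; retracting from $\tilde U$ back to $U$ only changes the overall constant, producing \eqref{eq:Sob3}.

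The hard part is that the claimed range $p<\tfrac{2(n+1)}{n-1}$ in (b) strictly exceeds what a naive H\"older bound against the standard unweighted Sobolev inequality on $B^N(0,R)$ (after even reflection) can produce, which only reaches $p<\tfrac{4(n+1)(1-\ga)}{n-1}$. Working at the critical weighted Sobolev exponent $q^\sharp$ is therefore essential, which is what forces the use of $A_2$-weighted Sobolev theory. An elementary alternative to bypass that theory is to split $B^N_+(0,R)$ along $\{x_N=\lambda R\}$, apply the classical Sobolev inequality (after even reflection) on the outer region where $x_N^\mz$ is pointwise comparable to $(\lambda R)^\mz$, use a one-dimensional Hardy inequality in the inner region, and optimize $\lambda$ at the end.
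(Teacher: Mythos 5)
Your part (a) is correct and essentially identical to the paper's argument: the positive case $\ga\in(0,\tfrac12]$ is imported from \cite{CR,DSWZ} (the paper also cites \cite{Ma}), and your translated bumps $U_t(\bx,x_N)=V(\bx,x_N-t)$ are exactly the paper's $\chi_{2R}(\bx,x_N)=\chi_2(\bx,x_N-R)$, with the same exponent count $t^{-\mz/(n-2\ga+2)}\to\infty$ for $\mz<0$.

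Part (b), however, rests on a false step. You claim that ``standard $A_2$-weighted Sobolev theory combined with scale invariance'' gives
$\|\tilde U\|_{L^{q^\sharp}(B^N(0,R);|x_N|^{\mz})}\le C\|\nabla\tilde U\|_{L^2(B^N(0,R);|x_N|^{\mz})}$ with $C$ independent of $R$, where $q^\sharp=\tfrac{2(n-2\ga+2)}{n-2\ga}$. This inequality is scale-invariant (both sides scale by the same power of $\lambda$ under $U\mapsto U(\cdot/\lambda)$), so an $R$-independent constant on balls would immediately yield \eqref{eq:Sob} on all of $\R^N_+$ by exhaustion — precisely the inequality your own part (a) just disproved for $\ga\in(\tfrac12,1)$. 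Concretely, rescaling the translated bumps $\chi_{2R}$ into $B^N_+(0,1)$ produces test functions compactly supported in the unit half-ball whose critical ratio blows up, so the $q^\sharp$-inequality fails even locally. (Nor does general $A_2$ theory reach $q^\sharp$: \cite[Theorem 1.2]{FKS} only guarantees a gain to exponents slightly above $\tfrac{2N}{N-1}$.) The point of part (b) is exactly that for $\mz<0$ the admissible exponent is capped by the \emph{unweighted} Sobolev exponent $\tfrac{2(n+1)}{n-1}=\tfrac{2N}{N-2}<q^\sharp$, coming from the region far from $\{x_N=0\}$; the paper obtains the correct local two-exponent inequality by running the argument of \cite[Proposition 3.1.1]{DMV} with \cite[Lemma 2.2]{TX} in place of \cite[Theorem 1.2]{FKS}. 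Your closing remark — splitting $B^N_+(0,R)$ at height $\lambda R$, using the classical Sobolev inequality where the weight is comparable to a constant, and a Hardy inequality near the boundary — is the right repair and is essentially how such a lemma is proved, but as written it is a one-sentence sketch: the trace/matching term on $\{x_N=\lambda R\}$ and the optimization in $\lambda$ are not carried out, so the main line of your proof of (b) does not stand.
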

\begin{proof}
(a) If $\ga \in (0,\frac{1}{2}]$, the proofs are found in \cite[Theorem 1.3]{CR}, \cite[(2.1.35)]{Ma}\footnote{The third condition for (2.1.35) must be corrected as
\[\beta = \frac{p}{(p-1)q+p}\left[p\,\alpha-1+\(\frac{1}{p}-\frac{1}{q}\)(n+1)\right] > -\frac{1}{q} \quad \textup{for } m = 1.\]},
and \cite[Proposition 1.1]{DSWZ}.

Assume that $\ga \in (\frac{1}{2},1)$. We pick a cut-off function $\chi_2$ in $C^{\infty}_c(B^N(0,2))$ satisfying $\chi_2 = 1$ in $B^N(0,1)$, and set $\chi_{2R}(x) = \chi_2(\bx,x_N-R)$ for $R > 2$. Then
\[\int_{\R^N_+} x_N^{\mz} |\chi_{2R}(x)|^{\frac{2(n-2\ga+2)}{n-2\ga}} dx \simeq R^{\mz} \quad \textup{and} \quad
\int_{\R^N_+} x_N^{\mz} |\nabla \chi_{2R}(x)|^2 dx \simeq R^{\mz}.\]
Here, the notation $a \simeq b$ for $a, b > 0$ means that $c^{-1} \le \frac{a}{b} \le c$ for some $c > 0$ determined by $N$, $\ga$, and $\chi_2$. As a consequence,
\[\frac{\|\chi_{2R}\|_{L^{\frac{2(n-2\ga+2)}{n-2\ga}}(\R^N_+;x_N^{\mz})}}{\|\chi_{2R}\|_{\dot{H}^{1,2}(\R^N_+;x_N^{\mz})}}
\simeq R^{\frac{\mz}{2} \(\frac{n-2\ga}{n-2\ga+2}-1\)} \to \infty \quad \textup{as } R \to \infty \quad \textup{for } \ga \in \(\tfrac{1}{2},1\).\]

\medskip \noindent \textup{(b)} We follow the argument in the proof of \cite[Proposition 3.1.1]{DMV} by replacing \cite[Theorem 1.2]{FKS} with \cite[Lemma 2.2]{TX}.
In particular, taking $q \in (1,2]$ close to $1$, we obtain \eqref{eq:Sob3} with $p$ arbitrarily close to $\frac{2(n+1)}{n-1}$. See also \cite{CF}.
\end{proof}

\begin{lemma}\label{lemma:embed}
Let $(X^N,g^+)$ be a CCE manifold with conformal infinity $(M^n,[\bh])$, $\rhog$ the geodesic defining function of $(M,\bh)$, and $\bgg = \rhog^2g^+$.
There exists a constant $C > 0$ depending only on $(\ox,\bgg)$, $\rhog$, and $\mz$ such that
\begin{equation}\label{eq:embed}
\int_X \rhog^{-\mz} U^2 dv_{\bgg} \le C \int_X \rhog^{\mz} \(|\nabla_{\bgg} U|_{\bgg}^2 + U^2\) dv_{\bgg} \quad \text{for } U \in H^{1,2}_0(X;\rhog^{\mz},\bgg).
\end{equation}
\end{lemma}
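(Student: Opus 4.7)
The inequality is a Hardy-type estimate that exploits the vanishing trace of $U$ on $M$. The plan is to split the domain into the collar $\mct_{\delta_0}$ (where the product structure \eqref{eq:g+ ext} permits a one-dimensional Hardy inequality in the normal direction) and the interior region $X \setminus \mct_{\delta_0}$ (where the weight $\rhog^{\mz}$ is bounded from above and below by positive constants, so the estimate is trivial). By a standard approximation argument (the weight $\rhog^{\mz}$ is Muckenhoupt $A_2$, and functions in $H^{1,2}_0(X;\rhog^{\mz},\bgg)$ can be approximated by smooth functions vanishing in a neighborhood of $M$), it suffices to prove \eqref{eq:embed} for smooth $U$ vanishing near $M$.

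On the interior piece $X \setminus \mct_{\delta_0}$, the defining function $\rhog$ is bounded above and below by positive constants, so $\rhog^{-\mz}$ and $\rhog^{\mz}$ are mutually comparable, and
\[\int_{X \setminus \mct_{\delta_0}} \rhog^{-\mz} U^2 dv_{\bgg} \le C \int_{X \setminus \mct_{\delta_0}} \rhog^{\mz} U^2 dv_{\bgg} \le C\int_X \rhog^{\mz} U^2 dv_{\bgg}.\]
On the collar $\mct_{\delta_0} = M \times (0,\delta_0)$, \eqref{eq:g+ ext} gives $\bgg = h_{\rhog} \oplus d\rhog^2$, so $|\nabla_{\bgg} U|^2_{\bgg} \ge |\pa_{\rhog} U|^2$ and $dv_{\bgg} = \sqrt{|h_{\rhog}|}\, d\sigma\, d\rhog$ with $\sqrt{|h_{\rhog}|/|\bh|}$ uniformly comparable to $1$ on $[0,\delta_0]$.

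For each fixed $\sigma \in M$, the fundamental theorem of calculus (using $U(\sigma,0)=0$) combined with Cauchy--Schwarz yields, for $\rho \in (0,\delta_0)$,
\[|U(\sigma,\rho)|^2 \le \left(\int_0^{\rho} s^{-\mz} ds\right)\left(\int_0^{\rho} s^{\mz}|\pa_s U(\sigma,s)|^2 ds\right) = \frac{\rho^{1-\mz}}{1-\mz} \int_0^{\rho} s^{\mz}|\pa_s U|^2 ds,\]
where integrability of $s^{-\mz}$ near $0$ uses $\mz < 1$. Multiplying by $\rho^{-\mz}$ and integrating in $\rho$, then swapping the order of integration (permitted since $1-2\mz > -1$), gives the one-dimensional Hardy inequality
\[\int_0^{\delta_0} \rho^{-\mz}|U(\sigma,\rho)|^2 d\rho \le \frac{\delta_0^{2-2\mz}}{(1-\mz)(2-2\mz)} \int_0^{\delta_0} s^{\mz}|\pa_s U(\sigma,s)|^2 ds = \frac{\delta_0^{4\ga}}{8\ga^2} \int_0^{\delta_0} s^{\mz}|\pa_s U|^2 ds.\]
Integrating this over $\sigma \in M$ against $\sqrt{|\bh|}\, d\sigma$ and using the comparability of $h_{\rhog}$ with $\bh$ in the collar yields
\[\int_{\mct_{\delta_0}} \rhog^{-\mz}U^2 dv_{\bgg} \le C \int_{\mct_{\delta_0}} \rhog^{\mz}|\pa_{\rhog} U|^2 dv_{\bgg} \le C\int_X \rhog^{\mz}|\nabla_{\bgg} U|^2_{\bgg} dv_{\bgg}.\]

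Adding the collar estimate and the interior estimate gives \eqref{eq:embed}. The only genuine input is the elementary Hardy inequality above; the main (very mild) technical point is justifying density of smooth compactly supported $U$ in $H^{1,2}_0(X;\rhog^{\mz},\bgg)$, which follows from the $A_2$ nature of $\rhog^{\mz}$ (since $|\mz|<1$) via a standard truncation-plus-mollification argument.
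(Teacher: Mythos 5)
Your proof is correct and follows essentially the same route as the paper's: reduce by density to smooth functions vanishing near $M$, use the product structure of $\bgg$ on the collar, and apply the one-dimensional Hardy inequality in the normal direction (fundamental theorem of calculus plus Cauchy--Schwarz with the weight split $s^{-\mz/2}\cdot s^{\mz/2}$, then Fubini). The only difference is presentational: you spell out the interior estimate and the density justification, which the paper leaves implicit.
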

\begin{proof}
By density of $C_c^{\infty}(X)$ in $H^{1,2}_0(X;\rhog^{\mz},\bgg)$, we may assume that $U \in C_c^{\infty}(X)$.
Let $\delta \in (0,1)$ be a small number such that $\bgg = (d\rhog)^2 + h_{\rhog}$ on $\overline{\mct_{\delta}} \subset \ox$
where the metric $h_{\rhog}$ on $M$ varies continuously with respect to $\rhog$. 
Since $U = 0$ on $M$, it holds that
\[U(\sigma,\rhog) = \int_0^{\rhog} \pa_{\rhog} U(\sigma,s) ds \quad \textup{for } \sigma \in M \textup{ and } \rhog \in [0,\delta].\]
By the Cauchy-Schwarz inequality,
\[\rhog^{-\mz} U^2(\sigma,\rhog) \le \rhog^{-\mz} \|\pa_{\rhog} U(\sigma,\cdot)\|_{L^1(0,\rhog)}^2 \le C\rhog^{4\ga-1} \int_0^{\rhog} s^{\mz} |\pa_{\rhog} U(\sigma,s)|^2 ds.\]
Hence Fubini's theorem gives
\[\int_0^{\delta} \int_M \rhog^{-\mz} U^2 dv_{\bh_{\rhog}} d\rhog \le C \int_0^{\delta} \int_M \rhog^{\mz} |\pa_{\rhog} U|^2 dv_{\bh} d\rhog.\]
From this, we can easily derive \eqref{eq:embed}.
\end{proof}
\begin{remark}
If $\mz \le 0$, then $\rhog^{-\mz} \le \rhog^{\mz}$ for $\rhog \in (0,\delta]$, and so \eqref{eq:embed} holds trivially.
Hence the above argument is meaningful only if $\mz > 0$, or equivalently, $\ga \in (\frac{1}{2},1)$.
\end{remark}

\subsection{Elliptic regularity}\label{subsec:app2}
We write $\mcc^N(0,R) = B^n(0,R) \times (0,R) \subset \R^N_+$ for $R > 0$, and let $\nabla_{\bx} = (\pa_{x_1},\ldots,\pa_{x_n})$ be the tangential gradient.
\begin{lemma}\label{lemma:ereg}
Suppose that $r_0 > 0$, $g$ is a $C^3$-metric on $\overline{\mcc^N(0,2r_0)}$ such that
\begin{equation}\label{eq:ereg12}
g_{ij} = \delta_{ij} + O\(|x|^2\) \quad \text{and} \quad g_{iN} = 0 \quad \text{for } i,j = 1,\ldots,n, \quad g_{NN} = 1 \quad \text{on } \overline{\mcc^N(0,2r_0)},
\end{equation}
and $U$ is a solution to
\begin{equation}\label{eq:ereg10}
\begin{cases}
-\textup{div}_g\(x_N^{\mz} \nabla_g U\) + x_N^{\mz} AU = x_N^{\mz}Q &\textup{in } \mcc^N(0,2r_0),\\
U = 0 &\textup{on } B^n(0,2r_0),\\
U \in H^{1,2}(\mcc^N(0,2r_0);x_N^{\mz},g).
\end{cases}
\end{equation}

\medskip \noindent \textup{(a)} Fix any $\ell_0 \in \N \cup \{0\}$. If $|\nabla_{\bx}^{\ell} \pa_{x_N} \sqrt{|g|}(x)| \le Cx_N$ on $\mcc^N(0,2r_0)$ for all $\ell = 0,\ldots,\ell_0$,
then
\begin{equation}\label{eq:ereg1a}
\sum_{\ell=0}^{\ell_0} \left\|\nabla_{\bx}^{\ell} U\right\|_{C^{0,\beta}(\overline{\mcc^N(0,3r_0/2)})}
\le C\left[\|U\|_{L^2(\mcc^N(0,2r_0); x_N^{1-2\ga})} + \sum_{\ell=0}^{\ell_0} \left\|\nabla_{\bx}^{\ell} Q\right\|_{L^{\infty}(\mcc^N(0,2r_0))}\right]
\end{equation}
provided the right-hand side is finite. Here, $C > 0$ and $\beta \in (0,1)$ depend only on $n$, $\ga$, $r_0$, $g$, and $\sum_{\ell=0}^{\ell_0} \|\nabla_{\bx}^{\ell} A\|_{L^{\infty}(\mcc^N(0,2r_0); x_N^{1-2\ga})}$.

\medskip \noindent \textup{(b)} If $A \in C^{0,\beta}(\overline{\mcc^N(0,2r_0)})$, then
\begin{equation}\label{eq:ereg1b}
\left\|x_N^{\mz}\pa_{x_N} U\right\|_{C^{0,\min\{\beta,2(1-\ga)\}}(\overline{\mcc^N(0,r_0)})}
\le C\left[\sum_{\ell=0}^2 \left\|\nabla_{\bx}^{\ell} U\right\|_{C^{0,\beta}(\overline{\mcc^N(0,3r_0/2)})} + \sum_{\ell=0}^1 \left\|\nabla_{\bx}^{\ell} Q\right\|_{L^{\infty}(\mcc^N(0,2r_0))}\right]
\end{equation}
provided the right-hand side is finite. Here, $C > 0$ depends only on $n$, $\ga$, $r_0$, $g$, and $\|A\|_{C^{0,\beta}(\overline{\mcc^N(0,2r_0)})}$.
\end{lemma}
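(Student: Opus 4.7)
The plan is to prove (a) by induction on $\ell_0$ via tangential differentiation, and (b) by converting the PDE into a first-order ODE in $x_N$ that is then integrated up from the boundary. The two structural facts I will exploit are that $x_N^{\mz}$ with $\mz = 1-2\ga \in (-1,1)$ lies in the Muckenhoupt $A_2$ class, and that the hypotheses $g_{iN} = 0$ and $g_{NN} = 1$ decouple tangential and normal directions so that the equation expands as
\[-\frac{1}{\sqrt{|g|}}\pa_{x_N}\(\sqrt{|g|}\,x_N^{\mz}\pa_{x_N} U\) - \frac{1}{\sqrt{|g|}}\sum_{i,j=1}^n \pa_{x_i}\(\sqrt{|g|}\,g^{ij}x_N^{\mz}\pa_{x_j}U\) + x_N^{\mz} AU = x_N^{\mz} Q.\]

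For part (a), the base case $\ell_0 = 0$ is the classical boundary H\"older estimate for degenerate elliptic equations with $A_2$ weights and Dirichlet data; I would obtain it by combining the Fabes-Kenig-Serapioni De Giorgi-Nash-Moser theory \cite{FKS} with the odd reflection of $U$ across $\{x_N = 0\}$, which preserves both the weight and the equation thanks to the condition $U|_{x_N = 0} = 0$. For the inductive step, since the weight depends only on $x_N$, any tangential derivative $V := \pa_{x_i} U$ with $i \in \{1,\ldots,n\}$ satisfies a degenerate equation of the same form with the same weight and Dirichlet data $V = 0$; its RHS collects $\pa_{x_i} Q$, $(\pa_{x_i} A)\,U$, and commutator terms arising from differentiating $\sqrt{|g|}\,g^{jk}$. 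The hypothesis $|\nabla_{\bx}^{\ell}\pa_{x_N}\sqrt{|g|}| \le Cx_N$ is precisely what guarantees these commutators preserve the degenerate divergence structure and remain integrable against $x_N^{\mz}$. Applying the base case to $V$ and iterating $\ell_0$ times produces \eqref{eq:ereg1a}.

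For part (b), I would first apply (a) with $\ell_0 = 2$ to obtain $C^{0,\beta}$-control of $U$, $\nabla_{\bx} U$, and $\nabla_{\bx}^2 U$, and then rewrite the PDE as
\[\pa_{x_N}\(\sqrt{|g|}\,x_N^{\mz}\pa_{x_N}U\) = x_N^{\mz}\, \mathcal{G}, \qquad \mathcal{G} := -\sum_{i,j=1}^n \pa_{x_i}\(\sqrt{|g|}\,g^{ij}\pa_{x_j}U\) + \sqrt{|g|}(AU - Q),\]
observing that $\mathcal{G}$ is H\"older continuous up to the boundary. Setting $W := \sqrt{|g|}\,x_N^{\mz}\pa_{x_N}U$, the convergence of $\int_0^{x_N} s^{\mz}|\mathcal{G}(\bx, s)|\,ds$ (valid since $\mz > -1$) combined with classical interior Schauder in the non-degenerate region $\{x_N \ge r_0/4\}$ shows that the trace $\mathcal{W}_0(\bx) := \lim_{x_N \to 0^+} W(\bx, x_N)$ exists and inherits H\"older continuity in $\bx$. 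The identity
\[W(\bx, x_N) = \mathcal{W}_0(\bx) + \int_0^{x_N} s^{\mz}\mathcal{G}(\bx, s)\,ds\]
then yields H\"older regularity of $W$ in the joint variables, via the elementary bound $|\int_{x_N'}^{x_N} s^{\mz}\,ds| \le C|x_N - x_N'|^{\min(1,\,1+\mz)} = C|x_N - x_N'|^{\min(1,\,2(1-\ga))}$; combining with the $C^{0,\beta}$-control in $\bx$ produces the exponent $\min(\beta, 2(1-\ga))$, establishing \eqref{eq:ereg1b}.

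The main obstacle will be the base case of part (a): establishing a sharp boundary H\"older estimate with Dirichlet data for degenerate equations with $A_2$ weights requires a careful adaptation of De Giorgi-Nash-Moser arguments to handle both the interior degeneracy and the vanishing trace, and verifying that the odd reflection preserves the PDE structure together with the $L^{\infty}$-boundedness of $A$ and $Q$. Once this step is in place, the tangential differentiation in the inductive step of (a) and the ODE integration in (b) reduce to bookkeeping.
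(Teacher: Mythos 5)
Your part (b) is essentially the paper's own argument: the paper also rewrites the equation as $\pa_{x_N}(x_N^{\mz}\sqrt{|g|}\,\pa_{x_N}U)=x_N^{\mz}\mcq$, integrates in $x_N$ to produce the trace $v(\bx)=\lim_{x_N\to 0}x_N^{\mz}\sqrt{|g|}\,\pa_{x_N}U$ and the identity $x_N^{\mz}\sqrt{|g|}\,\pa_{x_N}U=v(\bx)+\int_0^{x_N}t^{\mz}\mcq(\bx,t)\,dt$, and the H\"older exponent $\min\{\beta,2(1-\ga)\}$ comes out exactly from your elementary bound on $\int_{x_N'}^{x_N}s^{\mz}\,ds$ together with the $C^{0,\beta}$ control of $\mcq$ supplied by part (a) with $\ell_0=2$. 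For part (a) the paper does something different in form but not in substance: it invokes the boundary Moser-iteration argument of \cite[Lemma A.3]{KMW2}, fed by the weighted Sobolev inequalities \eqref{eq:Sob} (for $\ga\le\frac12$) and \eqref{eq:Sob3} (for $\ga>\frac12$) and the vanishing trace of the test functions, rather than reflecting and quoting \cite{FKS}. Your odd-reflection reduction of the base case to interior $A_2$ De Giorgi--Nash--Moser theory is a legitimate alternative (the structure $g_{iN}=0$, $g_{NN}=1$ does make the evenly reflected coefficients and the oddly reflected solution compatible with the weak formulation across $\{x_N=0\}$), and it buys you a cleaner $\ell_0=0$ case at the cost of having to re-derive the dependence of the constants on $\|A\|_{L^\infty}$ and $\|Q\|_{L^\infty}$ from the FKS statement.

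The genuine gap is in your inductive step for (a). Differentiating the equation tangentially gives, for $V=\pa_{x_i}U$,
\[-\textup{div}\(x_N^{\mz}\mca\nabla V\)+x_N^{\mz}AV=x_N^{\mz}\pa_{x_i}Q-x_N^{\mz}(\pa_{x_i}A)U+\textup{div}\(x_N^{\mz}(\pa_{x_i}\mca)\nabla U\),\]
and the last commutator has divergence-form data $(\pa_{x_i}\mca)\nabla U$ which a priori lies only in $L^2(x_N^{\mz})$. Mere "integrability against $x_N^{\mz}$" is not sufficient: the FKS (or Moser) H\"older estimate for an inhomogeneity $\textup{div}(x_N^{\mz}F)$ requires $F$ in a weighted $L^p$ with $p$ above the homogeneous dimension, so you cannot apply the base case to $V$ directly. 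One must first bootstrap higher integrability/boundedness of $\nabla_{\bx}U$ (and control the normal flux, which is where the hypothesis $|\nabla_{\bx}^{\ell}\pa_{x_N}\sqrt{|g|}|\le Cx_N$ actually enters, taming the term $\pa_{x_N}(\pa_{x_i}\sqrt{|g|}\cdot x_N^{\mz}\pa_{x_N}U)$), typically via Caccioppoli inequalities for tangential difference quotients combined with \eqref{eq:Sob}/\eqref{eq:Sob3} and iteration. This coupled energy bootstrap is precisely the content of the cited \cite[Lemma A.3]{KMW2}; it is the substantive part of the proof of (a), not bookkeeping, so your proposal should either carry it out or cite it.
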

\begin{proof}
(a) By utilizing \eqref{eq:Sob} for $\ga \in (0,\frac{1}{2}]$, \eqref{eq:Sob3} for $\ga \in (\frac{1}{2},1)$, \eqref{eq:ereg12},
and the zero Dirichlet boundary condition of test functions, one can adapt the argument in the proof of \cite[Lemma A.3]{KMW2}
to establish \eqref{eq:ereg1a}.\footnote{The
term $\sum_{\ell=1}^{\ell_0} \left\|\nabla_{\bx}^{\ell} A\right\|_{L^{\infty}(\mcc^N(0,2r_0))}$ appears on the right-hand side of \cite[(A.6)]{KMW2}.
By slightly modifying the proof of \cite[Lemma A.3]{KMW2}, it is possible to make the constant $C > 0$ absorb this term.}

\medskip \noindent (b) We first claim that $x_N^{\mz} (\sqrt{|g|} \pa_{x_N}U)(\bx,x_N)$ converges to a finite value $v(\bx)$ as $x_N \to 0$ for each $\bx \in B^n(0,r_0)$.
The first equation in \eqref{eq:ereg10} can be rewritten as
\begin{equation}\label{eq:ereg11}
\pa_{x_N}\(x_N^{\mz} \sqrt{|g|} \pa_{x_N}U\) = x_N^{\mz} \left[-\sum_{i,j=1}^n\pa_{x_i}\(\sqrt{|g|}g^{ij}\pa_{x_j}U\) + \sqrt{|g|}(AU-Q)\right] =: x_N^{\mz} \mcq.
\end{equation}
Fixing $\bx$, we integrate the both sides of \eqref{eq:ereg11} from $x_N = \ep$ to $r_0$ to get
\begin{equation}\label{eq:ereg13}
\ep^{\mz}\(\sqrt{|g|}\pa_{x_N}U\)(\bx,\ep) = r_0^{\mz}\(\sqrt{|g|}\pa_{x_N}U\)(\bx,r_0) - \int_{\ep}^{r_0} t^{\mz} \mcq(\bx,t)dt.
\end{equation}
By classical elliptic regularity, 
the right-hand side of \eqref{eq:ereg13} is well-defined. On the other hand, there exists $C > 0$ depending only on $n$, $\ga$, $r_0$, $g$, and $\|A\|_{L^{\infty}(\mcc^N(0,2r_0))}$ such that
\[\int_0^{r_0} t^{\mz} |\mcq(\bx,t)|dt \le C\(\sum_{\ell=0}^2 \left\|\nabla_{\bx}^{\ell} U\right\|_{L^{\infty}(\mcc^N(0,\frac{3r_0}{2}))} + \|Q\|_{L^{\infty}(\mcc^N(0,2r_0))}\) < \infty.\]
Therefore, the right-hand side of \eqref{eq:ereg13} has a limit as $\ep \to 0$.

By integrating \eqref{eq:ereg11} with respect to the $x_N$-variable, we arrive at
\begin{equation}\label{eq:ereg14}
\begin{aligned}
x_N^{\mz} \(\sqrt{|g|}\pa_{x_N}U\)(\bx,x_N) &= v(\bx) + \int_0^{x_N} t^{\mz} \mcq(\bx,t) dt \\
&= r_0^{\mz}\(\sqrt{|g|}\pa_{x_N}U\)(\bx,r_0) - \int_{x_N}^{r_0} t^{\mz} \mcq(\bx,t) dt
\end{aligned}
\end{equation}
for $(\bx,x_N) \in \mcc^N(0,r_0)$. Direct calculations using \eqref{eq:ereg11} and \eqref{eq:ereg14} lead to the desired estimate \eqref{eq:ereg1b}.
\end{proof}
\begin{remark}
By integrating \eqref{eq:ereg14} with respect to the $x_N$-variable again, we obtain
\[U(\bx,x_N) = v(\bx) \int_0^{x_N} \frac{s^{-\mz} ds}{\sqrt{|g(\bx,s)|}} + \int_0^{x_N} \frac{s^{-\mz}}{\sqrt{|g(\bx,s)|}} \int_0^s t^{\mz} \mcq(\bx,t) dtds \quad \text{for } (\bx,x_N) \in \overline{\mcc^N(0,r_0)}.\]
Thus there exist functions $F \in C^0(\overline{\mcc^N(0,r_0)})$ and $G \in C^2(\overline{\mcc^N(0,r_0)})$ such that
\[U = F(\bx,x_N)x_N^2 + G(\bx,x_N)x_N^{2\ga} \quad \text{for } (\bx,x_N) \in \overline{\mcc^N(0,r_0)}.\]
This generalizes the conclusion of Proposition \ref{prop:ext2} to a wider class of equations.
\end{remark}

Exploiting Lemma \ref{lemma:ereg}, we derive \eqref{eq:degeq53} for any solution $Z$ to \eqref{eq:degeq54}.
\begin{proof}[Derivation of \eqref{eq:degeq53}]
Choosing functions $\chi_{31} \in C^{\infty}_c(B^n(0,\frac{3r_0}{2}))$ and $\chi_{32} \in C^{\infty}_c([0,\frac{3r_0}{2}))$ satisfying $\chi_{31} = 1$ in $B^n(0,r_0)$ and $\chi_{32} = 1$ in $[0,r_0)$,
we set $\chi_3(x) = \chi_{31}(\bx)\chi_{32}(x_N)$ for $x=(\bx,x_N) \in \mcc^N(0,2r_0)$. Given any $\ep \in (0,\ga)$, we observe
\begin{align*}
\textup{div}_{\bgg}\(x_N^{\mz} \nabla_{\bgg} \(x_N^{2\ga-\ep}\)\) 
&=-\ep(2\ga-\ep)x_N^{\mz} \left[x_N^{-2(1-\ga)-\ep}(1+O(x_N))\right] \\
&\le -\frac{\ep(2\ga-\ep)}{2}x_N^{\mz} x_N^{-2(1-\ga)-\ep} \quad \textup{in } \mcc^N(0,2r_0)
\end{align*}
for $r_0 > 0$ small. By \eqref{eq:degeq55}, Lemma \ref{lemma:ereg}(a), and the classical elliptic estimates,
\[|A|+|Z|+|\nabla_{\bx} Z| \le C \quad \textup{in } \mcc^N\(0,\tfrac{3r_0}{2}\) \quad \textup{and} \quad |\pa_{x_N}Z| \le C \quad \textup{in } B^n\(0,\tfrac{3r_0}{2}\) \times \(r_0,\tfrac{3r_0}{2}\).\]
Hence we see from \eqref{eq:degeq54} that
\[\begin{medsize}
\displaystyle \textup{div}_{\bgg}\(x_N^{\mz} \nabla_{\bgg} (\chi_3Z)\) 
= x_N^{\mz} \left[AZ\chi_3 + Z\Delta_{\bgg}\chi_3 + 2\la \nabla_{\bgg}Z, \nabla_{\bgg}\chi_3 \ra_{\bgg} + \mz x_N^{-1}Z\chi_{31} (\pa_{x_N}\chi_{32})\right] =: x_N^{\mz} \mcq_0
\end{medsize}\]
and $\mcq_0 \in L^{\infty}(\mcc^N(0,2r_0))$. Accordingly, if we choose
\[C_1 = \frac{2(2r_0)^{2(1-\ga)+\ep} (1+\|\mcq_0\|_{L^{\infty}(\mcc^N(0,2r_0))})}{\ep(2\ga-\ep)} > 0,\]
then $C_1 x_N^{2\ga-\ep} \pm \chi_3Z = C_1 x_N^{2\ga-\ep} \ge 0$ on $\pa \mcc^N(0,2r_0)$ and
\[-\textup{div}_{\bgg}\(x_N^{\mz} \nabla_{\bgg} \(C_1x_N^{2\ga-\ep} \pm \chi_3Z\)\) \ge x_N^{\mz} \left[\|\mcq_0\|_{L^{\infty}(\mcc^N(0,2r_0))} \(\frac{2r_0}{x_N}\)^{2(1-\ga)+\ep} \mp\mcq_0\right] \ge 0\]
in $\mcc^N(0,2r_0)$. Since $x_N^{2\ga-\ep}\in H^{1,2}(\mcc^N(0,2r_0);x_N^{\mz},\bgg)$, the weak maximum principle for degenerate elliptic equations yields
\[|(\chi_3Z)(x)| \le C_1x_N^{2\ga-\ep} \quad \textup{for } x=(\bx,x_N) \in \mcc^N(0,2r_0).\]
This completes the proof.
\end{proof}

We also need a variant of Lemma \ref{lemma:ereg} in the proof of Proposition \ref{prop:Poi}.
\begin{lemma}\label{lemma:ereg2}
Given $x=(\bx,x_N) \in \R^N \setminus \{0\}$, let $\bth = \frac{\bx}{|x|}$ and $\theta_N = \frac{x_N}{|x|}$.
Assume that $a$, $c$, and $q$ are functions on $\overline{\S^n_+}$ satisfying $c, \nabla_{\bth}c, q \in C^{0,\beta'}(\overline{\S^n_+})$ for some $\beta' \in (0,1)$ and
\begin{equation}\label{eq:ereg20}
\begin{cases}
-\textup{div}_{g_{\S^n}}\(\theta_N^{\mz}\nabla_{g_{\S^n}}a(\theta)\) + \theta_N^{\mz} c(\theta)a(\theta) = \theta_N^{\mz}q(\theta) &\text{in } \S^n_+,\\
a(\theta) = 0 &\text{on } \pa\S^n_+,\\
a \in H^{1,2}(\S^n_+;\theta_N^{\mz},g_{\S^n}).
\end{cases}
\end{equation}

\medskip \noindent \textup{(a)} It holds that
\begin{equation}\label{eq:ereg23}
\sum_{\ell=0}^2 \left\|\nabla_{\bth}^{\ell} a\right\|_{C^{0,\beta}(\overline{\S^n_+})} + \left\|\theta_N^{\mz} \pa_{\theta_N} a\right\|_{C^{0,\beta}(\overline{\S^n_+})}
\le C\left[\|a\|_{L^2(\S^n_+; \theta_N^{1-2\ga})} + \|q\|_{C^{0,\beta'}(\overline{\S^n_+})} + \sum_{\ell=1}^2 \left\|\nabla_{\bth}^{\ell} q\right\|_{L^{\infty}(\S^n_+)}\right]
\end{equation}
provided the right-hand side is finite. Here, $C > 0$ and $\beta \in (0,\beta')$ depend only on $n$, $\ga$,
and $\|c\|_{C^{0,\beta'}(\overline{\S^n_+})} + \sum_{\ell=1}^2 \left\|\nabla_{\bth}^{\ell} c\right\|_{L^{\infty}(\S^n_+)}$.

\medskip \noindent \textup{(b)} It holds that
\begin{multline}\label{eq:ereg24}
\sum_{\ell=0}^3 \left\|\nabla_{\bth}^{\ell} a\right\|_{C^{0,\beta}(\overline{\S^n_+})} + \sum_{\ell=0}^1 \left\|\theta_N^{\mz} \pa_{\theta_N} \nabla_{\bth}^{\ell}a\right\|_{C^{0,\beta}(\overline{\S^n_+})}
+ \left\|\theta_N^{\mz+1} \pa_{\theta_N}^2a\right\|_{C^{0,\beta}(\overline{\S^n_+})} \\
\le C\left[\|a\|_{L^2(\S^n_+; \theta_N^{1-2\ga})} + \|q\|_{C^{0,\beta'}(\overline{\S^n_+})} + \sum_{\ell=1}^3 \left\|\nabla_{\bth}^{\ell} q\right\|_{L^{\infty}(\S^n_+)}\right]
\end{multline}
provided the right-hand side is finite. Here, $C > 0$ and $\beta \in (0,\beta')$ depend only on $n$, $\ga$,
and $\sum_{\ell=0}^1 \|\nabla_{\bth}^{\ell} c\|_{C^{0,\beta'}(\overline{\S^n_+})} + \sum_{\ell=2}^3 \left\|\nabla_{\bth}^{\ell} c\right\|_{L^{\infty}(\S^n_+)}$.
\end{lemma}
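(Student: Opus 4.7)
The plan is to reduce Lemma \ref{lemma:ereg2} to Lemma \ref{lemma:ereg} via a local flattening of the boundary $\pa\S^n_+$, combined with a tangential differentiation argument for part (b). Away from $\pa\S^n_+$ the equation \eqref{eq:ereg20} is uniformly elliptic (after dividing by $\theta_N^{\mz}$) with H\"older coefficients, so classical Schauder theory immediately yields the interior $C^{2,\beta}$ (resp.\ $C^{3,\beta}$) estimates needed for (a) (resp.\ (b)). Hence, via a finite open cover of $\overline{\S^n_+}$ and a subordinate partition of unity, it suffices to prove the required estimates in a neighborhood of an arbitrary boundary point $\theta_0 \in \pa\S^n_+$.

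Fix such a $\theta_0$ and introduce Fermi-type coordinates $y=(\bar{y},y_n) \in B^{n-1}(0,2r_0)\times[0,2r_0)$ adapted to $\pa\S^n_+$: let $y_n$ be the geodesic distance from $\pa\S^n_+$ into $\S^n_+$, and let $\bar{y}$ be geodesic normal coordinates on $\pa\S^n_+ \cong \S^{n-1}$ centered at $\theta_0$. A direct calculation gives $\theta_N=\sin(y_n)=y_n\,\phi(y_n)$ with $\phi$ smooth and strictly positive on $[0,2r_0)$, together with $g_{\S^n}=dy_n^2+\cos^2(y_n)\,h_{\pa\S^n_+}(\bar{y})$. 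Since $h_{\pa\S^n_+,\,ij}(\bar y)=\delta_{ij}+O(|\bar y|^2)$ in normal coordinates, the coordinate representation of $g_{\S^n}$ satisfies the structural assumption \eqref{eq:ereg12} under the dimensional shift $(N,n)\mapsto(n,n-1)$. Absorbing the smooth factor $\phi(y_n)^{\mz}$ into the coefficient and source data transforms \eqref{eq:ereg20} into a problem of the form \eqref{eq:ereg10} on $B^{n-1}(0,2r_0)\times(0,2r_0)$ with weight $y_n^{\mz}$.

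The proof of Lemma \ref{lemma:ereg} is essentially dimension-independent: it uses only the weighted Sobolev inequalities \eqref{eq:Sob}--\eqref{eq:Sob3} (valid in every ambient dimension), a Moser-type iteration, and the integration identity \eqref{eq:ereg14} for the normal derivative. Rereading that proof with $(N,n)$ replaced by $(n,n-1)$ therefore yields local estimates of the form \eqref{eq:ereg1a}--\eqref{eq:ereg1b} for $a$ and the weighted normal derivative $y_n^{\mz}\pa_{y_n}a$ on $B^{n-1}(0,3r_0/2)\times(0,r_0)$. Since $\theta_N$ and $y_n$ are related by a smooth diffeomorphism, the chain rule converts these into local versions of \eqref{eq:ereg23}; patching via the partition of unity completes the proof of (a).

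For (b), differentiate \eqref{eq:ereg20} once along a smooth tangential vector field $D$ on $\S^n$: then $\tilde a:=Da$ solves an equation of the same form as \eqref{eq:ereg20} with perturbed coefficient $\tilde c$ and inhomogeneity $\tilde q$ whose $C^{0,\beta'}$-norms are controlled by the right-hand side of \eqref{eq:ereg24}. Applying part (a) to $\tilde a$ yields the $C^{0,\beta}$ bounds on $\nabla_{\bth}^3 a$ and on $\theta_N^{\mz}\pa_{\theta_N}\nabla_{\bth} a$. Finally, the bound on $\theta_N^{\mz+1}\pa_{\theta_N}^2 a$ is extracted directly from the equation: expanding
\[\pa_{\theta_N}\bigl(\theta_N^{\mz}\pa_{\theta_N}a\bigr) = \theta_N^{\mz}\pa_{\theta_N}^2 a + \mz\,\theta_N^{\mz-1}\pa_{\theta_N}a,\]
multiplying \eqref{eq:ereg20} by $\theta_N$, and isolating the principal term, one obtains $\theta_N^{\mz+1}\pa_{\theta_N}^2 a$ as a sum of $\theta_N^{\mz}\pa_{\theta_N}a$ (already controlled by part (a)) and $\theta_N$ times a $C^{0,\beta}$-expression involving $\theta_N^{\mz}$ multiplied by tangential second derivatives of $a$; since $\theta_N^{\mz+1}=\theta_N^{2(1-\ga)}$ is itself H\"older continuous, the resulting sum is $C^{0,\beta}$ after possibly shrinking $\beta$. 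The main technical obstacle is the Fermi-coordinate reduction, namely verifying that the coordinate representation of the metric satisfies \eqref{eq:ereg12} and that the weight $\theta_N^{\mz}$ is comparable to $y_n^{\mz}$ modulo a smooth positive factor, so that Lemma \ref{lemma:ereg} can be applied verbatim after reindexing.
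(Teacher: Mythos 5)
Your proposal is correct and follows essentially the same route as the paper: the paper likewise flattens near $\pa\S^n_+$ using the geodesic normal coordinate $\psi_n=\arcsin\theta_N$ (your $y_n$), notes $\sin\psi_n/\psi_n=1+O(\psi_n^2)$ to reduce to the setting of Lemma \ref{lemma:ereg}, and for (b) differentiates the equation tangentially and then solves the equation for $\theta_N^{\mz+1}\pa_{\theta_N}^2a$ exactly as you describe. The only detail worth making explicit in your coordinate reduction is that the evenness of $\cos^2(y_n)$ and of $\sin(y_n)/y_n$ in $y_n$ is what guarantees the hypothesis $|\nabla_{\bx}^{\ell}\pa_{x_N}\sqrt{|g|}|\le Cx_N$ of Lemma \ref{lemma:ereg}(a) after the weight $\phi(y_n)^{\mz}$ is absorbed.
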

\begin{proof}
Considering the metric tensor in spherical coordinates on $\S^n$, we observe that $\psi_n := \arcsin \theta_N$ is a geodesic defining function of $(\pa\S^n_+, g_{\S^{n-1}})$.
In other words, if $(\psi_1,\ldots,\psi_n)$ denotes Fermi coordinates on $\overline{\S^n_+}$ around a specific point on $\pa\S^n_+$, the resulting metric expression $g$ fulfills the orthogonal condition as in \eqref{eq:ereg12}.

\medskip \noindent \textup{(a)} Writing \eqref{eq:ereg20} with respect to $(\psi_1,\ldots,\psi_n)$-coordinates, we obtain
\begin{equation}\label{eq:ereg21}
\begin{cases}
-\textup{div}_g\((\sin\psi_n)^{\mz}\nabla_ga\) + (\sin\psi_n)^{\mz} ca = (\sin\psi_n)^{\mz}q &\text{in } \mcc^n(0,2r_2),\\
a = 0 &\text{on } B^{n-1}(0,2r_2),\\
a \in H^{1,2}(\mcc^n(0,2r_2);(\sin\psi_n)^{\mz},g),
\end{cases}
\end{equation}
for a small $r_2 > 0$ where $\mcc^n(0,2r_2) = B^{n-1}(0,2r_2) \times (0,2r_2)$. Because $\sin\psi_n/\psi_n = 1 + O(\psi_n^2)$ as $\psi_n \to 0$, we can reason as in the proof of Lemma \ref{lemma:ereg}
to derive H\"older estimates for $a$, $\pa_{\psi_i}a$, $\pa_{\psi_i\psi_j}a$, and $(\sin\psi_n)^{\mz} \pa_{\psi_n}a$ for $i,j = 1,\ldots,n-1$. Thus \eqref{eq:ereg23} holds.

\medskip \noindent \textup{(b)} By inspecting the equation of $\pa_{\psi_i}a$ and using $\nabla_{\bth}^3c,\, \nabla_{\bth}^3q \in L^{\infty}(\S^n_+)$ and $\nabla_{\bth}c \in C^{0,\beta'}(\overline{\S^n_+})$,
we deduce H\"older estimates for $\pa_{\psi_i\psi_j\psi_k}a$ and $(\sin\psi_n)^{\mz} \pa_{\psi_n\psi_i}a$ for $i,j,k = 1,\ldots,n-1$. Also, we infer from \eqref{eq:ereg21} that
\begin{multline}\label{eq:ereg22}
(\sin\psi_n)^{\mz+1}\pa_{\psi_n}^2a = -\mz(\sin\psi_n)^{\mz}\pa_{\psi_n}a \\
+ (\sin\psi_n)^{2(1-\ga)}\left[ca-q-\frac{1}{\sqrt|g|} \sum_{i,j=1}^{n-1} \pa_{\psi_i}(\sqrt{|g|}g^{ij}\pa_{\psi_j}a) - \frac{\pa_{\psi_n}\sqrt{|g|}}{\sqrt{|g|}} \pa_{\psi_n}a\right]
\end{multline}
in $\mcc^n(0,2r_2)$. From \eqref{eq:ereg22} and $c, q \in C^{0,\beta'}(\overline{\S^n_+})$, we observe that $(\sin\psi_n)^{\mz+1}\pa_{\psi_n}^2a \in C^{0,\beta}(\overline{\S^n_+})$. Thus \eqref{eq:ereg24} holds.
\end{proof}

\subsection{Weighted spherical harmonics}
As before, we write $\theta_N = \frac{x_N}{|x|}$ for $x \in \R^N$.
\begin{prop}\label{prop:spherical}
Given $n \in \N$, $\ga \in (0,1)$, and $\mz = 1-2\ga$, let $\textup{Spec}(-\textup{div}_{g_{\S^n}}(\theta_N^{\mz} \nabla_{g_{\S^n}} \cdot))$
be the set of all eigenvalues of the weighted Laplacian $-\textup{div}_{g_{\S^n}}(\theta_N^{\mz} \nabla_{g_{\S^n}} \cdot)$ with Dirichlet boundary condition on $\pa\S^n_+$. Then
\[\textup{Spec}\(-\textup{div}_{g_{\S^n}}\(\theta_N^{\mz} \nabla_{g_{\S^n}} \cdot\)\) = \{(\ell+2\ga)(\ell+n): \ell \in \N \cup \{0\}\}.\]
Also, if $Y_{\ell} \in L^2(\S^n_+;\theta_N^{\mz},g_{\S^n})$ is an eigenfunction corresponding to the eigenvalue $(\ell+2\ga)(\ell+n)$, then it is a restriction of a solution to
\begin{equation}\label{eq:har}
\begin{cases}
-\textup{div}\(x_N^{\mz} \nabla U\) = 0 &\textup{in } \R^N_+,\\
U = 0 &\textup{on } \R^n
\end{cases}
\end{equation}
such that
\[U(x) = \sum\limits_{\substack{k, m \in \N \cup \{0\} \\ k+2m=\ell}} x_N^{2(\ga+m)}p_k(\bx) \quad \text{for all } x=(\bx,x_N) \in \R^N_+\]
where $p_k(\bx)$ is a homogeneous polynomial in $\bx \in \R^n$ of degree $k$.
\end{prop}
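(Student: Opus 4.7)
The plan is to convert the eigenvalue problem into the classification of homogeneous $\mz$-harmonic extensions. Substituting the separated ansatz $U(x) = b(r)a(\theta)$ with $r=|x|$ and $\theta = x/|x|$ into \eqref{eq:har} and using Lemma \ref{lemma:Poi2}, one finds that $U$ is $\mz$-harmonic with $U|_{\R^n} = 0$ if and only if $a$ is an eigenfunction of $L := -\textup{div}_{g_{\S^n}}(\theta_N^{\mz}\nabla_{g_{\S^n}} \cdot)$ on $\S^n_+$ with Dirichlet data and some eigenvalue $\lambda$, while $b$ solves the Euler equation $r^2 b'' + (n+1-2\ga) r b' - \lambda b = 0$. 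Testing the eigenvalue equation against $a$ forces $\lambda > 0$, so the positive root $\alpha_+ := \tfrac{1}{2}[-(n-2\ga) + \sqrt{(n-2\ga)^2 + 4\lambda}] > 0$ of the indicial equation produces a homogeneous solution $U(x) = |x|^{\alpha_+}a(x/|x|)$. Identifying $\textup{Spec}(L)$ thus amounts to determining the admissible values of $\alpha_+$.

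For the inclusion ``$\supseteq$,'' given $\ell \in \N \cup \{0\}$ and a homogeneous polynomial $p_\ell$ of degree $\ell$ in $\bx$, I would set $q_0 = p_\ell$, define $q_{m+1} := -\Delta_{\bx} q_m / [4(m+1)(\ga+m+1)]$, and put $U_{p_\ell}(x) := \sum_{m=0}^{\lfloor \ell/2 \rfloor} x_N^{2(\ga+m)} q_m(\bx)$. The identity $\textup{div}(x_N^{\mz}\nabla(x_N^{2(\ga+m)}q)) = x_N^{2m+1}\Delta_{\bx}q + 4m(\ga+m) x_N^{2m-1} q$ makes the sum telescope to $\textup{div}(x_N^{\mz}\nabla U_{p_\ell}) = 0$, so $U_{p_\ell}$ is homogeneous of degree $\ell+2\ga$ and vanishes on $\R^n$; its restriction to $\S^n_+$ is then an eigenfunction of $L$ with eigenvalue $\alpha_+(\alpha_+ + n - 2\ga) = (\ell+2\ga)(\ell+n)$. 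For the reverse inclusion ``$\subseteq$,'' given any eigenfunction $Y$ with eigenvalue $\lambda$, I would form the homogeneous $\mz$-harmonic extension $U(x) := |x|^{\alpha_+}Y(x/|x|)$ on $\R^N_+ \setminus \{0\}$, vanishing on $\R^n \setminus \{0\}$. Since $\alpha_+ > 0$, the function $U$ has finite local weighted energy and extends across the origin to a solution in $H^{1,2}_{\textup{loc}}(\R^N_+;x_N^{\mz})$. The crux is then a Frobenius-type expansion near $\R^n$: treating \eqref{eq:har} as a Fuchsian ODE in $x_N$ with $\bx$ as parameter, the indicial roots at $x_N = 0$ are $0$ and $2\ga$, and the Dirichlet condition eliminates the smooth ``$0$-branch.'' One is left with a series $U(x) = \sum_{m \ge 0} Q_m(\bx) x_N^{2(\ga+m)}$ whose coefficients obey the same recursion as in Step 1, starting from a function $Q_0$ on $\R^n$. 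Homogeneity of $U$ of degree $\alpha_+$ then forces $Q_0$ to be a homogeneous polynomial of some degree $\ell \in \N \cup \{0\}$, which truncates the series and yields $\alpha_+ = \ell + 2\ga$, $\lambda = (\ell+2\ga)(\ell+n)$, and $U = U_{Q_0}$.

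The principal obstacle is making the Frobenius expansion rigorous for an a priori merely-$L^2$ eigenfunction $Y$. The formal computation is transparent, but upgrading $Y$ to a function admitting the required power-series decomposition at $\pa\S^n_+$, and controlling each coefficient $Q_m$, requires the weighted Schauder estimates in Lemma \ref{lemma:ereg2} applied in Fermi coordinates together with an iteration. A tempting alternative is to sidestep this step entirely by arguing directly that the families $\{U_{p_\ell}|_{\S^n_+}\}$ are complete in $L^2(\S^n_+;\theta_N^{\mz},g_{\S^n})$: the compactness of the resolvent of $L$ (via the weighted Sobolev embedding on $\S^n_+$) reduces completeness to a density statement in $H^{1,2}_0(B^N_+(0,1);x_N^{\mz})$, which can be established by approximation combined with polynomial density in $\R^n$.
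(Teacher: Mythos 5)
Your forward inclusion is correct and is in fact a more explicit version of what the paper does: your recursion $q_{m+1} = -\Delta_{\bx} q_m/[4(m+1)(\ga+m+1)]$ together with the telescoping identity produces exactly the space the paper calls $x_N^{2\ga}\mch_{\ell}$, parametrized by the leading polynomial $p_\ell$, and the separation-of-variables/indicial computation correctly converts homogeneity $\ell+2\ga$ into the eigenvalue $(\ell+2\ga)(\ell+n)$. The genuine gap is in the reverse inclusion, i.e.\ in showing that \emph{no other} eigenvalues occur, and it is not just the regularity issue you flag. Even granting a full expansion $U = \sum_m Q_m(\bx) x_N^{2(\ga+m)}$ near $\{x_N=0\}$, the step ``homogeneity of $U$ forces $Q_0$ to be a homogeneous polynomial'' does not follow: homogeneity only makes $Q_0$ positively homogeneous of degree $\alpha_+ - 2\ga$ on $\R^n\setminus\{0\}$, and neither the integrality of that degree nor the polynomiality of $Q_0$ comes for free (compare $|\bx|$ or $x_1^3/|\bx|^2$, homogeneous of degree one but not polynomials). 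To rule these out you would need smoothness of $Q_0$ at the origin of $\R^n$, which is a \emph{boundary} point of $\R^N_+$; this requires tangential boundary-regularity estimates up to that corner, iterated to all orders, on top of the Frobenius expansion itself. None of this is carried out, and your fallback (completeness of the family $\{U_{p_\ell}|_{\S^n_+}\}$ in $L^2(\S^n_+;\theta_N^{\mz},g_{\S^n})$) is asserted rather than proved.

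That fallback is precisely the route the paper takes, and it has two nontrivial ingredients that your sketch omits. First, density: Stone--Weierstrass gives density of the polynomial algebra $\mca$ in $C^0(\overline{B^N_+(0,1)})$, and the approximation $f_\ep = \theta_N^{2\ga} f/(\theta_N+\ep)^{2\ga}$ then shows that $\theta_N^{2\ga}\mca|_{\overline{\S^n_+}}$ (note the extra factor $\theta_N^{2\ga}$, which is essential because your eigenfunctions all vanish like $\theta_N^{2\ga}$ at $\pa\S^n_+$) is dense in the weighted $L^2$ space. Second, an algebraic decomposition $x_N^{2\ga}\mca_{\ell} = x_N^{2\ga}\mch_{\ell} \oplus |x|^2 x_N^{2\ga}\mca_{\ell-2}$, which the paper obtains from a Fischer-type inner product with respect to which $x_N^{-\mz}\textup{div}(x_N^{\mz}\nabla\,\cdot\,)$ is adjoint to multiplication by $|x|^2$; restricting to the sphere, where $|x|^2=1$, and inducting on $\ell$ shows every element of $\theta_N^{2\ga}\mca|_{\S^n_+}$ is a finite combination of weighted spherical harmonics, so the known eigenspaces span $L^2$ and the spectrum is exhausted. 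Your dimension count $\dim\mch_\ell = \binom{\ell+n-1}{n-1} = \dim\mca_\ell - \dim\mca_{\ell-2}$ could substitute for the inner-product argument only if you separately prove the sum $\mch_\ell + |x|^2\mca_{\ell-2}$ is direct (e.g.\ via $L^2(\S^n_+;\theta_N^{\mz})$-orthogonality of eigenspaces for distinct eigenvalues and injectivity of restriction to the sphere); as written, neither the density step nor the decomposition step is supplied, so the proof of $\textup{Spec} \subseteq \{(\ell+2\ga)(\ell+n)\}$ is incomplete.
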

\begin{proof}
The proposition and its proof have already been presented in \cite[Section 4]{MN}. Here, we provide a more detailed explanation based on the argument in \cite[Chapter III, Section 3]{St}.

\medskip \noindent \textsc{Step 1.} For $k, m \in \mathbb{Z}$, we define a vector space
\[\begin{medsize}
\mca_{k,m} = \begin{cases}
\textup{span}\left\{x_N^{2m} p(\bx): \textup{$p(\bx)$ is a homogeneous polynomial in $\bx \in \R^n$ of degree $k$}\right\} &\text{if } k, m \in \N \cup \{0\},\\
\emptyset &\text{otherwise}.
\end{cases}
\end{medsize}\]
Then $\mca := \sum_{k,m=0}^{\infty} \mca_{k,m}$ is a subalgebra of $C^0(\overline{B^N_+(0,1)})$ that contains nonzero constant functions and separates points.
Therefore the Stone-Weierstrass theorem implies that $\mca$ is dense in $C^0(\overline{B^N_+(0,1)})$.

Given any $f \in C^0(\overline{\S^n_+})$, we observe from the dominated convergence theorem that
\[f_{\ep} := \theta_N^{2\ga} \frac{f}{(\theta_N+\ep)^{2\ga}} \to f \quad \text{in } L^2(\S^n_+;\theta_N^{\mz},g_{\S^n}) \quad \text{as } \ep \to 0.\]
Since $C^0(\overline{\S^n_+})$ is dense in $L^2(\S^n_+;\theta_N^{\mz},g_{\S^n})$, so is $\theta_N^{2\ga} C^0(\overline{\S^n_+})$.
Applying the Weierstrass approximation theorem, we conclude that $\theta_N^{2\ga} \mca|_{\overline{\S^n_+}}$ is dense in $L^2(\S^n_+;\theta_N^{\mz},g_{\S^n})$
where $\mca|_{\S^n_+}$ is the space of the restrictions of polynomials in $\mca$ to $\overline{\S^n_+}$.

\medskip \noindent \textsc{Step 2.} Given any $k, m \in \N \cup \{0\}$, we define an inner product on the vector space $x_N^{2\ga} \mca_{k,m}$ as follows:
For $x_N^{2(\ga+m)} p_1(\bx), x_N^{2(\ga+m)} p_2(\bx) \in x_N^{2\ga} \mca_{k,m}$ (so that $\deg p_1 = \deg p_2 = k$), we set
\begin{multline*}
\la x_N^{2(\ga+m)} p_2(\bx), x_N^{2(\ga+m)} p_1(\bx)\ra \\
= \begin{cases}
4^m m! (m+\ga)(m-1+\ga) \cdots (2+\ga)(1+\ga) \left[p_2\(\dfrac{\pa}{\pa \bx}\)\right] p_1(\bx) &\text{for } m \ge 1,\\
\left[p_2\(\dfrac{\pa}{\pa \bx}\) \right]p_1(\bx) &\text{for } m = 0
\end{cases}
\end{multline*}
where $p_2(\frac{\pa}{\pa \bx})$ is the differential operator such that each component $x_i$ in $p_2$ is replaced with $\pa_{x_i}$. We extend this to the space $x_N^{2\ga} \mca$ by letting
\[\la x_N^{2(\ga+m_2)} p_2(\bx), x_N^{2(\ga+m_1)} p_1(\bx)\ra = 0 \quad \text{if } \deg p_1 = k_1,\, \deg p_2 = k_2,\, (k_1,m_1) \ne (k_2,m_2).\]
Then, by conducting straightforward but long computations, one sees that if $q_1 \in x_N^{2\ga} (\mca_{k+2,m} + \mca_{k,m+1})$ and $q_2 \in x_N^{2\ga} \mca_{k,m}$,
then $x_N^{-\mz} \textup{div}(x_N^{\mz} \nabla q_1) \in x_N^{2\ga} (\mca_{k,m}+\mca_{k+2,m-1}+\mca_{k-2,m+1})$ and
\[\la|x|^2 q_2, q_1\ra = \la q_2, x_N^{-\mz} \textup{div}\(x_N^{\mz} \nabla q_1\)\ra.\]
Therefore, if we set
\[x_N^{2\ga}\mca_{\ell} = x_N^{2\ga} \sum_{\substack{k,m \in \N \cup \{0\} \\ k+2m=\ell}} \mca_{k,m}
\quad \text{and} \quad x_N^{2\ga}\mch_{\ell} = \left\{U \in x_N^{2\ga}\mca_{\ell}: U \textup{ is a solution to \eqref{eq:har}}\right\}\]
for $\ell \in \N \cup \{0\}$, then $x_N^{2\ga}\mca_{\ell} = x_N^{2\ga}\mch_{\ell} \oplus |x|^2 x_N^{2\ga} \mca_{\ell-2}$.

\medskip \noindent \textsc{Step 3.} Let $\theta_N^{2\ga} H_{\ell}$ be the space of the restriction of functions in $x_N^{2\ga}\mch_{\ell}$ to $\S^n_+$.
The elements of this space are referred to as weighted spherical harmonics.
An induction argument with Step 2 shows that every element of $\theta_N^{2\ga}\mca|_{\overline{\S^n_+}}$ can be written as a finite linear combination of weighted spherical harmonics.
In view of Step 1, it holds that $L^2(\S^n_+;\theta_N^{\mz},g_{\S^n}) = \sum_{\ell=0}^{\infty} \theta_N^{2\ga} H_{\ell}$.

By Lemma \ref{lemma:Poi2},
\[-\textup{div}\(x_N^{\mz} \nabla \(|x|^{2\ga+\ell}a(\theta)\)\) = |x|^{\ell-1} \left[\text{div}_{g_{\S^n}}\(\theta_N^{\mz}\nabla_{g_{\S^n}}a(\theta)\) + (\ell+2\ga)(\ell+n) \theta_N^{\mz}a(\theta)\right].\]
As a result, $\theta_N^{2\ga} H_{\ell}$ is the eigenspace of $-\text{div}_{g_{\S^n}}(\theta_N^{\mz}\nabla_{g_{\S^n}} \cdot)$ associated with the eigenvalue $(\ell+2\ga)(\ell+n)$. The proof is finished.
\end{proof}

\subsection{Expansions of the squares of Riemannian distances}
Let $\Psi = d_{\bh}^2$ be the square of the Riemannian distance on a smooth closed manifold $(M,\bh)$.
By appealing the smoothness of the exponential map defined in an open neighborhood of the zero section of $TM$ and the inverse function theorem, one can easily check that $\Psi$ is smooth near the diagonal of $M \times M$.
In the following proposition, we study the expansion of $\Psi$, referring to \cite{Ni, DZ}.
\begin{lemma}
Fix any $\sigma_0 \in M$ and $r_3 \in (0,\frac{1}{2}\textup{inj}(M,\bh))$.

\medskip \noindent \textup{(a)} Let $H_1(\bx,\bw) = \Psi(\exp_{\sigma_0}\bx,\exp_{\sigma_0}\bw)$ for $\bx, \bw \in \R^n$ such that $|\bx|, |\bw| < r_3$. Then
\begin{equation}\label{eq:dist1}
H_1(\bx,\bw) = |\bx-\bw|^2 + O\(\(|\bx|^2+|\bw|^2\)|\bx-\bw|^2\)
\end{equation}
and
\begin{equation}\label{eq:dist2}
(\nabla_{\bw}H_1)(\bx,\bw) = 2(\bw-\bx) + O\(\(|\bx|^2+|\bw|^2\)|\bx-\bw|\).
\end{equation}

\medskip \noindent \textup{(b)} Let $H_2(\bx,\bz) = \Psi(\exp_{\sigma_0}\bx,\exp_{\sigma_0}(\bx+\bz))$ for $\bx, \bz \in \R^n$ such that $|\bx|, |\bz| < r_3$. Then
\begin{equation}\label{eq:dist3}
\left|(\nabla_{\bx}^{\ell} H_2)(\bx,\bz)\right| = O\(|\bz|^2\) \quad \text{for any } \ell \in \N \cup \{0\}.
\end{equation}
\end{lemma}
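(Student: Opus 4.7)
The strategy for part (a) rests on smoothness of $\Psi$ on the diagonal neighborhood $\{(\sigma_1,\sigma_2) : d_\bh(\sigma_1,\sigma_2) < \textup{inj}(M,\bh)\}$, which, since $r_3 < \frac{1}{2}\textup{inj}(M,\bh)$, makes $H_1$ and $H_2$ smooth on $\overline{B^n(0,r_3)}^2$. Writing $E(\bx,\bw) := H_1(\bx,\bw) - |\bx-\bw|^2$, I will establish the factorization
\begin{equation*}
E(\bx,\bw) = x^i x^j (w^k - x^k)(w^l - x^l)\, K_{ijkl}(\bx,\bw-\bx)
\end{equation*}
for some smooth tensor field $K_{ijkl}$. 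Once this is in hand, \eqref{eq:dist1} follows from $|E| \le C|\bx|^2 |\bw-\bx|^2 \le C(|\bx|^2 + |\bw|^2)|\bw-\bx|^2$, and \eqref{eq:dist2} from termwise differentiation in $\bw$ of the factorization, using that $K_{ijkl}$ and its derivatives are uniformly bounded on the compact domain.

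The factorization will come from iterated Hadamard decompositions, after recording the following properties of $E$, all valid in normal coordinates at $\sigma_0$. The defining property of normal coordinates gives $H_1(0,\bw) = |\bw|^2$ and $H_1(\bx,0) = |\bx|^2$, so $E(0,\bw) = E(\bx,0) = 0$. The condition $E(\bx,\bx) = 0$ is trivial, and $(\nabla_\bw E)(\bx,\bx) = 0$ because $\bw = \bx$ is a critical point of both $H_1(\bx,\cdot)$ and $|\bx-\cdot|^2$. The classical gradient formula $\nabla_q d_\bh^2(p,q) = -2\exp_q^{-1}(p)$ evaluated in normal coordinates at $\sigma_0$ yields $(\nabla_\bx H_1)(0,\bw) = -2\bw$ and $(\nabla_\bw H_1)(\bx,0) = -2\bx$, each matching the corresponding Euclidean gradient of $|\bx-\bw|^2$, so also $(\nabla_\bx E)(0,\bw) = 0$ and $(\nabla_\bw E)(\bx,0) = 0$.

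With these properties in hand, I will substitute $\bz = \bw - \bx$ and set $\tilde E(\bx,\bz) := E(\bx, \bx + \bz)$. The conditions above give $\tilde E(\bx,0) = (\nabla_\bz \tilde E)(\bx,0) = 0$ and, via the chain rule $(\nabla_\bx \tilde E)(0,\bz) = (\nabla_\bx E)(0,\bz) + (\nabla_\bw E)(0,\bz) = 0$, also $\tilde E(0,\bz) = (\nabla_\bx \tilde E)(0,\bz) = 0$. Two nested applications of Hadamard's lemma in its integral-remainder form then yield smooth $K_{ijkl}$ with $\tilde E(\bx,\bz) = x^i x^j z^k z^l K_{ijkl}(\bx,\bz)$; the crucial observation is that after factoring out $x^i x^j$ as $\tilde E(\bx,\bz) = x^i x^j H_{ij}(\bx,\bz)$, the remainder $H_{ij}(\bx,\bz) = \int_0^1 (1-t)(\partial_{x^i x^j}^2 \tilde E)(t\bx,\bz)\,dt$ still vanishes to order two in $\bz$ at $\bz = 0$, since $\bz$-partials commute with the integral. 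Substituting $\bz = \bw - \bx$ back gives the desired factorization of $E$.

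Part (b) is a single application of Hadamard's lemma: $H_2(\bx,0) = H_1(\bx,\bx) = 0$ and $(\nabla_\bz H_2)(\bx,0) = (\nabla_\bw H_1)(\bx,\bx) = 0$ immediately yield $H_2(\bx,\bz) = z^i z^j G_{ij}(\bx,\bz)$ for a smooth $G_{ij}$, and since $\nabla_\bx^\ell G_{ij}$ is uniformly bounded on the compact domain for every $\ell$, \eqref{eq:dist3} follows. The one nontrivial technical point throughout is verifying the gradient vanishings $(\nabla_\bx E)(0,\bw) = 0$ and $(\nabla_\bw E)(\bx,0) = 0$, which rest on the classical gradient formula for the squared distance function at a base point; modulo this, the argument is routine bookkeeping with Hadamard's lemma and elementary inequalities on a compact set.
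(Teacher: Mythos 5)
Your proof is correct, but it takes a genuinely different route from the paper's. The paper quotes the fourth--order Taylor expansion of $H_2(\bx,\cdot)$ at $\bz=0$ from \cite{DZ} (equation \eqref{eq:dist11}), identifies the cubic coefficient $\pa_{z_iz_jz_k}H_2(\bx,0)$ by differentiating the eikonal identity $|(\nabla_{\bx}H_1)(\bx,\bw)|_{\bh}^2=4H_1(\bx,\bw)$ three times and restricting to the diagonal, and then invokes the normal--coordinate expansion $\bh_{ij}(\bx)=\delta_{ij}+O(|\bx|^2)$ to see that this coefficient is $O(|\bx|)$; part (b) is obtained by differentiating \eqref{eq:dist11} in $\bx$. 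You instead produce the factorization $H_1(\bx,\bw)-|\bx-\bw|^2=x^ix^j(w^k-x^k)(w^l-x^l)K_{ijkl}$ via two nested Taylor expansions with integral remainder, the vanishing conditions coming from the exact identities $H_1(0,\bw)=|\bw|^2$, $H_1(\bx,\bx)=0$, the criticality of the diagonal, and the first--variation formula $\nabla_q d_{\bh}^2(p,q)=-2\exp_q^{-1}(p)$; I checked all of these and they hold, and your key observation --- that the order--two vanishing in $\bz$ along $\bz=0$ survives the integral remainder of the $\bx$--expansion because $\bz$--derivatives commute with the integral --- is sound. The one point you should make explicit is that the iterated remainders evaluate $\tilde E$ at points $(t\bx,s\bz)$, $s,t\in[0,1]$, i.e.\ at the pairs $(\exp_{\sigma_0}(t\bx),\exp_{\sigma_0}((t-s)\bx+s\bw))$; a short triangle--inequality computation shows the sum of their distances to $\sigma_0$ is less than $2r_3<\textup{inj}(M,\bh)$, so $\Psi$ stays smooth along the whole construction and the Taylor formulas are legitimate. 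Your route is self--contained (no appeal to \cite{DZ}) and in fact yields the slightly sharper bound $O(|\bx|^2|\bx-\bw|^2)$ --- hence, by the symmetry of $H_1$, $O(\min\{|\bx|,|\bw|\}^2|\bx-\bw|^2)$ --- which implies \eqref{eq:dist1} and \eqref{eq:dist2}; the paper's computation has the advantage of exhibiting the explicit curvature coefficients, which are not needed here. For part (b) the two arguments essentially coincide, your Hadamard form of the remainder being marginally cleaner than differentiating a truncated expansion.
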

\begin{proof}
(a) Note that $H_2(\bx,\bz) 
= H_1(\bx,\bw)$ for $\bz = \bw-\bx = (z_1,\ldots,z_n)$. In the proof of Lemma 5.6 in \cite{DZ}, it was shown that
\begin{equation}\label{eq:dist11}
H_2(\bx,\bz) = \sum_{i,j=1}^n \bh_{ij}(\bx)z_iz_j + \frac{1}{3!} \sum_{i,j,k=1}^n\pa_{z_iz_jz_k}H_2(\bx,0)z_iz_jz_k + O\(|z|^4\).
\end{equation}
Since $|(\nabla_{\bx} d_{\bh})(\exp_{\sigma_0}\bx,\exp_{\sigma_0}\bw)|_{\bh} = 1$ for each fixed $\bw$, it holds that
\begin{equation}\label{eq:dist12}
|(\nabla_{\bx} H_1)(\bx,\bw)|_{\bh}^2 = \sum_{i,j=1}^n \bh^{ij}(\bx) (\pa_{x_i}H_1\pa_{x_j}H_1)(\bx,\bw) = 4H_1(\bx,\bw).
\end{equation}
Taking $\pa_{x_kx_lx_m}$ on the both sides of \eqref{eq:dist12} and letting $\bx = \bw$, we find
\[(\pa_{x_kx_lx_m} H_1)(\bx,\bx) = \(\pa_{x_k}\bh_{lm} + \pa_{x_l}\bh_{mk} + \pa_{x_m}\bh_{kl}\)(\bx) \quad \text{for } k,l,m = 1,\ldots,n,\]
which together with Lemma \ref{lemma:Poi1} yields
\begin{equation}\label{eq:dist13}
\pa_{z_iz_jz_k}H_2(\bx,0) = (\pa_{x_ix_jx_k} H_1)(\bx,\bx) = O(|\bx|).
\end{equation}
Plugging \eqref{eq:dist13} and the equality $\bh_{ij}(\bx) = \delta_{ij} + O(|\bx|^2)$ into \eqref{eq:dist11}, we obtain \eqref{eq:dist1}.

Equality \eqref{eq:dist2} is a simple consequence of the relation $(\nabla_{\bw}H_1)(\bx,\bw) = (\nabla_{\bz}H_2)(\bx,\bz)$ and the above calculations.

\medskip \noindent \textup{(b)} By taking the $\ell$-th derivative of \eqref{eq:dist11} with respect to $\bx$, we readily derive \eqref{eq:dist3}.
\end{proof}
\begin{remark}
Following the derivation of \eqref{eq:dist3}, one can verify
\begin{equation}\label{eq:exp}
\left|\nabla_{\bx}^{\ell} \left[\exp^{-1}_{\exp_{\pi(\xi_0)}(\bx+\bz)} \exp_{\pi(\xi_0)} \bx\right]\right| = O(|\bz|)
\end{equation}
for $\ell \in \N \cup \{0\}$ and $\bx, \bz \in \R^n$ such that $|\bx|,|\bz| < r_3$.
\end{remark}

\end{document}